%% file: main.tex
\documentclass{amsart}
\usepackage[utf8]{inputenc}

\usepackage[shortlabels]{enumitem}

\usepackage[bookmarks,bookmarksnumbered,pdfstartview=FitBH,backref=page]{hyperref}
\hypersetup{
colorlinks=true, 
linkcolor=blue,
citecolor= blue,
linktoc= all, 
}
\numberwithin{equation}{section} 

\usepackage[mathscr]{eucal}
\usepackage{setspace}
\usepackage{amsmath, amsthm}
\usepackage{amssymb}
\usepackage{thmtools}
\usepackage{mathrsfs}
\usepackage{bbding}
\usepackage{bbold}
\usepackage{graphicx}
\usepackage{pifont}
\usepackage{verbatim}
\usepackage{eucal}
\usepackage{fancyhdr}
\usepackage{bm}
\usepackage{upgreek}
\usepackage{mathtools}
\usepackage{stmaryrd}
\usepackage [english]{babel}
\usepackage [autostyle, english = american]{csquotes}
\MakeOuterQuote{"}

\usepackage[capitalise]{cleveref}

\newtheorem{theorem}[equation]{Theorem}
\crefname{theorem}{Theorem}{Theorems}

\newtheorem{lemma}[equation]{Lemma}

\crefname{claim}{Claim}{Claims}
\newtheorem{claim}{Claim}
\newtheorem*{claim*}{Claim}
\newtheorem{corollary}[equation]{Corollary}
\newtheorem{proposition}[equation]{Proposition}

\theoremstyle{definition}
\newtheorem{definition}[equation]{Definition}
\newtheorem{example}[equation]{Example}

\theoremstyle{remark}
\newtheorem{remark}[equation]{Remark}

\declaretheoremstyle[
spaceabove=\topsep, 
spacebelow=6pt,
headfont=\normalfont\itshape,
notefont=\normalfont, notebraces={(}{)},
bodyfont=\normalfont,
postheadspace=4pt,
qed=\mbox{\smaller[4]$\boxtimes$}
]{claimproofstyle}

\usepackage{tikz,ifthen}
\usetikzlibrary{cd}
\usetikzlibrary{babel}
\usetikzlibrary{decorations.markings}
\usetikzlibrary {arrows.meta,bending,positioning}
\usetikzlibrary{calc}
\usepackage{caption}
\usepackage{subcaption}

\tikzstyle{->-} = [decoration={markings,mark=at position 0.7 with {\arrow{Stealth}}},
postaction={decorate}]

\newcommand{\Kc}{\mathcal{K}}

\newcommand{\K}{\mathscr{K}}
\newcommand{\M}{\mathbf{M}}
\newcommand{\G}{\mathcal{G}}
\renewcommand{\H}{\mathcal{H}}
\newcommand{\F}{\mathcal{F}}
\newcommand{\E}{\mathbf{E}}

\newcommand{\bigmid}{\;\big|\;}

\renewcommand{\epsilon}{\varepsilon}
\newcommand{\acts}{\curvearrowright}

\newcommand{\bij}{\xrightarrow{\,\smash{\raisebox{-0.6ex}{\ensuremath{\scriptstyle\sim}}}\,}}
\newcommand{\set}[1]{\left\{ #1 \right\}}
\newcommand{\w}{\mathrm{w}}

\usepackage{dsfont}

\newcommand{\deriv}{\mathrm{d}}

\newcommand{\MAlg}{\mathrm{MALG}}

\let\ker\relax
\DeclareMathOperator{\ker}{ker}
\DeclareMathOperator{\im}{im}
\DeclareMathOperator{\dom}{dom}

\DeclareMathOperator{\proj}{proj}

\usepackage{xcolor}

\let\dfn\textbf

\newcommand{\orig}{\mathtt{o}}
\newcommand{\targ}{\mathtt{t}}

\let\tilde\widetilde
\let\~\widetilde
\let\hat\widehat

\advance \topmargin by -\headheight
\advance \topmargin by -\headsep
\evensidemargin \oddsidemargin
\DeclareSymbolFont{AMSb}{U}{msb}{m}{n}

\newcommand{\N}{\mathbf{N}}
\newcommand{\Z}{\mathbf{Z}}
\newcommand{\R}{\mathbf{R}}
\newcommand{\Q}{\mathbf{Q}}

\newcommand{\bR}{\mathbf{R}}
\newcommand{\bN}{\mathbf{N}}
\newcommand{\RR}{{\bR_{\scriptscriptstyle{>0}}}}
\newcommand{\NN}{{\bN_{\scriptscriptstyle{>0}}}}

\renewcommand{\emptyset}{\varnothing}

\DeclareMathOperator{\Aut}{\mathrm{Aut}}

\DeclareMathAlphabet\mathbfcal{OMS}{cmsy}{b}{n}

\usepackage[draft,inline,nomargin,index]{fixme}

\fxsetup{theme=color,mode=multiuser}
\FXRegisterAuthor{dg}{adg}{\color{orange}DG}
\FXRegisterAuthor{ap}{aap}{\color{violet}AP}
\FXRegisterAuthor{at}{aat}{\color{red}AT}
\FXRegisterAuthor{rtd}{ardt}{\color{blue}RTD}
\FXRegisterAuthor{kw}{akw}{\color{teal}KW}

\definecolor{mygreen}{RGB}{45, 160, 22}
\FXRegisterAuthor{cite}{acite}{\color{mygreen}cite}

\usepackage[all]{xy}

\newcommand{\BS}{\mathrm{BS}}

\newcommand\PoincRetTime{\tau} 
\newcommand\PoincRetMap{\mathcal{P}} 
\newcommand{\actcoc}{\rho}

\newcommand\KKK{\mathcal K}

\newcommand\EE{R}

\allowdisplaybreaks 

\usepackage[titletoc]{appendix}

\title{The measure equivalence classification of Baumslag--Solitar groups}
\author{Damien Gaboriau, Antoine Poulin,\\ Anush Tserunyan,  Robin Tucker-Drob, Konrad Wr\'obel}

\date{\today}

\begin{document}

\begin{abstract}
In this article we show that all the nonamenable Baumslag--Solitar groups with nonunimodular Bass-Serre tree, i.e., $\mathrm{BS}(p,q)$ with $|p|,|q| \neq 1$ and $|p|\neq |q|$, are measure equivalent to each other, thereby completing the measure equivalence classification of Baumslag--Solitar groups: each $\BS(p,q)$ belongs to one of three measure equivalence classes according to whether it is amenable ($|p|=1$ or $|q|=1$), virtually isomorphic to $\mathbf{F}_n \times \mathbf{Z}$ ($|p|=|q|\neq 1$), or is nonamenable with nonunimodular Bass--Serre tree ($|p|,|q| \neq 1$ and $|p| \neq |q|$).
More generally, we obtain the similar classification of the generalized Baumslag--Solitar groups of rank $1$ (i.e., groups that act cocompactly on a tree with edge and vertex stabilizers infinite cyclic).

Our proof also establishes a classification of all treeable lcsc groups into exactly six measure equivalence classes. 
Our new contribution which completes this taxonomy is: all nonunimodular nonamenable treeable lcsc groups are measure equivalent.

Along the way we obtain several results about type $\mathrm{III}$ equivalence relations and graphings of independent interest.
For example, we show that every ergodic type $\mathrm{III}_0$ graphing admits an ergodic hyperfinite subgraph for which the natural factor map of Krieger flows is an isomorphism.

\end{abstract}

\keywords{measure equivalence, Baumslag–Solitar, GBS, classification, nonunimodular, treeable, nonamenable, nonsingular, mcp, type III, ergodic, Borel graph, Krieger flow, scalar potential, potential flow, edge slide}

\subjclass{Primary 37A20, 20E06, 22F10, 03E15; Secondary 37A40, 37C85, 20E08}

\maketitle

\tableofcontents


\section{Introduction}

Given two nonzero integers $p$ and $q$, the associated \textbf{Baumslag--Solitar group} $\BS(p,q)$, introduced by Baumslag and Solitar in \cite{Baumslag-Solitar-1962}, is the two-generated one-relator group with presentation
\begin{equation}\label{BS_presentation}
\BS(p,q)=\langle t,a \mid ta^p t^{-1}=a^q \rangle.    
\end{equation}
More than 25 years ago, Whyte \cite{Whyte} proved that all nonamenable Baumslag--Solitar groups $\BS(p,q)$ with nonunimodular Bass-Serre trees (i.e., those for which $|p|,|q|\neq 1$ and $|p|\neq |q|$) are quasi-isometric to each other, thereby completing the quasi-isometry classification of Baumslag--Solitar groups initiated by Farb and Mosher \cite{Farb-Mosher}.
Since then, the question as to their measure equivalence (in the sense of Gromov \cite[0.5.E]{Gromov-asympt-inv}) has remained a tantalizingly open problem.
Here, we solve this problem, establishing the measure equivalence counterpart to Whyte's theorem.

\begin{theorem}\label{thm:BS_ME}
All Baumslag--Solitar groups $\BS(p,q)$ with $|p|,|q|\neq 1$ and $|p|\neq |q|$ are measure equivalent to each other.
\end{theorem}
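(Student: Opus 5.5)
The plan is to reduce \cref{thm:BS_ME} to a statement about treeable locally compact groups and then prove that statement with nonsingular orbit equivalence techniques. Let $T_{p,q}$ be the Bass--Serre tree of $\BS(p,q)$, which is $(|p|+|q|)$-regular. The closure $G_{p,q}$ of the image of $\BS(p,q)$ in $\Aut(T_{p,q})$ is a closed, nonunimodular, nonamenable lcsc group that acts properly on a tree, hence is treeable. The kernel of the action is trivial, since the intersection of the conjugates of $\langle a\rangle$ is trivial when $|p|\neq|q|$. I would use the standard fact that a lattice is measure equivalent to its ambient group. If $\BS(p,q)$ is not discrete in $G_{p,q}$, I would instead use the cocompact action on $T_{p,q}$ with infinite cyclic stabilizers and compare vertex stabilizers, which are commensurable with $\Z$ in the discrete case and compact open in $G_{p,q}$. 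From this I would build a coupling between $\BS(p,q)$ and a treeable nonunimodular group, for instance $G_{p,q}\times\Z$ or a suitable extension. So the theorem follows from the general claim that \emph{all nonunimodular nonamenable treeable lcsc groups are measure equivalent}. I would also arrange each $\BS(p,q)$ to be ME to a fixed model group, such as $\Aut(T)$ of a tree with a distinguished end, or $\BS(2,4)$.

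The ME class of such a group $G$ is detected by a cross section. Take a free pmp-like action of $G$ and a cross section. This gives a nonsingular countable Borel equivalence relation $R$ that is treeable, nonamenable, and has a nontrivial Radon--Nikodym cocycle coming from the modular function. The pmp cross section becomes type $\mathrm{III}$ after twisting by the modular function. Two such groups are ME if, after suitable amplification, the relations $R\times(\text{Maharam extension data})$ are stably orbit equivalent in a way compatible with the modular cocycles. Concretely, I would reduce to the following statement: any two ergodic, treeable, nonamenable, nonsingular relations whose Maharam extensions (equivalently, the Krieger flows of the cocycle $\log\Delta$) are "the same kind" are stably orbit equivalent. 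I would also show that the Krieger flow can always be adjusted, since we are free to choose the action.

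The key steps, in order, are as follows.

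(1) Choose, for each $G$, a free action whose associated type $\mathrm{III}$ cross-section relation is of a prescribed type, for instance type $\mathrm{III}_1$ or a fixed $\mathrm{III}_\lambda$ model. This is done by taking products with suitable Bernoulli/Poisson actions.

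(2) Given a treeing of the relation, find an ergodic hyperfinite subgraph that carries the entire Krieger flow; this is the abstract's $\mathrm{III}_0$ result and its $\mathrm{III}_\lambda$ analogues.

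(3) Use edge slides to modify the treeing. The goal is to write $R$ as a free product $R_0 * S$ of a hyperfinite type $\mathrm{III}$ relation $R_0$ with a "measure-preserving relative to the potential" treeable part $S$ of controllable cost. Here a scalar potential, i.e. a Borel function whose coboundary is the log Radon--Nikodym cocycle along edges, allows one to treat $S$ as pmp on level sets.

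(4) Use Krieger's uniqueness theorem for hyperfinite relations with a given flow to identify the $R_0$ parts. Then match the free factors by a Hjorth/Gaboriau-style cost argument, using infinite-cost free factors to absorb differences.

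The main obstacle is step (3): producing, inside an arbitrary nonamenable treeable nonsingular relation, a decomposition in which the nonamenable part is "potential-compatible" and has infinite (or normalizable) cost, so that the pmp techniques apply. I would first try to build the potential flow along the treeing, and slide edges so that each remaining edge joins points of equal potential. Nonamenability should then force the equal-potential subgraph to be nonhyperfinite, which yields infinitely many free generators after amplification.
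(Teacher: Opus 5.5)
There are genuine gaps. Your step (2) is in line with the paper, but the reduction from $\BS(p,q)$, the choice of type in step (1), and the endgame in steps (3)–(4) do not work as stated.

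\textbf{The reduction fails.} $\BS(p,q)$ is never discrete in $G_{p,q}$: its vertex stabilizer $\langle a\rangle$ is infinite and lies in a compact open subgroup. More seriously, it cannot be measure equivalent to $G_{p,q}\times\Z$, nor to \emph{any} treeable or nonunimodular group. Unimodularity and treeability are ME invariants, whereas $\BS(p,q)$ is discrete, hence unimodular, and is not treeable, being nonamenable of cost $1$. Likewise, the automorphism group of a tree fixing an end is amenable, so it cannot serve as your model group. The missing idea is the modular semidirect product. The assignment $a\mapsto(a,1)$, $t\mapsto(t,0)$ embeds $\BS(p,q)$ as a cocompact lattice in the unimodular group $\Aut(T_{p,q})\ltimes\R$, where $\Aut(T_{p,q})$ acts on $\R$ through $\Delta$. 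One then needs a genuine lifting result, $G\sim_{\mathrm{ME}}H\Rightarrow G\ltimes\R\sim_{\mathrm{ME}}H\ltimes\R$, and this is not formal. The coupling must carry a measurable $\Psi:\Omega\to\RR$ with $\Psi(g.\omega.h)=\Delta_G(g)\Psi(\omega)\Delta_H(h)$. The paper obtains $\Psi$ from the uniqueness up to coboundary of modular functions of measured groupoids, and then builds the lifted coupling on $\Omega\times\R$.

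\textbf{Step (1) targets an impossible type.} For $G_{p,q}$ with $2\le p<q$ one has $\Delta(G_{p,q})=(q/p)^{\Z}$. So the Radon--Nikodym cocycle of a cross-section relation takes values in $(q/p)^\Z$, and constant maps to $\RR/\langle p/q\rangle$ are scalar potentials. Hence type $\mathrm{III}_1$ is impossible, and type $\mathrm{III}_\lambda$ forces $\lambda=(p/q)^k$. A relation that is a cross-section relation for both $G_{2,3}$ and $G_{2,5}$ would have a Krieger flow factoring onto both $\RR/\langle 2/3\rangle$ and $\RR/\langle 2/5\rangle$. Being of type $\mathrm{III}$, it must then be of type $\mathrm{III}_0$. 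This is why the paper works only in type $\mathrm{III}_0$: it multiplies the action by $\RR/\langle p/q\rangle\times\RR/\langle\alpha\rangle$, then changes $\mu$ within its class so that $\w_{\mu_0}$ takes exactly the values $(p/q)^\Z$.

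\textbf{Steps (3)--(4) aim at the wrong target.} Showing that any two such relations are stably orbit equivalent is far more than ME requires, and it is not what cost arguments give: equal-cost treeable pmp relations are generally not OE. ME only needs the given relation to be the cross-section relation of \emph{some} free pmp action of the model group. The paper gets this from a $\mu_0$-homogeneous $T_{p,q}$-treeing (in-degree $p$, out-degree $q$, and $\w_{\mu_0}=q/p$ on positive edges), from which an $\Aut(T_{p,q})$-action is reconstructed. Your decomposition $R_0*S$ with $S\subseteq\ker\w$ has no mechanism for producing such exact degrees.

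\textbf{The missing input is that the $G$-action is pmp.} This yields an $\RR$-invariant probability measure in the class of the Krieger flow, hence a balanced cocycle after a change of measure. The paper gets in-degree $p$ from Poulin's theorem and slides tails so that $\w=p/q$ on edges. Mass transport across the scalar potential then shows the expected out-degree is exactly $q$ on a.e.\ ergodic component of $\ker\w$. A pmp redistribution inside the kernel finishes the construction. Note also that in type $\mathrm{III}$ the cocycle is never a coboundary, so a scalar potential must be an equivariant map into a flow, not a real function on $X$.
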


While there is a fruitful analogy between measure equivalence and quasi-isometry, there is no implication between the two notions in general. 
For example, while any two infinite amenable groups are measure equivalent, many different quasi-isometry classes of amenable groups can be distinguished by growth rate. 
In the other direction, property (T) is a measure equivalence invariant \cite[Section 3.1]{Fur}, but not a quasi-isometry invariant \cite[Section 3.6]{BdlHV};
see \cite[p. 1814, footnote $\dagger\dagger$]{Gab05} for other examples.

Theorem \ref{thm:BS_ME} is perhaps surprising in light of the measure equivalence coupling rigidity results for Baumslag--Solitar groups established by Kida in \cite{Kida:BSGroups} and by Houdayer and Raum in \cite{HoudayerRaum:BSGroups}.
It is shown in these articles that, given groups $\BS (p_0,q_0)$ and $\BS (p_1,q_1)$ as in Theorem \ref{thm:BS_ME}, along with a hypothetical measure equivalence coupling $\Omega$ between them, if all nontrivial elliptic subgroups of $\BS (p_i,q_i)$ act ergodically on $\Omega /\BS (p_{1-i},q_{1-i})$,  then $\BS (p_0,q_0)$ and $\BS (p_1,q_1)$ are necessarily isomorphic. 
The couplings produced in the proof of \cref{thm:BS_ME} in fact have the property that no elliptic subgroup acts ergodically on the quotient.

Theorem \ref{thm:BS_ME} also stands in contrast to the von Neumann algebra rigidity exhibited by Meesschaert and Vaes in \cite{MeesschaertVaes:BSgroups}. 
Using Popa's deformation/rigidity theory, they showed that if two nonamenable Baumslag--Solitar groups $\BS(p_0,q_0)$ and $\BS(p_1,q_1)$ are stably $W^*$-equivalent (i.e., their group von Neumann algebras $L(\BS(p_0,q_0))$ and $L(\BS(p_1,q_1))$ are stably isomorphic), then $|\frac{p_0}{q_0}|=|\frac{p_1}{q_1}|^{\pm 1}$.
Measure equivalence has strong parallels with $W^*$-equivalence, just as it does with quasi-isometry, and points of divergence are particularly intriguing, e.g., \cite{ChifanIoana:MEvsW*,IPV:superrigidity,CdSS:products}. 

Combined with previous work of Ornstein and Weiss \cite{OW87} and Kida \cite{Kida:BSGroups}, \cref{thm:BS_ME} completes the classification of Baumslag--Solitar groups up to measure equivalence, as we now explain.

\subsection{Measure equivalence classification}
Each $\BS (p,q)$ admits a natural action on the (oriented) Bass--Serre tree $T_{|p|,|q|}$ associated to the presentation \labelcref{BS_presentation} as an HNN-extension. 
We then have the following three broad families:
\begin{enumerate}[(F1), leftmargin=*]
\item  $|p| = 1$ or $|q| = 1$. 
Then $\BS (p,q)$ is solvable, and hence amenable.
\item  $|p|=|q| \neq 1$.
Then $\Aut(T_{|p|,|q|})$ is unimodular, and $\mathrm{BS} (p,q)$ is nonamenable and virtually isomorphic to the direct product $\mathbf{F} \times \mathbf{Z}$ of a finitely generated nonabelian free group $\mathbf{F}$ with an infinite cyclic group.
\item $|p|, |q|\neq 1$ and $|p|\neq |q|$. 
Then $\Aut(T_{|p|,|q|})$ is nonunimodular and $\BS (p,q)$ is nonamenable.
\end{enumerate}

It follows from the landmark article \cite{OW87} of Ornstein and Weiss that groups in the family $\mathrm{(F1)}$ are all measure equivalent to one another and not measure equivalent to groups in either of the other two families. 
Groups in the family $\mathrm{(F2)}$, being pairwise commensurable, are all measure equivalent to each other, and Kida showed in \cite{Kida:BSGroups} that they are not measure equivalent to any of the groups in the family $\mathrm{(F3)}$.
Thus, together with Theorem \ref{thm:BS_ME}, we obtain the following classification up to measure equivalence. 

\begin{theorem}[ME Classification of Baumslag--Solitar groups]\label{ME_classification}
Let $p$ and $q$ be nonzero integers. 
Then the Baumslag--Solitar group $\BS(p,q)$ belongs to exactly one of three measure equivalence classes, according to whether it belongs to the family $\mathrm{(F1)}$, $\mathrm{(F2)}$, or $\mathrm{(F3)}$.

Groups in the family $\mathrm{(F1)}$ are all measure equivalent to $\mathbf{Z}$,
those in the family $\mathrm{(F2)}$ are all measure equivalent to $\mathbf{F}_2\times\mathbf{Z}$ (where $\mathbf{F}_2$ is a free group on two generators), and those in the family $\mathrm{(F3)}$ are all measure equivalent to $\BS (2,3)$. 
\end{theorem}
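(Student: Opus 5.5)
The plan is to assemble \cref{ME_classification} from \cref{thm:BS_ME} and the cited external results, checking three things: every $\BS(p,q)$ lies in exactly one of the families, each family is a single measure equivalence class, and no two families meet.

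\textbf{Partition.} The families $\mathrm{(F1)}$, $\mathrm{(F2)}$, $\mathrm{(F3)}$ partition all pairs of nonzero integers. Either $\min(|p|,|q|)=1$, or both $|p|,|q|\ge 2$, and in the latter case either $|p|=|q|$ or $|p|\neq|q|$.

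\textbf{Within each family.}
\begin{itemize}
\item For $\mathrm{(F1)}$, say $|p|=1$; up to isomorphism (swapping $t\leftrightarrow t^{-1}$ and $a\leftrightarrow a^{-1}$) take $p=1$. Then $\BS(1,q)\cong \Z[1/q]\rtimes_q \Z$ is solvable, hence amenable, and infinite. By Ornstein--Weiss \cite{OW87}, every infinite countable amenable group is measure equivalent to $\Z$.
\item For $\mathrm{(F2)}$, $|p|=|q|=n\ge 2$. The element $a^n$ generates a normal subgroup (up to sign), and the quotient by it is virtually free. One exhibits a finite-index subgroup isomorphic to $\mathbf{F}_k\times\Z$ with $k\ge 2$. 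Since $\mathbf{F}_k$ has finite index in $\mathbf{F}_2$ (or conversely, via common finite-index subgroups), all such groups are commensurable with $\mathbf{F}_2\times\Z$. Commensurable groups are measure equivalent, so this family is measure equivalent to $\mathbf{F}_2\times\Z$.
\item For $\mathrm{(F3)}$, \cref{thm:BS_ME} gives directly that each member is measure equivalent to $\BS(2,3)$, which itself lies in $\mathrm{(F3)}$.
\end{itemize}

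\textbf{Distinctness.} Measure equivalence preserves amenability, so $\mathrm{(F1)}$ is separated from the other two families. Groups in $\mathrm{(F2)}$ and $\mathrm{(F3)}$ are nonamenable: each contains a free subgroup coming from the action on a tree with more than two ends. Finally, $\mathrm{(F2)}$ and $\mathrm{(F3)}$ are not measure equivalent by Kida \cite{Kida:BSGroups}. Since measure equivalence is an equivalence relation, this yields exactly three classes.

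\textbf{Main obstacle.} Apart from \cref{thm:BS_ME}, which is assumed, the only nontrivial external input is Kida's separation of $\mathrm{(F2)}$ from $\mathrm{(F3)}$. The only verification left to the plan is the virtual isomorphism in $\mathrm{(F2)}$ with a product $\mathbf{F}\times\Z$. I would obtain it from the central extension $1\to\langle a^n\rangle\to \BS(p,q)\to \Z/n * \Z\to 1$ (up to an index-$2$ issue when $p=-q$). One passes to a torsion-free finite-index free subgroup of the quotient; the extension then splits over it, because free groups have cohomological dimension $1$. To get a genuine direct product rather than a twisted one, one further passes to the index-$\le 2$ subgroup acting trivially on the kernel.
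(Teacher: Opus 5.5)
Your proposal is correct and follows the paper's argument: Ornstein--Weiss \cite{OW87} for $\mathrm{(F1)}$, commensurability with $\mathbf{F}_2\times\mathbf{Z}$ for $\mathrm{(F2)}$, \cref{thm:BS_ME} for $\mathrm{(F3)}$, and invariance of amenability together with Kida's theorem \cite{Kida:BSGroups} to separate the three classes. One small imprecision is your justification of nonamenability for $\mathrm{(F3)}$: acting on a tree with more than two ends does not by itself produce a free subgroup, since the amenable group $\BS(1,q)$ also acts on $T_{1,q}$. Instead, use that for $|p|,|q|\geq 2$ the action on $T_{|p|,|q|}$ is minimal and fixes no end (the elliptic element $a$ fixes only the base vertex), or simply cite the standard fact that these groups contain $\mathbf{F}_2$.
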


We note that \cref{ME_classification} also gives the classification of Baumslag--Solitar groups up to orbit equivalence, since it follows from \cite{Kida:stability} that measure equivalence and orbit equivalence coincide for these groups.

\cref{ME_classification} extends to all $1$-GBS (generalized Baumslag--Solitar) groups, i.e., groups acting cocompactly and without inversion on a tree with all vertex and edge stabilizers infinite cyclic:

\begin{theorem}[ME Classification of $1$-GBS groups]\label{thm:1-GBS}
Let $G$ be the Bass--Serre fundamental group of a finite graph of infinite cyclic groups (i.e., a $1$-GBS group). 
Then $G$ is measure equivalent to
\begin{enumerate}
\item $\mathbf{Z}$ if $G$ is amenable;
\item $\mathbf{F}_2\times\mathbf{Z}$ if $G$ contains a finite index subgroup isomorphic to $\mathbf{F}_r\times\mathbf{Z}$ for some $r\geq 2$;
\item $\BS (2,3)$ otherwise.
\end{enumerate}
\end{theorem}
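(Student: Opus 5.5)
The plan is to split according to the dynamics of $G$ on its Bass--Serre tree $T$, using the structure theory of $1$-GBS groups. Let $T$ be the tree of the given graph of infinite cyclic groups. Since all edge and vertex stabilizers are infinite cyclic, any two vertex stabilizers are commensurable. This yields the modular homomorphism $\Delta\colon G\to\mathbf{Q}^{\times}$, given by $\Delta(g)=[\langle a\rangle:\langle a\rangle\cap g\langle a\rangle g^{-1}]/[g\langle a\rangle g^{-1}:\langle a\rangle\cap g\langle a\rangle g^{-1}]$ for a vertex generator $a$. It is the restriction of the modular function of the closure $\overline{G}\le\Aut(T)$, since $\overline{G}$ has compact open vertex stabilizers commensurated by $G$. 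The trichotomy is then:

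\begin{enumerate}
\item $T$ is a line or a point (elementary case), or $G$ is amenable;
\item $|\Delta|\equiv 1$ and $G$ is nonamenable;
\item $|\Delta|\not\equiv 1$ and $G$ is nonamenable.
\end{enumerate}

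Case (1) follows from Ornstein--Weiss: an infinite amenable group is ME to $\mathbf{Z}$. Such $G$ are infinite, being $\mathbf{Z}$, the Klein bottle group, or $\BS(1,n)$.

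Case (2) uses a standard fact. When $\Delta$ is unimodular, some element of a vertex group is centralized by a finite index subgroup, so the kernel of the action on $T$ contains a finite index subgroup of a vertex group. Hence $G$ has a finite index subgroup of the form $\mathbf{F}_r\times\mathbf{Z}$ with $r\ge 2$, by nonamenability. Finitely generated nonabelian free groups are all commensurable, so $G$ is ME to $\mathbf{F}_2\times\mathbf{Z}$. Conversely, if $G$ virtually contains $\mathbf{F}_r\times\mathbf{Z}$, then the $\mathbf{Z}$ factor is (virtually) elliptic and central, which forces unimodularity. So cases (2) and (3) of the statement match the dichotomy on $\Delta$ for nonamenable $G$.

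Case (3) is the main step. The aim is to reduce to the general theorem of the paper announced in the abstract: all nonunimodular nonamenable treeable lcsc groups are ME. If $|\Delta|\not\equiv 1$, then $H=\overline{G}\le\Aut(T)$ is a closed, nonunimodular, treeable lcsc group, since it acts properly on $T$ with compact open stabilizers. It is nonamenable, because an amenable closed subgroup of $\Aut(T)$ fixes an end or preserves a line, which would make $G$ elementary.

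The remaining point is that $G$ is ME to $H$. For this I would use the standard "lattice envelope" argument: $G\cap H_v$ is infinite cyclic but not a lattice in $H_v$, so $G$ itself is not a lattice in $H$. Instead, I would pass to the kernel $N$ of the action. $N$ is normal and contained in the cyclic edge groups, hence trivial or infinite cyclic. In the nonunimodular case it must be trivial, since an infinite cyclic normal elliptic subgroup would make $\Delta$ trivial. So I would replace the coupling by the product of the ME equivalence $G\sim_{\mathrm{ME}} \Z\times(\text{quotient data})$. Concretely, $G$ acts on $H/H_v\times\text{(profinite completion of }G_v)$ with stabilizer structure, giving a coupling. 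Alternatively, and more cleanly, I would apply the paper's main result in its groupoid/equivalence relation form: the orbit equivalence relation of $G$ on the boundary-type space $\overline{G}/\!$ (the profinite compact $H_v$) is a treeable nonamenable equivalence relation of type III whose Krieger flow/modular data is nonunimodular. That is the same hypothesis under which the paper proves ME with $\BS(2,3)$.

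The hardest part is this last identification: producing from $G$ a treeable, nonamenable, non-measure-preserving structure whose ME class is governed by the general theorem. I would do it by taking the free pmp action $G\acts \widehat{G_v}$-type coset space $H/K$ for a compact open $K\le H_v$, together with a Bernoulli shift to get freeness. $G$ is dense in $H$ and $G\cap K$ is infinite, so $G$ is ME to $H\times$ (compact) in the sense of lcsc ME. Then $G\sim_{\mathrm{ME}} H$, and the classification theorem for treeable lcsc groups gives $H\sim_{\mathrm{ME}}\overline{\BS(2,3)}\sim_{\mathrm{ME}}\BS(2,3)$. Transitivity finishes the proof.
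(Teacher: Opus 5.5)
Your amenable case, your virtually $\mathbf{F}_r\times\mathbf{Z}$ case, and your identification of the remaining case with $|\Delta|\not\equiv 1$ (essentially Levitt's Proposition 2.6) are fine.

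\textbf{The gap is in case (3).} You claim $G\sim_{\mathrm{ME}}H=\overline{G}$ (or $G$ is ME to $H$ times a compact group), and your final link $\overline{\BS(2,3)}\sim_{\mathrm{ME}}\BS(2,3)$ relies on the same idea. Both are false, for two independent reasons:
\begin{enumerate}
\item Unimodularity is a measure equivalence invariant of lcsc groups \cite[Corollary 16]{Koivisto-Kyed-Raum-ME-non-unim}. $G$ is discrete, hence unimodular, while $H$ is nonunimodular exactly because $|\Delta|\not\equiv 1$.
\item Treeability is also an ME invariant. $H$ is treeable, but a nonamenable $1$-GBS group has cost $1$ (it has an infinite commensurated cyclic subgroup), so it is not treeable.
\end{enumerate}
Density of $G$ in $H$ with $G\cap K$ infinite does not produce a coupling; only a lattice embedding would, and $G$ is not a lattice in $H$, as you note yourself. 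Your alternative through ``a treeable type III relation of $G$'' does not get around this. You would still need a coupling between the discrete group $G$ and a treeable nonunimodular group, and the two invariants above forbid it.

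\textbf{The missing idea is the modular semidirect product.} First pass to $G_0=\delta^{-1}(\RR)$, which has index at most $2$; this is needed because the signed commensuration homomorphism $\delta$ must be positive for $a\mapsto(a,1)$ to extend to a homomorphism. Then $G_0$ is a cocompact lattice in the unimodular group $\overline{G_0}\ltimes\R$, where $\overline{G_0}$ acts on $\R$ through its modular function (Cornulier--Valette). Likewise $\BS(2,3)$ is a lattice in $\Aut(T_{2,3})\ltimes\R$.

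You then need a genuinely additional step, \cref{modular_semidirect_ME-reduction}: a measure equivalence between two nonunimodular groups lifts to one between their modular semidirect products. This is not formal. The paper builds, on a strict coupling $\Omega$, an a.e.\ bi-equivariant map $\Psi:\Omega\to\RR$. It does so using Hahn's theorem that the modular functions of two Haar measures on the same measured groupoid are cohomologous. It then lets $\R$ act on the fibre of $\Omega\times\R$ over $\omega$ through dilation by $\Psi_\omega^{-1}$.

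Only with this step in hand does the main theorem ($\overline{G_0}\sim_{\mathrm{ME}}\Aut(T_{2,3})$) give the chain $G\sim_{\mathrm{ME}}G_0\sim_{\mathrm{ME}}\overline{G_0}\ltimes\R\sim_{\mathrm{ME}}\Aut(T_{2,3})\ltimes\R\sim_{\mathrm{ME}}\BS(2,3)$.
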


The proof of \cref{thm:BS_ME} passes through the world of measure equivalence between nonunimodular locally compact groups (defined in \cite{DL14} and developed in \cite{Koivisto-Kyed-Raum-ME-non-unim}); as a major step in the proof we obtain measure equivalence couplings between the nonunimodular groups $\Aut (T_{p_0,q_0})$ and $\Aut (T_{p_1,q_1})$.  

\begin{theorem}\label{thm:AutTpqME}
All the groups $\Aut(T_{p,q})$ with $p,q\geq 2$ and $p\neq q$ are measure equivalent to each other. There are exactly four measure equivalence classes among the groups $\Aut(T_{p,q})$, $p,q\geq 1$:
\begin{enumerate}[(1)]
\item\label{item:uni_amen} $\Aut(T_{1,1})=\Z$;
\item\label{item:nonuni_amen} $\Aut(T_{1,q})$, for  $q\geq 2$ 
(nonunimodular amenable);
\item $\Aut(T_{p,p})$ for $p\geq 2$ (unimodular nonamenable);
\item $\Aut(T_{p,q})$, for $p\not=q$ and $p,q\geq 2$ (nonunimodular nonamenable).
\end{enumerate}
\end{theorem}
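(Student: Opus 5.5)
The proof splits into two parts. The main part is that all $\Aut(T_{p,q})$ with $p,q\geq 2$, $p\neq q$, are measure equivalent. The rest is showing that the four listed classes are pairwise distinct.

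For the main part, I use that each $\Aut(T_{p,q})$ is a treeable lcsc group: its action on $T_{p,q}$ has compact open vertex stabilizers. Hence it admits a cross section whose cross-section equivalence relation is a treeable, ergodic, nonsingular countable Borel equivalence relation $R_{p,q}$. Measure equivalence of lcsc groups (in the sense of \cite{DL14,Koivisto-Kyed-Raum-ME-non-unim}) can then be witnessed by stable orbit equivalence of such cross-section relations, where the relation must carry the modular cocycle (the Radon--Nikodym cocycle) coming from the modular function. Concretely, for $\Aut(T_{p,q})$ the modular function takes values in $(p/q)^{\Z}$. The relevant object is therefore a treeable type $\mathrm{III}$ relation together with its Radon--Nikodym cocycle.

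The strategy is to find a single "universal" model that every nonunimodular nonamenable treeable group reduces to. I would show two things:
\begin{enumerate}[(a)]
\item The Radon--Nikodym cocycle can be modified by a coboundary, using the scalar potential / edge slide techniques on graphings, so that its essential range becomes all of $\R$. This passes from type $\mathrm{III}_\lambda$ to $\mathrm{III}_1$, or more precisely it shows that the associated Maharam extensions are stably orbit equivalent up to amplification by $\R$ versus $\lambda^{\Z}$.
\item Any two ergodic nonamenable treeable relations whose Maharam extensions have the same Krieger flow and are both strongly ergodic-free enough can be made orbit equivalent. For this I would use a hyperfinite subgraph that carries the whole Krieger flow (the type $\mathrm{III}_0$ statement from the abstract is of this form). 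One then adds a free nonamenable part by edge slides, in the spirit of Hjorth/Gaboriau's theorem that treeable nonamenable relations contain free subrelations.
\end{enumerate}

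The key hard step is (b): producing, inside each $R_{p,q}$, a hyperfinite subgraph whose Krieger flow equals the ambient one. Once this subgraph is matched between the two relations via Krieger's classification, the remaining tree edges must be rearranged by edge slides so that the resulting graphings are isomorphic. The main obstacle is controlling the cocycle along those slides. I would first try to realize both relations as (hyperfinite part) $*$ (free part) with the free part measure-class preserving and cocycle-compatible.

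For the distinctness, I rely on standard invariants:
\begin{itemize}
\item Amenability is a measure equivalence invariant, and $\Aut(T_{1,1})=\Z$ and $\Aut(T_{1,q})$ are amenable, while the others are not.
\item Unimodularity is also an invariant for lcsc groups, since nonunimodular groups are never measure equivalent to unimodular ones (the cross-section relation is of type $\mathrm{III}$ versus type $\mathrm{II}$). This separates (1) from (2) and (3) from (4).
\end{itemize}

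Finally, within each class, the groups are equivalent: $\Aut(T_{1,q})$ for varying $q$ by the Ornstein--Weiss / Connes--Feldman--Weiss analogue for nonunimodular amenable groups, and $\Aut(T_{p,p})$ via lattices that are free groups.
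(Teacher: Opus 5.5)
Your distinctness argument is fine: amenability and unimodularity are ME invariants, \cite[Theorem B]{Koivisto-Kyed-Raum-ME-non-unim} gives the nonunimodular amenable class, and $\mathbf{F}_p$ is a cocompact lattice in $\Aut(T_{p,p})$. The main part, however, has a genuine gap.

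\textbf{Step (a) cannot work.} Changing $\mu$ within its class changes $\w_\mu$ by a coboundary. This does not change the ratio set, the type, or the Krieger flow of $(E,[\mu])$, so you cannot pass from $\mathrm{III}_\lambda$ to $\mathrm{III}_1$ this way. It is also inconsistent with your step (b), which invokes the type $\mathrm{III}_0$ subgraph theorem.

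\textbf{$\mathrm{III}_1$ is the wrong target.} If $E$ is the cross-section relation of a free pmp $\Aut(T_{p,q})$-action, then for the canonical measure $\w$ takes values in $\langle\tfrac{p}{q}\rangle$. So every constant map to $\RR/\langle\tfrac{p}{q}\rangle$ is a scalar potential, and the Krieger flow factors onto that circle. Moreover, by \cref{th: pmp G-action implies potential flow}, its measure class contains an invariant probability measure. Consequently, a relation that is a cross-section relation for both $\Aut(T_{p,q})$ and $\Aut(T_{p',q'})$, with $\langle\tfrac{p}{q}\rangle$ and $\langle\tfrac{p'}{q'}\rangle$ non-commensurable, must be of type $\mathrm{III}_0$. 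Type $\mathrm{II}$ is excluded by the invariant probability measure, and $\mathrm{III}_\lambda$, $\mathrm{III}_1$ by the two circle factors.

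\textbf{Missing idea: choose the action.} For arbitrary free pmp actions, your $R_{p,q}$ and $R_{p',q'}$ can have different Krieger flows. Edge slides never change the relation, so no sliding repairs that. Passing to Maharam extensions does not help either: it only sees the modular semidirect products, whose ME is strictly weaker (cf.\ the remark following \cref{modular_semidirect_ME-reduction}). The paper takes a free treeable pmp $G$-action, multiplies it by the free flow $\RR/\langle\tfrac{p}{q}\rangle\times\RR/\langle\alpha\rangle$ (with $G$ acting through $\Delta_G$), and passes to an ergodic component. The resulting cross-section relation $E$ is of type $\mathrm{III}_0$, has $\RR/\langle\tfrac{p}{q}\rangle$ as a potential flow, and has an invariant probability measure in its Krieger class.

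\textbf{Step (b) is vaguer and stronger than needed.} You propose to match hyperfinite subgraphs of two relations via Krieger's classification and then slide the remaining edges. You name controlling the cocycle as the obstacle but give no mechanism. The paper avoids matching altogether. It works in the single relation $E$ and builds a $\mu_0$-homogeneous $T_{p,q}$-treeing. By \cref{lem:cocycleTpqTreeingArisesFromAutTpqAction}, this realizes $(E,\mu_0)$ as the cross-section relation of a free pmp $\Aut(T_{p,q})$-action, and ME follows from \cite[Theorem A]{Koivisto-Kyed-Raum-ME-non-unim}.

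The cocycle is controlled in stages:
\begin{enumerate}
\item Change the measure so that the Krieger potential lands in a $\tfrac{p}{q}$-equispaced cross-section $Z$ with $E^{\RR}_Z$-invariant image measure. Then $\w_{\mu_0}$ is dis-closed and balanced, with values exactly $(\tfrac{p}{q})^{\Z}$.
\item Your hyperfinite subgraph with the same Krieger flow (\cref{thm:sameTypeHyperfiniteSubgraph}) is regraphed by a single $S$ with $\w_{\mu_0}(Sx,x)=(\tfrac{p}{q})^{\pm1}$ and used only as rails.
\item In-degree $p$ comes from \cite[Theorem 3.1]{Poulin}, and tail slides along $S$ make $\w_{\mu_0}=\tfrac{p}{q}$ on every edge.
\item Balancedness, via mass transport across the Krieger potential, forces the expected out-degree to be exactly $q$ on each ergodic component of $\ker_{E_S}\w_{\mu_0}$. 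A pmp redistribution then gives out-degree exactly $q$.
\end{enumerate}
The invariant probability measure on the Krieger flow, which your proposal never uses, is what makes this degree count possible.
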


These groups share a fundamental property that is invariant under measure equivalence, namely, treeability. 
A locally compact second countable group $G$ is \textbf{treeable} if it admits a free probability measure preserving Borel action $G\acts (X,\mu)$ whose orbit equivalence relation admits a Borel structuring by trees; see \cite[Appendices]{CGMTD}.
For example, the group $\mathrm{SL}_2(\R )$ is treeable, as are all of the groups $\Aut (T_{p,q})$ for $p,q\geq 1$. 
Further examples are the groups $\Aut(T_{(p_0,q_0),(p_1,q_1)})$ of all automorphisms of a directed, edge-labeled tree in which every vertex has $p_i$-many incoming and $q_i$-many outgoing edges with label $i$, for $i\in\{0,1\}$; for these groups, the image of the modular homomorphism is $\langle \tfrac{p_0}{q_0},\tfrac{p_1}{q_1}\rangle$, which is dense in $\RR$ whenever $\langle \tfrac{p_0}{q_0}\rangle$ and $\langle \tfrac{p_1}{q_1}\rangle$ are nontrivial and noncommensurable.

On the other hand, the nonamenable Baumslag--Solitar groups $\BS(p,q)$, and more generally, the nonamenable $1$-GBS groups (as in \cref{thm:1-GBS}), are not treeable; indeed, they all have cost $1$ since they have an infinite commensurated amenable subgroup \cite[Critère VI.24(3)]{Gab00a}  and treeable cost $1$ groups are necessarily amenable \cite[Théorème 4]{Gab00a}.
When $2\leq p_i<q_i$, the nonamenable direct products $\Aut (T_{p_0,q_0})\times \Aut (T_{p_1,q_1})$ furnish nonunimodular groups that are non-treeable by \cite[Theorem 10]{Kechris00} (see also \cite[Corollary 5.13]{T-TD:amenable_in_tree_antitreeability}).

Our methods yield the following generalization of the fourth assertion of \cref{thm:AutTpqME}.

\begin{theorem}\label{thm:nonunim_nonamen_treeable_ME}
All nonunimodular nonamenable treeable lcsc groups are measure equivalent. 
\end{theorem}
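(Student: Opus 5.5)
Reducing to orbit equivalence of cross sections, I plan to prove that every nonunimodular nonamenable treeable lcsc group $G$ is measure equivalent to one fixed model, say $\Aut(T_{2,3})$. Measure equivalence is an equivalence relation among lcsc groups (\cite{Koivisto-Kyed-Raum-ME-non-unim}), so this suffices. The invariant I will work with is a free ergodic pmp action $G\acts (X,\mu)$ with treeable orbit relation. I pick a lacunary cross section $Y\subset X$. The modular function $\Delta_G$ induces a Radon--Nikodym cocycle on the countable restricted relation $\EE_Y$, so $\EE_Y$ is a treeable, nonamenable, nonsingular (non-measure-preserving) equivalence relation on $(Y,\nu)$. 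Following the Koivisto--Kyed--Raum framework, measure equivalence of $G$ and $H$ reduces to a stable orbit equivalence of such cross-section relations that respects the Radon--Nikodym cocycles. Concretely, this means an isomorphism of the associated Maharam extensions, i.e.\ of the relations twisted by the $\R$-valued cocycle $\log\Delta$. So the goal becomes: any two such cocycle-twisted treeable relations are orbit equivalent in a cocycle-preserving way, after restricting and rescaling.

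First I normalize the type. Since $G$ is nonunimodular, the image $\Delta(G)\subset\RR$ is either cyclic $\lambda^{\Z}$ or dense. The action can be chosen so that the Maharam extension has a prescribed associated flow; for instance I would pass to $G\times$ an amenable group or change the pmp action. I aim for every relation to be of type $\mathrm{III}_1$, or at least to all share a common associated flow. This is where the type $\mathrm{III}$ results announced in the abstract enter. Ideally I would push everything to a single ergodic type $\mathrm{III}_1$ treeable nonamenable relation. Possibly I instead use the $\mathrm{III}_0$ result: an ergodic hyperfinite subgraph whose Krieger flow maps isomorphically. That subgraph would carry all of the type structure, leaving a "free, measure-preserving-in-the-fibres" complement.

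Second, the core step: show that any two nonamenable treeable nonsingular relations with the same Krieger flow, equipped with their Radon--Nikodym cocycles, are stably orbit equivalent as cocycle-relations. My plan follows the classical proof that treeable nonamenable pmp relations are all ME to $\mathbf F_2$, via Hjorth/Gaboriau--Lyons-style arguments. Decompose each treeing into a hyperfinite part carrying the cocycle (the Krieger flow, matched by the Krieger/Connes--Woods classification of amenable relations) plus extra edges. By edge slides along the hyperfinite part, I make these extra edges cocycle-trivial ("potential zero"). The nonamenable remainder is then a pmp-like free piece in each $\Delta$-level set. Two such decompositions can then be matched: hyperfinite parts by Krieger, and the remaining free pieces by an $\mathbf F_\infty$-type argument, since any nonamenable treeing contains such a piece after slides.

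The main obstacle is the edge-slide step. One needs to find, inside an arbitrary nonamenable treeing, a cocycle-trivial subforest with enough nonamenability to absorb the rest. One also needs to show that sliding preserves treeability while zeroing the scalar potential. I would attempt this through the potential flow / scalar potential on the graph, following the potential contours to locate level sets where edges can be rerouted.
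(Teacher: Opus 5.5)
\paragraph{The core step is false.} The reduction to cross-section relations via Koivisto--Kyed--Raum is fine. The step you make central, however, is false: two ergodic treeable nonamenable relations with the same Krieger flow need not be stably orbit equivalent, with or without their cocycles.

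This already fails in type $\mathrm{II}_1$, where the Krieger flow is always $\RR\acts\RR$. A Bernoulli action of $\mathbf F_2$ is strongly ergodic. By contrast, the diagonal action on a Bernoulli shift times an irrational rotation, with $\mathbf F_2$ acting on the circle through a surjection onto $\Z$, is free and ergodic but not strongly ergodic. Strong ergodicity is a stable orbit equivalence invariant, so no argument can ``match the remaining free pieces.''

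The classical pmp argument you want to imitate does not match two given treeings either. Hjorth's lemma shows that a single given treeable relation of integer cost $n$ is itself the orbit relation of a free $\mathbf F_n$-action. The paper's proof is the exact analogue of that. It never compares relations coming from different groups. Instead it shows that the cross-section relation $E$ of a suitably chosen action of $G$ is, for some $\mu_0\sim\mu$, itself the cross-section relation of a free pmp $\Aut(T_{p,q})$-action.

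The notion you are missing is the $\mu_0$-homogeneous $T_{p,q}$-treeing of \cref{def:TpqTreeing}: in-degree $p$, out-degree $q$, and $\w_{\mu_0}(\orig(e),\targ(e))=q/p$ for every positive edge $e$. \cref{lem:cocycleTpqTreeingArisesFromAutTpqAction} converts such a treeing into the desired action. This means your edge-slide target is the wrong one. Making the extra edges cocycle-trivial produces nothing recognizable as an $\Aut(T_{p,q})$ cross-section; every edge must carry weight exactly $q/p$, with the prescribed degrees. It also means no ``common associated flow'' is required, since the $\Aut(T_{p,q})$-action is built out of $E$ itself.

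\paragraph{The normalization is wrong, and a key ingredient is missing.} Type $\mathrm{III}_1$ is impossible for your model. By \cref{eq:RN_through_modular}, the cocycle of any cross-section relation of $\Aut(T_{2,3})$ takes values in $(3/2)^{\Z}$, so its Krieger flow factors onto $\RR/\langle 3/2\rangle$. The paper instead multiplies the action by $\RR/\langle p/q\rangle\times\RR/\langle\alpha\rangle$, with $G$ acting through $\Delta$ and $\alpha$ chosen so that $\RR$ acts freely. This makes $E$ type $\mathrm{III}_0$ with $\RR/\langle p/q\rangle$ as a potential flow.

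The paper then uses an ingredient absent from your plan. Because the $G$-action is pmp, the Krieger flow carries an invariant probability measure (\cref{th: pmp G-action implies potential flow}). This condition is also necessary for any cross-section relation of a pmp $\Aut(T_{p,q})$-action, so it cannot be bypassed. It yields $\mu_0$ with $\w_{\mu_0}$ taking exactly the values $(p/q)^{\Z}$ and balanced.

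Balance is what makes the degree bookkeeping close up:
\begin{enumerate}
\item It gives a generator $S$ of the hyperfinite subrelation with $\w_{\mu_0}(Sx,x)=(p/q)^{\pm1}$.
\item Poulin's theorem then gives in-degree $p$.
\item Sliding tails along $S$ puts weight $p/q$ on every edge.
\item Mass transport across the Krieger potential shows that the out-degree has mean exactly $q$ on a.e.\ ergodic component of $\ker_E\w_{\mu_0}$.
\item A pmp redistribution inside $\ker_{E_S}\w_{\mu_0}$ then makes the out-degree exactly $q$.
\end{enumerate}

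Your instinct to use the $\mathrm{III}_0$ hyperfinite-subgraph theorem and slide edges along the hyperfinite part is right. But it has to serve this realization construction, not a classification of relations by their Krieger flow.
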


\begin{remark}
While we will not be using this, it is interesting to note that every nonunimodular nonamenable treeable lcsc group $G$ admits a compact open subgroup by \cite[Corollary 5.16]{T-TD:amenable_in_tree_antitreeability}.
Thus, if $G$ is additionally compactly generated, then $G$ can essentially be interpreted as a closed, vertex transitive subgroup of the automorphism group of some connected locally finite graph.
\end{remark}

Combining \cref{thm:nonunim_nonamen_treeable_ME} with the known results in the amenable and unimodular cases gives the following taxonomy of treeable locally compact groups up to measure equivalence; for background on cost in the locally compact setting, see \cite[Appendix A]{CGMTD}.

\begin{corollary}
There are exactly 6 measure equivalence classes of treeable lcsc groups:
\begin{enumerate}
\item compact, e.g., the trivial group;
\item unimodular amenable non-compact, e.g., $\mathbf{Z}$;
\item nonunimodular amenable, e.g., $\Aut(T_{1,2})$;
\item unimodular nonamenable finite cost, e.g., $\mathbf{F}_2$;
\item unimodular nonamenable infinite cost, e.g., $\mathbf{F}_\infty$;
\item nonunimodular nonamenable, e.g., $\Aut(T_{2,3})$.
\end{enumerate}
\end{corollary}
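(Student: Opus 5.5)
The corollary follows by combining \cref{thm:nonunim_nonamen_treeable_ME} with known invariants and known couplings. Two things must be shown: the six classes are pairwise distinct, and the groups within each class are mutually measure equivalent. Treeability is an ME invariant, so every group ME to a treeable group is treeable and the classification stays within treeable groups.

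\emph{Distinctness.} I will use the following ME invariants of lcsc groups.
\begin{itemize}
\item Compactness: compact groups are exactly those ME to the trivial group, since the coupling space has finite measure.
\item Amenability: this is classical, extended to lcsc groups in the cited works \cite{DL14,Koivisto-Kyed-Raum-ME-non-unim}.
\item Unimodularity: \cite{Koivisto-Kyed-Raum-ME-non-unim} shows it is preserved under ME. Roughly, the modular function can be read off from the Radon--Nikodym cocycle of the cross-section equivalence relation, and a groupoid/relation is measure-preserving exactly in the unimodular case.
\item For unimodular nonamenable treeable groups, finiteness of cost: by Gaboriau's theory extended in \cite[Appendix A]{CGMTD}, cost is an ME invariant up to rescaling by the coupling index. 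Hence "cost $=\infty$" versus "cost $<\infty$" is ME invariant. For treeable groups the cost is attained, so $\mathbf{F}_2$ and $\mathbf{F}_\infty$ lie in different classes.
\end{itemize}
These invariants separate all six classes. Each example listed actually lies in its class: for instance, $\Aut(T_{1,2})$ is amenable because it fixes an end, and nonunimodular because $1\neq 2$.

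\emph{Each class is a single ME class.}
\begin{itemize}
\item Class (1): all compact groups are ME to the trivial group.
\item Class (2): unimodular amenable noncompact lcsc groups are all ME to $\mathbf{Z}$. This is Ornstein--Weiss/Connes--Feldman--Weiss via cross sections: a free pmp action has an ergodic hyperfinite cross-section relation, and by Dye's theorem any two such relations are stably orbit equivalent.
\item Class (3): nonunimodular amenable groups give, on a cross section, a nonsingular hyperfinite relation. I expect this is the most delicate of the "known" cases. The plan is to show that one can choose the actions so that the Maharam extensions, i.e.\ the associated pmp relations of the couplings, match. This is done using Krieger's classification applied to the type III relations produced from an invariant measure on $G\times\mathbf{R}$, or by directly citing the existing result from the amenable literature (Koivisto--Kyed--Raum).
\item Class (4): unimodular nonamenable treeable groups of finite cost are all ME to $\mathbf{F}_2$. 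The approach is to take a cross section whose relation is treeable of cost $c$, with $1<c<\infty$ by nonamenability. Restricting to an appropriate measure rescales the cost to $2$. Hjorth's theorem then gives an orbit equivalence with a free action of $\mathbf{F}_2$.
\item Class (5): the infinite-cost unimodular case is the same argument with Hjorth's $\mathbf{F}_\infty$ version.
\item Class (6): this is exactly \cref{thm:nonunim_nonamen_treeable_ME}.
\end{itemize}

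The main obstacle is not in the corollary itself but in \cref{thm:nonunim_nonamen_treeable_ME}; within this deduction, the subtle points are the ME invariance of unimodularity and the single-class statement for the nonunimodular amenable case. For both I would rely on the cited lcsc ME literature rather than reprove them.
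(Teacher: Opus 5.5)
Your proposal is correct and follows essentially the same route as the paper. Distinctness comes from ME invariance of unimodularity, and of finiteness of cost among unimodular groups. The compact, unimodular amenable and nonunimodular amenable classes are handled by Ornstein--Weiss and Koivisto--Kyed--Raum. The two unimodular nonamenable classes follow from Gaboriau's cost $>1$ theorem, the induction formula and Hjorth's lemmas applied to an ergodic treeable cross-section relation, and the last class is \cref{thm:nonunim_nonamen_treeable_ME}.

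One small correction for class (4): when the cost $c$ exceeds $2$, restricting to a subset only increases the cost. In that case you must amplify rather than restrict before invoking Hjorth; the induction formula covers both directions.
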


\begin{proof}
The class of compact groups forms a single measure equivalence class, as does the class of noncompact unimodular amenable groups \cite{OW87,KoivistoKyedRaum:measure_equivalence_unimodular}, as well as the class of nonunimodular amenable groups \cite[Theorem B]{Koivisto-Kyed-Raum-ME-non-unim}.
Since the trivial group, $\mathbf{Z}$, and $\Aut(T_{1,2})$ are all treeable, the invariance of treeability under measure equivalence shows that all three of these classes consist of treeable groups.

Unimodularity is invariant under measure equivalence by \cite[Corollary 16]{Koivisto-Kyed-Raum-ME-non-unim} and, among unimodular groups, having finite cost is invariant under measure equivalence \cite[p.~44]{Gab00a}.
Moreover, all unimodular nonamenable treeable groups have cost strictly greater than $1$ by \cite[Théorème 4]{Gab00a}; for such a group $G$, applying the induction formula for cost \cite[Proposition II.6]{Gab00a} and Hjorth's results \cite[Lemmas 4.1 and 4.2]{Hjorth:cost-attained} to an ergodic treeable cross-section relation establishes a measure equivalent with either $\mathbf{F}_2$ or $\mathbf{F}_\infty$ according to whether the cost of $G$ is finite or infinite.
The treeable groups that remain are those that are nonunimodular and nonamenable, and \cref{thm:nonunim_nonamen_treeable_ME} shows that these groups comprise a single measure equivalence class.
\end{proof}

\subsection{Related results} 
In \cite{Moldavanskii:BSGroupsIsomorphism}, Moldavanskii showed that two Baumslag--Solitar groups $\BS(p_0,q_0)$ and $\BS(p_1,q_1)$ are isomorphic if and only if there exists $\epsilon\in\{-1,1\}$ such that either $\epsilon p_0 =p_1$ and $\epsilon q_0=q_1$, or $\epsilon p_0=q_1$ and $\epsilon q_0=p_1$.
Since $\BS (p,q)$ and $\BS (-p,q)$ have isomorphic index two subgroups, for the remainder of the introduction we will focus on the groups $\BS (p,q)$ with $1\leq p\leq q$. 

In the late 1990s Farb and Mosher in \cite{Farb-Mosher} classified the solvable Baumslag--Solitar groups up to quasi-isometry by showing that $\BS(1,q_0)$ and $\BS(1,q_1)$ are quasi-isometric if and only if they are abstractly commensurable, which they showed holds if and only if there exist $k,n,m\in \mathbf{N}$ such that $q_0=k^n$ and $q_1=k^m$.
For the classification of Baumslag-Solitar groups up to abstract commensurability in general, see \cite{Casals-RuizKazachkovZakharov:CommensurabilityOfBS}.
Whyte then completed the classification up to quasi-isometry in \cite{Whyte} by showing that groups in the family $\mathrm{(F3)}$ are all pairwise quasi-isometric, and not quasi-isometric to any group in the family $\mathrm{(F2)}$.

The first major progress in understanding measure equivalence couplings between nonamenable Baumslag--Solitar groups was made by Kida in the article \cite{Kida:BSGroups}, which ruled out the possibility of such a coupling between a group in the family $\mathrm{(F2)}$ and one in the family $\mathrm{(F3)}$. 
In \cite{Kida:stability}, Kida showed that Baumslag-Solitar groups are stable in the sense of Jones and Schmidt \cite{JS87}, which in particular implies that $\BS (p,q)$ is measure equivalent to $\BS (p,q)\times \mathbf{Z}$. 

In \cite{HoudayerRaum:BSGroups}, Houdayer and Raum generalized rigidity results from \cite{Kida:BSGroups} with an approach highlighting the relevance of the Schlichting completion of $\BS (p,q)$ (i.e., the closure of $\BS (p,q)$ in $\Aut(T_{p,q})$) in understanding measure equivalence couplings amongst groups in $\mathrm{(F3)}$. 
See \cite{Meesschaert:BSactions} for a further generalization to group measure space von Neumann algebras.

Kida further demonstrated in \cite[Corollary 7.5]{Kida:BSGroups} that a measure equivalence between two Baumslag--Solitar groups in the family $\mathrm{(F3)}$ entails a measure equivalence between their respective modular kernels.
At the time, there was some hope that this could be used toward potentially distinguishing measure equivalence classes in the family $\mathrm{(F3)}$.
However, in the recent article \cite{Kida:TreeingsHNN} Kida proved that all such modular kernels are indeed measure equivalent to each other and to $\mathbf{F}_{\infty}\times\mathbf{Z}$, thereby ruling out this approach.

In a recent breakthrough \cite{Forester:BSComplexes}, Forester showed that, for $k\geq 2$, the groups $\BS (k,k^2)$ and $\BS (k^2,k^4)$ contain finite index subgroups which are both cocompact lattices in the same locally compact group.
This provided the first examples of a measure equivalence between nonamenable Baumslag--Solitar groups that are not abstractly commensurable.
While one might hope to extend the techniques from \cite{Forester:BSComplexes} to obtain Theorem \ref{thm:BS_ME}, such an approach is ruled out by work in progress of Carette, Cornulier, Le Boudec, and Tessera \cite{CCBT}; they show that if the cyclic subgroups $\langle\frac{p_0}{q_0}\rangle$ and $\langle\frac{p_1}{q_1}\rangle$ of $(\RR,\cdot)$ are not commensurable, then $\BS(p_0,q_0)$ is not commable with $\BS(p_1,q_1)$, and hence these groups cannot have finite index subgroups that are both cocompact lattices in the same locally compact group. 

\subsection{Outline of proofs of {\cref{thm:1-GBS,thm:nonunim_nonamen_treeable_ME}}}

We first reduce \cref{thm:1-GBS} to \cref{thm:nonunim_nonamen_treeable_ME}. 
Let $\Gamma$ be a 1-GBS group, i.e., $\Gamma$ acts cocompactly and without inversion on a tree $T_\Gamma$ with infinite cyclic edge and vertex stabilizers.
We assume that $\Gamma$ is nonamenable and does not contain a finite index subgroup isomorphic to $\mathbf{F}_r\times\mathbf{Z}$, toward the goal of showing that $\Gamma$ is measure equivalent with $\BS(2,3)$.

In \cref{subsec:latticeEmbedding}, we record from \cite[Proposition 4.4]{Cornulier_Valette_2015} that, after passing to an index $2$ subgroup of $\Gamma$ if necessary, the group $\Gamma$ is a lattice in the modular semidirect product $\overline \Gamma \ltimes \R$, where $\overline \Gamma$ is the closure of the image of $\Gamma$ in the automorphism group of $T_\Gamma$.
Likewise, $\BS(2,3)$ is a lattice in $\Aut(T_{2,3})\ltimes \R$.
Hence it suffices to show that the modular semidirect products $\overline \Gamma \ltimes \R$ and $\Aut(T_{2,3}) \ltimes \R$ are measure equivalent.
\cref{modular_semidirect_ME-reduction} then reduces this to showing that the nonamenable nonunimodular treeable groups $\overline\Gamma$ and $\Aut(T_{2,3})$ are measure equivalent.

\subsubsection*{Outline of proof of \cref{thm:nonunim_nonamen_treeable_ME}}

The Krieger flow associated to an ergodic type III measure class preserving (mcp) countable Borel equivalence relation plays a major role throughout the proof. 
In \cref{sec:scalar_potentials} we introduce the language of \textbf{scalar potentials and potential flows}, which provides a streamlined framework for studying the Krieger flow and its factors.

We assume that $G$ is a nonamenable nonunimodular treeable lcsc group with modular homomorphism $\Delta :G\to \RR$, and let $p$ and $q$ be integers with $2 \leq p < q$, with the goal of proving that $G$ and $\Aut(T_{p,q})$ are measure equivalent.
Toward this end, we isolate in \cref{def:TpqTreeing} the notion of a \textbf{$\mu$-homogeneous $T_{p,q}$-treeing} of an mcp countable Borel equivalence relation $(E,\mu )$. 

\begin{enumerate}[(1), leftmargin=*]
\item\label{step:reduction_Tpq-treeing} We fix a vertex of $T_{p,q}$ and let $K\coloneq K_{p,q}\leq \Aut(T_{p,q})$ denote its stabilizer.
Given a free ergodic probability measure preserving (pmp) action of $\Aut(T_{p,q})$ on $(\tilde{Y},\tilde{\nu} )$, the associated orbit equivalence relation descends in the quotient $(Y,\nu )\coloneqq (\tilde{Y}/K,\tilde{\nu}/K )$ to an mcp type III countable Borel equivalence relation $E_Y$ that comes naturally equipped with a $\nu$-homogeneous
$T_{p,q}$-treeing.
Furthermore, every ergodic $\nu$-homogeneous $T_{p,q}$-treeing arises in this way (\cref{lem:cocycleTpqTreeingArisesFromAutTpqAction}).

Thus, using the correspondence between measure equivalence and stable orbit equivalence \cite[Theorem A]{Koivisto-Kyed-Raum-ME-non-unim}, 
finding a measure equivalence coupling between $G$ and $\Aut(T_{p,q})$ is essentially equivalent to finding an appropriate free ergodic treeable pmp action of $G$ on a space $(\tilde{X},\tilde{\mu} )$, 
along with a uniformly discrete cross-section $(X,\mu_X)$ (where $\mu_X$ is the canonical cross-section probability measure, given by \cref{pmp_cross-section}), such that the measure class $[\mu_X]$ contains a probability measure $\mu_0$ on $X$ for which the cross-section mcp equivalence relation $(E_X, \mu_0 )$ admits a $\mu_0$-homogeneous $T_{p,q}$-treeing.  

\medskip

\item If $G\curvearrowright (\tilde{X},\tilde{\mu})$ is any action of our mystery nonamenable nonunimodular treeable group which successfully implements the strategy described in step \labelcref{step:reduction_Tpq-treeing}, then $(E_X, \mu_X )$ necessarily satisfies: 
\begin{enumerate}[(i)]
\item \label{property:erg_nonamen_treeable} $(E_X,\mu _X)$ is treeable, nonamenable, and ergodic;

\item \label{property:Krieger_factors} the Krieger flow $\RR\curvearrowright \K_{E_X}$ of $(E_X,\mu_X )$ factors onto the circle $\RR\acts \RR/\langle \tfrac{p}{q}\rangle$; 

\item \label{property:inv-meas} the measure class on $\K_{E_X}$ contains an $\RR$-invariant probability measure (\cref{th: pmp G-action implies potential flow}).
\end{enumerate}

\smallskip

We will show that these conditions together with the following property \labelcref{property:type_III_0} are sufficient.

\begin{enumerate}[resume*]
\item \label{property:type_III_0} $(E_X,\mu_X )$ is type $\mathrm{III}_0$.
\end{enumerate}

\noindent More precisely, we prove in \cref{circle_factor_pmp_Krieger_gives_Tpq-treeing} that if a free pmp action $G \acts (\~X,\~\mu)$ is given, with a uniformly discrete cross-section $(X,\mu)$ satisfying \labelcref{property:erg_nonamen_treeable,property:type_III_0,property:Krieger_factors,property:inv-meas}, then there exists a measure $\mu_0$ equivalent to $\mu_X$ such that $(E_X,\mu_0 )$ admits a $\mu_0$-homogeneous $T_{p,q}$-treeing. 
We then use treeability, nonunimodularity, and nonamenability of $G$ to produce such an action $G \acts (\~X, \~\mu)$, as explained in the proof of \cref{ME_with_Autpq}. 

The main difficulty lies in the proof of the sufficiency of conditions \labelcref{property:erg_nonamen_treeable,property:type_III_0,property:Krieger_factors,property:inv-meas} (\cref{circle_factor_pmp_Krieger_gives_Tpq-treeing}), and we discuss this proof in the steps below.

\medskip

\item Property \labelcref{property:type_III_0} ensures that the Krieger flow $\K_{E_X}$ is aperiodic, and property \labelcref{property:Krieger_factors} allows us to find a Borel cross-section $Z \subseteq \K_{E_X}$ that is \textbf{$\frac{p}{q}$-equispaced}, i.e., with $Z\cap \RR . z = \langle\frac{p}{q}\rangle .z$ for each $z\in Z$.
Using property \labelcref{property:inv-meas}, we then apply \cref{changing measure to reach a pmp in Krieger flow} to this cross-section $Z$ to obtain an equivalent probability measure $\mu_0 \sim \mu$ and a Krieger scalar potential $\pi_0: X \to \K_{E_X}$ for $(E_X,\mu_0)$ such that the pushforward measure $(\pi_0)_*\mu_0$ is in the Ramsay measure-class on $Z$ (see \cref{measure-class_on_cross-section}) and is an invariant measure for the cross-section equivalence relation $E^\RR_Z$ on $Z$.

The cross-section $Z$ being $\tfrac{p}{q}$-equispaced corresponds to the Radon--Nikodym cocycle $\w_{\mu_0}(y,x)$ taking all, and only, values in the subgroup $\langle \tfrac{p}{q}\rangle$ of $\RR$, for each fixed $x\in X$, as $y$ varies over the $E_X$-class of $x$; 
this is certainly a prerequisite for $(E_X,\mu_0 )$ to admit a $\mu_0$-homogeneous $T_{p,q}$-treeing, but the invariance of $(\pi_0)_*\mu_0$ will also be instrumental in the final step below. 

\medskip

\item The general strategy at this point is to measurably reassemble an initial treeing $\mathcal{T}_0$ of $E_X$ (provided by property \labelcref{property:erg_nonamen_treeable}) into a $\mu_0$-homogeneous $T_{p,q}$-treeing $\mathcal{T}_1$ of $(E_X,\mu_0 )$.
To accomplish this, we first find an appropriate ergodic hyperfinite subgraph $\mathcal{H}$ of $\mathcal{T}_0$ which generates an equivalence subrelation $F\leq E_X$ onto which we will place guiding tracks for future edge slides.
To ensure that $F$ sufficiently captures the measurable behavior of the Radon--Nikodym cocycle $\w_{\mu_0}$, we will require not just ergodicity of $F$, but moreover that the natural factor map from the Krieger flow of $(F,\mu_0 )$ to that of $(E_X,\mu_0 )$ is an isomorphism.
Towards this end, we make essential use of the ergodic hyperfinite subgraph theorem from \cite{Tserunyan:hyperfin_erg_subgraph} to prove in \cref{thm:sameTypeHyperfiniteSubgraph} that \textit{every ergodic type $\mathrm{III}_0$-graphing admits an ergodic hyperfinite subgraph for which the natural factor map of Krieger flows is an isomorphism}.
We anticipate this statement being quite useful for future measured combinatorial constructions as well.

\medskip 

\item Assembly of $\mathcal{T}_1$ now occurs in two stages. 
In the first, we discard $\mathcal{H}$ and instead find a more suitable directed graphing of $F$ that will guide the second stage. 
We do this via \cref{prop:graphWithMinCocycleValues}, which shows that $F$ can be generated by a directed graph $\mathcal{S}$ having undirected degree two everywhere, with the property that $\w_{\mu_0} (y,x) =\frac{p}{q}$ whenever there is a directed edge from $x$ to $y$ in $\mathcal{S}$.

\medskip

\item The second stage, carried out in \cref{prop:edgeSlideTpqTreeing}, consists of three phases of sliding the edges of $\mathcal{T}_0 \smallsetminus \mathcal{H}$ along $\mathcal{S}$ to obtain $\mathcal{T}_1$.
Here, we make critical use of the fact that the natural factor map between the Krieger flows of $F$ and $E_X$ is an isomorphism as well as the invariance of the measure $(\pi_0)_*\mu_0$ for the cross-section equivalence relation $E^\RR_Z$ on $Z$.

Since $E_X$ is nonamenable, the edges of $\mathcal{T}_0\smallsetminus\mathcal{H}$ meet a.e.\ $E_X$-class. 
The fact that $F$ is type $\mathrm{III}$ then ensures, by \cite[Theorem 3.1]{Poulin}, that we can first slide heads of these edges so that almost every vertex has in-degree equal to $p$. 
Our choice of $F$ further ensures that we may then slide tails of edges so that $\w_{\mu_0}(y,x)=\tfrac{p}{q}$ whenever there is a directed edge from $x$ to $y$. 
At this point, the expected out-degree is $q$.
But more precisely, a mass transport argument using the invariance of the measure $(\pi_0)_*\mu_0$ shows that the expected out-degree is $q$ within a.e.\ ergodic component of the kernel equivalence subrelation $F\cap (\w_{\mu_0}=1)$.
Towards obtaining out-degree exactly $q$ a.e., we will then slide tails of edges along the kernel $F\cap (\w_{\mu_0}=1)$; since this kernel is pmp, any such sliding will preserve the relative weights along directed edges as well as the in-degrees of vertices.
This establishes the desired $\mu_0$-homogeneous $T_{p,q}$ treeing of $E_X$, concluding the proof that $G$ and $\Aut(T_{p,q})$ are measure equivalent. 
\end{enumerate}

\subsubsection*{Acknowledgments}
ATs thanks Grigory (Greg) Terlov for illuminating discussions in Paris regarding the hyper-type  $\mathrm{II}_1$ phenomenon in the percolation context.
RTD would like to thank Cyril Houdayer and Yoshikata Kida for stimulating discussion over the years that touched on this problem.
We also thank David Kerr and Chiranjib Mukherjee for inviting all of the authors to the workshop
\textit{Probability, Dynamics, and the Geometry of Groups} in
September 9--13, 2024, at the University of Münster,
during which we completed the proof of \cref{thm:nonunim_nonamen_treeable_ME} and took photos with all the authors in decreasing order of heights.

DG is supported by the French C.N.R.S.
ATs was supported by NSERC Discovery Grant RGPIN-2020-07120. 
RTD was supported by NSF grant DMS-2246684.

\subsubsection*{AI statement.} 

\enlargethispage{\baselineskip}The research for this article was entirely conducted by the five named authors, without any consultation with or input from AI.
The first version of the proof of \cref{ME_classification} was completed by July 2024, and the first version of the proof of \cref{thm:nonunim_nonamen_treeable_ME} was completed in September 2024. 
Since then the proof went through various reconceptualizations, including the emergence of the theory of scalar potentials, and this was completed by April 2026, without any use of AI.

ChatGPT-4o was used in 2024 as a thesaurus when writing a preliminary version of the introduction.

\section{Preliminaries}

\subsection{1-GBS groups as lattices in modular semidirect products}\label{subsec:latticeEmbedding}

Let $G$ be a locally compact second countable group, abbreviated as \textbf{lcsc}, with left Haar measure $\mu_G$.
The group $G$ acts by automorphisms on the additive group $(\mathbf{R},+)$ via multiplication through the modular homomorphism $\Delta_G : G\to \RR$ (satisfying $\mu_G(Bg) = \Delta_G (g) \mu_G (B)$ for $B\subseteq G$ Borel). 
The corresponding semidirect product
\[
G\ltimes\mathbf{R}=\langle G,\mathbf{R}\mid grg^{-1}=\Delta_G(g)r\ (g\in G,\ r\in\mathbf{R})\rangle,
\]
which we call the \textbf{modular semidirect product} associated to $G$, is unimodular, with two-sided invariant measure $\mu_G\times\mu_{\mathbf{R}}$.

In what follows we assume $p$ and $q$ are integers with $2\leq p < q$. 
The group $\BS(p,q)=\langle t,a \mid ta^pt^{-1}=a^q \rangle$ can be expressed as the HNN-extension of the infinite cyclic group $\langle a\rangle$ with respect to the isomorphism from the subgroup $\langle a^p\rangle$ to the subgroup $\langle a^q\rangle$ sending $a^p$ to $a^q$.
We let $T_{p,q}= V_{p,q}\sqcup E_{p,q}$ denote the Bass--Serre tree associated to this HNN-extension: the vertex set $V_{p,q}$ of $T_{p,q}$ is the set $\BS(p,q)/\langle a\rangle$ of left $\langle a\rangle$-cosets in $\BS(p,q)$, 
the (positively oriented) edge set $E_{p,q}^+$ of $T_{p,q}$ is the set $\BS(p,q)/\langle a^q\rangle$ of left $\langle a^q\rangle$-cosets in $\BS(p,q)$, where the edge $\gamma \langle a^q \rangle$ has origin $\gamma \langle a\rangle$ and terminus $\gamma t \langle a\rangle$.
Then each vertex $\gamma \langle a \rangle$ of $T_{p,q}$ has $p$-many incoming positive edges (the edges $\gamma a^it^{-1}\langle a ^q \rangle$ for $0\leq i <p$) and $q$-many outgoing positive edges (the edges $\gamma a^i\langle a ^q \rangle$ for $0\leq i < q$).

Let $\Aut(T_{p,q})$ denote the group of all graph automorphisms of $T_{p,q}$ that preserve the edge orientations.
This group is naturally an lcsc group with respect to the topology of pointwise convergence on $V_{p,q}$, where $V_{p,q}$ is given the discrete topology.
The translation action of $\BS(p,q)$ on each of the coset spaces $V_{p,q}$ and $E_{p,q}^+$ embeds $\BS(p,q)$ as a (non-discrete) subgroup of $\Aut(T_{p,q})$, and in what follows we identify $\BS(p,q)$ with its image in $\Aut(T_{p,q})$.

The stabilizer $K_{p,q}$ of the vertex $\langle a\rangle$ is a compact open subgroup of $\Aut(T_{p,q})$. 
We have that $\BS(p,q)K_{p,q} = \Aut(T_{p,q})$, since $\BS(p,q)$ acts transitively on $V_{p,q}$.
Let $\mu _{p,q}$ denote left Haar measure on $\Aut(T_{p,q})$, normalized so that $\mu _{p,q}(K_{p,q})=1$.

The group $\Aut(T_{p,q})$ is nonunimodular, with modular homomorphism $\Delta : \Aut(T_{p,q})\to \RR$ uniquely determined by $\Delta (t)=q/p$, and $\Delta (K_{p,q}) = \{ 1 \}$.
The kernel $\ker(\Delta )$ of the modular homomorphism is an open subgroup, and $\Aut(T_{p,q})$ is the internal semidirect product $\Aut(T_{p,q})=\langle t\rangle \ltimes \ker(\Delta) $.

Taking the modular semidirect product associated to $\Aut(T_{p,q})$, we obtain the group $\Aut(T_{p,q})\ltimes \mathbf{R}$.
The homomorphism $i : \BS(p,q)\to \Aut(T_{p,q})\ltimes\mathbf{R}$, uniquely determined by $i (a) \coloneqq (a,1 )$ and $i (t) \coloneqq (t,0)$, is injective, and the image $i (\BS(p,q))$ is a cocompact lattice with fundamental domain $K_{p,q}\times [0,1)$.
This may be viewed as a special case of \cite[Proposition 4.4]{Cornulier_Valette_2015}: Cornulier and Valette exhibit $\BS(p,q)$ as a lattice in the smaller group $\overline \BS(p,q)\ltimes \mathbf{R}$, where $\overline \BS(p,q)$ is the closure of the natural embedding of $\BS(p,q)$ in $\Aut(T_{p,q})$; since $\overline \BS(p,q)$ is cocompact and of finite covolume in $\Aut(T_{p,q})$, this is equivalent to exhibiting $\BS(p,q)$ as a lattice in $\Aut(T_{p,q})\ltimes \R$.

More generally, assume $\Gamma$ is a generalized Baumslag-Solitar group of rank $1$ ($1$-GBS group), equivalently $\Gamma$ acts cocompactly and without inversion on a tree $T_\Gamma$ with infinite cyclic edge and vertex stabilizers.
Let $\Aut_\Gamma(T_\Gamma)$ be the group of all automorphisms of $T_\Gamma$ that setwise fix each $\Gamma$-orbit, i.e. $\Aut_\Gamma(T_\Gamma)\backslash T_\Gamma=\Gamma\backslash T_\Gamma$ as a graph. 
Commensuration of a vertex group in $\Gamma$ induces a homomorphism $\delta : \Gamma \to \Q^{\times}\subseteq \R^\times$ (see \cite[p.~483]{Levitt:GBS}). 
Let $\Gamma_0\coloneq \delta^{-1}(\RR )\leq \Gamma$ be the subgroup (of index at most $2$) taking values in $\RR$, and let $\overline{\Gamma}_0$ be the closure of the natural homomorphic image of $\Gamma_0$ in $\Aut_\Gamma(T_\Gamma)$.
The homomorphism induced by $\delta$ on the image of $\Gamma_0$ extends to the modular homomorphism of $\overline{\Gamma}_0$.
Then, by \cite[Proposition 4.4]{Cornulier_Valette_2015}, $\Gamma_0$ is a cocompact lattice in the modular semidirect product $\overline{\Gamma}_0\ltimes \R$.

\subsection{Measure class preserving equivalence relations and graphs}

A \textbf{countable Borel equivalence relation} $E$ is a Borel equivalence relation $E \subset X^2$ on a standard Borel space $X$ such that for all $x \in X$, the equivalence class of $x$ is countable. 
Given a subset $A\subseteq [x]_E$, we write $[A]_E\coloneqq \{x \in X \mid x \mathrel{E} y\in A\}$.
A \textbf{complete section} for $E$ is a Borel subset of $X$ that intersects every $E$-class.
A \textbf{transversal} for $E$ is a Borel subset of $X$ that intersects every class of $E$ in exactly one point. If $E$ admits a  transversal, then $E$ is said to be \textbf{smooth}.

Given a countable Borel equivalence relation $E$ on $X$, we define its \textbf{full group}, denoted $[E]$, to be the group of Borel bijections $\phi: X \to X$ such that $\phi(x) \mathrel{E} x$ for all $x \in X$. We define its \textbf{full pseudogroup}, denoted $[[E]]$, to be the group of Borel bijections $\phi : A \to B$ such that $\phi(x) \mathrel{E} x$ for all $x \in A$, where $A,B$ are Borel subsets of $X$.

A countable Borel equivalence relation $E$ on a standard probability space $(X, \mu)$ is said to be \textbf{probability measure preserving (pmp)} and $\mu$ is said to be $E$-invariant if $\phi_*\mu = \mu$ for every $\phi \in [E]$.
A countable Borel equivalence relation $E$ on a standard measure space $(X,\mu)$ is said to be \textbf{measure class preserving (mcp)} if the measures $\phi_*\mu$ and $\mu$ are equivalent ($\phi_*\mu \sim\mu$) for every $\phi \in [E]$.
An mcp relation $E$ is \dfn{ergodic} if every $E$-invariant set is either $\mu$-null or $\mu$-conull. A subequivalence relation  $F$ of $E$ is \dfn{ergodic relative} to $E$
if $[A]_E\smallsetminus [A]_F$ is $\mu$-null for every Borel set $A\subseteq X$.

We say two ergodic mcp countable Borel equivalence relations $(E,X,\mu)$ and $(F,Y,\nu)$ are \textbf{isomorphic (resp.\ stably isomorphic)} if there exist conull (resp.\ positive measure) sets $X'\subseteq X$ and $Y'\subseteq Y$ and a Borel isomorphism $\phi: X'\longrightarrow Y'$ such that $\phi_*(\mu |_{X'})\sim \nu|_{Y'}$ and for every $x\in X'$ and for every $y\in [x]_E\cap X'$ we have $\phi(y)\in [\phi(x)]_F$.

Given a countable Borel equivalence relation $E$ on a standard measure space $(X,\mu)$, we define the left and right fiber measures  $\mu^\ell_E \coloneqq \int_X c_x \, \deriv \mu(x)$ and $\mu^r_E \coloneqq \int_X c^x \, \deriv \mu(x)$
where $c_x$ and $c^x$ denote counting measure on $\{ x \}\times [x]_{E}$ and $[x]_{E}\times \{ x \}$ respectively.
Observe that $E$ is pmp if and only 
if $\mu^\ell_E=\mu^r_E$.
Similarly, $E$ is mcp if and only if $\mu^\ell_E \sim \mu^r_E$. 
In this case, the Radon--Nikodym derivative $\frac{\deriv \mu^\ell_E}{\deriv \mu^r_E} :E\to (0,\infty)$ is called the \textbf{Radon--Nikodym cocycle} of $E$ with respect to $\mu$, and we denote it by $\w_\mu$.
After discarding an $E$-invariant Borel null set, the Radon--Nikodym cocycle $\w_\mu$ satisfies the cocycle identity $\w_\mu(z,y)\w_\mu(y,x) = \w_\mu(z,x)$ for all points $x,y,z$ in the same $E$-class.
In this context, the defining property of the Radon--Nikodym derivative is often called the \textbf{mass transport principle}: for every Borel map $f:E\to [0, \infty]$, we have
\begin{equation}\label{mass_transport_formula}
\int_X \sum_{y \in[x]_E} f(x,y) \, \deriv \mu(x) = \int_X \sum_{y \in[x]_E} f(y,x) \w_\mu(y,x) \, \deriv \mu(x).
\end{equation}
For $A\subseteq [x]_E$, we define $\w_\mu(A \times \{x\}) \coloneqq \{\w_\mu(y,x)\;: \; y\in A\}$. 

By a \dfn{Borel graph} on a standard Borel space $X$, we mean a graph with vertex set $X$, and whose edge set $\G$ is a standard Borel space whose origin and terminus maps $\orig_{\G} :\G\to X$, $\targ_{\G} : \G \to X$ are both Borel.
We say that $\G$ is locally countable if every vertex is incident with at most countably many edges.
We denote by $E_{\G}$ the equivalence relation on $X$ whose classes are the connected components of $\G$.
For a locally countable Borel graph $\G$ and a Borel probability measure $\mu$ on $X$, we say that $(\G, X, \mu)$ is pmp, mcp, or ergodic according to whether $(E_\G,X, \mu)$ has the corresponding property.

\subsection{Properties of the Radon--Nikodym cocycle and types of mcp relations}

Let $E$ be an mcp countable Borel equivalence relation on a standard measure space $(X,\mu)$. 
The \textbf{kernel of the Radon--Nikodym cocycle} $\w_\mu$ is defined to be the equivalence relation $\ker_E \w_\mu\leq E$ on $(X,\mu)$ where two points $x,y\in X$ are $\ker_E\w_\mu$-equivalent if they are $E$-equivalent and $\w_\mu(y,x)=1$.
Note that the kernel $\ker_E \w_\mu$ preserves the measure $\mu$.
The Radon--Nikodym cocycle $\w_\mu$ is said to be \textbf{dis-closed} if $\w_\mu ([x]_E\times \{ x \} )$ is a closed and discrete subset of $\RR$ for almost every $x\in X$.

Let $E$ be an ergodic mcp countable Borel equivalence relation on a standard probability space $(X,\mu)$. 
The equivalence relation $E$ is said to be \textbf{type I} if $\mu$ concentrates on a single $E$-class; 
$E$ is \textbf{type II} if $E$ is not type I and there exists an equivalent sigma-finite measure $\nu \sim \mu$ such that $\nu$ is $E$-invariant. 
We further say $E$ is \textbf{type II$_1$} (resp.\ \textbf{II$_\infty$}) if there exists a probability (resp.\ infinite sigma-finite) measure $\nu\sim\mu$ such that $\nu$ is $E$-invariant. 
$E$ is \textbf{type III} if it is neither type I nor type II. 

In turn, ergodic type III equivalence relations are divided into {\em subtypes} depending on the {\em ratio set} $\Delta(E,\mu)$ of the Radon--Nikodym cocycle of $E$. 
These notions were introduced by Krieger in \cite{Krieger:nonsingular_trans_I_II,Krieger-1970-Araki-Woods} and we refer the reader to \cite[Section 2]{KatznelsonWeiss:NonsingularTransformations} for a comprehensive modern overview.
An ergodic type III equivalence relation $E$ is said to be
\begin{itemize}
\item \textbf{type III$_0$} if $\Delta(E,\mu) = \{ 1 \} \cup \{ 0, \infty \}$, 
\item \textbf{type III$_\lambda$} for $\lambda \in (0,1)$ if $\Delta(E,\mu) = \lambda^\Z \cup \{0, \infty \}$,
\item \textbf{type III$_1$} if $\Delta(E,\mu) = \RR \cup \{0, \infty\}$. 
\end{itemize}
Incidentally, an ergodic mcp countable Borel equivalence relation $E$ is type I or II if and only if $\Delta(E,\mu) = \{1\}$.

We do not use ratio sets in our arguments and instead, we use equivalent definitions of the subtypes III$_\lambda$, $\lambda \in [0,1]$, in terms of Krieger flow given in \cref{defn:classbytype}.

We refer to \cite[Section 3]{FM77a},  \cite{HamachiOsikawa:ErgGroupsAndKrieger}, \cite[Chapter 2]{Schmidt-1990-Algebraic-ideas}, and \cite{ KatznelsonWeiss:NonsingularTransformations} and  for more about type III relations, but here we mention a couple of consequences we use in our arguments. 

\begin{proposition}[\cite{Hopf-1932,Dang73:typeIII_elasticity}]\label{typeIII elasticity}
Let $(E,X,\mu)$ be an ergodic mcp type $\mathrm{III}$ countable Borel equivalence relation.
Then for any two Borel sets $A,B \subseteq X$ of positive measure, there is $\varphi \in [[E]]$ with $\dom(\varphi) = A$ and $\im(\varphi) = B$ up to a null set.
In particular, stably isomorphic ergodic type $\mathrm{III}$  countable Borel equivalence relations are isomorphic.
\end{proposition}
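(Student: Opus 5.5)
The plan is to build the partial isomorphism $\varphi$ from $A$ to $B$ by a measure-theoretic exhaustion, using the absence of a finite invariant measure to show that neither side can be exhausted before the other. The "in particular" clause then follows by composing with such maps.

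\emph{Step 1: a basic matching lemma.} For Borel sets $A',B'$ of positive measure, ergodicity gives $[A']_E$ conull, so $B'\cap[A']_E$ is non-null. Fix a countable family $(\phi_n)\subseteq[E]$ whose graphs cover $E$, as provided by Feldman--Moore. Then some $\phi_n$ satisfies $\mu(\phi_n(A')\cap B')>0$. This yields a $\psi\in[[E]]$ with non-null domain inside $A'$ and image inside $B'$.

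\emph{Step 2: the exhaustion.} Consider a maximal partial injection $\varphi\in[[E]]$ with $\dom\varphi\subseteq A$ and $\im\varphi\subseteq B$, maximal up to null sets. One obtains it by transfinite (in fact countable) exhaustion: at each stage, greedily add a piece whose domain has measure at least half the supremum available. Maximality, together with Step 1, forces one of $A\smallsetminus\dom\varphi$ and $B\smallsetminus\im\varphi$ to be null. Say $B$ is exhausted, so $\varphi$ maps $A_0\subseteq A$ onto $B$ and $A\smallsetminus A_0$ is non-null. It then suffices to show that $B$ can also be injected into $A\smallsetminus A_0$, or more generally to show that every positive-measure set is "compressible."

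\emph{Step 3: the type III input.} The key claim is that for type III, every non-null $C$ admits $\theta\in[[E]]$ mapping $C$ onto a subset $C'\subseteq C$ with $C\smallsetminus C'$ non-null. Equivalently, $E|_C$ is compressible. Nishida's (or Hopf's) theorem says that an ergodic restriction $E|_C$ that is not compressible admits an invariant probability measure equivalent to $\mu|_C$. That measure would extend along $E$ to a $\sigma$-finite $E$-invariant measure equivalent to $\mu$, putting $E$ in type I or II, a contradiction. This is the step I expect to carry the real weight; I would cite the Hopf compressibility theorem rather than reprove it.

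Given compressibility, iterate the compression of $A$ to obtain infinitely many disjoint non-null pieces, and use the usual Schröder--Bernstein argument for $[[E]]$. This works because $B\preceq A$ by Step 2, and $A\preceq B$ holds by the symmetric argument applied with the roles of $A$ and $B$ swapped. Hence there is a bijection $A\to B$ in $[[E]]$ up to null sets. Note that the Schröder--Bernstein back-and-forth stays Borel because it is a countable composition of pieces.

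\emph{Step 4: the "in particular" clause.} Suppose $\phi:X'\to Y'$ is a stable isomorphism. Apply the main statement in $X$ to the sets $X'$ and $X$, obtaining $\alpha\in[[E]]$ from $X$ onto $X'$. Likewise, apply it in $Y$ to obtain $\beta\in[[F]]$ from $Y'$ onto $Y$. The composite $\beta\circ\phi\circ\alpha$ is defined on a conull set, preserves the measure class, and maps classes into classes, so it gives the isomorphism.
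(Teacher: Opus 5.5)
\textbf{There is a genuine gap at the end of Step 3.} That is exactly where type III has to enter. Steps 1--2 hold verbatim for an ergodic type $\mathrm{II}_1$ relation, and there the conclusion is false: a set of measure $1$ cannot be mapped onto a set of measure $1/2$.

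Step 2 only yields the dichotomy ``$A\preceq B$ or $B\preceq A$'', and this dichotomy is symmetric. Rerunning it with $A$ and $B$ swapped can again return ``$B$ is exhausted''. So it does not give $A\preceq B$, and without both comparisons Schröder--Bernstein has nothing to act on.

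Two further points do not help either:
\begin{itemize}
\item Iterating a compression of $A$ produces disjoint copies of $A\smallsetminus\theta(A)$ inside $A$. This says nothing about fitting $A$ into $B$.
\item The reduction you state (injecting $B$ into $A\smallsetminus A_0$) is not the right one. What is needed is that $B\sim A_0$ absorbs all of $A$.
\end{itemize}

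\textbf{The missing idea is to compress the target and use ergodicity to spread the source over infinitely many disjoint copies.}
\begin{enumerate}
\item By your Step 3, take $\theta\in[[E]]$ with $\dom\theta=B$, $\im\theta\subseteq B$, and $D\coloneqq B\smallsetminus\theta(B)$ non-null.
\item Then $D,\theta(D),\theta^2(D),\dots$ are pairwise disjoint non-null subsets of $B$, since $\theta^n(D)=\theta^n(B)\smallsetminus\theta^{n+1}(B)$.
\item By ergodicity $[D]_E$ is conull. With your Feldman--Moore maps $\phi_n$, the sets $A_n\coloneqq \bigl(A\cap\phi_n^{-1}(D)\bigr)\smallsetminus\bigcup_{m<n}\phi_m^{-1}(D)$ partition $A$ up to a null set.
\item The map $\bigsqcup_n \theta^n\circ\phi_n|_{A_n}$ is then an element of $[[E]]$ injecting $A$ into $B$.
\end{enumerate}
This argument is genuinely symmetric, so it also gives $B\preceq A$, and Schröder--Bernstein in $[[E]]$ finishes. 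Step 2 then becomes unnecessary.

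With this repair, the rest of your proposal is fine: the use of Hopf's theorem to get compressibility of every non-null set, the extension of an $E|_C$-invariant measure to contradict type III, and Step 4. For comparison, the paper gives no proof of this proposition and simply cites Hopf and Dang-Ngoc-Nghiem.
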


\begin{proposition}\label{bounded-unbounded_cocycle_types}
Let $(E,X,\mu)$ be an mcp countable Borel equivalence relation. Set 
\begin{center}$s_\mu(x) \coloneq \sup_{y \in [x]_E \cap Y} \w_\mu(y,x)$ \ \ and \ \ $i_\mu(x) \coloneq \inf_{y \in [x]_E \cap Y} \w_\mu(y,x) $.\end{center}
\begin{enumerate}[(a), leftmargin=*]
\item\label{item:sup<infinity} If $s_\mu(x) < \infty$ for a.e.\ $x \in X$, then the measure $\nu$ given by $d\nu \coloneq s_\mu \, d\mu$ is an $E$-invariant sigma-finite measure equivalent to $\mu$.

\item\label{item:inf>0} If $i_\mu(x) > 0$ for a.e.\ $x \in X$, then the measure $\nu$ given by $d\nu \coloneq i_\mu \, d\mu$ is an $E$-invariant finite measure equivalent to $\mu$.
\end{enumerate}
In particular, if $E$ is ergodic and type $\mathrm{III}$, then $s_\mu \equiv \infty$ and $i_\mu \equiv 0$ a.e.
\end{proposition}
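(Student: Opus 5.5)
The idea is to show that $s_\mu$ and $i_\mu$ are \emph{multiplicative coboundaries} that exactly cancel the Radon--Nikodym cocycle. I read the index set $[x]_E\cap Y$ in the statement as $[x]_E$ itself, i.e.\ $Y=X$.

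First I would settle measurability. By Lusin--Novikov, write $E$ as a countable union of graphs of Borel partial functions $f_n$. Then $s_\mu(x)=\sup_n \w_\mu(f_n(x),x)$ and $i_\mu(x)=\inf_n \w_\mu(f_n(x),x)$ are Borel, with values in $[1,\infty]$ and $[0,1]$ respectively, since $\w_\mu(x,x)=1$. Next, off the invariant null set where the cocycle identity fails, for $x\mathrel{E}y$ I would compute
\[
s_\mu(y)=\sup_{z\in[x]_E}\w_\mu(z,x)\,\w_\mu(x,y)=s_\mu(x)\,\w_\mu(x,y),
\]
and similarly $i_\mu(y)=i_\mu(x)\,\w_\mu(x,y)$. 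In particular, the sets $\{s_\mu<\infty\}$ and $\{i_\mu>0\}$ are $E$-invariant.

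Second, I would record how the cocycle changes under a density. Let $d\nu=\rho\,d\mu$ with $0<\rho<\infty$. Substituting into the mass transport principle \eqref{mass_transport_formula} gives
\[
\int\sum_y f(x,y)\,\rho(x)\,d\mu=\int\sum_y f(y,x)\,\frac{\rho(y)}{\rho(x)}\,\w_\mu(y,x)\,d\nu(x),
\]
so $\w_\nu(y,x)=\rho(y)\,\w_\mu(y,x)/\rho(x)$. Taking $\rho=s_\mu$, which is finite a.e.\ by the hypothesis of (a) and at least $1$, the identity above gives
\[
\w_\nu(y,x)=\frac{s_\mu(x)\,\w_\mu(x,y)\,\w_\mu(y,x)}{s_\mu(x)}=1,
\]
so $\nu$ is $E$-invariant. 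It is equivalent to $\mu$ because $0<\rho<\infty$ a.e. It is sigma-finite because $X=\bigcup_n\{s_\mu\le n\}$ and each piece has finite $\nu$-measure. For (b) the same computation works with $\rho=i_\mu\in(0,1]$, and then $\nu(X)\le\mu(X)<\infty$.

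For the final assertion, suppose $E$ is ergodic. Then the invariant set $\{s_\mu<\infty\}$ is null or conull. If it were conull, (a) would produce an equivalent $E$-invariant sigma-finite measure, so $E$ would be of type I or II, contradicting type $\mathrm{III}$. Hence $s_\mu\equiv\infty$ a.e., and the same argument with (b) gives $i_\mu\equiv0$ a.e.

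The only delicate point is the bookkeeping of the convention $\w_\mu=\deriv\mu^\ell_E/\deriv\mu^r_E$. One must use the change-of-density formula with the correct orientation, $\rho(y)/\rho(x)$ rather than its inverse, so that the coboundary cancels $\w_\mu$ instead of squaring it. I would double-check that orientation against \eqref{mass_transport_formula}.
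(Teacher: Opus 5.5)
Your proof is correct and follows the paper's argument. Both use the cocycle identity to obtain the coboundary relation $s_\mu(x)=s_\mu(y)\,\w_\mu(y,x)$ (and likewise for $i_\mu$), and then check $E$-invariance of $g\,\deriv\mu$ through the mass transport principle; your formula $\w_\nu(y,x)=\rho(y)\,\w_\mu(y,x)/\rho(x)$, which has the correct orientation, simply packages that same computation, while you additionally spell out measurability, $\sigma$-finiteness, and the type $\mathrm{III}$ consequence that the paper leaves implicit.
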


\begin{proof}
Let $g \in \set{i_\mu, s_\mu}$ and suppose that $0 < g < \infty$ a.e.
Then by the cocycle identity, we have $g(x) = g(z) \w_\mu(z,x)$ for a.e.\ pair $x,z \in X$ of $E$-equivalent points.
Thus, letting $\nu$ be the measure defined by $d \nu \coloneq g \, d \mu$, we check the $E$-invariance of $\nu$ via mass transport: for each Borel $f : E \to [0,\infty]$,
\begin{align*}
\int \sum_{y \in [x]_E} f(x,y) d\nu(x) 
&=
\int \sum_{y \in [x]_E} f(x,y) g(x) d\mu(x)
=
\int \sum_{y \in [x]_E} f(y,x) g(y) \w_\mu(y, x) d\mu(x)
\\
&=
\int \sum_{y \in [x]_E} f(y,x) g(x) d\mu(x)
=
\int \sum_{y \in [x]_E} f(y,x) d\nu(x).
\end{align*}
It remains to note that $i_\mu \le 1$, so $\nu \le 1$ when $g = i_\mu$.
\end{proof}

Recall that an mcp countable Borel equivalence relation $R$ is \textbf{$\mu$-nowhere smooth} if there does not exist a positive measure Borel subset $B$ of $X$ for which the restriction $R |_B$ is smooth.
A special case of the following appears in \cite[Lemma 6.5]{Krieger:Flow}.

\begin{proposition}\label{prop:kernelIsNowhereSmooth}
Let $(E, X, \mu)$ be a $\mu$-nowhere smooth mcp countable Borel equivalence relation whose Radon--Nikodym cocycle $\w_\mu$ is dis-closed. 
Then $\ker_E \w_\mu$ is $\mu$-nowhere smooth as well.
\end{proposition}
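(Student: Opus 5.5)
We argue by contradiction. Suppose $B\subseteq X$ has positive measure and $\ker_E\w_\mu|_B$ is smooth, witnessed by a Borel transversal $T\subseteq B$ for $\ker_E\w_\mu|_B$. The goal is to produce a positive measure subset on which $E$ itself is smooth, contradicting $\mu$-nowhere smoothness of $E$. The idea is that the dis-closed hypothesis lets us pick, inside each $E|_B$-class, a canonical ``level'' of the cocycle, and then the kernel transversal picks a canonical point within that level.

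First we reduce to $E|_B$. Note that $E|_B$ is again mcp and $\mu$-nowhere smooth. Its Radon--Nikodym cocycle with respect to $\mu|_B$ is the restriction of $\w_\mu$, and $\w_\mu([x]_{E|_B}\times\{x\})\subseteq \w_\mu([x]_E\times\{x\})$ is still closed and discrete in $\RR$ for a.e.\ $x$. Also $\ker_{E|_B}\w_\mu = (\ker_E\w_\mu)|_B$. So we may assume $B=X$ and that $\ker_E \w_\mu$ is smooth with transversal $T$.

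Next we select a canonical value in each class. For a.e.\ $x$, the set $S_x\coloneqq \w_\mu([x]_E\times\{x\})$ is closed, discrete, and contains $1$. By the cocycle identity, $S_y=\w_\mu(x,y)\,S_x$ for $y\mathrel{E}x$, so the set of values $\{\w_\mu(y,x)\}$ up to an overall rescaling is a class invariant. Since $S_x$ is discrete and closed, the set $S_x\cap[1,2]$ is finite. Define the Borel $E$-invariant-up-to-choice quantity as follows: consider the set $A$ of points $y$ such that $\w_\mu(z,y)\ge 1$ for all $z$ in a suitable window. A cleaner route is to work on the Borel set $X_1\coloneqq\{x: S_x\cap(1,2]\text{ is finite and nonempty, or } S_x\cap[1,\infty)\ldots\}$. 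Concretely, we let $m(x)\coloneqq \min\big(S_x\cap[1,\infty)\smallsetminus\{1\}\big)$ when this set is nonempty. This is Borel by discreteness, and it allows us to pick, in each class, the points $y$ whose cocycle value relative to a fixed reference lies in a fundamental window.

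Choosing a transversal for the level sets is the main obstacle. If $S_x$ is bounded above (or below) on a positive measure set, then $E$ has a nonnull piece where a maximal level exists. The set $M\coloneqq\{y: \w_\mu(z,y)\le 1\ \forall z\mathrel{E}y\}$ is then a Borel union of kernel classes, one per $E$-class that meets it, and $M\cap T$ is a transversal of $E|_{[M]_E}$, contradicting nowhere smoothness. When $S_x$ is unbounded in both directions a.e., we instead use the discrete ordering: every kernel class $C$ inside an $E$-class has a successor kernel class, namely the one with the next larger cocycle value. This yields a Borel $\Z$-like structure on $T$ whose orbits are the $E$-classes intersected with $T$. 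Then $T$ is a complete section for $E$ on which $E|_T$ is generated by the Borel injection ``successor'' $\sigma:T\to T$ with no finite orbits.

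To finish, we need a positive measure set on which $E|_T$ is smooth. This is not automatic: an aperiodic $\Z$-action can be nonsmooth. What we do have is that $\w_\mu(\sigma t,t)>1$ for every $t$. Hence $\w_\mu(\sigma^n t,t)\to\infty$ as $n\to\infty$, and, by discreteness and closedness, $\w_\mu(\sigma^n t,t)\to 0$ as $n\to-\infty$. The mass transport principle then gives $\mu(\sigma^n(D))=\int_D \w_\mu(\sigma^n t,t)\,d\mu(t)$ for $D\subseteq T$. I expect the delicate step to be extracting a wandering set from this. The first thing to try is to take $D\coloneqq\{t\in T: \w_\mu(\sigma^n t,t)\ge 2^{n}\ \text{for all } n\ge0\}$, or a variant restricted to the part where $\mu$ has density bounded on a compact window. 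The hope is to show that $D$ is positive measure and that its translates $\sigma^n(D)$ have finite total measure, which contradicts conservativity (Poincaré recurrence) of the nonsmooth part. If this direct approach fails, the fallback is to invoke Hopf's decomposition: by Proposition \ref{typeIII elasticity}-style recurrence for conservative relations, the cocycle along a recurrent orbit must return to bounded values infinitely often, contradicting monotonicity.
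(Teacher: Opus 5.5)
Your reduction to $E|_B$ and your treatment of the bounded case (the extremal kernel class $M$, with $M\cap T$ a Borel transversal of $E|_{[M]_E}$) are fine. In the doubly unbounded case you also construct exactly the right object: a Borel injection $\sigma\colon T\to T$ with $\w_\mu(\sigma t,t)>1$ for every $t$. The gap is the last step. You do not need a positive-measure set on which $E|_T$ is smooth, and the remark that aperiodic $\Z$-actions can be nonsmooth is beside the point. On a finite measure space, a Borel injection of a set into itself that strictly increases weight is already impossible. Mass transport applied to the indicator of $\{(\sigma t,t): t\in T\}$ gives
\[
\mu(\sigma(T))=\int_T \w_\mu(\sigma t,t)\,d\mu(t)>\mu(T)\geq \mu(\sigma(T)),
\]
which is a contradiction as soon as $\mu(T)>0$. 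That positivity holds because $B$ is the $(\ker_E\w_\mu)|_B$-saturation of $T$ and $E$ is mcp. This is precisely the case $n=1$, $D=T$ of the formula $\mu(\sigma^n(D))=\int_D\w_\mu(\sigma^n t,t)\,d\mu(t)$ that you wrote down, but you did not draw the conclusion from it.

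The finish you actually propose goes the wrong way. For your set $D$ one has $\mu(\sigma^n(D))\geq 2^n\mu(D)$, so the translates have exponentially growing, not summable, measure. It is this growth against $\mu(X)<\infty$, and not any wandering-set or conservativity argument, that produces the contradiction.

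The fallback is not a proof either. Proposition~\ref{typeIII elasticity} concerns ergodic type $\mathrm{III}$ relations and says nothing about the behaviour of $\w_\mu(\sigma^n t,t)$. The recurrence you want (the cocycle returning infinitely often to a compact window) is essentially Maharam's theorem on conservativity of the Maharam extension. That would close the argument, since only finitely many of the strictly increasing values $\w_\mu(\sigma^n t,t)$ lie in any compact subset of $\RR$. However, it is far heavier than needed and is not what you invoke.

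For comparison, the paper's proof is your doubly unbounded case completed by the one-line computation above, and it also avoids your case split. It takes an arbitrary partial transversal $A$ of $\ker_E\w_\mu$. The points $x\in A$ with no $y\in[x]_E\cap A$ satisfying $\w_\mu(y,x)>1$ form a partial transversal of $E$, hence are null. On the rest, the next-heavier-point map $T\colon A\to A$ is injective, so mass transport yields $\mu(A)<\mu(T(A))\leq\mu(A)$ unless $\mu(A)=0$.
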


\begin{proof}
Let $A$ be any partial Borel transversal of $\ker_E \w_\mu$, i.e., a transversal of $\ker_E \w_\mu$ restricted to some Borel set. 
It suffices to show that $A$ is null.

For $x\in A$, consider \[
W_>(x) \coloneq \{ \w_\mu(y,x) : y\in [x]_E\cap A \text{ and } \w_\mu(y,x)>1\}.
\]
Since $A$ is a partial transversal for $\ker_E\w_\mu$, the set $A_0 \coloneq \set{x \in A : W_>(x) = \emptyset}$ is a partial transversal for $E$, and therefore null since $E$ is $\mu$-nowhere smooth. Thus $W_>(x)$ is nonempty for a.e. $x\in A$.

Furthermore, because the Radon--Nikodym cocycle is dis-closed, the set $W_>(x)$ admits a minimum element for a.e.\ $x \in A$, and hence there is a unique $T(x)$ in the partial $\ker_E \w_\mu$-transversal $A$ achieving this minimum, i.e., with $\w_\mu(T(x), x) = \min W_>(x)$.

Now, if $A$ were non-null, then the mass transport principle applied to the indicator function of the graph $D:= \{ (x,T(x)): x\in A \}$ of the injective function 
$T: A \to A, x\mapsto T(x) $
would give a contradiction via:
\begin{align*}
\mu (A) = \int_A \sum_{y \in[x]_E} 1_D(x,y) \, \deriv \mu(x) 
&= 
\int_{T(A)} \sum_{y \in[x]_E} 1_D (y,x) \w_\mu(y,x) \, \deriv \mu(x)
\\
&=
\int_{T(A)} \w_\mu(T^{-1}(x),x) \, \deriv \mu(x)
<
\mu(T(A)) \le \mu(A)
.
\qedhere
\end{align*}
\end{proof}

\subsection{Toolkit for measure disintegration}

Let $(X,\mu)$ be a standard probability space and $\pi : X \to Z$ be a Borel function to a standard Borel space $Y$.
Let $\mu = \int_Z \mu_z \, \deriv \pi_*\mu (z)$ be the measure disintegration of $\mu$ with respect to $\pi$.
For a Borel function $f : X \to \R$, we denote by $\E[f \mid \pi]$ its conditional expectation with respect to the sigma-algebra generated by $\pi$.
Thus $\E[f \mid \pi](x) = \int f \, \deriv \mu_{\pi(x)}$ for $\mu$-a.e.\ $x\in X$.
If $R$ is a pmp equivalence relation on $(X,\mu)$ with ergodic decomposition map $\pi$, then the conditional expectation $\E[f \mid R]$ with respect to the sigma-algebra of $R$-invariant Borel sets coincides with $\E [f\mid \pi ]$.

\begin{lemma}[pmp redistribution]
\label{typeIISlide}
Let $(R, X, \mu)$ be a pmp countable Borel equivalence relation with ergodic decomposition map $\pi :X\to Z$ and associated disintegration $\mu = \int_Z \mu_z \, \deriv \pi_*\mu (z)$. 
Let $g: X \to \N \cup \{ \infty\}$ be a Borel function such that the set $\{x \in X \mid g(x) > 0 \}$ is a complete section for $R$. 
Let $Y$ be a standard Borel space, and let $\phi : Y \to X$ be a countable-to-one Borel map satisfying $\int_X |\phi^{-1}(x)|\, \deriv \mu_z = \int_X g(x)\, \deriv \mu_z$ for $\pi_*\mu$-a.e.\ $z\in Z$. 
Then there is a Borel map $\psi: Y \to X$ with $(\psi(y),\phi (y))\in \mathbin{R}$ for all $y \in Y$ and 
$ 
\left| \psi^{-1} (x)\right| = g(x)
$
for $\mu$-a.e.\ $x \in X$.
\end{lemma}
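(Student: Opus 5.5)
We build $\psi$ by a Hall-type matching inside the pmp relation $R$. Think of the points of $Y$ as "balls" and the points $x\in X$ as "boxes" of capacity $g(x)$. Each ball $y$ must be placed in a box in the $R$-class of $\phi(y)$, and every box must end up exactly full. The hypothesis says that on a.e. ergodic component the expected number of balls equals the expected capacity. We will place the balls greedily, and use the mass transport principle for the invariant measure $\mu$ to show that the greedy procedure leaves no deficit or excess on a conull set.

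First, we reduce to single points. Replace $Y$ by $\tilde Y\coloneqq\{(x,n): x\in X,\ n< |\phi^{-1}(x)|\}$, using a Borel enumeration of the fibers of $\phi$ (Lusin--Novikov). Likewise, replace capacities by the set $\tilde X\coloneqq\{(x,m): m<g(x)\}$. Both sets carry the measure $\mu\otimes$counting. The relation $R$ lifts to a pmp relation $\tilde R$ on $\tilde X\sqcup\tilde Y$, defined by $(x,i)\sim(x',j)$ iff $x\mathrel{R}x'$. What we need is a Borel bijection $\theta:\tilde Y\to\tilde X$, defined off a null set, with $\theta(u)\mathrel{\tilde R} u$. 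Composing with the projections then gives $\psi$, and the null set is fixed by redefining $\psi$ on an $R$-invariant null set.

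The existence of $\theta$ follows from the standard pmp "equal measure implies a full-group bijection" fact, applied componentwise. By hypothesis, $\tilde X$ and $\tilde Y$ have equal measure in every ergodic component $\mu_z$, and this common measure may be infinite. We then run the usual back-and-forth exhaustion. Fix a Feldman--Moore countable group $\{\gamma_n\}$ generating $\tilde R$. At stage $n$, match unmatched points $u\in\tilde Y$ with $\gamma_n u\in\tilde X$ unmatched, alternating between the forward direction and the reverse direction with $\gamma_n^{-1}$, so that the process is symmetric. This yields a Borel partial injection $\theta$ with graph in $\tilde R$. Let $U\subseteq\tilde Y$ and $V\subseteq\tilde X$ be the leftover unmatched sets. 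By maximality, no point of $U$ is $\tilde R$-related to a point of $V$, so the saturations $[U]$ and $[V]$ are disjoint, and they are $\tilde R$-invariant.

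The main obstacle is showing that $U$ and $V$ are null when the fiber measures may be infinite. Here the completeness of the support $\{g>0\}$ is used. On any component where the common measure is finite, mass transport applied to $\theta$ gives $\mu_z(\tilde Y\setminus U)=\mu_z(\tilde X\setminus V)$. Hence $\mu_z(U)=\mu_z(V)$, while disjointness of the saturations forces one of the two to be $\mu_z$-null; so both are null. On components of infinite total mass, we first decompose $\tilde Y$ and $\tilde X$ into countably many pieces of finite measure, aligned componentwise by a measurable choice using $\E[\cdot\mid\pi]$. We also need each ergodic component of infinite mass to yield infinitely many matches. This follows because $\{g>0\}$ is a complete section: every class contains capacity, and infinite expected capacity within an ergodic component is spread over a positive-measure set. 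We then apply the finite-measure argument to each matched pair of pieces and take the union of the resulting bijections.
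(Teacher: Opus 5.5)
Your proof is correct and takes the same route as the paper, whose proof is the one-liner ``measure-theoretic exhaustion using \cite[Lemma~7.10]{Kechris-Miller}''. Your lift to the point sets $\tilde Y,\tilde X$, greedy matching inside $[[\tilde R]]$ with leftover sets of disjoint saturations, and comparison of ergodic-component masses is that exhaustion written out. One correction concerns the infinite-mass case. The step is carried by your splitting into countably many pairs of pieces of equal finite component mass, not by the complete-section hypothesis, and that splitting is available via \cref{sets_with_prescribed_fiber-measures} on the aperiodic part of $R$, where the $\mu_z$ are atomless, and by classwise matching of equal cardinalities on the smooth finite-class part. Your sentence about ``infinitely many matches'' can therefore be dropped.
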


\begin{proof}
By measure-theoretic exhaustion using \cite[Lemma 7.10]{Kechris-Miller}.
\end{proof}

\begin{lemma}[Mass transport across a Borel homomorphism]\label{lem:mass_transport_homomorphism}
Let $(E,X,\mu)$ and $(F, Z, \nu)$ be mcp countable Borel equivalence relations and let $\pi : X \to Z$ be a measure-preserving Borel homomorphism, i.e., satisfying $\pi_* \mu = \nu$ and $(\pi\times\pi )(E)\subseteq F$.
Let $\mu = \int \mu_z \, \deriv \pi_*\mu$ denote the measure disintegration of $\mu$ with respect to $\pi$.
Then for every Borel function $f : E \to [0,\infty]$ and for a.e.\ $z \in Z$, we have
\begin{equation}\label{eq:mass_transport_accross_hom}
\int_X \sum_{y \in [x]_E} f(x,y) \; \deriv \mu_z(x) = \sum_{z' \in [z]_F} \w_\nu(z',z) \int_X \sum_{\substack{y \in [x]_E  \\ \pi(y) = z}} f(y,x) \w_\mu(y,x) \; \deriv\mu_{z'}(x).
\end{equation}
\end{lemma}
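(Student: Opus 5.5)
The identity \eqref{eq:mass_transport_accross_hom} should follow from the ordinary mass transport principle \eqref{mass_transport_formula}, applied to a function that also weights by a test function on $Z$. Then one disintegrates over $\pi$ and compares with the mass transport principle for $(F,\nu)$.

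Fix a Borel $f:E\to[0,\infty]$ and an arbitrary Borel $h:Z\to[0,\infty]$. It suffices to show that the two sides of \eqref{eq:mass_transport_accross_hom}, viewed as functions of $z$, have the same integral against $h\,\deriv\nu$. Since $h$ is arbitrary, this gives equality for a.e.\ $z$; both sides are Borel in $z$ by standard measurability of disintegrations.

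For the left side, disintegrating $\mu$ along $\pi$ gives
\[
\int_Z h(z)\int_X \sum_{y\in[x]_E} f(x,y)\,\deriv\mu_z(x)\,\deriv\nu(z)=\int_X h(\pi(x))\sum_{y\in[x]_E} f(x,y)\,\deriv\mu(x).
\]
Now apply \eqref{mass_transport_formula} for $(E,\mu)$ to the function $(x,y)\mapsto h(\pi(x))f(x,y)$. This turns the expression into
\[
\int_X\sum_{y\in[x]_E} h(\pi(y))\,f(y,x)\,\w_\mu(y,x)\,\deriv\mu(x).
\]
Next group the inner sum according to the value $z=\pi(y)$, which lies in $[\pi(x)]_F$ because $\pi$ is a homomorphism. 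Define
\[
\Phi(z',z)\coloneqq \int_X\sum_{y\in[x]_E,\ \pi(y)=z} f(y,x)\,\w_\mu(y,x)\,\deriv\mu_{z'}(x)
\]
for $(z',z)\in F$. Disintegrating once more, the previous integral equals
\[
\int_Z\sum_{z\in[z']_F} h(z)\,\Phi(z',z)\,\deriv\nu(z').
\]
Here we use that $\mu_{z'}$ is concentrated on $\pi^{-1}(z')$, so that $\pi(x)=z'$ for $\mu_{z'}$-a.e.\ $x$.

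Finally apply \eqref{mass_transport_formula} for $(F,\nu)$ to the function $(z',z)\mapsto h(z)\Phi(z',z)$. The integral becomes
\[
\int_Z h(z)\sum_{z'\in[z]_F}\Phi(z',z)\,\w_\nu(z',z)\,\deriv\nu(z),
\]
which is exactly the integral of the right-hand side of \eqref{eq:mass_transport_accross_hom} against $h\,\deriv\nu$.

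The main technical point is Borel measurability of $\Phi$ on $F$, which is needed to apply mass transport for $F$. I would handle it via the Feldman--Moore representation $F=\bigcup_n \operatorname{graph}(\gamma_n)$ for Borel involutions $\gamma_n$, together with the Borel measurability of $z'\mapsto\int g\,\deriv\mu_{z'}$ for Borel $g\geq0$ on $X\times Z$. One also has to justify the interchange of sums and integrals, which is immediate by Tonelli since everything is nonnegative.
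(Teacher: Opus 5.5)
Your proof is correct and follows essentially the same route as the paper: disintegrate, apply mass transport for $(E,\mu)$, regroup according to $\pi(y)$, disintegrate again, and apply mass transport for $(F,\nu)$. The paper first proves the integrated identity for an arbitrary $g$ and then specializes to $g(x,y)=f(x,y)\mathbf{1}_B(\pi(x))$, which is exactly your weighting by $h\circ\pi$; your explicit treatment of the Borel measurability of $\Phi$ on $F$ is a detail the paper leaves implicit.
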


\begin{proof}
Let $g: E \to [0,\infty]$. Then
\begin{align*}
\int_Z \int_X \sum_{y \in [x]_E} g(x,y) \; \deriv \mu_z(x) \; \deriv\nu(z) &= \int_X \sum_{y \in [x]_E} g(x,y) \; \deriv \mu(x) \\
&= \int_X \sum_{y \in [x]_E} g(y,x) \w_\mu(y,x) \; \deriv \mu(x) \\
&= \int_Z \int_X \sum_{y \in [x]_E} g(y,x) \w_\mu(y,x) \; \deriv \mu_z(x)  \; \deriv\nu(z),
\intertext{by the measure disintegration of $\mu$ over $\nu$ (used twice) and the mass transport principle \labelcref{mass_transport_formula} over $(E,X,\mu)$. Now the mass transport principle over $(F,Z,\nu)$ yields:}
&= \int_Z \int_X\sum_{z' \in [z]_F} \sum_{\substack{{y \in [x]_E}\\ \pi(y) = z' }} g(y,x) \w_\mu(y,x) \; \deriv \mu_z(x)  \; \deriv\nu(z) \\
&= \int_Z  \sum_{z' \in [z]_F} \left(\int_X \sum_{\substack{{y \in [x]_E}\\ \pi(y) = z' }} g(y,x) \w_\mu(y,x) \; \deriv \mu_z(x) \right)  \; \deriv\nu(z) 
\\
&= \int_Z \sum_{z' \in [z]_F} \w_\nu(z',z) \int_X \sum_{\substack{{y \in [x]_E}\\ \pi(y) = z }} g(y,x) \w_\mu(y,x) \; \deriv \mu_{z'}(x)\; \deriv\nu(z).
\end{align*}

Now given $f: E \to [0,\infty]$, for each Borel set $B \subseteq Z$ we apply the just established equality to the function $g(x,y) := f(x,y) \mathbf{1}_{B}(\pi(x))$, which gives
\[
\int_B \int_X \sum_{y \in [x]_E} f(x,y) \; \deriv \mu_z(x) \; \deriv\nu(z) 
= 
\int_B \sum_{z' \in [z]_F} \w_\nu(z',z) \int_X \sum_{\substack{{y \in [x]_E}\\ \pi(y) = z }} f(y,x) \w_\mu(y,x) \; \deriv \mu_{z'}(x)\; \deriv\nu(z).
\]
This implies the equality of the integrands $\nu$-a.e., namely, \cref{eq:mass_transport_accross_hom}.
\end{proof}

We use the following folklore statement in our proofs, a version of which was given in \cite[Lemma 2.1]{Krieger:Flow} and a special case of which for ergodic decompositions was recorded in \cite[Proposition 2.2]{LeMaitre:full-groups} as Maharam's lemma of Dye, originally presented in \cite[Maharam's lemma]{Dye:groups_mp_transformations_I}.
For a standard Borel space $X$, let $P(X)$ denote the standard Borel space of Borel probability measures on $X$.

\begin{proposition}\label{sets_with_prescribed_fiber-measures}
Let $X, Z$ be standard Borel spaces and $z \mapsto \mu_z : Z \to P(X)$ be a Borel bundle of atomless Borel probability measures on $X$.
Then for each Borel function $f : Z \to [0,1]$, there is a Borel set $B \subseteq Z \times X$ with $\mu_z(B_z) = f(z)$ for all $z \in Z$.

Moreover, if $\pi : X \to Z$ is a Borel function such that $\mu_z$ is supported on $\pi^{-1}(z)$ for all $z \in Z$, then there is a Borel subset $A \subseteq X$ with $\mu_z(A) = f(z)$ for all $z \in Z$.
\end{proposition}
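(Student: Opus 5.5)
The plan is to reduce to the case of Lebesgue measure on the unit interval via a measurable isomorphism of measure algebras done fiberwise, or, more concretely, via fiberwise distribution functions.

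First I fix a Borel isomorphism of $X$ with a Borel subset of $[0,1]$. We may then assume $X \subseteq [0,1]$, and we regard each $\mu_z$ as an atomless Borel probability measure on $[0,1]$. Define
\[
F(z,t) \coloneqq \mu_z([0,t]).
\]
For fixed $t$, the map $z\mapsto F(z,t)$ is Borel, because $z\mapsto\mu_z$ is Borel into $P(X)$. For fixed $z$, the map $t\mapsto F(z,t)$ is nondecreasing and right-continuous. Since $\mu_z$ is atomless, it is in fact continuous in $t$, with $F(z,0)=0$ and $F(z,1)=1$. Hence $F$ is a Carathéodory function, and so it is jointly Borel on $Z\times[0,1]$.

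Next I set
\[
B \coloneqq \{(z,x)\in Z\times X : F(z,x) < f(z)\}.
\]
This set is Borel. Its fiber is $B_z = \{x : F(z,x) < f(z)\}$, which is an initial segment of $[0,1]$ intersected with $X$. By continuity and monotonicity of $F(z,\cdot)$, we have $B_z = [0,s)$ or $[0,s]$ up to a $\mu_z$-null set, where $s=\sup\{t : F(z,t)<f(z)\}$ and $F(z,s)=f(z)$. Therefore $\mu_z(B_z) = f(z)$. The edge cases $f(z)=0$ (giving an empty fiber) and $f(z)=1$ (where the fiber misses only a $\mu_z$-null set) are checked directly. This proves the first assertion. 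The only mild obstacle is the joint Borel measurability of $F$, which is handled by the Carathéodory argument above: $F$ is Borel in $z$ and continuous in $t$, so $F(z,t)=\lim_n F(z,\lceil nt\rceil/n)$.

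For the ``moreover'' part, I take $B$ as above and set
\[
A \coloneqq \{x\in X : (\pi(x),x)\in B\},
\]
which is Borel as the preimage of $B$ under the Borel map $x\mapsto(\pi(x),x)$. Since $\mu_z$ is supported on $\pi^{-1}(z)$, we get
\[
\mu_z(A)=\mu_z(A\cap\pi^{-1}(z))=\mu_z(B_z\cap\pi^{-1}(z))=\mu_z(B_z)=f(z)
\]
for all $z$. If "supported" is meant only up to null sets, the same computation holds, since it only involves $\mu_z$-measure.
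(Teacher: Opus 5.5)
Your proposal is correct and follows essentially the same route as the paper: threshold the fiberwise distribution function $\mu_z([0,x])$ at $f(z)$, then pull back along the graph map $x\mapsto(\pi(x),x)$ for the ``moreover'' part. The paper uses the non-strict inequality $\mu_z([0,x])\le f(z)$ where you use $<$, which is immaterial for atomless $\mu_z$; your Carath\'eodory argument for joint Borel measurability of $(z,t)\mapsto\mu_z([0,t])$ supplies a step the paper leaves implicit.
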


\begin{proof}
By the Borel isomorphism theorem, we may assume $X = [0,1]$.
Define $g : Z \times [0,1] \to Z \times [0,1]$ by $(z,x) \mapsto (z, \mu_z([0,x]))$, and let $B_{\le f} \coloneqq \set{(z,x) \in Z \times [0,1] : x \le f(z)}$. 
Then $B \coloneqq g^{-1}(B_{\le f})$ is as desired.

For the additional part, take the bijection $\tilde\pi : x \mapsto (\pi(x), x) : X \to Z \times X$ from $X$ to the (inverse of the) graph of $\pi$. Applying the main statement to the map $z \mapsto \tilde\pi_* \mu_z$, we obtain a Borel set $B \subseteq Z \times X$ with $\tilde\pi_* \mu_z(B_z) = f(z)$, so the set $A \coloneqq \tilde\pi^{-1}(B)$ is as desired.
\end{proof}

\subsection{Measure equivalence}
\label{sect:ME}

Let $G$ be an lcsc group.
A \textbf{left Borel $G$-space} is a standard Borel space $X$ equipped with a jointly Borel left action of $G$.
Given a left Borel $G$-space $X$, we equip the quotient space $G\backslash X$ with the quotient measurable structure.
If $\mathscr{C}$ is a measure class on $X$, then we write $G\backslash\mathscr{C}$ for the pushforward measure class on $G\backslash X$.
Likewise, we define all these notions with ``right'' instead of ``left''.

A \textbf{measure-class-preserving} (\textbf{mcp}) \textbf{Borel $G$-space} is a Borel $G$-space equipped with a probability measure $\mu$ whose class is invariant under the $G$-action, i.e., $g_*\mu\sim\mu$ for each $g\in G$.
We define a \textbf{probability-measure-preserving} (\textbf{pmp}) \textbf{Borel $G$-space} analogously, with $g_*\mu=\mu$ for each $g\in G$.

Measure equivalence for countable discrete groups was defined by Gromov \cite[0.5.E]{Gromov-asympt-inv}.
While our main goal concerns measure equivalence between the discrete Baumslag--Solitar groups, we will make use of measure equivalence between general, possibly nonunimodular, lcsc groups.
The theory of measure equivalence at this level of generality was developed in \cite{DL14,Koivisto-Kyed-Raum-ME-non-unim}.
By a Borel \textbf{$(G,H)$-biaction} on a standard Borel space $\Omega$ we mean a Borel left action of $G$ on $\Omega$ and a Borel right action of $H$ on $\Omega$ such that these two actions commute.

\begin{definition}\label{def:strictMEcoupling}
Let $G$ and $H$ be lcsc groups.
A \textbf{strict $(G,H)$-ME-coupling} is a standard Borel space $\Omega$ together with a measure class $\mathscr{C}$ of a $\sigma$-finite Borel measure, equipped with an mcp Borel $(G,H)$-biaction such that:
\begin{itemize}
\item $G$ and $H$ each act freely;
\item the quotient measurable left $G$-space $\Omega/H$ is standard and admits a $G$-invariant probability measure in the measure class $\mathscr{C}/H$; and
\item the quotient measurable right $H$-space $G\backslash\Omega$ is standard and admits an $H$-invariant probability measure in the measure class $G\backslash\mathscr{C}$.
\end{itemize}
The groups $G$ and $H$ are said to be \textbf{measure equivalent} if they admit a strict $(G,H)$-ME-coupling.
\end{definition}

It was shown in \cite[Proposition~3.6]{DL14} that this definition coincides with that of \cite[0.5.E]{Gromov-asympt-inv} for countable discrete groups and with that of \cite[Definition~1.1]{BFS-13} for general unimodular groups, where the $(G,H)$-biaction is required to be measure preserving.

\begin{remark}\label{rem:algebraic_coupling}
In \cref{def:algebraic_ME_coupling} of Appendix~\ref{app:algebraic_ME}, we reformulate measure equivalence in algebraic terms, using continuous Boolean biactions on standard measure-class algebras. 
\cref{prop:algebraic_to_strict} shows that the two notions are equivalent: every strict $(G,H)$-ME-coupling induces an algebraic $(G,H)$-ME-coupling on its measure-class algebra, and conversely every algebraic $(G,H)$-ME-coupling is induced by a strict one.
\end{remark}

\subsection{Cross-sections of actions of lcsc groups}

For an lcsc group $G$, we write $\mu_G$ for a left Haar measure on $G$.
We let $\Delta _G  :G\to \RR$ denote the modular homomorphism of $G$, denoting it simply by $\Delta$ when $G$ is clear from the context. 
Thus, $\mu_G(Bg) = \Delta (g)\mu _G(B)$ for all $g\in G$ and Borel sets $B \subseteq G$.

Let $G$ be an lcsc group and let $\Omega$ be a left Borel $G$-space. 
We denote by $E^G_\Omega$ the \textbf{orbit equivalence relation} of this action, i.e., where two points of $\Omega$ are equivalent if and only if they are in the same $G$-orbit. 
A Borel \textbf{cross-section} for the $G$-space $\Omega$ is a Borel subset $Z$ of $\Omega$ 
such that for every $x\in \Omega$ the set $\{g\in G \mid g.x\in Z\}$ is a nonempty  closed and discrete subset of $G$.
A Borel cross-section $Z$ for the action is called \textbf{uniformly discrete} if there exists an open neighborhood $U$ of $e_G$ such that $U.z_0$ and $U.z_1$ are disjoint for all distinct $z_0,z_1\in Z$.
Notice that uniformly discrete Borel cross-sections always exist \cite{Kec92}.

Given a Borel cross-section $Z$, we write $E^G_Z$ for the restriction of $E^G_\Omega$ to $Z$, and call it the \dfn{cross-section equivalence relation} on $Z$ induced by the action $G \acts \Omega$. 

\begin{proposition}[Ramsay {\cite[Theorem 6.17]{Ra71}}]\label{measure-class_on_cross-section}
Let $G$ be an lcsc group and $Z$ be a Borel cross-section of a free Borel $G$-space $\Omega$. 
Then for each $G$-invariant measure class $\mathscr{C}$ on $\Omega$ there exists a unique $E^G_Z$-invariant measure class $\mathscr{C}_Z$ on $Z$ whose $E^G_Z$-invariant null sets are precisely the intersections with $Z$ of the $G$-invariant null sets of $\mathscr{C}$. 
We call $\mathscr{C}_Z$ the \dfn{Ramsay measure class} on $Z$ induced by  $\mathscr{C}$.
\end{proposition}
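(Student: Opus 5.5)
We prove Ramsay's statement by transporting measure classes along the standard correspondence between $G$-invariant sets in $\Omega$ and $E^G_Z$-invariant sets in $Z$, and building $\mathscr{C}_Z$ explicitly from a thickening of the cross-section.

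\emph{Set-up.} Since $Z$ is a cross-section, every $G$-orbit meets $Z$. Hence the map $B\mapsto G.B$ is a bijection from $E^G_Z$-invariant Borel subsets of $Z$ to $G$-invariant Borel subsets of $\Omega$, with inverse $A\mapsto A\cap Z$. Borelness of $G.B$ follows from the fact that $G.B$ is the image of a countable-to-one Borel map, as explained in the next paragraph.

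\emph{Existence.} Fix $\mu\in\mathscr{C}$, which we may take to be a probability measure. First reduce to the case where $Z$ is uniformly discrete, with a compact identity neighborhood $U$ such that $U.z_0\cap U.z_1=\emptyset$ for distinct $z_0,z_1\in Z$. In general, closed discreteness of $\{g: g.x\in Z\}$ lets us cover $Z$ by countably many Borel pieces $Z_n$, each $U_n$-discrete; this uses the Kechris-style argument behind \cite{Kec92}. Then the map $a\colon U\times Z\to\Omega$, $(u,z)\mapsto u.z$, is an injective Borel map, by freeness together with uniform discreteness, so its image $U.Z$ is Borel. Pull $\mu|_{U.Z}$ back along $a$ and disintegrate it over the $Z$-coordinate. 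This gives a measure $\nu:=(\mathrm{proj}_Z)_*a^{-1}_*(\mu|_{U.Z})$ on $Z$, and we set $\mathscr{C}_Z:=[\nu]$. In the general, non-uniformly-discrete case, take a weighted sum $\sum_n 2^{-n}\nu_n$ over the pieces $Z_n$.

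\emph{Null sets correspond.} For Borel $B\subseteq Z$, we have $\nu(B)=\mu(U.B)$. If $B$ is $E^G_Z$-invariant and $\mu(G.B)=0$, then $U.B\subseteq G.B$, so $\nu(B)=0$. Conversely, if $\nu(B)=0$, then $\mu(U.B)=0$. Choosing a countable dense set $\{g_k\}\subseteq G$ with $G=\bigcup_k g_kU^{-1}$ (or using translates of the interior of $U$), we get $G.B=\bigcup_k g_k.(U.B)$ up to reindexing of neighborhoods. Each $g_k.(U.B)$ is $\mu$-null because $\mathscr{C}$ is $G$-invariant, so $G.B$ is null. Note that this argument actually shows more: for an arbitrary Borel $B\subseteq Z$, $\nu(B)=0$ if and only if $\mu(G.B)=0$.

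\emph{$E^G_Z$-invariance of $[\nu]$.} Let $\phi\in[[E^G_Z]]$ and let $B\subseteq\dom\phi$ be $\nu$-null. Then $G.\phi(B)=G.B$, since $\phi(z)\in G.z$. So $\mu(G.\phi(B))=0$, and by the strengthened statement above, $\nu(\phi(B))=0$. The strengthened statement is exactly what is needed here, since $B$ itself need not be invariant.

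\emph{Uniqueness.} Suppose $\mathscr{C}'$ is another $E^G_Z$-invariant class with the same invariant null sets. For an $E^G_Z$-invariant mcp measure class, a Borel set $B$ is null if and only if its saturation $[B]_{E^G_Z}$ is null, because $[B]_{E^G_Z}$ is a countable union of images $\phi_n(B)$ with $\phi_n\in[[E^G_Z]]$ (Feldman--Moore). Thus the null sets of any such class are determined by its invariant null sets, which forces $\mathscr{C}'=\mathscr{C}_Z$.

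\emph{Main obstacle.} The delicate step is the Borel bookkeeping in the general case, where $Z$ is only a cross-section with closed discrete return times. There one needs a countable Borel partition of $Z$ into uniformly discrete pieces, so that the thickening map $a$ is countable-to-one and the formula $\nu(B)\approx\mu(U.B)$ survives.
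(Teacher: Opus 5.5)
Your proof is correct, and it takes a different route from the paper: the paper gives no argument for this proposition and simply imports it from Ramsay's theory of virtual groups (measured groupoids).

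You instead give a self-contained proof that uses only Lusin--Novikov, Feldman--Moore and a Lindel\"of covering of $G$. It also produces an explicit representative, $\nu(B)=\mu(U.B)$. Your strengthened observation, that $\nu(B)=0\iff\mu(G.B)=0$ for \emph{every} Borel $B\subseteq Z$, delivers both $E^G_Z$-quasi-invariance and the correspondence of invariant null sets in one stroke.

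A further benefit of your route is that, in the setting of \cref{pmp_cross-section}, your $\nu$ is the canonical cross-section measure up to a scalar. To see this, plug $B=U\times B'$ into \cref{eq:cross-section_measure} and use injectivity of $\Theta$ on $U\times Z$; this gives $\mu_G(U)\,\mu_Z(B')=\mathrm{covol}(Z)\,\xi(U.B')$. The citation, by contrast, buys brevity and places the statement inside the general groupoid framework.

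Two small repairs are needed.
\begin{itemize}
\item You need $G=\bigcup_k g_k\,\mathrm{int}(U)$, not $g_kU^{-1}$, so that $G.B=\bigcup_k g_k.(U.B)$.
\item The partition into uniformly discrete pieces needs no appeal to \cite{Kec92}. Take $(U_n)$ a decreasing basis of compact identity neighborhoods and set $Z_n\coloneqq\{z\in Z:(U_n^{-1}U_n\smallsetminus\{e\}).z\cap Z=\emptyset\}$. These sets are Borel by Lusin--Novikov, since the relevant sections are countable. They exhaust $Z$ because return-time sets are closed and discrete. Each is $U_n$-discrete, and freeness then makes $(u,z)\mapsto u.z$ injective on $U_n\times Z_n$.
\end{itemize}
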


We will use the following characterization of null subsets of mcp actions of lcsc groups.

\begin{lemma}[Nullness in terms of Haar measure]\label{null vs Haar null}
Let $G$ be an lcsc group and $G \acts (X,\mu)$ be an mcp action.
Then a measurable set $A \subset X$ is null if and only if for a.e.\ $x \in X$, the set $\hat{A}^x:=\{g \in G : g.x \in A\}$ is Haar-null.
\end{lemma}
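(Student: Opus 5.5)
The approach is Fubini applied to the action map. Consider the Borel set
\[
D \coloneqq \{(g,x)\in G\times X : g.x\in A\},
\]
whose $x$-sections are $D^x=\hat{A}^x$. We then compute the $\mu_G\times\mu$-measure of $D$ in two ways. Both measures are $\sigma$-finite ($\mu_G$ is $\sigma$-finite since $G$ is lcsc, and $\mu$ is a probability measure), and $D$ is Borel because the action is jointly Borel, so Tonelli applies.

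First, integrating over $x$ for each fixed $g$ gives
\[
(\mu_G\times\mu)(D)=\int_G \mu(g^{-1}.A)\,\deriv\mu_G(g).
\]
Second, integrating over $g$ for each fixed $x$ gives
\[
(\mu_G\times\mu)(D)=\int_X \mu_G(\hat{A}^x)\,\deriv\mu(x).
\]

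Since the action is mcp, for each $g\in G$ we have $\mu(g^{-1}.A)=\big((g)_*\mu\big)(A)$, and $g_*\mu\sim\mu$. Hence $\mu(g^{-1}.A)=0$ if and only if $\mu(A)=0$, independently of $g$.

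Suppose $A$ is null. Then the first integrand vanishes for every $g$, so $D$ is $\mu_G\times\mu$-null. From the second expression, $\mu_G(\hat{A}^x)=0$ for $\mu$-a.e.\ $x$.

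Conversely, suppose $\hat{A}^x$ is Haar-null for a.e.\ $x$. Then the second expression vanishes, so $\mu(g^{-1}.A)=0$ for $\mu_G$-a.e.\ $g$. Since $\mu_G(G)>0$, there is at least one such $g$, and by the mcp property $\mu(A)=0$.

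The only point needing care is measurability. One must check that $(g,x)\mapsto\mathbf 1_A(g.x)$ is Borel on $G\times X$ (which holds because the action is jointly Borel), and that "Haar-null" is unambiguous. The latter is fine because left and right Haar measures are equivalent via $\Delta$, so they have the same null sets. Beyond this the argument is routine, and I do not expect a genuine obstacle.
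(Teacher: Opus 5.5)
Your proposal is correct and is essentially the paper's proof. Both apply Fubini--Tonelli to the preimage $D$ of $A$ under the action map: its $g$-fibers are $g^{-1}.A$, which are null iff $A$ is null by the mcp hypothesis, and its $x$-fibers are $\hat{A}^x$. You spell out the two iterated integrals and the choice of a single good $g$ in the converse, which the paper compresses into one line.
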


\begin{proof}
Let $\hat{A} \subseteq G \times X$ denote the preimage of $A$ under the multiplication map $G \times X \to X$.
By Fubini's theorem, it suffices to check that $A$ is null if and only if for each $g \in G$, the fiber $\hat{A}_g = g^{-1} A$ is null, which follows from the action being mcp.
\end{proof}

\begin{proposition}\label{mcp class determined by invariant null sets}
Let $G$ be an lcsc group and $G \acts (X,\mu)$ be an mcp Borel action.
Then the measure class $[\mu]$ is uniquely determined by the collection of $G$-invariant $\mu$-null sets, i.e.,
if $\nu$ is a Borel probability measure on $X$ whose class is preserved by the action of $G$, then $\nu \sim \mu$ if and only if $\nu$ and $\mu$ have the same $G$-invariant null sets.
\end{proposition}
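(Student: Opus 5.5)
One direction is immediate: if $\nu\sim\mu$, then $\nu$ and $\mu$ have the same null sets, and in particular the same $G$-invariant null sets. The plan for the converse is to use \cref{null vs Haar null} to reduce nullness of an arbitrary Borel set to nullness of an associated $G$-invariant set. Suppose $\nu$ and $\mu$ are both $G$-quasi-invariant and have the same $G$-invariant null sets. By symmetry it suffices to show that every $\mu$-null Borel set $A\subseteq X$ is $\nu$-null.

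The key construction is the saturation-type set
\[
A^* \coloneqq \set{x\in X : \hat{A}^x \text{ is not Haar-null}}.
\]
Since $\hat{A}^{h.x}=\hat{A}^x h^{-1}$, and right translation preserves Haar-nullness (it multiplies Haar measure by $\Delta(h)$), the set $A^*$ is $G$-invariant. It is Borel by Fubini: the map $x\mapsto \mu_G(\hat{A}^x\cap K_n)$ is Borel for an exhaustion of $G$ by compacts $K_n$.

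Apply \cref{null vs Haar null} to the mcp action $G\acts (X,\mu)$. Since $A$ is $\mu$-null, $\hat{A}^x$ is Haar-null for $\mu$-a.e.\ $x$, so $A^*$ is $\mu$-null. Because $A^*$ is $G$-invariant, the hypothesis gives that $A^*$ is also $\nu$-null. Thus $\hat{A}^x$ is Haar-null for $\nu$-a.e.\ $x$. Now apply \cref{null vs Haar null} again, this time to the mcp action $G\acts (X,\nu)$, to conclude that $A$ is $\nu$-null, as required.

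The only point needing care is the Borel measurability and $G$-invariance of $A^*$. Both follow from joint measurability of the action map $G\times X\to X$ together with the standard behavior of left Haar measure under right translation, so I do not expect any genuine obstacle.
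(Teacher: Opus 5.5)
Your proof is correct and is essentially the paper's argument. The paper uses the $G$-invariant set $B=\{x : \hat{A}^x \text{ is Haar-null}\}$, the complement of your $A^*$, gets its invariance from $\hat{A}^{hx}=\hat{A}^x h^{-1}$, and then concludes by applying \cref{null vs Haar null} to both $\mu$ and $\nu$; your remark on Borel measurability via Tonelli fills in a detail the paper leaves implicit.
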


\begin{proof}
Suppose that $\nu$ and $\mu$ agree on $G$-invariant null sets and let $A \subseteq X$ be a Borel set.
Letting $\hat{A} \subseteq G \times X$ denote the preimage of $A$ under the multiplication map $G \times X \to X$, note that the set 
\[
B \coloneq \{x \in X : \hat{A}^x \text{ is Haar-null}\}
\]
is $G$-invariant because $\hat{A}^{hx} = \hat{A}^x h^{-1}$ for all $h \in G$.
Thus, $B$ is $\mu$-conull if and only if it is $\nu$-conull, so the conclusion follows from \cref{null vs Haar null}.
\end{proof}

\begin{proposition}[Canonical cross-section measure]\label{pmp_cross-section} 
Let $G\acts (\Omega ,\xi )$ be a free ergodic pmp Borel action of the lcsc group $G$, and let $Z\subseteq \Omega$ be a uniformly discrete Borel cross-section.
Then there exists a unique probability measure $\mu_Z$ on $Z$ in the Ramsay class induced by $[\xi]$, along with a constant $\mathrm{covol}(Z)\in(0,\infty)$ such that 
\begin{equation}\label{eq:cross-section_measure}
\mu_G\times \mu_Z (B)= \mathrm{covol}(Z) \cdot \int _\Omega |B \cap {\Theta^{-1}(\omega)}| \, d\xi (\omega)  
\end{equation}
for every Borel subset $B\subseteq G\times Z$,
where $\Theta : G \times Z \to \Omega$ is the map $\Theta(g,z) = g.z$.
Moreover, the Radon--Nikodym cocycle for $E^G_Z$ with respect to $\mu_Z$ is given by 
\begin{equation}\label{eq:RN_through_modular}
\w_{\mu_Z} (g.z,z) = \Delta _G(g)    
\end{equation}
for $\mu _Z$-a.e.\ $z\in Z$ and all $g\in G$ with $g.z \in Z$. 

We call $\mu_Z$ the \dfn{canonical cross-section measure} on $Z$.
\end{proposition}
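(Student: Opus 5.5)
We need to prove the canonical cross-section measure proposition: existence and uniqueness of $\mu_Z$ satisfying \eqref{eq:cross-section_measure}, and the formula \eqref{eq:RN_through_modular}.

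The first step is to define a candidate measure on $G\times Z$ by the right-hand side, $\lambda(B) \coloneqq \int_\Omega |B\cap \Theta^{-1}(\omega)|\, d\xi(\omega)$. Because $G$ acts freely and $Z$ is a cross-section, each fiber $\Theta^{-1}(\omega)=\{(g,z): g.z=\omega\}$ is countable; it is in bijection with $\{h \in G: h.\omega\in Z\}$, which is closed and discrete. Hence $\lambda$ is a $\sigma$-finite Borel measure.

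Next we check the equivariance of $\lambda$. Let $G$ act on $G\times Z$ by $h\cdot(g,z)=(hg,z)$. Then $\Theta$ is $G$-equivariant, and since $\xi$ is $G$-invariant, $\lambda$ is invariant under this left action. By uniform discreteness, $\Theta$ is injective on $U\times Z$ for a small neighbourhood $U$ of $e_G$. So $\lambda$ restricted to $U\times Z$ is the pullback of $\xi|_{U.Z}$, and in particular $\lambda$ is finite on $K\times Z$ for compact $K$.

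We then disintegrate $\lambda$ over the projection to $Z$. Define $\eta(A)\coloneqq\lambda(V\times A)$ for a fixed compact neighbourhood $V$; this is a finite measure on $Z$. The fiber measures on $G$ are left-invariant, so by uniqueness of Haar measure (a standard argument: a left-invariant measure on $G\times Z$ with finite marginal on compacta is $\mu_G\times\eta'$) we get $\lambda=\mu_G\times\eta'$ for a unique $\sigma$-finite $\eta'$. We need $\eta'$ finite: $\eta'(Z)\mu_G(U)=\lambda(U\times Z)=\xi(U.Z)\le 1$. Nonzero follows since $G.Z=\Omega$. Setting $\mu_Z\coloneqq\eta'/\eta'(Z)$ and $\mathrm{covol}(Z)\coloneqq 1/\eta'(Z)$ gives \eqref{eq:cross-section_measure}, and uniqueness follows because both the constant and the measure are forced by the normalization.

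It remains to show that $\mu_Z$ is in the Ramsay class. A $E^G_Z$-invariant set $A\subseteq Z$ is $\mu_Z$-null iff $\lambda(G\times A)=0$ iff $\xi(G.A)=0$. Together with the characterization in \cref{measure-class_on_cross-section}, this gives $[\mu_Z]=\mathscr{C}_Z$. Strictly, one should first check that $\mu_Z$ is $E^G_Z$-quasi-invariant; this falls out of the cocycle computation below.

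For \eqref{eq:RN_through_modular}, the plan is to exploit the second symmetry of $\lambda$. Consider a partial Borel injection $\phi\in[[E^G_Z]]$ with $\phi(z)=\gamma(z).z$, where $\gamma: \dom\phi\to G$ is Borel; by Lusin–Novikov, countably many such maps cover $E^G_Z$. The map $(g,z)\mapsto (g\gamma(z)^{-1},\phi(z))$ preserves $\Theta$, hence preserves $\lambda$ restricted to the relevant sets. Computing with $\lambda=\mu_G\times\mu_Z$ and $\mu_G(B\gamma^{-1})=\Delta(\gamma)^{-1}\mu_G(B)$ yields $d(\phi_*\mu_Z)/d\mu_Z (\phi z)=\Delta(\gamma(z))^{-1}$. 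With the convention $\w_\mu=d\mu^\ell/d\mu^r$ this gives $\w_{\mu_Z}(g.z,z)=\Delta_G(g)$, up to checking the sign convention carefully. This bookkeeping, along with the measurability of the disintegration step via the uniqueness of Haar measure fiberwise, is where I expect the main care is needed.
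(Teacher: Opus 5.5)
Your proposal is correct and takes essentially the same route as the paper, which simply defers to the proof of parts (2)--(3) of \cite[Proposition 4.3]{KPV15}: factor the fiber-counting measure on $G\times Z$ as $\mu_G\times\mu_Z$ via Haar uniqueness, then use the $\Theta$-preserving map $(g,z)\mapsto(g\gamma(z)^{-1},\gamma(z).z)$, noting that unimodularity is used there only to conclude invariance. Your sign bookkeeping also comes out right: with $\w_\mu=\deriv\mu^\ell/\deriv\mu^r$ and $\mu_G(Cg)=\Delta_G(g)\mu_G(C)$, the computation gives $\w_{\mu_Z}(z,g.z)=\Delta_G(g)^{-1}$, which is exactly $\w_{\mu_Z}(g.z,z)=\Delta_G(g)$.
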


\begin{proof}
The proof is identical to the proof of parts (2) and (3) of \cite[Proposition 4.3]{KPV15}; 
note that, while the statement of \cite[Proposition 4.3]{KPV15} assumes that $G$ is unimodular, this assumption is never used in the proof of part (2), and the proof of part (3) in fact establishes \cref{eq:RN_through_modular} and only invokes unimodularity of $G$ to further deduce that the measure on the cross-section is invariant for the cross-section equivalence relation in that case. 
\end{proof}

\subsection{Real flows} 
\label{sect: real flows}
By a \textbf{Borel flow} we mean a Borel $\RR$-space.
Given a \emph{free} Borel flow $\RR \acts \Omega$, we additionally equip each $\RR$-orbit $\RR.\omega$ with the linear order making the orbit map $\RR\to\RR.\omega$, $\lambda\mapsto \lambda .\omega$ an order isomorphism. 
A cross-section $Z$ of the $\RR$-action is called \textbf{bi-recurrent} if the intersection of $Z$ with each $\RR$-orbit has neither a maximum nor a minimum element.
Given a bi-recurrent Borel cross-section $Z\subseteq\Omega$, the associated \textbf{return time function} for $Z$ is the function $\PoincRetTime_Z : Z \to \RR$ given by
\begin{equation}\label{eq:return_fct}
\PoincRetTime_Z(z) \coloneqq \min \{ \lambda \in \RR : \lambda >1 \text{ and } \lambda .z \in Z\}.
\end{equation}
Then the \textbf{Poincaré return map} associated to the cross-section $Z$ is the map $\PoincRetMap_Z : Z \to Z$ given by 
\begin{equation}\label{eq:Poincare_return_map}
\PoincRetMap_Z(z) \coloneqq \PoincRetTime_Z(z).z.
\end{equation}
We define $\actcoc_\Omega : E^{\RR}_\Omega\to \RR$ to be the \textbf{action-cocycle} determined by 
\begin{equation}\label{eq:action-cocycle_for_R-flow}
\actcoc_\Omega (z, y) . y = z
\end{equation}
for all $z,y\in \Omega$ in the same $\RR$-orbit. 
In the case where $Z$ is bi-recurrent and $\PoincRetTime_Z$ takes a constant value $\alpha \in \RR$, we say that the cross-section $Z$ is \textbf{$\alpha$-equispaced}.

\section{Lifting couplings through semi-direct products}

In this section we record a general lemma on lifting couplings through semi-direct products.
We then explain how applying this to the relevant modular semi-direct products allows us to reduce \cref{thm:1-GBS} to \cref{thm:nonunim_nonamen_treeable_ME}. 

\subsection{ME-couplings and semidirect products} 

Here, we show that a measure equivalence between two nonunimodular groups induces a measure equivalence between their modular semi-direct products.

\begin{lemma}\label{lem:lift_ME_semidirect}
Let $G$, $H$, and $A$ be lcsc groups, and let $\mathcal{L}$ be a Polish group acting continuously on $A$ by continuous group automorphisms. Let $\sigma:G\to\mathcal{L}$ and $\tau:H\to\mathcal{L}$ be continuous group homomorphisms, and let $\tilde{G}=G\ltimes_\sigma A$ and $\tilde{H}=H\ltimes_\tau A$ be the associated semidirect product groups, viewed as internal semidirect products. 

Let $(\Omega,\mathscr{C})$ be a strict $(G,H)$-ME-coupling, and suppose that there exists a measurable map $\Psi:\Omega\to\mathcal{L}$ that is a.e.\ bi-equivariant, i.e., for every $g\in G$ and every $h\in H$,
\begin{equation}\label{eq:Psi_biequivariant}
\Psi_{g.\omega.h}=\sigma_g\Psi_\omega\tau_h
\quad\text{for a.e.\ }\omega\in\Omega.
\end{equation}
Let $\mu$ be a representative of $\mathscr{C}$, let $\mu_A$ be a left Haar measure on $A$, and let $\tilde{\mathscr{C}}$ be the measure class of $\mu\times\mu_A$ on $\tilde{\Omega}\coloneqq \Omega\times A$. Then the formulas
\begin{align}
ga.(\omega,c)&\coloneqq (g.\omega,\Psi_\omega^{-1}(a)c), \label{eq:tildeG_act}\\
(\omega,c).hb&\coloneqq (\omega.h,\tau_h^{-1}(c)b) \label{eq:tildeH_act}
\end{align}
define a near $(\tilde{G},\tilde{H})$-biaction on $\tilde{\Omega}$, and the associated continuous $(\tilde{G},\tilde{H})$-biaction on $\MAlg(\tilde{\Omega},\tilde{\mathscr{C}})$ is an algebraic $(\tilde{G},\tilde{H})$-ME-coupling in the sense of \cref{def:algebraic_ME_coupling}.
\end{lemma}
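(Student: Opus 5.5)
The plan is to verify, in order: that the formulas give a near biaction; that it preserves $\tilde{\mathscr{C}}$; that both actions are essentially free; and that both quotients carry invariant probability measures in the right classes.

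\emph{Biaction.} First I would check that \eqref{eq:tildeG_act} respects the semidirect product law. In $\tilde G$ we have $(g a)(g'a') = gg'\,\sigma_{g'}^{-1}(a)a'$. Composing the formulas gives
\[
ga.(g'a'.(\omega,c)) = \bigl(gg'.\omega,\ \Psi_{g'.\omega}^{-1}(a)\,\Psi_\omega^{-1}(a')c\bigr).
\]
By \eqref{eq:Psi_biequivariant} with $h=e$, $\Psi_{g'.\omega}^{-1}=\Psi_\omega^{-1}\sigma_{g'}^{-1}$ for a.e.\ $\omega$. The right-hand side therefore equals
\[
\bigl(gg'.\omega,\ \Psi_\omega^{-1}(\sigma_{g'}^{-1}(a)a')c\bigr)=(gg'\sigma_{g'}^{-1}(a)a').(\omega,c).
\]
The $\tilde H$ side \eqref{eq:tildeH_act} is an exact right action, checked the same way using $\tau$ alone.

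For commutation, compute $(ga.(\omega,c)).hb$ and $ga.((\omega,c).hb)$. The second coordinates are $\tau_h^{-1}(\Psi_\omega^{-1}(a)c)b$ and $\Psi_{\omega.h}^{-1}(a)\tau_h^{-1}(c)b$. These agree because $\Psi_{\omega.h}=\Psi_\omega\tau_h$ a.e. Since all identities hold only a.e.\ for each fixed pair of group elements, this is precisely a near biaction. Fubini turns these into identities in $\MAlg(\tilde\Omega,\tilde{\mathscr C})$, and Appendix~\ref{app:algebraic_ME} supplies the associated continuous Boolean biaction.

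\emph{Measure class and freeness.} Each group element acts as a fiberwise automorphism of $A$ (left translation composed with a continuous automorphism) over an mcp map of $\Omega$. Automorphisms of $A$ scale Haar measure and translations preserve it, so $\mu\times\mu_A$ is sent to an equivalent measure, with the scaling factor measurable in $\omega$. Freeness of $\tilde G$: if $ga$ fixes $(\omega,c)$, then $g=e$ by freeness on $\Omega$, and then $\Psi_\omega^{-1}(a)=e$, so $a=e$. The same argument works for $\tilde H$.

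\emph{Quotients.} This is the step I expect to be the main obstacle. I would use the $A$-factor to identify $\tilde\Omega/\tilde H$ with $\Omega/H$. The map $(\omega,c)\mapsto \omega H$ is $\tilde H$-invariant, and its fibers are single $\tilde H$-orbits, because right translation by $A$ is transitive on the second coordinate. So $\tilde\Omega/\tilde H\cong\Omega/H$ as measure-class spaces, with the pushforward class $\mathscr C/H$.

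The left $\tilde G$-action descends to this quotient with $A$ acting trivially: $a$ changes only the second coordinate, which is absorbed by $\tilde H$. Hence the $G$-invariant probability measure on $\Omega/H$ in $\mathscr C/H$ is $\tilde G$-invariant. The symmetric argument uses left translation by $\Psi_\omega^{-1}(A)=A$ in the second coordinate and identifies $\tilde G\backslash\tilde\Omega$ with $G\backslash\Omega$.

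The delicate point is doing this purely algebraically. One must show that the fixed subalgebra of $\MAlg(\tilde\Omega)$ under $\tilde H$ (respectively $\tilde G$) is exactly the pullback of $\MAlg(\Omega/H)$ (respectively $\MAlg(G\backslash\Omega)$). I would argue this by Fubini: an $A$-invariant class must be a.e.\ constant in $c$ by ergodicity of $A$ on itself. It must then be $H$-invariant in $\omega$. The conditions of \cref{def:algebraic_ME_coupling} then follow.
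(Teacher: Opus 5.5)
Your check of the near biaction is correct and more explicit than the paper's; it covers both the product law in $\tilde G$ and the commutation, each via \eqref{eq:Psi_biequivariant}. Your identifications $\tilde\Omega/\tilde H\cong\Omega/H$ and $\tilde G\backslash\tilde\Omega\cong G\backslash\Omega$, with $A$ acting trivially on these quotients, are exactly what gives condition (ii) of \cref{def:algebraic_ME_coupling}.

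\textbf{The gap is condition (i).} You never produce the equivariant isomorphisms $\Phi_{\tilde G}:\MAlg(\tilde G,[\mu_{\tilde G}])\mathbin{\widehat\otimes}\mathcal{M}^{\tilde G}\to\mathcal{M}$ and $\Phi_{\tilde H}:\mathcal{M}^{\tilde H}\mathbin{\widehat\otimes}\MAlg(\tilde H,[\mu_{\tilde H}])\to\mathcal{M}$. These do not follow from knowing the fixed subalgebras together with pointwise freeness. Your checklist is really the list of strict-coupling axioms. Getting from those to (i), as in the first half of \cref{prop:algebraic_to_strict}, requires Borel sections of genuine free Borel actions with standard quotient. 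That argument would work for the $\tilde H$-action. The $\tilde G$-formula, however, is only a near action, so the argument cannot simply be quoted. Your closing claim that the conditions of the definition then follow skips the one step with real content.

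The paper supplies this step directly, and the fix is short.
\begin{itemize}
\item Choose Borel sections $s$ of $\Omega\to X\coloneqq\Omega/H$ and $t$ of $\Omega\to Y\coloneqq G\backslash\Omega$.
\item The maps $(x,hb)\mapsto(s(x).h,\,b)$ and $(ag,y)\mapsto\bigl(g.t(y),\,\Psi_{g.t(y)}^{-1}(a)\bigr)$ are Borel isomorphisms $X\times\tilde H\to\tilde\Omega$ and $\tilde G\times Y\to\tilde\Omega$.
\item They carry $[\nu_X\times\mu_{\tilde H}]$ and $[\mu_{\tilde G}\times\nu_Y]$ to $\tilde{\mathscr{C}}$. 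This holds because Haar measure on a semidirect product is equivalent to the product of the Haar measures, and each $\Psi_\omega^{-1}$ is an automorphism of $A$.
\item They turn the $\tilde H$-action into right translation exactly, and the $\tilde G$-action into left translation almost everywhere, by \eqref{eq:Psi_biequivariant}.
\end{itemize}
Evaluate $\Psi$ at $g.t(y)$, not at $t(y)$: the section $t(Y)$ is typically $\mathscr{C}$-null, and a.e.\ bi-equivariance says nothing there.

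These identifications give (i) at once. The fixed-subalgebra identifications you planned to prove by a Fubini/ergodicity argument then become the formal identity $\bigl(\MAlg(\tilde G,[\mu_{\tilde G}])\mathbin{\widehat\otimes}\mathcal{A}\bigr)^{\tilde G}=1\mathbin{\widehat\otimes}\mathcal{A}$, after which (ii) goes through as you said.

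A minor slip: right translations scale left Haar measure on a nonunimodular $A$ rather than preserve it. This does not affect the equivalence of measures.
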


\begin{proof}
It is clear that \cref{eq:tildeH_act} defines a free measure-class-preserving action of $\tilde{H}$ on $(\tilde{\Omega},\tilde{\mathscr{C}})$.
To see that \cref{eq:tildeG_act} defines a near action of $\tilde{G}$, we verify that $g.a.g^{-1}.(\omega,c)=\sigma_g(a).(\omega,c)$ for $\tilde{\mathscr{C}}$-a.e.\ $(\omega,c)$. We have
\begin{align*}
g.a.g^{-1}.(\omega,c)=g.a.(g^{-1}.\omega,c)
=g.(g^{-1}.\omega,\Psi_{g^{-1}.\omega}^{-1}(a)c) &=(\omega,\Psi_{g^{-1}.\omega}^{-1}(a)c)
=(\omega,\Psi_\omega^{-1}(\sigma_g(a))c)
=\sigma_g(a).(\omega,c),
\end{align*}
where the penultimate equality holds for $\mathscr{C}$-a.e.\ $\omega\in\Omega$ by \cref{eq:Psi_biequivariant}.

The left and right near actions commute almost everywhere: for every $ga\in\tilde{G}$ and $hb\in\tilde{H}$, for a.e.\ $\omega\in\Omega$ and every $c\in A$,
\begin{align*}
ga.((\omega,c).hb)
=ga.(\omega.h,\tau_h^{-1}(c)b)
&=(g.\omega.h,\Psi_{\omega.h}^{-1}(a)\tau_h^{-1}(c)b) \\
&=(g.\omega.h,\tau_h^{-1}(\Psi_\omega^{-1}(a))\tau_h^{-1}(c)b) \\
&=(g.\omega.h,\tau_h^{-1}(\Psi_\omega^{-1}(a)c)b)\\
&=(ga.(\omega,c)).hb.
\end{align*}
Thus \cref{eq:tildeG_act,eq:tildeH_act} induce a Boolean $(\tilde{G},\tilde{H})$-biaction on $\mathcal{M}\coloneqq \MAlg(\tilde{\Omega},\tilde{\mathscr{C}})$.

Let $X\coloneqq \Omega/H$ and $Y\coloneqq G\backslash\Omega$, and choose invariant Borel probability measures $\nu_X\in \mathscr{C}/H$ and $\nu_Y\in G\backslash\mathscr{C}$. 
Choosing Borel sections of the quotient maps identifies $(\Omega,\mathscr{C})$ with both $(X\times H,[\nu_X\times\mu_H])$ and $(G\times Y,[\mu_G\times\nu_Y])$. 
It follows that $(\tilde{\Omega},\tilde{\mathscr{C}})=(\Omega\times A,\tilde{\mathscr{C}})$ is mcp Borel isomorphic to both $(X\times\tilde{H},[\nu_X\times\mu_{\tilde{H}}])$ and $(\tilde{G}\times Y,[\mu_{\tilde{G}}\times\nu_Y])$.

Under the first identification, the right $\tilde{H}$-action is just right translation on the $\tilde{H}$-coordinate, so $\mathcal{M}^{\tilde{H}}\cong \MAlg(X,\nu_X)$ and $\mathcal{M}\cong \mathcal{M}^{\tilde{H}}\mathbin{\widehat\otimes}\MAlg(\tilde{H},[\mu_{\tilde{H}}])$ as Boolean $\tilde{H}$-spaces; 
under the first of these isomorphisms, the action of $\tilde{G}$ on $\mathcal{M}^{\tilde{H}}$ is isomorphic to $\tilde{G}$ acting on $\MAlg(X,\nu_X)$ through the quotient map to $G$, and therefore $\nu_X$ corresponds to a $\tilde{G}$-invariant faithful probability measure on $\mathcal{M}^{\tilde{H}}$.

Likewise switching the roles of $\tilde{G}$ and $\tilde{H}$. 
It follows that $\mathcal{M}$ is an algebraic $(\tilde{G},\tilde{H})$-ME-coupling.
\end{proof}

We will need some basic facts about measured groupoids; we refer the reader to \cite{Hahn:HaarMeasureGroupoids} for a thorough treatment. Given a free pmp left action $G\curvearrowright (X,\mu_X)$, the associated left-action groupoid is $\mathcal{G}=G\ltimes X$, with source map $s(g,x)=x$, range map $r(g,x)=g.x$, and Haar measure $\mu_G\times\mu_X$. 
Likewise, a free pmp right action $(Y,\mu_Y)\curvearrowleft H$ gives the right-action groupoid $\mathcal{H}=Y\rtimes H$, with source map $s(y,h)=y.h$, range map $r(y,h)=y$, and Haar measure $\mu_Y\times\mu_H$. 
The modular function of a measured groupoid $(\mathcal{K},\Lambda)$ is the Radon--Nikodym cocycle $\Delta_{\mathcal{K}}=d\Lambda/d\Lambda^{-1}$. 
For the two action groupoids above, a direct computation shows that $\Delta_{\mathcal{G}}(g,x)=\Delta_G(g)$ and $\Delta_{\mathcal{H}}(y,h)=\Delta_H(h)$.

\begin{proposition}
\label{modular_semidirect_ME-reduction}
Let $G$ and $H$ be lcsc groups, and let $\tilde{G}=G\ltimes\mathbf{R}$ and $\tilde{H}=H\ltimes\mathbf{R}$ be their associated modular semidirect products. If $G$ and $H$ are measure equivalent, then $\tilde{G}$ and $\tilde{H}$ are measure equivalent.
\end{proposition}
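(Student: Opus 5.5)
We apply \cref{lem:lift_ME_semidirect} with $A=\mathbf{R}$ and $\mathcal{L}=\RR$ acting on $\mathbf{R}$ by multiplication. We take $\sigma=\Delta_G$ and $\tau=\Delta_H$; these are continuous homomorphisms, and by definition $\tilde G=G\ltimes_\sigma\mathbf{R}$ and $\tilde H=H\ltimes_\tau\mathbf{R}$. Given a strict $(G,H)$-ME-coupling $(\Omega,\mathscr{C})$, it then suffices to produce a measurable map $\Psi:\Omega\to\RR$ satisfying \cref{eq:Psi_biequivariant}, i.e.
\[
\Psi(g.\omega.h)=\Delta_G(g)\,\Psi(\omega)\,\Delta_H(h)\quad\text{for a.e.\ }\omega.
\]
Once $\Psi$ exists, the lemma gives an algebraic $(\tilde G,\tilde H)$-ME-coupling. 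By \cref{prop:algebraic_to_strict} (see \cref{rem:algebraic_coupling}) this is induced by a strict coupling, so $\tilde G$ and $\tilde H$ are measure equivalent.

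To construct $\Psi$, we take a measure $\mu\in\mathscr{C}$ that is $G$-quasi-invariant with Radon--Nikodym derivative given by $\Delta_G$ and, dually, $H$-quasi-invariant with derivative given by $\Delta_H$. We then let $\Psi$ be the ratio of $\mu$ to a reference measure, or equivalently the Radon--Nikodym cocycle between two natural measures in $\mathscr{C}$.

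Concretely, we fix invariant probability measures $\nu_X$ on $X=\Omega/H$ and $\nu_Y$ on $Y=G\backslash\Omega$. Using Borel sections of the quotient maps, we identify $(\Omega,\mathscr{C})$ with $X\times H$ carrying $\nu_X\times\mu_H$, and also with $G\times Y$ carrying $\mu_G\times\nu_Y$. Transport these to two measures $\lambda_1,\lambda_2\in\mathscr{C}$ on $\Omega$.

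The measure $\lambda_1$ is $G$-invariant, since $G$ acts on $X\times H$ through the cocycle into $H$ and the action on $X$ preserves $\nu_X$; its transformation under right $H$-translation is given by $\Delta_H$, because $\mu_H(Bh)=\Delta_H(h)\mu_H(B)$. Symmetrically, $\lambda_2$ is $H$-invariant, and left translation by $G$ scales it by $\Delta_G$. This is cleanest in the groupoid language of the preceding paragraph: $\lambda_1$ and $\lambda_2$ are the Haar measures of the groupoids $\Omega\rtimes H$ and $G\ltimes\Omega$, whose modular functions are $\Delta_H$ and $\Delta_G$.

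We then set $\Psi\coloneqq (d\lambda_2/d\lambda_1)^{\pm1}$, choosing the sign convention so the exponents match. The required identity should follow from the chain rule:
\[
\frac{d\lambda_2}{d\lambda_1}(g.\omega.h)
=\frac{d(g^{-1}_*\lambda_2)}{d(g^{-1}_*\lambda_1)}\Big(\omega.h\Big)
\]
and so on, where each push-forward contributes exactly one modular factor. The point is that $g$ fixes $\lambda_1$ while scaling $\lambda_2$ by $\Delta_G(g)^{\pm1}$, and $h$ fixes $\lambda_2$ while scaling $\lambda_1$ by $\Delta_H(h)^{\pm1}$.

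The main obstacle is this Radon--Nikodym computation. One must check that $\lambda_1$ is genuinely $G$-invariant: a priori the $G$-action on $X\times H$ involves a cocycle $\alpha:G\times X\to H$ acting by left translation on the $H$-coordinate, and left translation preserves $\mu_H$. One must also verify that the right action of $h$ multiplies $\lambda_1$ by the constant $\Delta_H(h)$, with no dependence on the choice of section, and get all the inversions and exponents consistent. If constants come out inverted, replacing $\Psi$ by $\Psi^{-1}$, or composing with inversion, fixes it.

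The map $\Psi$ is measurable as a Radon--Nikodym derivative, and positivity a.e.\ follows from $\lambda_1\sim\lambda_2$. Since \cref{eq:Psi_biequivariant} only needs to hold almost everywhere for each fixed pair $(g,h)$, no strictification is required beyond what \cref{lem:lift_ME_semidirect} already handles.
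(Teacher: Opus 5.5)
Your plan of taking $\Psi$ to be the Radon--Nikodym derivative between a $G$-invariant and an $H$-invariant measure in $\mathscr{C}$ is viable. However, the measure $\lambda_2$ you chose does not have the properties you attribute to it, so the key step fails as written.

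Write the biaction in the coordinates $G\times Y$ as $g.(g_0,y).h=(gg_0\beta(y,h),\,y.h)$ for a cocycle $\beta:Y\times H\to G$. Since $\mu_G$ is a \emph{left} Haar measure, left translation preserves it. So $\lambda_2$ is $G$-invariant rather than scaled by $\Delta_G$. Right multiplication of the $G$-coordinate by $\beta(y,h)$ multiplies $\mu_G$ by $\Delta_G(\beta(y,h))$, which is in general a nonconstant function of $y$. So $\lambda_2$ is not $H$-invariant either.

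Consequently $d\lambda_2/d\lambda_1$ is $G$-invariant. That alone rules out $\Psi(g.\omega)=\Delta_G(g)^{\pm1}\Psi(\omega)$ when $G$ is nonunimodular, so no choice of exponent yields \cref{eq:Psi_biequivariant}. The same defect shows up as the section-dependence you worried about: replacing the section $t$ by $y\mapsto k(y).t(y)$ multiplies $\lambda_2$ by $\Delta_G(k(y))^{-1}$. The appeal to symmetry is exactly where it breaks. The mirror image of ``left cocycle translation preserves the left Haar measure of $H$'' is ``right cocycle translation preserves the \emph{right} Haar measure of $G$''.

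The repair is to use a right Haar measure. Let $\lambda_2'$ be the transport of $\check\mu_G\times\nu_Y$, where $d\check\mu_G\coloneqq\Delta_G^{-1}\,d\mu_G$. Then $\lambda_2'$ is section-independent, $H$-invariant (by right invariance of $\check\mu_G$ and $H$-invariance of $\nu_Y$), and satisfies $g_*\lambda_2'=\Delta_G(g)\lambda_2'$. Your computation for $\lambda_1$ is correct: it gives $g_*\lambda_1=\lambda_1$ and $(R_h)_*\lambda_1=\Delta_H(h)^{-1}\lambda_1$, where $R_h(\omega)\coloneqq\omega.h$. The chain rule then shows that $\Psi\coloneqq d\lambda_1/d\lambda_2'$ satisfies $\Psi(g.\omega.h)=\Delta_G(g)\Psi(\omega)\Delta_H(h)$ for a.e.\ $\omega$, and the rest of your argument goes through.

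So corrected, your route genuinely differs from the paper's. The paper splits off the unimodular case and uses Koivisto--Kyed--Raum's Theorem~A to replace the coupling by an isomorphism of action groupoids $G\ltimes X\cong Y\rtimes H$. It then invokes Hahn's theorem that the modular functions of two Haar measures on a measured groupoid are cohomologous, producing a function $u$ on $Y$ and setting $\Psi(g_0,x)=\Delta_G(g_0)u(\phi(x))$. Your argument works on an arbitrary strict coupling and needs neither result nor the case split. In the coordinates $(g_0,y)$ it gives $\Psi=\Delta_G(g_0)\,u(y)$ with $u\coloneqq d\lambda_1/d\lambda_2$ (your original $\lambda_2$), which is $G$-invariant and hence a function of $y$ alone. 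Thus the coboundary the paper imports from Hahn appears here as an explicit Radon--Nikodym derivative.
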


\begin{remark}
Observe that parts \labelcref{item:uni_amen} and \labelcref{item:nonuni_amen} of \cref{thm:AutTpqME} give a counter-example to the converse of \cref{modular_semidirect_ME-reduction}: while $\Aut(T_{1,1})$ and $\Aut(T_{1,2})$ are not measure equivalent (distinguished by unimodularity), their modular semi-direct products become measure equivalent (being both amenable and unimodular).
\end{remark}

\begin{proof}
Assume that $G$ and $H$ are measure equivalent. If one of these groups is unimodular, then so is the other by \cite[Corollary 16]{Koivisto-Kyed-Raum-ME-non-unim}, and in that case the claim is immediate because the modular semidirect products are just direct products. We may therefore assume that both $G$ and $H$ are nonunimodular, and in particular non-discrete.

By \cite[Theorem A and Lemma 4]{Koivisto-Kyed-Raum-ME-non-unim}, there exist a free pmp left $G$-space $(X,\mu_X)$ and a free pmp right $H$-space $(Y,\mu_Y)$ such that the associated measured groupoids $\mathcal{G}\coloneqq (G\ltimes X,\mu_G\times\mu_X)$ and $\mathcal{H}\coloneqq (Y\rtimes H,\mu_Y\times\mu_H)$ are mcp Borel isomorphic. Thus there is a Borel groupoid isomorphism $\Phi:G\ltimes X\to Y\rtimes H$ such that $\Phi_*(\mu_G\times\mu_X)$ and $\mu_Y\times\mu_H$ lie in the same measure class.

Write $\Phi(g,x)=(\phi(g.x),\alpha(g,x))$ and $\Phi^{-1}(y,h)=(\beta(y,h),\phi^{-1}(y.h))$, where $\phi:(X,\mu_X)\to (Y,\mu_Y)$ is an mcp Borel isomorphism, $\alpha:G\times X\to H$ is a Borel cocycle, and $\beta:Y\times H\to G$ is a Borel cocycle. Then $\phi(g.x)=\phi(x).\alpha(g,x)^{-1}$, and $\phi^{-1}(y.h)=\beta(y,h)^{-1}.\phi^{-1}(y)$, and
\[
\beta(\phi(g.x),\alpha(g,x))=g,
\qquad
\alpha(\beta(\phi(x),h)^{-1},x)^{-1}=h.
\]
Conjugating the right action of $H$ on $Y$ by $\phi$, we obtain an mcp right action of $H$ on $X$, given by $x.h\coloneqq \phi^{-1}(\phi(x).h)$. We then equip $(G\times X,\mu_G\times\mu_X)$ with the mcp $(G,H)$-biaction
\[
g.(g_0,x)\coloneqq (gg_0,x),
\qquad
(g_0,x).h\coloneqq (g_0\beta(\phi(x),h),x.h).
\]
This is a strict $(G,H)$-ME-coupling.

Now $\Phi_*(\mu_G\times\mu_X)$ is another Haar measure on $\mathcal{H}$, with associated modular function $\Delta_{\mathcal{G}}\circ\Phi^{-1}$. By \cite[Corollary 3.14]{Hahn:HaarMeasureGroupoids}, there exists a Borel map $u:Y\to \RR$ such that $\Delta_{\mathcal{H}}=(u\circ r)(\Delta_{\mathcal{G}}\circ\Phi^{-1})(u\circ s)^{-1}$ almost everywhere on $\mathcal{H}$, i.e., 
\[
\Delta_H(h)=u(y)\Delta_G(\beta(y,h))u(y.h)^{-1}
\]
for a.e.\ $(y,h)\in Y\times H$.

Define $\Psi:G\times X\to \RR$ by $\Psi(g_0,x)\coloneqq \Delta_G(g_0)u(\phi(x))$. Then for every $g\in G$ we have $\Psi(g.(g_0,x))=\Delta_G(g)\Psi(g_0,x)$ for all $(g_0,x)\in G\times X$. For every $h\in H$, we also have
\begin{align*}
\Psi((g_0,x).h)
&=\Psi(g_0\beta(\phi(x),h),x.h) \\
&=\Delta_G(g_0)\Delta_G(\beta(\phi(x),h))u(\phi(x).h) \\
&=\Delta_G(g_0)u(\phi(x))\Delta_H(h) \\
&=\Psi(g_0,x)\Delta_H(h)
\end{align*}
for a.e.\ $(g_0,x)\in G\times X$. Thus $\Psi$ is $(G,H)$-biequivariant almost everywhere in the sense of \cref{lem:lift_ME_semidirect}, where $\RR$ is viewed as the group of dilations of $\mathbf{R}$.

By \cref{lem:lift_ME_semidirect}, the induced Boolean $(\tilde{G},\tilde{H})$-biaction on $\MAlg((G\times X)\times\mathbf{R},[\mu_G\times\mu_X\times\mu_{\mathbf{R}}])$ is an algebraic $(\tilde{G},\tilde{H})$-ME-coupling. Hence, by \cref{prop:algebraic_to_strict}, the groups $\tilde{G}$ and $\tilde{H}$ are measure equivalent.
\end{proof}

\subsection{Reduction of Theorem \ref{thm:1-GBS} to Theorem \ref{thm:nonunim_nonamen_treeable_ME}}

Let $\Gamma$ be a $1$-GBS group.
Assuming $\Gamma$ is nonamenable and does not contain a finite index subgroup isomorphic to $\mathbf{F}_r\times\mathbf{Z}$, \cite[Proposition 2.6]{Levitt:GBS} implies that the image of $\Gamma$ in $\mathbf{Q}^\times$ under the homomorphism $\delta$ induced by commensuration of a vertex group (see \cite[p.~483]{Levitt:GBS}) is infinite.
As described at the end of \cref{subsec:latticeEmbedding}, after moving to an index $2$ subgroup of $\Gamma$ if necessary, we may assume that this image is in the positive rationals (so that $\delta$ extends to the modular homomorphism of the closure $\overline \Gamma \leq \Aut_\Gamma(T_\Gamma)$), and hence $\Gamma$ is a lattice in the modular semidirect product $\overline \Gamma \ltimes \R$.  

Hence \cref{modular_semidirect_ME-reduction} reduces the measure equivalence classification of $\Gamma$ to that of $\overline \Gamma$, a nonunimodular nonamenable closed subgroup of the automorphism group of a locally finite tree, thus an lcsc treeable group by \cite[Proposition 3.4]{JKLcber} and \cite[B.3.(5)]{CGMTD}.
By \cref{thm:nonunim_nonamen_treeable_ME}, all such groups are measure equivalent to each other, yielding the measure equivalence of all such $1$-GBS groups.

\section{Actions of \texorpdfstring{$\Aut(T_{p,q})$}{Aut(Tpq)} and their cross-sections}

Recall the suspension construction (see \cite[3.3]{JKLcber} and \cite[Sections 2.2 and 2.3]{Gaboriau05}):
Given a free pmp Borel (left) action of $G \coloneq \Aut(T_{p,q})$ on a standard probability space $(\tilde{X},\tilde{\mu})$, consider the diagonal action of $G$ on $\tilde{X} \times T_{p,q}$ and take the quotient 
$G\backslash (\tilde{X}\times T_{p,q})$.
This delivers an acyclic Borel graph $\mathcal{T}_{\mathrm{susp}}$ on 
$G \backslash (\tilde{X}\times V_{p,q})\simeq K_{v_0} \backslash \tilde{X}$
with positive edges given by 
$G\backslash (\tilde{X}\times E^+_{p,q})\simeq K_{e_0}\backslash \tilde{X}$, where $K_{v_0}$ and $K_{e_0}$ are the stabilizers in $G$ of the base vertex $v_0 \coloneq \langle a \rangle$ and the edge $e_0 \coloneq \langle a^q \rangle$, respectively.
Since the action on $\~X$ is free, each $\tilde{x}\in\tilde{X}$ induces an isomorphism $T_{p,q}\simeq G\backslash\left(G.(\{\tilde{x}\} \times T_{p,q} )\right)$ of directed trees, given by $z\mapsto [(\tilde{x},z)]_G$.

The diagonal action of $G$ on $\tilde{X} \times V_{p,q}$ admits a Borel transversal, namely, $X \times \set{v_0}$, where $X \subseteq \tilde{X}$ is a Borel transversal for the action of $K_{v_0}$, and we identify $G\backslash (\tilde{X} \times V_{p,q})$ with $X$.
The graph $\mathcal{T}_{\mathrm{susp}}$ is then a graphing of the cross-section equivalence relation $E^G_X$ on $X$ induced by the action $G \acts \tilde{X}$, and we henceforth denote it by $\mathcal{T}_X$. 

We equip $X$ with the unique probability measure $\mu$ as in \cref{pmp_cross-section}, i.e., satisfying \cref{eq:cross-section_measure}.
This makes $\mathcal{T}_X$ into an mcp graph, and for each positive edge $e$ of $\mathcal{T}_X$ we have 
\[
\w_\mu(\orig_{\mathcal{T}_X}(e),\targ_{\mathcal{T}_X}(e)) = \Delta_G(t) = q/p,
\]
by \cref{eq:RN_through_modular}.
One can check, using \cite[Lemma 5.9]{Chen-Terlov-Tserunyan}, that $\mu$ is precisely the pushforward of $\tilde{\mu}$ via the map $\pi : \tilde{X} \to G\backslash(\tilde{X} \times V_{p,q})$ defined by $x \mapsto [(x,v_0)]_G$. 

\begin{definition}\label{def:TpqTreeing}
A \textbf{$\mu$-homogeneous $T_{p,q}$-treeing} is a directed mcp acyclic Borel graph $ \mathcal T$ on a standard probability space $(X,\mu)$ such that
\begin{itemize}
\item each vertex has $p$-many incoming positive edges of $\mathcal{T}$,
\item each vertex has $q$-many outgoing positive edges of $\mathcal{T}$,
\item $\w_\mu(\orig_ \mathcal T(e),\targ_ \mathcal T(e)) = \tfrac{q}{p}$ for every positive edge $e$ of $ \mathcal T$.
\end{itemize}
\end{definition}

The following shows that every treeing with these properties arises in the described fashion from a pmp $\Aut(T_{p,q})$ action. 
As above, $K$ denotes the stabilizer of $v_0\coloneqq \langle a\rangle\in V_{p,q}$ in $\Aut(T_{p,q})$.

\begin{proposition}[Reconstruction]\label{lem:cocycleTpqTreeingArisesFromAutTpqAction}
Let $E$ be an mcp countable Borel equivalence relation on a standard probability space $(X,\mu)$.
Suppose that $\mathcal{T}$ is a $\mu$-homogeneous $T_{p,q}$-treeing of $E$.

Then there is a free pmp left action of $\Aut(T_{p,q})$ on a standard probability space $(\tilde{X},\tilde{\mu})$ containing $X$ as a Borel subset such that
\begin{itemize}
\item $X$ is a transversal for the $K$-orbits in $\tilde{X}$;
\item $E=E^G_X$ and $\mathcal{T}$ coincides with the treeing $\mathcal{T}_X$; and
\item $\mu(B)=\tilde{\mu}(K.B)$ for all Borel $B\subseteq X$.
\end{itemize}
\end{proposition}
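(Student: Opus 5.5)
We build $\tilde X$ as the space of pairs $(x,\phi)$, where $x\in X$ and $\phi : T_{p,q}\to \mathcal{T}|_{[x]_E}$ is an isomorphism of directed trees with $\phi(v_0)=x$. Here $\mathcal{T}|_{[x]_E}$ is the connected component of $x$, which is a directed tree in which every vertex has in-degree $p$ and out-degree $q$, so it is isomorphic to $T_{p,q}$. Equivalently, $\tilde X$ is the space of all pairs $(x,\phi)$ with $\phi: T_{p,q}\to[x]_E$ an isomorphism onto the component, and we define the left action by $g.(x,\phi)\coloneqq(\phi(g^{-1}v_0),\phi\circ g^{-1})$.

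\emph{Basic properties of the action.}
\begin{itemize}
\item It is a left action of $G=\Aut(T_{p,q})$, and it is free because $\phi\circ g^{-1}=\phi$ forces $g=e$.
\item Its orbits correspond to the components of $\mathcal{T}$, since any two isomorphisms $T_{p,q}\to$ component differ by an element of $G$.
\item The fibre over each $x$ is a $K$-torsor.
\end{itemize}
To realize $X$ inside $\tilde X$ we need a Borel choice of one isomorphism $\phi_x$ per $x$. We get it by fixing Borel labelings: using Lusin--Novikov, enumerate the incoming edges at each vertex by $\{0,\dots,p-1\}$ and the outgoing edges by $\{0,\dots,q-1\}$ in a Borel way. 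We also fix once and for all the analogous labeling of $T_{p,q}$ given by the coset representatives $a^it^{-1}$ and $a^i$. Then $\phi_x$ is the unique label-preserving isomorphism sending $v_0\mapsto x$, and $x\mapsto(x,\phi_x)$ embeds $X$ as a Borel transversal of the $K$-orbits. The standard Borel structure on $\tilde X$ comes from identifying it with $X\times K$ via $(x,k)\mapsto (x,\phi_x\circ k^{-1})$. With these identifications, $E=E^G_X$ and $\mathcal{T}=\mathcal{T}_X$ hold essentially by construction: $g.(x,\phi_x)$ lands in $X$ exactly at the points $\phi_x(u)$, and edges correspond.

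\emph{The measure.} We set $\tilde\mu\coloneqq \mu\times m_K$ under $X\times K$, where $m_K$ is the normalized Haar measure on $K$. This gives $\tilde\mu(K.B)=\mu(B)$ immediately. The main obstacle is showing that $\tilde\mu$ is $G$-invariant, and this is where the cocycle condition $\w_\mu=q/p$ across positive edges enters.

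\emph{Reducing invariance to generators.} $G$ is generated by $K$ together with any $g$ moving $v_0$ to a neighbour; more precisely, $G=\bigcup_u K g_u K$. Invariance under $K$ is clear from the product structure, since $K$ acts on the $K$-coordinate by translation, twisted in a measurable way that preserves $m_K$. So it suffices to check invariance under the full $G$-action of sets of the form $U_{u}=\{\tilde x : \tilde x \text{ sends } v_0 \text{ to some } u\}$. In other words, we must check that the map pushing the fibre over $x$ to the fibre over the neighbour $y=\phi(u)$ preserves measure.

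\emph{Mass transport computation.} Decompose $\tilde X$ as $\bigsqcup$ over neighbours. For a neighbour $u$ along a positive outgoing edge, the measure of $\{(x,\phi): \phi(u)\in\cdot\}$ is $\mu\otimes$ ($m_K$-mass of the set of $k$ with $k(u)=$ a given out-neighbour), which equals $\mu\otimes\frac1q$. Moving the base point to the out-neighbour $y$ changes this to $\mu(y)$-density times $\frac1p$, because $y$ has $p$ in-neighbours. The mass transport principle \cref{mass_transport_formula} with $\w_\mu(x,y)=q/p$ gives
\[
\tfrac1q\,\deriv\mu(x)=\tfrac1q\cdot\tfrac qp\,\deriv\mu(y)=\tfrac1p\,\deriv\mu(y),
\]
so these agree. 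Alternatively, one can use Haar-measure uniqueness: verify that the measure on $G\times X$ defined via \cref{eq:cross-section_measure} is left-invariant. Either way we conclude that $\tilde\mu$ is $G$-invariant; it is a probability measure because $\mu(X)=1$.
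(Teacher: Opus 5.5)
Your construction is the same as the paper's. The space of isomorphisms $\phi\colon T_{p,q}\to$ (a component of $\mathcal{T}$), with $g.\phi=\phi\circ g^{-1}$, is exactly the quotient of the skew product $X\times\Aut(T_{p,q})$ by $E_\alpha$, via $(x,h)\mapsto\varphi_x^{-1}\circ h$, and your label-preserving $\phi_x$ plays the role of $\varphi_x^{-1}$. The coordinates $X\times K$, the measure $\mu\times m_K$, the check of $E=E^G_X$ and $\mathcal{T}=\mathcal{T}_X$, and the reduction to $K$ plus one element $g$ moving $v_0$ to a neighbour all match as well.

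The step that is not actually proved is invariance under $g$; say $u\coloneqq g^{-1}v_0$ is an out-neighbour of $v_0$. Your mass-transport computation only compares $\tilde\mu$ and $g_*\tilde\mu$ on sets determined by the pair $(\phi(v_0),\phi(u))$. That shows the two measures induce the same measure on positive edges $x\to y$, namely $\tfrac1q\,\deriv\mu(x)$ versus $\tfrac1p\,\deriv\mu(y)$. Such sets do not generate the Borel $\sigma$-algebra of $\tilde X\cong X\times K$, so the reduction to sets of the form $U_u$ does not give invariance. The rest of the $K$-coordinate, beyond the image of one neighbour, is never examined.

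The missing ingredient is a fibrewise statement, which is easy but has to be made.
\begin{itemize}
\item Let $G_u$ be the stabilizer of $u$ and $L\coloneqq K\cap G_u$. The piece $\{\phi:\phi(v_0)=x,\ \phi(u)=y\}$ is a right $L$-torsor of $m_K$-measure $\tfrac1q$. On it, $\tilde\mu$ is $\tfrac1q\,\deriv\mu(x)$ times the unique $L$-invariant probability $\lambda_{x\to y}$.
\item The map $\phi\mapsto\phi\circ g^{-1}$ is a bijection onto $\{\psi:\psi(v_0)=y,\ \psi(gv_0)=x\}$. This is a torsor for $L'\coloneqq K\cap G_{gv_0}=gLg^{-1}$, of $m_K$-measure $\tfrac1p$.
\item Since $(\phi\circ l)\circ g^{-1}=(\phi\circ g^{-1})\circ glg^{-1}$, the map carries $\lambda_{x\to y}$ to the unique $L'$-invariant probability $\lambda'_{y\leftarrow x}$. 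On this piece, $\tilde\mu$ equals $\tfrac1p\,\deriv\mu(y)$ times $\lambda'_{y\leftarrow x}$.
\item For each Borel $A\subseteq\tilde X$, apply the mass transport principle to $f(x,y)\coloneqq\tfrac1q\lambda'_{y\leftarrow x}(A)$ on positive edges $x\to y$. Using $\w_\mu(x,y)=q/p$, this gives $g_*\tilde\mu(A)=\tilde\mu(A)$.
\end{itemize}

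This is exactly what the paper's computation on rectangles $B\times C$ encodes. The quantities $\mu_{p,q}(C\cap\alpha(y,x)Kt)$ are these fibre measures. The identity $\mu_{p,q}(Ct^{-1})=\tfrac pq\,\mu_{p,q}(C)$, coming from $\Delta(t)=q/p$, turns the measure-$\tfrac1q$ coset $K(v)$ into the measure-$\tfrac1p$ coset $\alpha(y,x)Kt\cap K$. That factor cancels the $q/p$ coming from $\w_\mu$.
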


\begin{proof}
Using the Lusin--Novikov Uniformization Theorem, we may find a Borel map $\varphi:E\to V_{p,q}$ such that for each $x\in X$, the map $\varphi_x:[x]_E\to V_{p,q}$ defined by $\varphi_x(y)\coloneqq \varphi(x,y)$ is an isomorphism of the $\mathcal{T}$-component of $x$ with $T_{p,q}$ that sends $x$ to $v_0$.

The map $\alpha:E\to \Aut(T_{p,q})$, defined by $\alpha(y,x)\coloneqq \varphi_y\circ\varphi_x^{-1}$ for $(y,x)\in E$, is then a Borel cocycle for $E$.
We let $E_{\alpha}$ denote the associated skew product equivalence relation on $(X\times\Aut(T_{p,q}),\mu\times\mu_{p,q})$, defined by $((y,h),(x,g))\in E_\alpha$ if and only if $(y,x)\in E$ and $\alpha(y,x)g=h$. 
Here $\mu_{p,q}$ denotes the left Haar measure on $\Aut (T_{p,q})$ with $\mu_{p,q}(K)=1$.

The set $X\times K$ is a Borel transversal for $E_\alpha$:
given $(x,g)\in X\times\Aut(T_{p,q})$, the unique $y\in [x]_E$ such that $(y,\alpha(y,x)g)$ belongs to $X\times K$ is $y=\varphi_x^{-1}(gv_0)$.
Indeed, $(y,\alpha(y,x)g)\in X\times K$ if and only if $v_0=\alpha(y,x)gv_0$, which holds if and only if $y=\varphi_x^{-1}(gv_0)$.

Let $\Aut(T_{p,q})$ act on $X\times\Aut(T_{p,q})$ by $g.(x,h)\coloneqq (x,hg^{-1})$.
This action descends to a Borel action on the quotient space of $X\times\Aut(T_{p,q})$ by $E_\alpha$, which we identify with $\tilde{X}\coloneqq X\times K$.
Explicitly, the action of $\Aut(T_{p,q})$ on $\tilde{X}$ is given by
\[
g.(x,k)=\bigl(\varphi_x^{-1}(kg^{-1}v_0),\alpha(\varphi_x^{-1}(kg^{-1}v_0),x)kg^{-1}\bigr).
\]
Under this action, the set $X\times\{e\}$ is a Borel transversal for the $K$-orbits in $\tilde{X}$.
The action is free, since if $g.(x,k)=(x,k)$ then $\varphi _x^{-1}(kg^{-1}v_0)=x$ and $\alpha(x,x)kg^{-1}=k$, hence $g=e$.

\begin{claim}\label{claim:E=E_X_T=T_X}
Under the identification of $X$ with $X\times \{ e \}\subseteq \tilde{X}$, one has $E=E^G_X$ and $\mathcal{T}=\mathcal{T}_X$.
\end{claim}

\begin{proof}[Proof of \cref{claim:E=E_X_T=T_X}] For $x,y\in X$, we have $((x,e),(y,e))\in E^G_X$ if and only if $g.(x,e)\in K.(y,e)=\{ y \}\times K$ for some $g\in \Aut(T_{p,q})$, if and only if $\varphi_x^{-1}(g^{-1}v_0)=y$ for some $g\in \Aut(T_{p,q})$. 
Since $\varphi_x : [x]_E\to V_{p,q}$ is bijective, this is equivalent to saying that $y\in [x]_E$. 
Thus $E^G_X=E$.

Now let $x,y\in X$. 
By construction of the cross-section treeing $\mathcal{T}_X$, there is a positive edge from $x$ to $y$ in $\mathcal{T}_X$ if and only if there is some $g\in \Aut(T_{p,q})$ with $g^{-1}v_0\in K t v_0$ and $g.(x,e)\in K.(y,e)$. 
As above, this is equivalent to the existence of some $g\in\Aut(T_{p,q})$ with $g^{-1}v_0\in K t v_0$ and $\varphi_x^{-1}(g^{-1}v_0)=y$, and therefore to $\varphi_x(y)$ belonging to $Ktv_0$. 
Since $\varphi_x$ is an isomorphism from the $\mathcal{T}$-component of $x$ onto $T_{p,q}$ sending $x$ to $v_0$, this last condition is equivalent to there being a positive edge from $x$ to $y$ in $\mathcal{T}$. 
Hence $\mathcal{T}=\mathcal{T}_X$.\qedhere[\cref{claim:E=E_X_T=T_X}]
\end{proof}

Equipping $\tilde{X}$ with the measure $\tilde{\mu}\coloneqq \mu\times\mu_K$, where $\mu_K$ is normalized Haar measure on $K$ (which coincides with the restriction of $\mu_{p,q}$ to $K$), it remains to show that the action of $\Aut(T_{p,q})$ on $(\tilde{X},\tilde{\mu})$ is measure preserving.
The action of $K$ is measure preserving, since elements of $K$ act trivially on the first coordinate and by right translation on the second coordinate.
Since $t$ and $K$ generate all of $\Aut(T_{p,q})$, it is enough to show that the action of $t$ is measure preserving.

For $v\in V_{p,q}$, let $\varphi^v:X\to X$ be given by $\varphi^v(x)\coloneqq \varphi_x^{-1}(v)$, and let $G_v$ denote the stabilizer of $v$ in $\Aut(T_{p,q})$, so that $K=G_{v_0}$.
For each $v\in Ktv_0$ fix some $k_v\in K$ such that $k_vtv_0=v$, and let $K(v)\coloneqq k_v(K\cap G_{tv_0})$.
Then $\{k_v\}_{v\in Ktv_0}$ is a transversal for the left cosets of $K\cap G_{tv_0}$ in $K$.

For Borel sets $B\subseteq X$ and $C\subseteq K$, we have
\begin{align}
\nonumber (\mu\times\mu_K)\bigl(t.(B\times C)\bigr)
&= \int_{X\times K}  \mathbf{1}_{B\times C}(t^{-1}.(x,k))\,d(\mu\times\mu_K)(x,k) \\
\nonumber &= \int_X \int_K  \mathbf{1}_B(\varphi_x^{-1}(ktv_0)) \mathbf{1}_C(\alpha(\varphi_x^{-1}(ktv_0),x)kt)\,d\mu_K(k)\,d\mu(x) \\
\nonumber &= \int_X \sum_{v\in Ktv_0}\int_{k\in K(v)}  \mathbf{1}_B(\varphi_x^{-1}(v))\, \mathbf{1}_C(\alpha(\varphi_x^{-1}(v),x)kt)\,d\mu_K(k)\,d\mu(x) \\
\nonumber &= \sum_{v\in Ktv_0}\int_X  \mathbf{1}_B(\varphi^v(x))\mu_{p,q}(\alpha(\varphi^v(x),x)^{-1}Ct^{-1}\cap K(v))\,d\mu(x) .
\end{align}
Applying now mass transport and the fact that $\w(x,y)=q/p$ whenever there is an edge from $x$ to $y$, we continue the computation
\begin{align}
&= \nonumber \sum_{v\in Ktv_0}\frac{q}{p}\int_X  \mathbf{1}_B(y)\sum_{x\in (\varphi^v)^{-1}(y)} \mu_{p,q}(\alpha(y,x)^{-1}Ct^{-1}\cap K(v))\,d\mu(y) \\
\label{eq:aftertransport} &= \frac{q}{p}\int_{y\in B}\sum_{v\in Ktv_0}\sum_{x\in (\varphi^v)^{-1}(y)} \frac{p}{q}\mu_{p,q}(C\cap \alpha(y,x)K(v)t)\,d\mu(y),
\end{align}
where in the last line we used that $\Delta(t)=q/p$.

Note that for $v\in Ktv_0$ and $x\in (\varphi^v)^{-1}(y)$ we have $\alpha(y,x)v=v_0$, hence $\alpha(y,x)k_vt\in K$ and
\[
\alpha(y,x)K(v)t=\alpha(y,x)Kt\cap \alpha(y,x)k_vtK=\alpha(y,x)Kt\cap K.
\]
Therefore $C\cap \alpha(y,x)K(v)t=C\cap \alpha(y,x)Kt$, since $C\subseteq K$.
Thus \cref{eq:aftertransport} is equal to
\begin{equation}\label{eq:int_sum}
\int_{y\in B}\sum_{\{ x\in [y]_E:\varphi_x(y)\in Ktv_0\}} \mu_{p,q}(C\cap \alpha(y,x)Kt)\,d\mu(y).
\end{equation}
As $x$ varies over the set $\{x\in [y]_E:\varphi_x(y)\in Ktv_0\}$, the sets $\alpha(y,x)Kt$ are pairwise disjoint, since the vertices $\alpha(y,x)(v_0)=\varphi_y(x)$ are distinct for distinct $x$. 
Moreover, the union of all these sets $\alpha (y,x)Kt$ consists of automorphisms of the form $gkt$, where $k\in K$ and $g$ takes $v_0$ to one of the $p$-many vertices having an outgoing positive edge with terminus $v_0$.
Thus $K$ is contained in this union, since for $k_0\in K$ we have $k_0=gt$ where $g=k_0t^{-1}$.
It follows that the integrand in \cref{eq:int_sum} is equal to $\mu_{p,q}(C)$, and hence the entire integral in \cref{eq:int_sum} is equal to $(\mu\times\mu_K)(B\times C)$.
Thus, $t$ is measure preserving, and the action is pmp.
\end{proof}

\section{Scalar potentials and potential flows}\label{sec:scalar_potentials}

In this section, we fix an ergodic mcp countable Borel equivalence relation $E$ on a standard probability space $(X,\mu)$ and introduce a class of homomorphism-like functions to $\RR$-flows. 

To motivate, note that $E$ is type $\mathrm{I}$ or $\mathrm{II}$ exactly when the Radon--Nikodym cocycle $\w_\mu$ is a coboundary, i.e., the differential of some Borel function $\pi : X \to \RR$ satisfying the equivariance condition $\pi(x) = \w_\mu(x,y) .\pi(y)$ for a.e.\ $(x,y) \in E$.
When $E$ is type $\mathrm{III}$, the Radon--Nikodym cocycle $\w_\mu$ is not a coboundary, but there still are Borel maps from $X$ to \emph{some} $\RR$-flow satisfying the analogous equivariance condition. A trivial example is the map to the one-point $\RR$-flow. 
A less trivial example, in the case when the values of the Radon--Nikodym cocycle $\w_\mu$ are essentially contained in $\alpha^\Z$, is any constant map to the circle $\RR \acts \RR / \langle \alpha \rangle$.

Such an equivariant map $\pi$ to $\RR$-flows plays a similar role as the map whose differential is $\w_\mu$ in the type $\mathrm{II}$ case in the sense that $\pi$ assigns absolute objects to points as opposed to the purely relative weights given by the Radon--Nikodym cocycle.
The next definition formalizes this idea.

\begin{definition}[Scalar potential, potential flow]\label{def: scalar potential}
For an mcp flow $\RR \acts (\Omega, \eta)$, a Borel map $\pi : X \to \Omega$ is called a \textbf{scalar potential} for $(E, X, \mu)$ with respect to $\eta$ if
\begin{enumerate}[(i),leftmargin=*]
\item \label{scalar-equivariance} (scalar-equivariance) $\pi(x) = \w_\mu(x,y) . \pi(y)$ holds for a.e.\ $x \in X$ and for all $y \in [x]_E$; and 

\item \label{scalar mcp} $\pi_*\mu$ and $\eta$ agree on which $\RR$-invariant Borel sets are null.
\end{enumerate}

\noindent An mcp $\RR$-flow $(\Omega, \eta)$ is called a \textbf{potential flow} of $(E,X,\mu)$ if there is a scalar potential $X \to \Omega$ for $(E,X,\mu)$ with respect to $\eta$.

\end{definition}

\begin{remark}\label{scalar_potential_extension}
Identifying $X$ with $X \times \{1\} \subseteq X \times \RR$, every Borel map $\pi : X \to \Omega$ to an $\RR$-flow $\Omega$ uniquely extends to an $\RR$-equivariant (Borel) map $\overline\pi : X \times \RR \to \Omega$ (given by $\overline\pi(x,r) \coloneq r . \pi(x)$).
For every Borel probability measure $\mu$ on $X$, the pushforward of the measure class $[\overline\pi_* (\mu \times \mu_\RR)] = \overline\pi_* [\mu \times \mu_\RR]$ is preserved by the flow.
\end{remark}

\begin{remark}\label{unique measure-class on scalar potential}
Given a Borel $\RR$-flow $\Omega$ and a Borel map $\pi : X \to \Omega$ satisfying \labelcref{scalar-equivariance}, there is a unique measure-class $[\eta]$ on $\Omega$ preserved by the $\RR$-flow and satisfying condition \labelcref{scalar mcp}.
Indeed, the uniqueness follows from \cref{mcp class determined by invariant null sets}, and the existence follows from \cref{scalar_potential_extension} by taking $[\eta]\coloneqq [\overline{\pi}_* (\mu \times \mu_{\RR})]$.
Therefore, we often only define a Borel map $\pi : X \to \Omega$ and say that it is a scalar potential if it satisfies \labelcref{scalar-equivariance}. 
\end{remark}

In light of \cref{unique measure-class on scalar potential}, we omit writing the measure  $\eta$ on a potential flow unless it is necessary for the discussion. 

\begin{remark}\label{Krieger_flow_is_ergodic}
Since $E$ is ergodic and a scalar potential $\pi : X \to \Omega$ is a homomorphism from $E$ to the orbit equivalence relation of the flow $\RR \acts \Omega$, the flow is also ergodic with respect to the pushforward measure $\pi_* \mu$.
It then follows that the only flows which are not essentially free that can appear as a potential flow are the one-point flow and the multiplication flows on the circles $\RR / \langle \alpha \rangle$. 
\end{remark}

\begin{proposition}[Scalar potential permanence properties]
\label{scalar potential permanence properties}
Let $(\Omega, \eta)$ be an  mcp $\RR$-flow and $\pi : X \to \Omega$ be a scalar potential for $(E, X, \mu)$ with respect to $\eta$.

\begin{enumerate}[(a)]
\item\label{item:changing by isom flow} 

If $\Psi : \Omega \to \Omega'$ is an
$\RR$-equivariant mcp Borel isomorphism of mcp $\RR$-flows $(\Omega,\eta)$ and $(\Omega',\eta')$, then $\pi' \coloneq \Psi \circ \pi$ is a scalar potential for $(E,X,\mu)$ with respect to $\eta'$.

\item \label{item:equiv_measure} 
If $\nu$ is a sigma-finite measure on $X$ equivalent to $\mu$, then the map $\pi' : X \to \Omega$ given by $x \mapsto \tfrac{d \nu}{d \mu}(x) . \pi(x)$ is a scalar potential for $(E,X,\nu)$ with respect to $\eta$.

\item \label{item:whole_to_section} If $Y \subseteq X$ is a complete section, then $\pi' \coloneq \pi|_Y :Y \to \Omega$ is a scalar potential for $(E|_Y, Y, \mu|_Y)$ with respect to $\eta$.

\item \label{item:section_to_whole} Conversely, if $Y \subseteq X$ is a complete section and $\pi':Y \to \Omega$ is a scalar potential for $(E|_Y, Y, \mu|_Y)$ with respect to $\eta$, then there is a unique scalar potential $X \to \Omega$ for $(E,X,\mu)$ with respect to $\eta$, which agrees with $\pi'$ on $Y$.
\end{enumerate}

In particular, the category of potential flows of $(E,X,\mu)$ is a stable isomorphism invariant. 
\end{proposition}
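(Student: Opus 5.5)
Each of the four parts reduces to checking condition \labelcref{scalar-equivariance}, which is a pointwise algebraic identity. Condition \labelcref{scalar mcp} then follows from \cref{unique measure-class on scalar potential}, provided we verify that the relevant pushforward has the same invariant null sets as $\eta$.

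For \labelcref{item:changing by isom flow}, equivariance of $\Psi$ gives $\Psi(\pi(x))=\Psi(\w_\mu(x,y).\pi(y))=\w_\mu(x,y).\Psi(\pi(y))$. Since $\Psi$ is an mcp isomorphism, it maps $\RR$-invariant $\eta$-null sets bijectively onto $\RR$-invariant $\eta'$-null sets. Hence $\pi'_*\mu=\Psi_*\pi_*\mu$ and $\eta'$ agree on invariant null sets.

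For \labelcref{item:equiv_measure}, set $f=\tfrac{d\nu}{d\mu}$. The change of measure formula for Radon--Nikodym cocycles reads $\w_\nu(x,y)=f(x)\w_\mu(x,y)f(y)^{-1}$ a.e.; it can be checked directly from the mass transport principle \labelcref{mass_transport_formula}. Then
\[
\w_\nu(x,y).\pi'(y)=f(x)\w_\mu(x,y)f(y)^{-1}f(y).\pi(y)=f(x).\pi(x)=\pi'(x).
\]
For \labelcref{scalar mcp}, note that for an $\RR$-invariant set $A$ we have $\pi'^{-1}(A)=\pi^{-1}(A)$, because $\pi'(x)$ lies in the $\RR$-orbit of $\pi(x)$. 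Since $\nu\sim\mu$, the measures $\pi'_*\nu$ and $\pi_*\mu$ have the same invariant null sets.

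For \labelcref{item:whole_to_section}, equivariance restricts verbatim. The cocycle of $\mu|_Y$ on $E|_Y$ is the restriction of $\w_\mu$; this is a standard consequence of mass transport applied to functions supported on $Y\times Y$. For an invariant set $A$, the preimage $\pi^{-1}(A)$ is $E$-invariant by equivariance, so it is $\mu$-null iff its intersection with the complete section $Y$ is $\mu|_Y$-null (here one discards an $E$-invariant null set on which equivariance fails).

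For \labelcref{item:section_to_whole}, we define the extension by
\[
\pi(x)\coloneqq \w_\mu(x,y).\pi'(y) \quad\text{for any } y\in[x]_E\cap Y.
\]
This is well defined because, for $y,y'\in Y$ in the class, the equivariance of $\pi'$ and the cocycle identity give $\w_\mu(x,y).\pi'(y)=\w_\mu(x,y)\w_\mu(y,y').\pi'(y')=\w_\mu(x,y').\pi'(y')$. The map is Borel: choose $y$ by Lusin--Novikov uniformization, i.e.\ a Borel selector $x\mapsto s(x)\in[x]_E\cap Y$. Equivariance on all of $X$ follows from the cocycle identity again, and uniqueness is forced by the equivariance requirement itself. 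The null-set condition transfers as in \labelcref{item:whole_to_section}.

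The mildly delicate point is bookkeeping of null sets: equivariance holds only off a null set, and it must be an $E$-invariant null set for the extension to be well defined. I would first saturate, replacing the exceptional set by its $E$-saturation, which is null since $E$ is mcp.

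The final sentence follows by combining these parts. A stable isomorphism identifies positive-measure sets (complete sections, up to ergodicity) with equivalent measures. Parts \labelcref{item:whole_to_section} and \labelcref{item:section_to_whole} move potentials between a relation and its restriction to a complete section, \labelcref{item:equiv_measure} adjusts the measure, and \labelcref{item:changing by isom flow} shows the constructions respect isomorphisms of flows.
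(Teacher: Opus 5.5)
Your proposal is correct and follows the paper's proof: the paper also treats (a)--(c) as routine checks and proves (d) by setting $\pi(x) \coloneq \w_\mu(x,f(x)).\pi'(f(x))$ for a Borel $E$-retraction $f : X \to Y$ (obtained from Feldman--Moore). That retraction plays exactly the role of your Lusin--Novikov selector $s$, and your well-definedness, equivariance, uniqueness and null-set checks fill in the details the paper leaves implicit.
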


\begin{proof}
Parts \labelcref{item:changing by isom flow,item:equiv_measure,item:whole_to_section} are straightforward, hence we only prove \labelcref{item:section_to_whole}. 
By the Feldman--Moore theorem \cite{FM77a}, there exists a Borel $E$-retract $f:X \to Y$, i.e., $f |_Y$ is the identity and $(x,f(x)) \in E$ for all $x \in X$, then, $\pi (x):=\w_\mu (x,f(x)).\pi '(f(x))$ is the (unique) desired scalar potential.
\end{proof}

For our arguments we will need the finest scalar potential, which we now define.

\begin{definition}[Krieger scalar potential]\label{def: Krieger=universal scalar potential}
A \textbf{Krieger scalar potential} for $(E,X,\mu)$, denoted $\pi_{E,\mu} : X \to \K_E$, is an initial scalar potential for $(E,X,\mu)$ with potential flow $(\K_E, \eta_E)$, that is, a scalar potential satisfying the following universal property: each scalar potential $\pi : X \to \Omega$ (uniquely) essentially factors through $\pi_{E,\mu}$, 
i.e., there is an essentially unique mcp Borel map $h : \K_E \to \Omega$ that is $\RR$-equivariant on an $\RR$-invariant $\eta_E$-conull set, and satisfies $\pi = h \circ \pi_{E,\mu}$ a.e. 
\begin{center}
\begin{tikzcd}
{}\arrow[dr, phantom, "\circlearrowleft", near end] & \K_E \ar[d, dashed, "\exists ! h"] \\
X \ar[r, swap, "\pi"] \ar[ur, "\pi_{E,\mu}"] & \Omega 
\end{tikzcd}
\end{center}
We sometime omit either $E$ or $\mu$ from the notation of $\pi_{E,\mu}$ and write $\pi_E$ or $\pi_\mu$ where $\mu$ or $E$ are clear from the context.
\end{definition}

We will postpone the proof of the existence of a Krieger scalar potential until \cref{section:constructing-Krieger-flow}. 
First note that the universal property implies the essential uniqueness of Krieger scalar potentials (up to Borel isomorphism of $\RR$-flows, as in \cref{scalar potential permanence properties}\labelcref{item:changing by isom flow}), hence we refer to it as \emph{the} Krieger scalar potential.
We will call the mcp action of $\RR$ on the codomain $(\K_E, \eta_E)$ of the Krieger scalar potential the \textbf{Krieger flow} of $(E,X,\mu)$. This is the name that appears in the literature, for example in \cite{HamachiOsikawa:ErgGroupsAndKrieger, KatznelsonWeiss:NonsingularTransformations}, and it is also referred to as Poincaré flow in \cite[p.~322]{FM77a}.

\begin{example}\label{w_values_in_p/q_Krieger_factors_onto_p/q-circle}
Let $(E,X,\mu)$ be an ergodic mcp countable Borel equivalence relation and let $p,q \ge 1$ are natural numbers.
If $\w_\mu([x]_E\times\{x \} ) \subseteq \langle\frac{p}{q}\rangle$, then the Krieger flow of $(E,X,\mu)$ factors onto the circle $\RR \acts \RR/\langle\frac{p}{q}\rangle$.
Indeed, this is simply because every constant map $\pi : X \to \RR/\langle\frac{p}{q}\rangle$ is a scalar potential.
\end{example}

\begin{remark}\label{krieger permanence properties}
It is straightforward to check that in \cref{scalar potential permanence properties}, if $\pi$ is the Krieger scalar potential then the resulting maps $\pi'$ in all parts \labelcref{item:changing by isom flow}--\labelcref{item:section_to_whole} are the Krieger scalar potential for the respective equivalence relations in \labelcref{item:changing by isom flow}--\labelcref{item:section_to_whole}.
Therefore, the Krieger flow is a stable isomorphism invariant of $(E,X,[\mu])$.
\end{remark}

\begin{proposition}\label{prop:subrelation_scalar_potentials}
Let $F \leq E$ be mcp equivalence relations on a standard probability space $(X,\mu)$. 
Then:

\begin{enumerate}[(a), leftmargin=*]
\item \label{item:subset_scalar_potentials} 
Every scalar potential for $(E,X,\mu)$ is also a scalar potential for $(F,X,\mu)$.
Thus, there is a natural ($\RR$-equivariant mcp Borel) factor map $\iota_{F,E} : (\K_F, \eta_F) \to (\K_E, \eta_E)$ from the Krieger flow of $(F,X,\mu)$ to the Krieger flow of $(E,X,\mu)$.

\item \label{item:same_scalar_potentials} If furthermore $\ker_F \w_\mu$ is ergodic relative to $\ker_E \w_\mu$ and 
$
\w_\mu ([x]_F\times\{x\}) = \w_\mu ([x]_E\times\{x\})
$
for a.e.\ $x \in X$, then conversely, every scalar potential for $(F,X,\mu)$ is also a scalar potential for $(E,X,\mu)$.
Moreover, $F$ is ergodic relative to $E$, and the natural factor map $\iota_{F,E}$ between the Krieger flows of $F$ and $E$ is an isomorphism.
\end{enumerate}
\end{proposition}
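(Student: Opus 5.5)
Part \labelcref{item:subset_scalar_potentials} is immediate from \cref{def: scalar potential}. Scalar-equivariance for $E$ restricts to scalar-equivariance for $F$, because the Radon--Nikodym cocycle of $F$ with respect to $\mu$ is the restriction of $\w_\mu$ to $F$. Condition \labelcref{scalar mcp} refers only to $\pi_*\mu$ and $\eta$, not to the relation. Applying this to the Krieger scalar potential $\pi_{E,\mu}$, it becomes a scalar potential for $F$. The universal property of $\pi_{F,\mu}$ (\cref{def: Krieger=universal scalar potential}) then yields the essentially unique factor map $\iota_{F,E}:\K_F\to\K_E$ with $\pi_{E,\mu}=\iota_{F,E}\circ\pi_{F,\mu}$.

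For \labelcref{item:same_scalar_potentials}, the plan has three steps.

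Step 1: Show that $F$ is ergodic relative to $E$. Fix a Borel set $A$. Take $y\in[A]_E$, so $y \mathrel{E} x$ for some $x\in A$. By the hypothesis on cocycle values, there is $x'\in[x]_F$ with $\w_\mu(x',x)=\w_\mu(y,x)$. Then $x'\in[A]_F$, and by the cocycle identity $(y,x')\in\ker_E\w_\mu$. Hence $[A]_E\subseteq[[A]_F]_{\ker_E\w_\mu}$ up to a null set. Relative ergodicity of the kernels gives $[[A]_F]_{\ker_E\w_\mu}=[[A]_F]_{\ker_F\w_\mu}=[A]_F$ up to null sets. This needs care: the selection $x\mapsto x'$ must be Borel, which follows from Lusin--Novikov, and the null-set bookkeeping must be done on the full measure set where the hypotheses hold.

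Step 2: Let $\pi:X\to\Omega$ be a scalar potential for $F$, and prove scalar-equivariance for $E$. Given $(x,y)\in E$, pick $y'\in[x]_F$ with $\w_\mu(y',x)=\w_\mu(y,x)$, again via Lusin--Novikov. Then $(y,y')\in\ker_E\w_\mu$, and
\[
\pi(x)=\w_\mu(x,y').\pi(y')=\w_\mu(x,y).\pi(y'),
\]
so it suffices to show $\pi(y)=\pi(y')$. In other words, $\pi$ must be $\ker_E\w_\mu$-invariant a.e., given that it is $\ker_F\w_\mu$-invariant. Here I use relative ergodicity of $\ker_F\w_\mu$ in $\ker_E\w_\mu$. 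For each Borel $U\subseteq\Omega$ from a countable separating family, the set $A=\pi^{-1}(U)$ is $\ker_F\w_\mu$-invariant. Relative ergodicity says $[A]_{\ker_E\w_\mu}\smallsetminus[A]_{\ker_F\w_\mu}=[A]_{\ker_E\w_\mu}\smallsetminus A$ is null. Hence $A$ is $\ker_E\w_\mu$-invariant modulo null sets. Intersecting over the countable family shows that $\pi$ is $\ker_E\w_\mu$-invariant off a null set. This set must then be enlarged to an $E$-invariant null set, which is legitimate since $E$ is mcp and countable. This is the step I expect to be the main obstacle, only because of the ``for a.e.\ $x$ and all $y$'' bookkeeping; the idea itself is simple. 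Condition \labelcref{scalar mcp} again does not depend on the relation.

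Step 3: Show that $\iota_{F,E}$ is an isomorphism. By Step 2, $\pi_{F,\mu}$ is a scalar potential for $E$. The universal property of $\pi_{E,\mu}$ gives $j:\K_E\to\K_F$ with $\pi_{F,\mu}=j\circ\pi_{E,\mu}$. The compositions $j\circ\iota_{F,E}$ and $\iota_{F,E}\circ j$ then factor the respective Krieger potentials through themselves. By the essential uniqueness clause in \cref{def: Krieger=universal scalar potential}, they equal the identity maps a.e. Therefore $\iota_{F,E}$ is an isomorphism.
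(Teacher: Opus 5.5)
Your proof is correct. Part (a), your Step 2 and your Step 3 are essentially the paper's argument. In Step 2 you spell out, via a countable separating family and enlargement to an $E$-invariant null set, what the paper compresses into one line: by relative ergodicity, $\pi$ may be assumed $\ker_E \w_\mu$-invariant.

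The genuine difference is Step 1. The paper proves relative ergodicity of $F$ \emph{after} establishing the transfer of scalar potentials, using a trick. Given an $F$-invariant set $Y$, it defines $\tilde\pi : X\to \K_F\sqcup\{\omega\}$, where $\omega$ is a fixed point of the flow: $\tilde\pi$ equals the Krieger potential of $F$ on $Y$ and equals $\omega$ off $Y$. This is a scalar potential for $F$, hence for $E$. Then $Y=\tilde\pi^{-1}(\K_F)$ is $E$-invariant because $\K_F$ is $\RR$-invariant.

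Your direct argument instead shows $[A]_E\subseteq[[A]_F]_{\ker_E\w_\mu}$ up to a null set, then applies relative ergodicity of the kernels to $[A]_F$. This is more elementary and independent of Step 2. It also needs no scalar potential for $F$ at all. That is a small advantage: at that point $F$ is not yet known to be ergodic, whereas the paper's existence result for Krieger potentials is stated only for ergodic relations. The paper's route, in exchange, shows that relative ergodicity is a formal consequence of $F$ and $E$ sharing the same scalar potentials.

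As a minor point, the Lusin--Novikov selections you invoke in Steps 1 and 2 are unnecessary. You only verify set inclusions and pointwise identities, so no Borel choice of $x'$ or $y'$ is needed.
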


\begin{proof}
The main statement of part \labelcref{item:subset_scalar_potentials} is obvious since \cref{def: scalar potential} is less restrictive for $F$ than for $E$.
Furthermore, the statements about the Krieger flows in both parts \labelcref{item:subset_scalar_potentials,item:same_scalar_potentials} follow from the universal property of the Krieger scalar potentials.

For part \labelcref{item:same_scalar_potentials}, we let $\pi_F : (X, \mu) \to (\Omega, \eta)$ be a scalar potential for $(F,X,\mu)$.
By definition, $\pi_F$ is $\ker_F \w_\mu$-invariant {a.e.}, so by relative ergodicity we may assume that $\pi_F$ is $\ker_E \w_\mu$-invariant after discarding an $E$-invariant Borel null set.
Furthermore, the assumption on $\w_\mu$ implies that for a.e.\ $x \in X$ and for all $y \in [x]_E$, there exists $z \in [x]_F$ with $\w_\mu(z,x) = \w_\mu(y,x)$, so $\w_\mu(z,y) = 1$, {i.e.}, $z \in [y]_{\ker_E \w_\mu}$. 
Hence
$
\pi_F(y) = \pi_F(z) = \w_\mu(z,x).\pi_F(x) = \w_\mu(y,x).\pi_F(x).
$

As for the ergodicity of $F$ relative to $E$, let $Y$ be an $F$-invariant Borel set.
Let $\pi : X \to \K_F$ be a Krieger scalar potential of $(F,X,\mu)$.
Let $\{\omega\}$ be a trivial $\RR$-flow so $\Omega \coloneq \K_F \sqcup \{\omega\}$ is an mcp $\RR$-flow when equipped with the probability measure $\eta \coloneq \mu(Y) \cdot \eta_F + (1-\mu(Y)) \cdot \delta_\omega$.
Define $\tilde\pi : X \to \Omega$ by setting 
$\tilde\pi |_Y \coloneq \pi |_Y$ and $\tilde\pi |_{X \smallsetminus Y} :\equiv \omega$.
It is straightforward to check that $\tilde\pi$ is a scalar potential for $(F,X,\mu)$ with the potential flow $(\Omega, \eta)$.
Hence $\tilde\pi$ is a scalar potential for $(E,X,\mu)$ as well, so $Y = \tilde\pi^{-1}(\K_F)$ is $E$-invariant because $\K_F$ is $\RR$-invariant.
\end{proof}

\begin{proposition}[Krieger flow cross-section]
\label{canonical_potential_section}
Let $(E,X,\mu)$ be an ergodic mcp countable Borel equivalence relation and let $\pi_{E,\mu}:X\to \K_E$ be a Krieger scalar potential, with $[\eta_E]$ denoting the measure-class on $\K_E$.  
There exists an essentially unique (up to an $\RR$-invariant $\eta_E$-null set) Borel subset $Z_{E,\mu} \subseteq \K_E$ such that
\begin{equation}\label{eq:potential_section}
\{s \in  \RR: s.\pi_{E,\mu}(x) \in Z_{E,\mu} \} 
= 
\overline{\w_\mu ([x]_E\times \{ x \} )}.
\end{equation}
for $\mu$-a.e.\ $x \in X$. 
It also follows that $(\pi_{E,\mu})_* \mu$ is supported on $Z_{E,\mu}$.

Moreover, if $\w_\mu$ is dis-closed, then $Z_{E,\mu}$ is a cross-section 
and the measure $(\pi_{E,\mu})_\ast \mu$ is in the Ramsay class of $Z_{E,\mu}$ (see \cref{measure-class_on_cross-section}). 
\end{proposition}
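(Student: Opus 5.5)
We will first build the set $Z_{E,\mu}$ and then check the two halves of \eqref{eq:potential_section}. The natural candidate uses the closed-set-valued map $x \mapsto C(x) \coloneqq \overline{\w_\mu([x]_E\times\{x\})}$. By the cocycle identity, $C(y) = \w_\mu(x,y)^{-1} C(x)$ for $(x,y)\in E$. In other words, $x \mapsto C(x)$ is a scalar-equivariant map from $X$ into the Effros Borel space $\mathcal{F}(\RR)$ of closed subsets of $\RR$, with $\RR$ acting by dilation $s.C = s^{-1}C$ (up to the paper's sign conventions). By \cref{unique measure-class on scalar potential}, the map $\pi_C : x\mapsto C(x)$ is therefore a scalar potential into the Borel flow $\mathcal{F}(\RR)$. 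By the universal property (\cref{def: Krieger=universal scalar potential}), there is an essentially unique $\RR$-equivariant $h:\K_E\to\mathcal{F}(\RR)$ with $h\circ\pi_{E,\mu} = \pi_C$ a.e. We then set
\[
Z_{E,\mu} \coloneqq \{ \omega\in\K_E : 1 \in h(\omega)\},
\]
which is Borel because $\{C : 1\in C\}$ is Borel in the Effros structure. Equivariance gives $s.\omega\in Z_{E,\mu}$ iff $1\in h(s.\omega) = s^{-1}h(\omega)$ iff $s\in h(\omega)$. Applying this at $\omega=\pi_{E,\mu}(x)$ yields \eqref{eq:potential_section}. Since $1\in C(x)$, the measure $(\pi_{E,\mu})_*\mu$ is supported on $Z_{E,\mu}$.

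For essential uniqueness, suppose $Z'$ also satisfies \eqref{eq:potential_section}. Then for a.e.\ $x$ the sets $\{s : s.\pi_{E,\mu}(x)\in Z\triangle Z'\}$ are empty. Hence the $\RR$-saturation of $Z\triangle Z'$ misses the image of a conull set. That saturation is an $\RR$-invariant set, and its preimage is $\mu$-null. By condition \ref{scalar mcp} it is therefore $\eta_E$-null, so $Z$ and $Z'$ agree off an invariant null set. Measurability of this saturation needs care; we would restrict to the $\RR$-invariant conull Borel set where $h$ is equivariant and use that the saturation of $Z\triangle Z'$ there is $\{\omega : h(\omega)\neq h'(\omega)\}$, which is Borel. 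Here $h'$ is defined analogously from $Z'$ by $h'(\omega)=\{s: s.\omega\in Z'\}$, after checking that this set is closed a.e.

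For the ``moreover'' part, if $\w_\mu$ is dis-closed then $C(x)$ is closed, discrete and nonempty for a.e.\ $x$. Thus $\{s: s.\omega\in Z_{E,\mu}\} = h(\omega)$ is closed, discrete and nonempty for $\omega$ in an $\RR$-invariant $\eta_E$-conull set (the saturation of the image, using condition \ref{scalar mcp} again). Discarding the complementary invariant null set, $Z_{E,\mu}$ is a cross-section. For the Ramsay class, we use \cref{measure-class_on_cross-section}: it suffices that the $E^{\RR}_{Z}$-invariant $(\pi_{E,\mu})_*\mu$-null subsets of $Z$ are exactly the traces of $\RR$-invariant $\eta_E$-null sets. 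Moreover, $(\pi_{E,\mu})_*\mu$ is quasi-invariant under $E^\RR_Z$. This is because partial maps of $E^\RR_Z$ pull back via $\pi_{E,\mu}$ to partial maps of $E$: if $s.\pi(x)\in Z$ with $s\in C(x)$, discreteness lets us choose $y\in[x]_E$ with $\w_\mu(y,x)=s$. The null-set correspondence is then exactly condition \ref{scalar mcp}, once we note that an $E^\RR_Z$-invariant subset of $Z$ is the trace of its $\RR$-saturation.

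The main obstacle is the Borel bookkeeping: making $h$ genuinely equivariant on an invariant conull Borel set, and handling measurability of saturations.
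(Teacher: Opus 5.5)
Your proposal is correct and follows the paper's proof essentially step for step. It uses the same Effros-space scalar potential $x\mapsto \overline{\w_\mu([x]_E\times\{x\})}$ factored through $h$, the same set $Z_{E,\mu}=\{\omega : 1\in h(\omega)\}$, the same uniqueness argument via $\RR$-invariant null sets, and the same lifting of partial maps of $E^{\RR}_{Z}$ to elements of $[[E]]$ to get quasi-invariance and hence the Ramsay class.

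Two small remarks. First, the cocycle identity actually gives $C(y)=C(x)\,\w_\mu(x,y)$; your displayed formula has the factor inverted, although the equivariance you then use, for the action $s.C=s^{-1}C$, is the correct one. Second, the detour through $h'$ is unnecessary. The set $\RR.(Z\triangle Z')$ is analytic, hence universally measurable, and its nullness can be read off directly from the representative $\overline{\pi}_*(\mu\times\mu_{\RR})$ of the class $[\eta_E]$.
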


\begin{proof}
Recall that the Effros Borel space $F(\RR)$ is the standard Borel space of all closed subsets of $\RR$.
Equipping $F(\RR)$ with the $\RR$-action $r . A \coloneq A r^{-1}$, for $A \in F(\RR)$ and $r \in \RR$, turns $F(\RR)$ into a Borel $\RR$-flow.
Then the Borel map $\pi : X \to F(\RR)$ defined by $x \mapsto \overline{\w_\mu([x]_E \times \{x\})}$ is a scalar potential, with the default measure class on $F(\RR)$ as in \cref{unique measure-class on scalar potential}; indeed: for a.e.\ $x \in X$ and $y \in [x]_E$,
\[
\pi(x) 
= \overline{\w_\mu([x]_E \times \{x\})} 
= \overline{\w_\mu([y]_E \times \{y\})} \w_\mu(y,x) 
= 
\pi(y) \w_\mu(x,y)^{-1} = \w_\mu(x,y) . \pi(y).
\]
By the universal property of the Krieger flow, $\pi$ factors through to an mcp Borel map $h : \K_E \to F(\RR)$.
Then the set 
\(
Z_{E,\mu} \coloneq \{z \in \K_E: 1 \in h(z)\}
\)
satisfies \labelcref{eq:potential_section} because for each $s \in \RR$, we have
\begin{align*}
s.\pi_{E,\mu}(x) \in Z_{E,\mu} 
&\iff 
1 \in 
h(s . \pi_{E,\mu}(x))
=
s . \pi(x) 
= 
\pi(x) s^{-1}
\\
&\iff 
s \in \pi(x) 
= 
\overline{\w_\mu([x]_E \times \{x\})}.
\end{align*}

To show essential uniqueness, let $Z_{E,\mu}'$ be another Borel set satisfying \labelcref{eq:potential_section}, and put $A \coloneq Z_{E,\mu} \mathbin{\triangle} Z'_{E,\mu}$. 
Then for $\mu$-a.e. $x \in X$, the set $\{s \in \RR  : s.\pi_{E,\mu}(x) \in A\}$ is empty, and hence $\RR.A$ is $(\pi_{E,\mu})_\ast \mu$-null.
But $(\pi_{E,\mu})_\ast \mu$ and $\eta_E$ have the same $\RR$-invariant null sets, so $\RR.A$ is also $\eta_E$-null, proving the essential uniqueness of $Z_{E,\mu}$.

That $(\pi_{E,\mu})_* \mu$ is supported on $Z_{E,\mu}$ follows from \labelcref{eq:potential_section} because for a.e.\ $x \in X$, we have 
$1 \in \overline{\w_\mu([x]_E \times x)}$, and hence $\pi_{E,\mu}(x) \in Z_{E,\mu}$.

Now suppose that $\w_\mu$ is dis-closed. 
It is then clear from \labelcref{eq:potential_section} that after discarding an $\RR$-invariant $\eta_E$-null set, $Z_{E,\mu}$ is a cross-section.
Since $(\pi_{E,\mu})_* \mu$ and $\eta_E$ have the same $\RR$-invariant null sets,
it remains to verify that the cross-section equivalence relation $F \coloneq E^\RR_{Z_{E,\mu}}$ on $Z_{E,\mu}$ induced by the $\RR$-flow preserves the measure $(\pi_{E,\mu})_* \mu$.
To this end, let $\phi \in [F]$ and let $A \subseteq Z_{E,\mu}$ be a $(\pi_{E,\mu})_\ast \mu$-nonnull set.
It then follows from \labelcref{eq:potential_section} that
for $\mu$-a.e.\ $x \in \pi_{E,\mu}^{-1}(A)$, we have $\phi(\pi_{E,\mu}(x)) = \pi_{E,\mu}(y)$ for some $y \in [x]_E$.
Luzin--Novikov uniformization (or the Feldman--Moore theorem) then yields a pseudogroup element $\psi \in [[E]]$ with $\mu$-nonnull domain $\dom(\psi) \subseteq \pi_{E,\mu}^{-1}(A)$ such that $\phi(\pi_{E,\mu}(x)) = \pi_{E,\mu}(\psi(x))$ for all $x \in \dom \psi$. 
Since $E$ is mcp, $\im (\psi)$ is also $\mu$-nonnull, and thus $\phi(A) \supseteq  \pi_{E, \mu}(\im (\psi))$ is $(\pi_{E,\mu})_\ast \mu$-nonnull, completing the proof.
\end{proof}

\subsection{Classification by types and Krieger flow}

The usual classification into types II, $\mathrm{III}_\alpha$, $\alpha \in [0,1]$ is equivalent to the following statements (see \cite[Section 8, last paragraph]{FM77a}, \cite[Theorem 19]{HamachiOsikawa:ErgGroupsAndKrieger} for the equivalence), which we will take as definition.

\begin{definition}[Classification by types]\label{defn:classbytype}
Let $ E$ be an ergodic mcp countable Borel equivalence relation on a standard measure space $(X,\mu)$ with $\mu$ atomless. 
Then $\K_E$ is ergodic by \cref{Krieger_flow_is_ergodic} and can be characterized by its stabilizers (closed subgroups of $\RR$) and whether or not it has a conull orbit. 
The classification of types then translates into the following:
\begin{itemize}
\item $E$ is type $\mathrm{II}$ 
if $\K_E$ is isomorphic to $\RR \acts \RR$. 
\item $E$ is type $\mathrm{III}_1$ 
if $\K_E$ is trivial, i.e. isomorphic to $\RR\acts \{\ast\}$.
\item $E$ is type $\mathrm{III}_\lambda$, $\lambda \in (0,1)$, 
if $\K_E$ isomorphic to $\RR \acts \RR/ \langle \lambda \rangle$.
\item $E$ is type $\mathrm{III}_0$ if $\K_E$ is nowhere smooth, equivalently $\K_E$ has no conull orbits and is essentially free. 
\end{itemize}
\end{definition}

The proof of the following proposition was also sketched in \cite[Section 4]{KatznelsonWeiss:NonsingularTransformations}.

\begin{proposition}\label{pi is ergodic decomposition}
Let $\pi_{E,\mu}:X\to \K_E$ be a Krieger scalar potential. If $\w_\mu$ is dis-closed, then $\pi_{E,\mu}$ is an ergodic decomposition of $\ker_E \w_\mu$.
\end{proposition}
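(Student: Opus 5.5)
The plan is to show that the $\sigma$-algebra generated by $\pi_{E,\mu}$ coincides, modulo $\mu$-null sets, with the $\sigma$-algebra of $\ker_E\w_\mu$-invariant Borel sets. Standard disintegration then makes $\pi_{E,\mu}$ an ergodic decomposition map for the pmp relation $\ker_E \w_\mu$ (recall that $\ker_E\w_\mu$ preserves $\mu$). The proof has two inclusions.

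\emph{$\pi_{E,\mu}$ is $\ker_E\w_\mu$-invariant.} This direction is immediate from scalar-equivariance. For a.e.\ $x$ and every $y\in[x]_{\ker_E\w_\mu}$ we have $\pi_{E,\mu}(x)=\w_\mu(x,y).\pi_{E,\mu}(y)=\pi_{E,\mu}(y)$. Hence every set of the form $\pi_{E,\mu}^{-1}(B)$ is $\ker_E\w_\mu$-invariant after discarding an $E$-invariant null set.

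\emph{Every invariant set is $\pi_{E,\mu}$-measurable.} Let $A\subseteq X$ be a $\ker_E\w_\mu$-invariant Borel set. After modifying $A$ on a null set, we may assume it is strictly invariant on an $E$-invariant conull set. We encode $A$ by a scalar potential, imitating the proof of \cref{canonical_potential_section}. Define
\[
\pi_A : X\to F(\RR),\qquad \pi_A(x)\coloneqq \{\w_\mu(y,x) : y\in[x]_E\cap A\}.
\]
Since $\w_\mu$ is dis-closed, $\pi_A(x)$ is a subset of the closed discrete set $\w_\mu([x]_E\times\{x\})$, so it is itself closed. Hence $\pi_A(x)$ equals the closure of a countable set of values enumerated via the Feldman--Moore theorem, which makes $\pi_A$ Borel into the Effros space. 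We equip $F(\RR)$ with the flow $r.C\coloneqq Cr^{-1}$. The cocycle identity gives $\pi_A(x)=\pi_A(y)\w_\mu(y,x)=\w_\mu(x,y).\pi_A(y)$, exactly as in the computation in \cref{canonical_potential_section}. So, by \cref{unique measure-class on scalar potential}, $\pi_A$ is a scalar potential for the induced measure class.

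The key observation is that $1\in\pi_A(x)$ if and only if some $y\in A$ is $\ker_E\w_\mu$-equivalent to $x$. By invariance of $A$, this holds if and only if $x\in A$. By the universal property of the Krieger scalar potential, there is an mcp map $h:\K_E\to F(\RR)$ with $\pi_A=h\circ\pi_{E,\mu}$ a.e. Therefore $A=\pi_{E,\mu}^{-1}(\{z\in\K_E: 1\in h(z)\})$ up to a null set, so $A$ is $\pi_{E,\mu}$-measurable.

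The only delicate points are bookkeeping ones. First, the invariance of $A$ and the cocycle identity hold only off null sets; we handle this by passing to an $E$-invariant conull Borel set. Second, we must check Borelness of $\pi_A$; closedness of the sets $\pi_A(x)$ is exactly where the dis-closed hypothesis enters. Beyond these, I expect no substantive obstacle: the argument is a direct reuse of the Effros-space scalar potential from \cref{canonical_potential_section}.
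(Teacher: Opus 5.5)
Your proof is correct, but it takes a different route from the paper's.

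\textbf{The paper's route.} It fixes an arbitrary ergodic decomposition $\pi : X \to Z$ of $\ker_E\w_\mu$ and turns $Z$ into a cross-section of a flow. Dis-closedness lets one define the next weight level $\tau'(x)=\min\{\w_\mu(y,x)>1 : y\in[x]_E\}$ and a Borel map $T'$ attaining it. These descend to a return-time function $\tau$ and an invertible mcp transformation $T$ of $Z$. The paper then takes the suspension flow of $T$ under $\tau$ and checks that $\pi$ is a scalar potential into it. It concludes from two universal properties that $\pi$ and $\pi_{E,\mu}$ factor through each other: the ergodic decomposition is initial among $\ker_E\w_\mu$-invariant maps, $\pi_{E,\mu}$ is initial among scalar potentials, and every scalar potential is $\ker_E\w_\mu$-invariant.

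\textbf{Your route.} You prove the nontrivial inclusion of $\sigma$-algebras directly. You test each invariant set $A$ against the Effros-space potential of \cref{canonical_potential_section}, with $[x]_E$ replaced by $[x]_E\cap A$. Dis-closedness is used exactly to make $1\in\pi_A(x)$ equivalent to $x\in[A]_{\ker_E\w_\mu}$ without taking closures.

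\textbf{Comparison.} Your argument is shorter. It avoids checking that $T$ is well defined, invertible and mcp, and that the suspension carries the right measure class. The paper's argument additionally produces an explicit model of the Krieger flow, as a flow under the function $\tau$ over the space of ergodic components of the kernel (\cref{kw:alternate_con}). Your proof does not give this model, but it does give the statement of the proposition, which is all that is used later.
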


\begin{proof}
It suffices to fix an ergodic decomposition $\pi : X \to Z$ of $\ker_E \w_\mu$ and show that $\pi$ is a scalar potential for $(E,X,\mu)$, where $Z$ is identified with a cross-section of some Borel $\RR$-flow.
Indeed, this would then imply that $\pi$ is a Krieger scalar potential by the universal properties of the ergodic decomposition and Krieger scalar potential, because all scalar potentials are $\ker_E \w_\mu$-invariant.

To this end, using the local finiteness of $\w_\mu$, we may define $\tau' : X \to (1, \infty)$ by
\[
\tau'(x) := \min\{r > 1 : \exists y\in[x]_E, \w_\mu(y,x) = r \}.
\]
One verifies that this function is $\ker_E \w_\mu$-invariant and thus, by the universal property of the ergodic decomposition, it factors through $\pi$, i.e., there is a Borel map $\tau : Z \to (1,\infty)$ such that $\tau' = \tau \circ \pi$ a.e. 

Furthermore, Luzin--Novikov uniformization \cite[Theorem 18.10]{Kechris:Classical_DST} yields a Borel map $T' : X \to X$ such that $\w_\mu(T'(x), x) = \tau'(x)$.
Then the composition $\pi \circ T'$ is $\ker_E \w_\mu$-invariant (although $T'$ may not be), so it also factors through $\pi$, i.e., there is a Borel map $T : Z \to Z$ such that $\pi \circ T' = T \circ \pi$ a.e. 
One can verify that $T$ is an invertible mcp transformation (for invertibility, take a Borel map $S': X \to X$ so that $\w_\mu(x, S'(x)) = \tau'(S(x))$ and verify that $x$, $S'(T'(x))$ and $T'(S'(x))$ are all in the same $\ker_E \w_\mu$-class for a.e. $x \in X$).

Now let $\RR \acts (\Omega, \eta)$ be the Poincaré suspension flow (also called the ``flow under a function'' in \cite{KatznelsonWeiss:NonsingularTransformations}) of the transformation $T$ on $(Z, \pi_\ast \mu)$ with return time map $\tau \colon Z \to (1,\infty)$.
By definition, $\RR \acts (\Omega, \eta)$ is an mcp flow for which $Z \subseteq \Omega$ is a Borel cross-section with return time function $\tau$ and Poincaré return map $T$ (defined in \cref{sect: real flows}), and $[\eta]$ is the Ramsay 
measure class
(cf.\ \cref{measure-class_on_cross-section}).

It remains to verify that the ergodic decomposition $\pi : X \to \Omega$ is a scalar potential. 
The condition on the measure class, \cref{def: scalar potential}\labelcref{scalar mcp}, is satisfied verbatim by the corresponding property of the Poincaré suspension flow said above.
It is enough to check the equivariance condition, \cref{def: scalar potential}\labelcref{scalar-equivariance}, for pairs $(T'x, x)$ for a.e.\ $x \in X$ because $\pi$ is $\ker_E \w_\mu$-invariant and $T'$ together with $\ker_E \w_\mu$ generate $E$. 
To this end, observe that for a.e.\ $x \in X$, we have
\[
\w_\mu(T'x,x).\pi(x)= \tau'(x).\pi(x) = \tau(\pi(x)).\pi(x) = T(\pi(x)) = \pi(T'(x)),
\]
where we respectively use the defining property of $T'$, the identity $\tau' = \tau \circ \pi$, the fact that $\tau$ is the return time of $T$ in the flow suspension flow $\RR \acts \Omega$, and the identity $T \circ \pi = \pi \circ T'$.
\end{proof}

\begin{remark}\label{erg_component=ker-class}
When $E$ is an ergodic type $\mathrm{III}_0$ equivalence relation and the Radon--Nikodym cocycle is dis-closed, it follows from \cref{def: scalar potential}\labelcref{scalar-equivariance} and the freeness of the Krieger flow that for each $z \in Z_{E, \mu}$, the preimage $\pi_{E,\mu}^{-1}(z)$ meets every $E$-class in at most one $\ker_E \w_\mu$-class a.e. 
Since $\pi_{E, \mu}$ is an ergodic decomposition of $\ker_E \w_\mu$ by \cref{pi is ergodic decomposition}, a.e.\ ergodic component of $\ker_E \w_\mu$ meets every $E$-class in at most one $\ker_E \w_\mu$-class.

It is worth emphasizing that in case $\mathrm{III}_1$ and in case $\mathrm{III}_\lambda$ with $\lambda \in (0,1)$ with dis-closed cocycle, the first property does not hold: the $\pi_{E,\mu}$-preimage of each $z \in Z_{E,\mu}$ meets infinitely many $\ker_E\w_\mu$-classes within a.e.\ $E$-class. 
\end{remark}

\begin{remark}\label{kw:alternate_con}
The proof of \cref{pi is ergodic decomposition} also yields an alternate construction of the Krieger flow for relations with dis-closed cocycles. 
In \cite[Section 4]{KatznelsonWeiss:NonsingularTransformations}, the authors say "It will be convenient to have an alternate, more concrete description of this flow" and proceed with this alternate construction.
\end{remark}

Assume now the Radon--Nikodym cocycle $\w_\mu$ is dis-closed (which can be ensured when $E$ is type $\mathrm{III}_0$ or $\mathrm{III}_\lambda$ for $\lambda \in (0,1)$, see  \cite[Propositions 2.2 and 2.3]{KatznelsonWeiss:NonsingularTransformations}). 
Let $\pi_{E,\mu}:X\to \K_E$ be a Krieger scalar potential for $(E,X,\mu )$ and let $Z_{E,\mu} \subseteq \K_E$ be the cross-section provided by \cref{canonical_potential_section}. 
We let $\PoincRetMap_{Z_{E, \mu}} : Z_{E, \mu} \to Z_{E, \mu}$ denote the associated Poincaré return map. 
By \cref{def: scalar potential}\labelcref{scalar-equivariance}, this map is explicitly defined by
\[
\PoincRetMap_{Z_{E, \mu}}(\pi_{E, \mu}(x)) \coloneqq \w_{\mu}(y,x) . \pi_{E, \mu}(x),
\]
where $x \in X$ and $y \in [x]_E$ is such that $\w(y,x)$ is the least value in $\w ([x]_E\times \{ x \} )$ greater than $1$, i.e.,\ $y$ belongs to the next $\ker_E \w_\mu$-class in $[x]_E$.
When $E$ is type $\mathrm{III}_0$, the Poincaré return map $\PoincRetMap_{Z_{E, \mu}}$ is aperiodic because such is the Krieger flow.

We say the Radon--Nikodym cocycle $\w_\mu$ is \textbf{balanced} if the Poincar\'e return map $\PoincRetMap_{Z_{E,\mu}}$ preserves the measure $(\pi_{E,\mu})_\ast \mu$.

\subsection{\texorpdfstring{Potential flows arising from pmp $G$-spaces}{Potential flows arising from pmp G-spaces}}

In this section we observe that all potential flows of cross-section equivalence relations of free \emph{pmp} actions of nonunimodular groups admit an $\RR$-invariant probability measure in its class.

\begin{proposition}\label{G-equiv_maps<=>scalar-potentials}
Let $G \acts (\Sigma,\nu)$ be a free ergodic pmp action of an lcsc group $G$.
Let $X \subseteq \Sigma$ be a uniformly discrete Borel cross-section and $\nu_X$ be the canonical cross-section probability measure (as in \cref{pmp_cross-section}).
Let $(\Omega, \eta)$ be any mcp $\RR$-flow. 
Then, the following are equivalent: 

\begin{enumerate}[(1)]
\item \label{item:exists_scalar_pot} There exists a scalar potential $\pi : X \to \Omega$ for $(E^G_X, X, \nu_X)$ with respect to $\eta$, where $E^G_X$ is the cross-section equivalence relation on $X$ induced by $G \acts \Sigma$.

\item \label{item:exists_G-equiv_map} There exists a $G$-invariant conull set $\Sigma' \subset \Sigma$ and a Borel $G$-equivariant map  $\hat\pi : \Sigma' \to \Omega$ (where $G$ acts on $\Omega$ through $\Delta_G:G \to \RR$) such that $\hat\pi_* \nu$ and $\eta$ have the same $\RR$-invariant null sets.
\end{enumerate}
In fact, given $\hat\pi$ as in \labelcref{item:exists_G-equiv_map}, then $\hat\pi |_X$ satisfies \labelcref{item:exists_scalar_pot}, and conversely, given $\pi$ as in \labelcref{item:exists_scalar_pot}, it admits an essentially unique extension to a $G$-equivariant map $\hat\pi$ satisfying \labelcref{item:exists_G-equiv_map}, defined by $\hat{\pi}(g . x) \coloneq \Delta_G(g) . \pi(x)$.
\end{proposition}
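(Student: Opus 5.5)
The plan is to prove the two directions separately, using \cref{pmp_cross-section}, and in particular the formula \cref{eq:RN_through_modular} $\w_{\nu_X}(g.x,x)=\Delta_G(g)$, as the bridge between the $G$-action and the Radon--Nikodym cocycle. Freeness of the action and the fact that $X$ is a cross-section give, for each $\sigma\in\Sigma$, a nonempty discrete set $\{g: g.\sigma\in X\}$. So every $\sigma$ can be written as $g.x$ with $x\in X$, and the pair $(g,x)$ is unique up to the cross-section relation.

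For \labelcref{item:exists_G-equiv_map}$\Rightarrow$\labelcref{item:exists_scalar_pot}, I set $\pi\coloneqq\hat\pi|_X$. We need $X\cap\Sigma'$ to be $\nu_X$-conull. By Ramsay's \cref{measure-class_on_cross-section}, $X\smallsetminus\Sigma'$ is contained in the $G$-invariant null set $\Sigma\smallsetminus\Sigma'$ intersected with $X$, so it is $\nu_X$-null. Equivariance then follows directly: for $y=g.x\in X$ we get $\pi(y)=\Delta_G(g).\pi(x)=\w_{\nu_X}(y,x).\pi(x)$ by \cref{eq:RN_through_modular}.

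For condition \labelcref{scalar mcp}, I need to show that $\pi_*\nu_X$ and $\hat\pi_*\nu$ have the same $\RR$-invariant null sets. Let $B\subseteq\Omega$ be $\RR$-invariant. Then $\hat\pi^{-1}(B)$ is $G$-invariant and equals $G.\pi^{-1}(B)$. By Ramsay again, it is $\nu$-null iff its trace $\pi^{-1}(B)$ on $X$ is $\nu_X$-null.

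For \labelcref{item:exists_scalar_pot}$\Rightarrow$\labelcref{item:exists_G-equiv_map}, I first remove from $X$ the $E^G_X$-invariant null set where scalar-equivariance fails, and let $\Sigma'$ be its $G$-saturation. This set is $\nu$-conull, again by Ramsay. I then define $\hat\pi(g.x)\coloneqq\Delta_G(g).\pi(x)$. This is well defined: if $g.x=h.y$ with $x,y\in X$, then $y=h^{-1}g.x$, so $\pi(y)=\Delta_G(h^{-1}g).\pi(x)$ by scalar-equivariance and \cref{eq:RN_through_modular}, and therefore $\Delta_G(h).\pi(y)=\Delta_G(g).\pi(x)$. $G$-equivariance is then immediate from the formula.

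Borelness of $\hat\pi$ is the main technical step. I would handle it with a Borel selector $\Sigma'\to G$, $\sigma\mapsto g_\sigma$ with $g_\sigma^{-1}.\sigma\in X$. Such a selector exists because the sets $\{g: g.\sigma\in X\}$ are closed, discrete and nonempty, and depend Borel-measurably on $\sigma$, as in Kechris's treatment of cross-sections. Alternatively, one can use that $\Theta:G\times X\to\Sigma$ is countable-to-one Borel and apply Luzin--Novikov uniformization. Then $\hat\pi(\sigma)=\Delta_G(g_\sigma).\pi(g_\sigma^{-1}.\sigma)$ is Borel. The null-set condition follows by the same Ramsay argument as before.

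Essential uniqueness comes from the fact that any $G$-equivariant extension must satisfy the defining formula on $G.X$, and $G.X=\Sigma$. The step I expect to require the most care is the measure-theoretic bookkeeping through Ramsay's theorem. It is used both to pass conull sets between $X$ and $\Sigma$ and to identify the invariant null sets of $\pi_*\nu_X$ and $\hat\pi_*\nu$. The key point is that $\nu_X$ lies in the Ramsay class, which is exactly what \cref{pmp_cross-section} provides.
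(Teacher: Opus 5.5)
Your proposal is correct and takes the same route as the paper, which records this as a straightforward verification resting on \cref{eq:RN_through_modular}. You supply the details the paper leaves out: Ramsay's \cref{measure-class_on_cross-section} to pass invariant null sets between $X$ and $\Sigma$, and Luzin--Novikov for the countable-to-one map $(g,x)\mapsto g.x$ to get Borelness of $\hat\pi$. One small bookkeeping addition: when trimming $X$, also remove the $E^G_X$-saturation of the null set where \cref{eq:RN_through_modular} fails, since your well-definedness step uses that formula pointwise.
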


\begin{proof}
This is a straightforward verification relying on the fact that by \labelcref{eq:RN_through_modular}, we have $\w_{\nu_X}(g.x, x) = \Delta_G(g)$ for $\nu_X$-almost every $x \in X$ and all $g \in G$ such that $g.x \in X$.
\end{proof}

\begin{theorem}[pmp $G$-action $\Rightarrow$ pmp potential flows]
\label{th: pmp G-action implies potential flow}
Let $G \acts (\Sigma,\nu)$ be a free ergodic pmp action of a nonunimodular lcsc group $G$.
Let $X \subseteq \Sigma$ be a uniformly discrete Borel cross-section and $\nu_X$ be the canonical cross-section probability measure (as in \cref{pmp_cross-section}).
Let $E^G_X$ be the cross-section equivalence relation on $X$ induced by the action $G \acts \Sigma$.
Then every potential flow $(\Omega, \eta)$ for $(E^G_X, X, \nu_X)$ admits an $\RR$-invariant probability measure equivalent to $\eta$. 
\end{theorem}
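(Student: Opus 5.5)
By \cref{G-equiv_maps<=>scalar-potentials}, the scalar potential $\pi : X \to \Omega$ extends essentially uniquely to a $G$-equivariant Borel map $\hat\pi : \Sigma' \to \Omega$, defined on a $G$-invariant conull set $\Sigma'\subseteq\Sigma$. Here $G$ acts on $\Omega$ through $\Delta_G$, and $\hat\pi_*\nu$ has the same $\RR$-invariant null sets as $\eta$. The plan is to show that $\hat\pi_*\nu$ is itself the desired measure.

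First, $\hat\pi_*\nu$ is a probability measure because $\nu$ is. Second, it is invariant under the image $\Delta_G(G)\le\RR$: for $g\in G$, equivariance and $G$-invariance of $\nu$ give
\[
\Delta_G(g).\hat\pi_*\nu=\hat\pi_*(g_*\nu)=\hat\pi_*\nu.
\]
Since $G$ is nonunimodular, $\Delta_G(G)$ is a nontrivial subgroup of $\RR$. The stabilizer in $\RR$ of a probability measure under a continuous (Borel) flow is a closed subgroup, because $r\mapsto r.\hat\pi_*\nu$ is continuous in the weak$^*$/total variation sense for jointly measurable actions. Therefore $\hat\pi_*\nu$ is invariant under $\overline{\Delta_G(G)}$.

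If $\Delta_G(G)$ is dense, this already gives $\RR$-invariance. If it is cyclic, say $\langle\alpha\rangle$ with $\alpha>1$, we average: set
\[
\eta' \coloneq \frac{1}{\log\alpha}\int_{1}^{\alpha} (r.\hat\pi_*\nu)\,\frac{dr}{r}.
\]
Because the integrand is $\langle\alpha\rangle$-periodic in $r$, $\eta'$ is an $\RR$-invariant probability measure. The map $r\mapsto r.\hat\pi_*\nu$ is measurable because the flow is jointly Borel, so $\eta'$ is well defined.

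It remains to check that the measure obtained ($\hat\pi_*\nu$ or $\eta'$) is equivalent to $\eta$. Since $\eta$ is mcp, \cref{mcp class determined by invariant null sets} applied to the $\RR$-action shows that it suffices to compare $\RR$-invariant null sets. For an $\RR$-invariant set $A$, $\eta'(A)=\hat\pi_*\nu(A)$, and by \cref{G-equiv_maps<=>scalar-potentials} this vanishes iff $\eta(A)=0$. We also need $\eta'$ to be quasi-invariant so that \cref{mcp class determined by invariant null sets} applies, but it is in fact invariant.

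The main technical point is the continuity and measurability of $r\mapsto r.\hat\pi_*\nu$. This follows from joint measurability of the flow together with Fubini, as in \cref{null vs Haar null}. The averaging trick sidesteps any closure argument in the cyclic case. In the dense case one can also avoid the continuity argument: $\Delta_G(G)$ contains some $\alpha\neq1$, so average over $[1,\alpha]$ as above and use invariance under $\langle\alpha\rangle$ alone.
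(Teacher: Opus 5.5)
Your argument is correct and follows the paper's proof: extend $\pi$ to the $G$-equivariant $\hat\pi$, note that $\hat\pi_*\nu$ is $\Delta_G(G)$-invariant, pass to $\overline{\Delta_G(G)}$, average against Haar measure over a fundamental interval in the cyclic case, and conclude by matching $\RR$-invariant null sets. The one weak point is your justification of the closure step: total-variation continuity of $r\mapsto r.\hat\pi_*\nu$ fails in general, e.g.\ for Dirac masses, and the paper instead uses that stabilizers of Borel actions of Polish groups are closed; but your closing remark, averaging over $[1,\alpha]$ for any single $\alpha\neq 1$ in $\Delta_G(G)$, is valid and makes both that step and the case split unnecessary.
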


\begin{proof}
Let $\pi : X \to \Omega$ be a scalar potential for $(E^G_X, X, \nu_X)$ with respect to $\eta$. 
Discarding a $G$-invariant null set from $\Sigma$, let $\hat\pi : \Sigma \to \Omega$ the $G$-equivariant map as in \cref{G-equiv_maps<=>scalar-potentials}, so $\hat{\pi}_* \nu$ is preserved by the modular image $\Delta_G(G)$, since $\nu$ is preserved by the action of $G$. 
Thus, $\hat{\pi}_* \nu$ is preserved by the closure $\overline{\Delta_G(G)}$ because, for the induced Borel $\RR$-flow on the space of Borel probability measures of $\Omega$, the stabilizers are closed by \cite[Theorem 9.17]{Kechris:Classical_DST}.

Since $G$ is nonunimodular, $\overline{\Delta_G(G)}$ is a non-trivial closed subgroup of $\RR$, which is hence either $\RR$ itself 
or $\langle \alpha \rangle$ for some element $\alpha \in (0,1)$.
In the first case, $\hat{\pi}_* \nu$ is the desired measure on $\Omega$ by the uniqueness statement of \cref{unique measure-class on scalar potential}.
In the latter case, we let $\mu$ be the convolution of $\hat{\pi}_* \nu$ by the normalized restriction of $\mu_\RR$ to $(\alpha, 1]$, namely, for a Borel $A \subseteq \Omega$,
\[
\mu(A) \coloneq \mu_\RR((\alpha, 1])^{-1} \int_{(\alpha, 1]} \hat{\pi}_* \nu(r^{-1} A) \;d \mu_\RR(r).
\]
It follows from the $\RR$-invariance of $\mu_\RR$ and $\langle \alpha \rangle$-invariance of $\hat{\pi}_* \nu$ that $\mu$ is an $\RR$-invariant probability measure.
It is also immediate from the definition of $\mu$ that $\mu$ and $\hat{\pi}_* \nu$ have the same $\RR$-invariant null sets.
Hence \cref{unique measure-class on scalar potential} implies that $\mu$ is as desired.
\end{proof}

\subsection{The Mackey--Maharam extension and the existence of the Krieger flow}\label{section:constructing-Krieger-flow}

Let $E$ be an ergodic mcp countable Borel equivalence relation on a standard probability space $(X,\mu)$.
We define an equivalence relation $E^\M_\mu$ on $(X \times \RR, \mu \times \mu_{\RR})$ by setting
\[
(x,{s}) \mathrel{E^\M_\mu} (y, {t}) \Longleftrightarrow x \mathrel{E}y \text{ and } \w_\mu(x,y) .{s} =  {t},
\]
and we call it the \textbf{Mackey--Maharam extension} of $(E, X, \mu)$.
Maharam defined the same equivalence relation but uses the (equivalent) Haar measure for the additive reals in order to make $E^\M_\mu$ measure preserving \cite[Theorem 1, 3.2]{Maharam:Extension1964}, which we do not need in this article.
On the other hand, Mackey defines such an extension more generally for any cocycle to an lcsc group, and we use the special case of his construction for the Radon--Nikodym cocycle $\w_\mu$ \cite[p.~198]{Mackey-1966-Erg-th-virt-gps}. 
In both Maharam's and Mackey's cases $E$ arises from a group action, while for mcp equivalence relations this construction is explicitly discussed in \cite[Section 8]{FM77a}.

Consider the flow $\RR \acts X \times \RR$ given by multiplication on the second coordinate. Notice that $E^M_\mu$ is preserved by this flow.

\begin{proposition}
[Existence of the Krieger scalar potential]
Every ergodic mcp countable Borel equivalence relation $(E,X,\mu)$ admits a Krieger scalar potential $\pi_{E,\mu} : (X, \mu) \to (\K_E, \eta_E)$.
\end{proposition}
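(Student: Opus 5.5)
Set $\K_E$ to be the space of ergodic components of the Mackey--Maharam extension $E^\M_\mu$. Concretely, fix an ergodic decomposition map $\beta : X\times\RR \to \K_E$ for $(E^\M_\mu, \mu\times\mu_\RR)$, i.e., a Borel map onto a standard Borel space that generates (mod null sets) the $\sigma$-algebra of $E^\M_\mu$-invariant Borel sets. The plan is then to show three things: the flow descends to $\K_E$; the map $\pi_{E,\mu}(x)\coloneqq \beta(x,1)$ is a scalar potential; and $\pi_{E,\mu}$ is initial.

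\emph{Step 1: descending the flow.} Since $E^\M_\mu$ is an mcp relation (the measure $\mu\times\mu_\RR$ has Radon--Nikodym cocycle computable from $\w_\mu$, and the measure class is preserved), an ergodic decomposition exists. Because the multiplication flow $r.(x,s)=(x,rs)$ normalizes $E^\M_\mu$ and preserves the class $[\mu\times\mu_\RR]$, each $r$ maps $E^\M_\mu$-invariant sets to $E^\M_\mu$-invariant sets. Hence $\RR$ acts on the measure algebra of invariant sets, i.e., on $\K_E$ as a near-action. By Mackey's point realization (or by choosing $\beta$ via a countable separating family of invariant Borel sets and invoking Mackey's theorem on Borel models of Boolean actions), we obtain a genuine Borel mcp flow on $\K_E$ with $\beta$ being $\RR$-equivariant on a conull invariant set. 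I expect this to be the main technical obstacle, though it is standard.

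\emph{Step 2: $\pi_{E,\mu}$ is a scalar potential.} Equivariance follows directly. For $y\in[x]_E$ we have $(x,1)\mathrel{E^\M_\mu}(y,\w_\mu(y,x))$, so
\[
\pi(x)=\beta(y,\w_\mu(y,x))=\w_\mu(y,x).\beta(y,1),
\]
which gives $\pi(y)=\w_\mu(x,y).\pi(x)$ up to orientation conventions. This holds a.e. after restricting to the conull set where $\beta$ is equivariant; Fubini converts "a.e.\ $(x,s)$" into "a.e.\ $x$ with $s=1$" by rescaling. The measure class is taken to be $\beta_*[\mu\times\mu_\RR]$, which coincides with $[\overline{\pi}_*(\mu\times\mu_\RR)]$ since $\overline\pi=\beta$. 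Condition \labelcref{scalar mcp} then follows from \cref{unique measure-class on scalar potential}.

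\emph{Step 3: universality.} Given a scalar potential $\pi':X\to\Omega$, let $\overline{\pi'}:X\times\RR\to\Omega$ be as in \cref{scalar_potential_extension}. Scalar-equivariance says exactly that $\overline{\pi'}$ is $E^\M_\mu$-invariant a.e. Hence, by the universal property of ergodic decomposition, it factors a.e.\ through $\beta$ via a Borel map $h:\K_E\to\Omega$. The map $h$ is $\RR$-equivariant a.e., because both $\overline{\pi'}$ and $\beta$ are equivariant and $\beta_*$ of the measure is quasi-invariant. Restricting to an invariant conull set, using that a.e.-equivariance for each $r$ yields strict equivariance on an invariant conull set, gives the required factorization. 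We check that $h$ is mcp since $\overline{\pi'}_*[\mu\times\mu_\RR]=h_*\beta_*[\mu\times\mu_\RR]$, and it has the same invariant null sets as the measure on $\Omega$. Uniqueness of $h$ holds because $\beta$ generates the invariant $\sigma$-algebra.
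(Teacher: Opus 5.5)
Your proposal is correct and follows essentially the paper's route. Like the paper, you point-realize (Mackey) the $\RR$-action on the algebra of $E^\M_\mu$-invariant sets, set $\pi(x)=\Pi(x,1)$, and get universality by factoring the $E^\M_\mu$-invariant map $\overline{\pi'}$ through this realization; the paper does that last step at the measure-algebra level via Mackey's Theorem~2, whereas you factor a.e.\ and then upgrade to strict equivariance. One small correction: with the paper's definition one has $(x,1)\mathrel{E^\M_\mu}(y,\w_\mu(x,y))$, not $(x,1)\mathrel{E^\M_\mu}(y,\w_\mu(y,x))$, and this gives exactly $\pi(x)=\w_\mu(x,y).\pi(y)$ with no orientation fudge needed.
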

\begin{proof}
Applying \cite[Theorems 1 and 2]{Mackey:PointRealizations} to the sigma-algebra of $\mathrel{E^\M_\mu}$-invariant sets, we obtain an mcp Borel flow $\RR \acts (\Omega, \eta)$ and an $\RR$-equivariant mcp Borel map $\Pi: X \times \RR \to \Omega$.

Then $\pi:X \to \Omega$ defined by $\pi(x) = \Pi(x,1)$ satisfies that for $\mu$-almost every $x\in X$ and for every $y \in [x]_E$,
\[
\pi(y) = \Pi(y,1) = \Pi(x,\w_\mu(y,x))= \w_\mu(y,x).\Pi(x,1) = \w_\mu(y,x).\pi(x).
\]
Furthermore, if $A \subseteq \Omega$ is $\RR$-invariant, then $\Pi^{-1}(A) = \pi^{-1}(A) \times \RR$, so $A$ is $\eta$-null exactly when $\pi^{-1}(A)$ is $\mu$-null, as required by the Definition of scalar potential (\cref{def: scalar potential}\labelcref{scalar mcp}).
Thus, $\pi: X \to (\Omega, \eta)$ is a scalar potential. 
We now proceed to showing that it satisfies the universal property (\cref{def: Krieger=universal scalar potential}).

Let $\pi_0 : X \to (\Omega_0, \eta_0)$ be an arbitrary scalar potential. 
By \cref{scalar_potential_extension}, the measure algebra $\mathrm{MALG}_{\eta_0}(\mathcal{B}(\Omega_0))$ is isomorphic to an $\RR$-invariant sub-algebra of the measure algebra of $E^\M_\mu$-invariants sets. Since $\Omega$ is a point realization of the measure algebra of \emph{all} $E^\M_\mu$-invariants sets, $\mathrm{MALG}_{\eta_0}(\mathcal{B}(\Omega_0))$ is isomorphic to an $\RR$-invariant subalgebra of $\mathrm{MALG}_{\eta}(\mathcal{B}(\Omega))$. 
Applying \cite[Theorem 2]{Mackey:PointRealizations} yields an mcp $\RR$-equivariant Borel map $\Omega \to \Omega_0$, as required for the universal property.
\end{proof}

The following is immediate from the above proof:

\begin{corollary}
For an ergodic mcp countable Borel equivalence relation $(E,X,\mu)$, the unique extension (as in \cref{scalar_potential_extension}) of the Krieger scalar potential $\pi_E : X \to \K_E$ to an $\RR$-equivariant map $\overline\pi_E : X \times \RR \to \K_E$ is an ergodic decomposition map for the Mackey--Maharam extension $(E^\M_\mu, X \times \RR, \mu \times \mu_\RR)$.
\end{corollary}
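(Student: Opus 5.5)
The plan is to read off the corollary directly from the construction in the preceding proof, where $\pi(x)=\Pi(x,1)$ and $\Pi : X\times\RR\to\Omega$ is the Mackey point realization of the $\sigma$-algebra of $E^\M_\mu$-invariant sets. Since the Krieger scalar potential is essentially unique up to isomorphism of $\RR$-flows (by the universal property), we may take $\pi_E=\pi$ and $\K_E=\Omega$. The claim then reduces to showing that the extension $\overline\pi_E(x,r)=r.\pi(x)$ coincides a.e.\ with $\Pi$.

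This is immediate from $\RR$-equivariance of $\Pi$ with respect to multiplication on the second coordinate: $\Pi(x,r)=r.\Pi(x,1)=r.\pi(x)=\overline\pi_E(x,r)$. Equivariance holds a.e., but by Mackey's theorem the flow acts on a conull invariant set, and Fubini handles the null sets. So $\overline\pi_E=\Pi$ almost everywhere on $(X\times\RR,\mu\times\mu_\RR)$.

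It then remains to recall what Mackey's point realization gives. $\Pi$ is $E^\M_\mu$-invariant, and the $\sigma$-algebra $\Pi^{-1}(\mathcal{B}(\Omega))$ equals, modulo null sets, the $\sigma$-algebra of all $E^\M_\mu$-invariant Borel sets, with $\Pi_*[\mu\times\mu_\RR]=[\eta]$. This is precisely the definition of an ergodic decomposition map in the measure-class (nonsingular) sense. Invariance can also be checked by hand: for $(x,s)E^\M_\mu(y,t)$ we have $\overline\pi_E(y,t)=t.\pi(y)=t\,\w_\mu(y,x).\pi(x)=s.\pi(x)$ using scalar-equivariance, after discarding a null set.

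The only point needing care is that ergodic decomposition here means the Mackey realization of the invariant $\sigma$-algebra for an mcp, not pmp, relation on a $\sigma$-finite space. If a measure decomposition into ergodic quasi-invariant fibre measures is wanted, one disintegrates $\mu\times\mu_\RR$ (replaced by an equivalent probability measure) over $\overline\pi_E$. Ergodicity of a.e.\ fibre measure then follows from the fact that the invariant $\sigma$-algebra is exactly the pullback $\sigma$-algebra, by the standard argument. I expect no real obstacle beyond this bookkeeping of null sets.
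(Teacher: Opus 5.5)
Your proposal is correct and matches the paper, which simply says the corollary is immediate from the existence proof. The point is that $\Pi$ is the Mackey point realization of the $\sigma$-algebra of $E^\M_\mu$-invariant sets, and its $\RR$-equivariance (which holds on a conull $\RR$-invariant set of the form $X'\times\RR$) gives $\Pi(x,r)=r.\Pi(x,1)=r.\pi(x)=\overline\pi_E(x,r)$. Your extra steps, transporting along the essentially unique flow isomorphism and checking $E^\M_\mu$-invariance by hand, only make explicit what the paper leaves implicit.
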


\subsection{Changing target measures of scalar potentials}\label{subsec:changing_measures}

\begin{theorem}
\label{changing measure to reach a pmp in Krieger flow}
Let $E$ be an ergodic mcp type $\mathrm{III}_0$ countable Borel equivalence relation on a standard probability space $(X,\mu)$. 
Suppose that the measure class $[\eta_E]$ of its Krieger flow $\K_E$ contains an invariant probability measure, and let $Z$ be a uniformly discrete Borel cross-section of $\RR\curvearrowright\K_E$.
Then there exists an equivalent probability measure $\mu' \sim \mu$ and a Krieger scalar potential $\pi' : X \to \K_E$ for $(E,X,\mu')$
such that the pushforward measure $\pi'_* \mu'$ is in the Ramsay measure class on $Z$ induced by $[\eta_E]$ (see \cref{pmp_cross-section}) and is preserved by the cross-section equivalence relation $E^\RR_Z$ induced on $Z$ by the $\RR$-action.
\end{theorem}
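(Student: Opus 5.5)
Fix a Krieger scalar potential $\pi=\pi_{E,\mu}:X\to\K_E$ and an $\RR$-invariant probability measure $\eta\in[\eta_E]$. Since $Z$ is a uniformly discrete cross-section of an essentially free flow (type $\mathrm{III}_0$, \cref{defn:classbytype}), the map $\Theta:\RR\times Z\to\K_E$, $(r,z)\mapsto r.z$, is countable-to-one. Applying \cref{pmp_cross-section} to the pmp flow $\RR\acts(\K_E,\eta)$, which is unimodular, gives a probability measure $\eta_Z$ in the Ramsay class on $Z$. By \cref{eq:RN_through_modular} with $\Delta\equiv 1$, $\eta_Z$ is $E^\RR_Z$-invariant. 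The goal is therefore to move the points $\pi(x)$ onto $Z$ by rescaling, and then reweight $\mu$ so that the pushforward is exactly $\eta_Z$.

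First, I choose a Borel function $\rho:X\to\RR$ such that $\rho(x).\pi(x)\in Z$. For example, take $\rho(x)$ to be the element $r$ with $r.\pi(x)\in Z$ that is nearest to $1$ in the sense of $\log$, with a Borel tie-breaking rule; Luzin--Novikov gives measurability. Put $d\mu_1\coloneqq\rho\,d\mu$, normalized after truncating if necessary. By \cref{scalar potential permanence properties}\labelcref{item:equiv_measure} and \cref{krieger permanence properties}, $\pi_1(x)\coloneqq\rho(x).\pi(x)$ is a Krieger scalar potential for $(E,X,\mu_1)$, and it takes values in $Z$. If $\rho$ is not integrable, I first multiply $\rho$ by a suitable $E$-non-invariant weight, or rather replace $\rho$ by choosing a return in a bounded window; uniform discreteness and the bi-recurrence of $Z$ should allow this, and in any case only the measure class matters. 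Next I check that $(\pi_1)_*\mu_1$ lies in the Ramsay class on $Z$. This follows from \cref{measure-class_on_cross-section}: the $E^\RR_Z$-invariant null sets of $(\pi_1)_*\mu_1$ are exactly the traces on $Z$ of $\RR$-invariant $\eta_E$-null sets, by \cref{def: scalar potential}\labelcref{scalar mcp}. One also has to check that $(\pi_1)_*\mu_1$ is quasi-invariant under $E^\RR_Z$; I would argue this as in the last paragraph of the proof of \cref{canonical_potential_section}, lifting partial maps of $E^\RR_Z$ to $[[E]]$. That lifting uses that for a.e.\ $x$, every point of $Z\cap\RR.\pi_1(x)$ is of the form $\pi_1(y)$ with $y\in[x]_E$, and this I must ensure.

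This surjectivity onto the whole cross-section orbit is the main obstacle. The values $\w_{\mu_1}([x]_E\times\{x\})$ are dense-type and do not a priori hit every point of $Z\cap\RR.\pi_1(x)$. I would handle it using type $\mathrm{III}$ elasticity (\cref{typeIII elasticity}) combined with \cref{sets_with_prescribed_fiber-measures}. Disintegrate $\mu_1$ over $\pi_1$ into fibers $\mu_{1,z}$, and choose the rounding map $\rho$ more cleverly: partition each fiber and send a prescribed fraction of it to each point of $Z$ in the orbit within a fixed window. Ergodicity of the Mackey--Maharam extension (the corollary of the previous subsection) then ensures the image of $[x]_E$ is a.e.\ all of $Z\cap\RR.\pi(x)$.

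Finally, with $\nu\coloneqq(\pi_1)_*\mu_1\sim\eta_Z$, I set $d\mu'\coloneqq\frac{d\eta_Z}{d\nu}\circ\pi_1\,d\mu_1$. This is a reweighting constant on fibers of $\pi_1$, so it does not change $\w$ along $\ker_E\w_{\mu_1}$. However, it does change $\w$ across fibers, so $\pi_1$ is no longer a scalar potential for $\mu'$ unless compensated. I expect to fix this by noting that $f=\frac{d\eta_Z}{d\nu}$ is itself the Radon--Nikodym correction for the invariant measure on $E^\RR_Z$. Concretely, I would take $\pi'(x)\coloneqq f(\pi_1(x)).\pi_1(x)$ and then reapply the rounding to $Z$. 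I would verify that the cocycle relation makes $\pi'_*\mu'=\eta_Z$ by a mass transport computation (\cref{lem:mass_transport_homomorphism}); this bookkeeping is routine once the surjectivity step is in place.
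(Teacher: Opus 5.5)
\textbf{There is a genuine gap in your last step, and it is not routine bookkeeping.}

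Changing $\mu_1$ by a density $h$ forces the scalar potential to move by $h$ as well; this is exactly \cref{scalar potential permanence properties}\labelcref{item:equiv_measure}. With $d\mu'=(f\circ\pi_1)\,d\mu_1$, the potential becomes $(f\circ\pi_1).\pi_1$, which leaves $Z$. ``Rounding back'' to $Z$ then multiplies the measure by yet another factor and changes the pushforward again. Nothing in your scheme makes this process stop at $\eta_Z$. Your ``normalized'' $\mu_1$ already has the same problem: multiplying by a constant $c$ shifts the potential by $c$.

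On a fixed copy of $X$, reweighting ties the density on $Z$ to the position along the orbit, so you need an extra degree of freedom. The missing idea is the paper's \cref{change target measure}. It changes the \emph{space} rather than the weight:
\begin{enumerate}[(1)]
\item Write $\frac{d\eta_Z}{d\nu}=k+s$, with $k$ integer-valued and $s\in(0,1]$.
\item Choose $Y\subseteq X$ whose fiber measures over $\pi_1$ equal $s$, using \cref{sets_with_prescribed_fiber-measures}.
\item Let $\tilde X=(Y\times\{0\})\sqcup\{(x,n):1\le n\le k(\pi_1(x))\}$. Equip it with $\mu_1$ times counting measure and with the relation pulled back from $E$.
\item Since $\w_{\tilde\mu}((x,n),(y,m))=\w_{\mu_1}(x,y)$, the map $\pi_1\circ\proj_X$ is still a $Z$-valued scalar potential, and its pushforward is exactly $\eta_Z$.
\item The new relation is stably isomorphic to $E$, hence isomorphic to it by \cref{typeIII elasticity}. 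Transporting along this isomorphism gives $\mu'\sim\mu$ and $\pi'$.
\end{enumerate}
This is where elasticity is really needed, not in the surjectivity step.

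\textbf{Your intermediate step also does not work as stated.}

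A ``fixed window'' cannot make $\pi_1([x]_E)$ exhaust $Z\cap\RR.\pi(x)$. Uniform discreteness bounds the gaps of $Z$ only from below. For an arbitrary $\mu$, nothing prevents $\w_\mu([x]_E\times\{x\})$ from having gaps that contain arbitrarily many points of $Z$.

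Also, the Mackey--Maharam extension is not ergodic here. It is ergodic only in type $\mathrm{III}_1$; the corollary says that $\overline\pi_E$ is its ergodic \emph{decomposition}.

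The paper's \cref{change target cross-section} proceeds as follows:
\begin{enumerate}[(1)]
\item It rounds \emph{down}, with $\tau_1<1$. This also removes your integrability worry.
\item It uses \cref{prop:kernelIsNowhereSmooth} to get atomless fibers.
\item It splits $X=\bigsqcup_n X_n$ with fiber masses $2^{-n}$ and pushes $X_n$ down $n$ further returns.
\item Then $(\pi_2)_*\mu_2\sim\sum_n 2^{-n}(\PoincRetMap_Z^{-n})_*\nu$.
\item Quasi-invariance under $\PoincRetMap_Z$ follows from nowhere smoothness (conservativity) of $E^\RR_Z$, with no pointwise surjectivity needed.
\end{enumerate}
The essential feature is unbounded downward shifts with summable weights.
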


\begin{proof}
The proof of this theorem follows from \cref{change target cross-section,change target measure} proved below together with \cref{pmp_cross-section} as follows.
Let $\pi : X \to \K_E$ be a Krieger scalar potential for $(E,X,\mu)$, and note that the Krieger flow $(\K_E, \eta_E)$ is ergodic (by \cref{Krieger_flow_is_ergodic}) and nowhere smooth because $E$ is type $\mathrm{III}_0$, see \cref{defn:classbytype}.
Then \cref{change target cross-section} gives an equivalent measure $\tilde \mu \sim \mu$ on $X$ and a Krieger scalar potential $\tilde \pi : X \to \K_E$ for $(E, X, \tilde\mu)$ such that the pushforward measure $\tilde\pi_* \tilde \mu$ is in the Ramsay measure class on $Z$ induced by $\eta_E$.
By \cref{pmp_cross-section}, there is a probability measure $\xi$ on $Z$ in the Ramsay class induced by $[\eta_E]$ which is preserved by the equivalence relation $E^\RR_Z$ induced on $Z$ by the $\RR$-action.
Finally, applying \cref{change target measure} to $\tilde \pi$ on $(E,X,\tilde \mu)$, we get an equivalent measure $\mu' \sim \mu$ and a Krieger scalar potential $\pi'$ for $(E,X,\mu')$ with $(\pi')_* \mu' = \xi$, as desired.
\end{proof}

The following lemma can be viewed as a generalization and strengthening (in terms of scalar potentials and for arbitrary equivalence relations) of a statement for $\Z$-actions about discretization of the Radon--Nikodym cocycle, which goes back to Krieger \cite[Lemma 6.5]{Krieger:Flow} and revisited in \cite[Proposition 2.3]{KatznelsonWeiss:NonsingularTransformations}.

\begin{lemma}[Changing the target measure class]\label{change target cross-section}
Let $E$ be an ergodic mcp type $\mathrm{III}_0$ countable Borel equivalence relation on a standard probability space $(X,\mu)$.
Let $\pi : (X,\mu) \to (\Omega, \eta)$ be a scalar potential for $(E,X,\mu)$, where the potential flow $\RR \acts (\Omega,\eta)$ is nowhere smooth.
Let $Z$ be a Borel cross-section of $\Omega$.

Then there exists an equivalent probability measure $\tilde \mu \sim \mu$ such that the scalar potential $\tilde\pi : X \to \Omega$ given by $x \mapsto \frac{d\tilde\mu}{d\mu}(x) . \pi(x)$ for $(E,X,\tilde\mu)$ satisfies that $\tilde\pi_* \tilde \mu$ is in the Ramsay measure class on $Z$ induced by $\eta$.
In particular, if $(\Omega, \eta)$ is a Krieger flow for $(E,X,\mu)$ then $\tilde\pi$ is a Krieger scalar potential for $(E,X,\tilde\mu)$.
\end{lemma}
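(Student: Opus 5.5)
We look for a Borel function $\rho : X \to \RR$ with $\rho(x).\pi(x) \in Z$ for a.e.\ $x$. Then we set $d\tilde\mu \coloneqq c\,\rho\, d\mu$ for a suitable normalization, after making $\rho$ integrable. The equivariance part of \cref{scalar potential permanence properties}\labelcref{item:equiv_measure} shows that $\tilde\pi(x) = c\rho(x).\pi(x)$ is a scalar potential for $(E,X,\tilde\mu)$ with respect to $\eta$. The constant $c$ is harmless if we choose $\rho$ so that $c\rho(x).\pi(x)\in Z$; concretely, we first choose $\rho$ and then replace $Z$ by its translate $c^{-1}.Z$, or we build the normalization into the choice as below. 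The ``in particular'' clause then follows from \cref{krieger permanence properties} applied to part \labelcref{item:equiv_measure}.

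\emph{Choosing $\rho$.} Since $Z$ is a cross-section, for every $\omega \in \Omega$ the set $\{s \in \RR : s.\omega \in Z\}$ is nonempty, closed and discrete. We use a Borel selection, for instance the least $s \ge 1$ with $s.\pi(x)\in Z$, and call it $\rho_0(x)$. This selection is Borel: the return-time function of $Z$ is Borel, and the Lusin--Novikov theorem applies to the relation $\{(x,s) : s.\pi(x)\in Z\}$, whose sections are countable. Such a $\rho_0$ may fail to be integrable. To fix this, we use the freedom in choosing the point of $Z$. For each $n \in \Z$, let $A_n$ be the set of $x$ for which the selected point lies at ``distance'' in $[2^n,2^{n+1})$ in the first case, and adjust the choices so that $\int \rho\, d\mu<\infty$. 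The simplest way is to pick, for each $x$, a point $s(x).\pi(x)\in Z$ with $s(x)\le 1$; the largest such point exists unless $Z\cap \RR.\pi(x)$ has no element below $\pi(x)$. Bi-recurrence of $Z$ on a.e.\ orbit follows from nowhere smoothness of $\Omega$ (an orbit where $Z$ has a minimum gives a Borel transversal), so this is a.e.\ well defined. Then $\rho\le 1$ is integrable, and $\rho>0$ everywhere, so $\tilde\mu\sim\mu$. Finally, normalize by $c=(\int\rho\,d\mu)^{-1}$ and replace $\rho$ by the point of $Z$ at or below $c^{-1}$ instead of $1$; equivalently, choose the largest $s\le c^{-1}$. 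Since the normalizing constant depends on the choice, we fix $c$ by an iterative/monotone argument. It is simpler to note that the function $t\mapsto\int \rho_t\,d\mu$, where $\rho_t(x)=\max\{s\le t : s.\pi(x)\in Z\}$, is monotone and ranges over $(0,\infty)$, so we can take a $t$ with $\int\rho_t\,d\mu$ close to $1$. Exact normalization is unnecessary, since the Ramsay class is a measure class and $\tilde\mu$ is merely required to be a probability measure up to scaling. Scaling $\tilde\mu$ by a constant moves $\tilde\pi$ by that constant, so we instead compose with the flow isomorphism $\omega\mapsto c.\omega$ using \cref{scalar potential permanence properties}\labelcref{item:changing by isom flow} and replace $Z$ accordingly. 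I expect this bookkeeping to be routine but fiddly.

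\emph{Measure class.} The remaining and main step is to show that $\tilde\pi_*\tilde\mu$, now supported on $Z$, lies in the Ramsay class $\eta_Z$. By \cref{measure-class_on_cross-section}, it suffices to check two things: that $\tilde\pi_*\tilde\mu$ is $E^\RR_Z$-quasi-invariant, and that its $E^\RR_Z$-invariant null sets are exactly the traces of $\RR$-invariant $\eta$-null sets. The second holds because $\tilde\pi$ is a scalar potential, so $\tilde\pi_*\tilde\mu$ and $\eta$ agree on $\RR$-invariant null sets, and $\RR$-saturations of subsets of $Z$ are $\RR$-invariant. Quasi-invariance is the real obstacle, because a single selection may charge only a ``thin'' part of $Z$. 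To get it, we argue as in the proof of \cref{canonical_potential_section}: given $\phi\in[E^\RR_Z]$ and a nonnull $A\subseteq Z$, we need $\phi(A)$ to be nonnull. Here we use type $\mathrm{III}$ and the nowhere smoothness of $\Omega$. Together with \cref{bounded-unbounded_cocycle_types}, these give that for a.e.\ $x$ the values $\w_\mu(y,x)$, $y\in[x]_E$, are unbounded above and below. If quasi-invariance fails for a given selection, we randomize the selection instead. We partition $X$ into countably many Borel pieces and, on each piece, select the $k$-th point of $Z$ below level $t$, with weights chosen to keep $\rho$ integrable. This makes $\tilde\pi_*\tilde\mu$ a countable sum of pushforwards along the return map $\PoincRetMap_Z^{-k}$. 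Such a sum is quasi-invariant under $E^\RR_Z$, since that relation is generated by $\PoincRetMap_Z$. Each summand is mutually absolutely continuous with the preceding one, because $\pi_*\mu$ is quasi-invariant under the flow by \cref{scalar_potential_extension}. This averaging over $k\in\Z$ is the step I expect to require the most care.
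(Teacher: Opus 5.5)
Your first step (push $\pi(x)$ down to the largest point of $Z$ below level $1$) is essentially the paper's Step 1. Splitting $X$ into pieces on which you descend further returns is also the right second move. However, the justification you give for quasi-invariance is false, and the ingredient that makes the second move work is missing.

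Write $\pi_1,\mu_1$ for the output of your first step and $\nu\coloneqq(\pi_1)_*\mu_1$. \cref{scalar_potential_extension} is about $\overline\pi_*(\mu\times\mu_\RR)$ on $X\times\RR$, not about $\pi_*\mu$. Indeed, $\nu$ is concentrated on the $\eta$-null set $Z$, so it is not flow-quasi-invariant. Moreover, if consecutive summands $(\PoincRetMap_Z^{-k})_*\nu$ and $(\PoincRetMap_Z^{-k-1})_*\nu$ were equivalent, $\nu$ would already be $\PoincRetMap_Z$-quasi-invariant and no averaging would be needed.

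For an arbitrary partition $X=\bigsqcup_k X_k$, the measures $(\pi_1)_*(\mu_1|_{X_k})$ can be mutually singular, since each piece may sit over a different part of $Z$. Then $\sum_k(\PoincRetMap_Z^{-k})_*(\pi_1)_*(\mu_1|_{X_k})$ has no reason to be quasi-invariant. The missing idea is to choose the partition fiberwise over $\pi_1$, with $(\mu_1)_z(X_n)=2^{-n}$ for a.e.\ $z$, so that $(\pi_1)_*(\mu_1|_{X_n})=2^{-n}\nu$ for every $n$. This needs the disintegration of $\mu_1$ over $\pi_1$ to be atomless. The paper gets that from \cref{prop:kernelIsNowhereSmooth}, which applies because $\w_{\mu_1}$ is dis-closed once $\pi_1$ lands in $Z$ and $\pi_1$ is $\ker_E\w_{\mu_1}$-invariant, together with \cref{sets_with_prescribed_fiber-measures}.

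Even with this partition, you only descend ($n\ge1$), which is what keeps the density below $1$, so the sum is one-sided. For $\lambda\coloneqq\sum_{n\ge1}2^{-n}(\PoincRetMap_Z^{-n})_*\nu$ one has $(\PoincRetMap_Z)_*\lambda=\tfrac12\lambda+\tfrac12\nu$. You therefore still must show $\nu\ll\lambda$, i.e., that $B$ is $\nu$-null whenever $\PoincRetMap_Z^n(B)$ is $\nu$-null for all $n\ge1$. This is where nowhere smoothness of $\Omega$ is actually used. The points of $B$ outside $\bigcup_{n\ge1}\PoincRetMap_Z^n(B)$ are the minima of $B$ on their orbits, so they form a partial transversal. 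Its $\RR$-saturation is an invariant set on which the flow is smooth, hence $\eta$-null, hence $\nu$-null.

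Your phrase ``averaging over $k\in\Z$'' skips this step. A genuinely two-sided average would give quasi-invariance for free, but it requires moving points up. That breaks the integrability bound unless the fiber weights are allowed to depend on $z$, which again is only possible once you have the fiberwise partition.

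Finally, your normalization fix of replacing $Z$ by a translate is not admissible, since $Z$ is prescribed. The paper's construction likewise ends with a measure of total mass less than $1$. That is harmless for its later use, because \cref{change target measure} then produces a probability measure with prescribed pushforward.
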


\begin{proof}
We proceed in three steps.

\noindent \textit{Step 1.}
For every $x\in X$, let 
\[
\tau_1(x) \coloneq \text{ the largest $r < 1$ in $\RR$ such that $r . \pi(x) \in Z$}. 
\]
Let $\pi_1 : X \to \Omega$ be defined by $x \mapsto \tau_1(x).\pi(x)$. 
This map lands inside $Z$.
Define the measure $\mu_1$ by $d \mu_1 \coloneq \tau_1 d\mu$, i.e.\ $\frac{d \mu_1}{d \mu} \coloneqq \tau_1$, and note that $\mu_1 \sim \mu$ and $\mu_1(X) < 1$ since $0 < \tau_1 < 1$.
Now $\pi_1$ is a scalar potential for $(E,X,\mu_1)$ with respect to $\eta$, by \cref{scalar potential permanence properties}\ref{item:equiv_measure}, and it is a Krieger scalar potential if $\pi$ is, by \cref{krieger permanence properties}.
Since the image $\pi_1(X)$ lands inside the cross-section $Z$, it follows that the Radon--Nikodym cocycle $w_{\mu_1}$ is dis-closed.

\smallskip

\noindent \textit{Step 2.}
By \cref{prop:kernelIsNowhereSmooth}, $\ker_E w_{\mu_1}$ is $\mu_1$-nowhere smooth.
Since $\pi_1$ is $\ker_E w_{\mu_1}$-invariant, the measure disintegration $z \mapsto (\mu_1)_z$ of $\mu_1$ through $\pi_1$ satisfies that $(\mu_1)_z$ is atomless for $(\pi_1)_* \mu_1$-a.e.\ $z \in Z$.
Thus \cref{sets_with_prescribed_fiber-measures} applies to the measure disintegration $z \mapsto (\mu_1)_z$, yielding a partition of $X = \sqcup_{n \ge 1} X_n$ into Borel sets such that $(\mu_1)_z(X_n) = 2^{-n}$ for each $n \ge 1$ and for $(\pi_1)_* \mu_1$-a.e.\ $z \in Z$.

We now define $\tau_2 : X \to \RR$ by setting, for each $n \in \N$ and $x \in X_n$,
\[
\tau_2(x) \coloneqq \text{ the $n^\text{th}$ largest $r < 1$ in $\RR$ such that $r . \pi_1(x) \in Z$}. 
\]

\noindent
Let $\pi_2 : X \to \Omega$ be defined by $x \mapsto \tau_2(x).\pi_1(x)$, and note that $\pi_2 |_{X_n} = \PoincRetMap_Z^{-n} \circ (\pi_1 |_{X_n})$ for each $n \ge 1$, where $\PoincRetMap_Z$ is the Poincaré return map associated to $Z$.
Define the measure $\mu_2$ by setting $d \mu_2 \coloneq \tau_2 d\mu_1$, i.e.\ $\frac{d \mu_2}{d \mu_1} \coloneqq \tau_2$, and note that $\mu_2 \sim \mu$ and $\mu_2(X) < 1$ since $0 < \tau_2 < 1$.
Again, $\pi_2$ is a scalar potential for $(E,X,\mu_2)$ with respect to $\eta$, by \cref{scalar potential permanence properties}\ref{item:equiv_measure}, and it is a Krieger scalar potential if $\pi$ (hence $\pi_1$) is, by \cref{krieger permanence properties}.

\smallskip

\noindent \textit{Step 3.}
We now prove that $(\pi_2)_* \mu_2$ is in the Ramsay measure class on $Z$ induced by $[\eta]$.
Since $(\pi_2)_* \mu_2$ and $\eta$ have the same $\RR$-invariant null sets, it suffices to check that the measure class of $(\pi_2)_* \mu_2$ is preserved by $E_Z^{\RR}$, equivalently, by the Poincaré return map $\PoincRetMap_Z$.

Put $\nu \coloneq (\pi_1)_* (\mu_1)$ and observe that $(\pi_1)_* (\mu_1 |_{X_n}) = 2^{-n} \nu$ by the definition of the sets $X_n$ and measure disintegration.
Because $\pi_2 |_{X_n} = \PoincRetMap_Z^{-n} \circ (\pi_1 |_{X_n})$ for all $n \ge 1$, we have: 
\[
(\pi_2)_* \mu_2 \sim (\pi_2)_* \mu_1 = \sum_{n \ge 1} (\pi_2)_* (\mu_1 |_{X_n}) = \sum_{n \ge 1} (\PoincRetMap_Z^{-n})_* \bigl((\pi_1)_* (\mu_1 |_{X_n})\bigr) = \sum_{n \ge 1} 2^{-n}(\PoincRetMap_Z^{-n})_* \nu.
\]
Thus, 
\[
(\PoincRetMap_Z)_* \big((\pi_2)_* \mu_2\big) \sim (\PoincRetMap_Z)_* \left(\sum_{n \ge 1} 2^{-n}(\PoincRetMap_Z^{-n})_* \nu \right) = \sum_{n \ge 1} 2^{-n}(\PoincRetMap_Z^{-n+1})_* \nu = 2^{-1} \sum_{n \ge 1} 2^{-n}(\PoincRetMap_Z^{-n})_* \nu + 2^{-1} \nu,
\]
so it is enough to check that if a set $B$ is null with respect to $\sum_{n \ge 1} 2^{-n}(\PoincRetMap_Z^{-n})_* \nu$, then it is $\nu$-null.
By definition, $\bigcup_{n \ge 1} \PoincRetMap_Z^n (B)$ is $\nu$-null.
But $E^\RR_Z$ is $\nu$-nowhere smooth because the flow $\Omega$ is $\eta$-nowhere smooth and $\nu$ has the same $\RR$-invariant Borel null sets as $\eta$ (being a push-forward under a scalar potential with respect to $\eta$).
Hence $\bigcup_{n \ge 1} \PoincRetMap_Z^n (B)$ is equal to the full $\PoincRetMap_Z$-saturation $\bigcup_{n \in \Z} \PoincRetMap_Z^n (B)$ up to a $\nu$-null set. 
In particular, $B=\PoincRetMap_Z^0 (B)$ is $\nu$-null. 

Thus, $\tilde\mu \coloneqq \mu_2$ and $\tilde\pi \coloneqq \pi_2$ are as desired.
\end{proof}

\begin{lemma}[Changing the target measure to an equivalent one]
\label{change target measure}
Let $E$ be an ergodic mcp type $\mathrm{III}$ countable Borel equivalence relation on a standard probability space $(X,\mu)$.
Let $\pi : X \to \Omega$ be a scalar potential for $(E,X,\mu)$ and let $\xi $ be a probability measure equivalent to $\pi_* \mu$.
There is a probability measure $\mu'$ on $X$ equivalent to $\mu$ and a scalar potential $\pi' : X \to \Omega$ for $(E, X, \mu')$ such that $\pi'_* \mu' = \xi$.
Moreover, if $\pi$ is a Krieger scalar potential for $(E,X,\mu)$ then $\pi'$ is a Krieger scalar potential for $(E,X,\mu')$.
\end{lemma}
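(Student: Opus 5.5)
The plan is to modify $\mu$ fiberwise over $\pi$ without changing $\pi$ itself, so that $\pi' \coloneqq \pi$ and only the measure changes. The catch is that multiplying $\mu$ by a density $f$ changes the Radon--Nikodym cocycle by the coboundary of $f$. By \cref{scalar potential permanence properties}\labelcref{item:equiv_measure}, the scalar potential for $(E,X,f\mu)$ is then $x\mapsto f(x).\pi(x)$, not $\pi$. The pushforward of $f\mu$ under this new potential is what we must control.

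The first step is to reduce to densities that are functions of the potential. Write $\nu\coloneqq\pi_*\mu$ and $h\coloneqq \frac{d\xi}{d\nu}$, which is positive and finite $\nu$-a.e. If we take $d\mu_0\coloneqq (h\circ\pi)\,d\mu$, then $\pi_*\mu_0=\xi$ exactly. The scalar potential for $(E,X,\mu_0)$ is, however, $\pi_0(x)=h(\pi(x)).\pi(x)$, and its pushforward $(\pi_0)_*\mu_0$ is the image of $\xi$ under the map $\Theta:\omega\mapsto h(\omega).\omega$. This map need not preserve anything. So the simple density change fails, and we need a correction.

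The second step uses the type $\mathrm{III}$ hypothesis, i.e.\ the freedom in how mass is placed along $E$-classes. I would look for a density $f$ such that $f(x).\pi(x)$ is distributed as $\xi$ under $f\mu$. Define $\Phi(x)\coloneqq f(x).\pi(x)$. The condition $\Phi_*(f\mu)=\xi$ reads
\[
\int g(f(x).\pi(x))\,f(x)\,d\mu(x)=\int g\,d\xi
\]
for all Borel $g\ge 0$. Disintegrate $\mu$ over $\pi$ as $\mu=\int\mu_\omega\,d\nu(\omega)$. We want, for each target point $\omega'$, to choose which source fibers $\mu_\omega$ contribute and with which scaling $s$, where $s.\omega=\omega'$. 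The most direct choice is to take $f$ constant on each fiber, $f=c\circ\pi$, with $c:\Omega\to\RR$ chosen so that $\Theta_c:\omega\mapsto c(\omega).\omega$ satisfies $(\Theta_c)_*(c\,\nu)=\xi$. This is a transport problem for the flow, which I would solve by passing to the extension $\overline\pi$ from \cref{scalar_potential_extension}. Concretely, replace $X$ by the stably isomorphic relation on $X\times\RR$ restricted to a suitable set, or more simply use \cref{typeIII elasticity}. If $\mathrm{III}$-elasticity gives partial maps $\varphi\in[[E]]$ moving mass between positive sets, then we can rearrange $\mu$ along classes while keeping the potential, since $\pi\circ\varphi=\w_\mu(\varphi x,x).\pi(x)$.

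My concrete route is as follows. Partition $X$ using \cref{sets_with_prescribed_fiber-measures} over $\pi$, which applies because the fibers $\mu_\omega$ are atomless: type $\mathrm{III}$ makes $\ker_E\w_\mu$ nowhere smooth, in the spirit of \cref{prop:kernelIsNowhereSmooth}. Then choose the density piece by piece, so that the pieces' pushforwards, rescaled via $\w$, sum to $\xi$. We can also use that $i_\mu\equiv 0$ and $s_\mu\equiv\infty$ by \cref{bounded-unbounded_cocycle_types}. This means every fiber can be slid by arbitrarily large or small scalings within its $E$-class, so the density can be absorbed into moving mass rather than into rescaling the target.

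The ``moreover'' clause is then immediate from \cref{krieger permanence properties}, since $\pi'$ arises from $\pi$ by a change of measure as in \cref{scalar potential permanence properties}\labelcref{item:equiv_measure}. The main obstacle is the transport step: producing $f$ so that $\Phi_*(f\mu)=\xi$ exactly, and not merely up to equivalence. I would first try an exhaustion argument, matching measure in countably many pieces, each matched exactly with $\sum 2^{-n}$ weights as in the proof of \cref{change target cross-section}.
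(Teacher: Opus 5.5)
There is a genuine gap. The step that carries the whole content of the lemma is left open as your ``main obstacle'': you never produce a measure whose potential pushes it forward to $\xi$ \emph{exactly}. None of the tools you list can supply it. The $\sum 2^{-n}$ exhaustion from \cref{change target cross-section} only controls measure classes. The facts $i_\mu\equiv 0$, $s_\mu\equiv\infty$ give no exactness, and the fiber-constant ansatz $f=c\circ\pi$ is an exact matching problem you do not solve. Rearranging $\mu$ by elements of the full group does not change the target measure at all. For $\varphi\in[E]$ one has $\frac{d\varphi_*\mu}{d\mu}(x)=\w_\mu(\varphi^{-1}x,x)$, so the potential of \cref{scalar potential permanence properties}\labelcref{item:equiv_measure} for $\varphi_*\mu$ is $\pi\circ\varphi^{-1}$, which pushes $\varphi_*\mu$ forward to $\pi_*\mu$ again. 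Finally, the auxiliary stably isomorphic model you allude to should live on $X\times\N$ with counting measure, not on $X\times\RR$.

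The missing idea is to duplicate points rather than rescale them. Set $h=\frac{d\xi}{d\pi_*\mu}$, $k=\lceil h\rceil-1$ and $f=h-k\in(0,1]$. Pick $Y\subseteq X$ with $\mu_\omega(Y)=f(\omega)$ by \cref{sets_with_prescribed_fiber-measures}. Let $\tilde X=(Y\times\{0\})\sqcup\{(x,n):1\le n\le k(\pi(x))\}$, with $\tilde\mu$ the restriction of $\mu\times(\text{counting})$ and $\tilde E$ the pullback of $E$. Then $\w_{\tilde\mu}$ is the pullback of $\w_\mu$, so $\tilde\pi=\pi\circ\proj_X$ is still a scalar potential, and Fubini gives $\tilde\pi_*\tilde\mu=(f+k)\,\pi_*\mu=\xi$ exactly. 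The set $Y\times\{0\}$ is a complete section on which $\tilde E$ is $E|_Y$, so the two relations are stably isomorphic, hence isomorphic by \cref{typeIII elasticity}; this is where type III enters. For an isomorphism $\varphi:\tilde X\to X$ put $\mu'=\varphi_*\tilde\mu$ and $\pi'=\tilde\pi\circ\varphi^{-1}$.

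In your own terms: building $\varphi$ from elasticity maps makes it class-preserving. Then $\theta\coloneqq\proj_X\circ\varphi^{-1}$ is a many-to-one map with $\theta(x')\mathrel{E}x'$ and $|\theta^{-1}(x)|=k(\pi(x))+\mathbf 1_Y(x)$. The density is $\frac{d\mu'}{d\mu}(x')=\w_\mu(\theta(x'),x')$, and $\pi'=\pi\circ\theta$ is exactly the canonical potential $\frac{d\mu'}{d\mu}.\pi$. So your framework is sound. What it lacks is that the rearrangement must be many-to-one with these prescribed multiplicities, integer part plus a fractional set $Y$, and your proposal never finds this.

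Your justification for atomless fibers is also not valid: \cref{prop:kernelIsNowhereSmooth} requires a dis-closed cocycle, and type III alone does not give one. What is actually needed, also for choosing $Y$, is atomlessness of the disintegration of $\mu$ over $\pi$. This holds where the lemma is applied, in \cref{changing measure to reach a pmp in Krieger flow}. There $\pi_*\mu$ lies in the Ramsay class of a cross-section, so the cocycle is dis-closed, and $\pi$ is an ergodic decomposition of the nowhere smooth kernel (\cref{pi is ergodic decomposition}).
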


\begin{proof}
Let $k : \Omega \to \N; \ z \mapsto  \lceil \frac{d \xi }{d (\pi_* \mu)}(z) \rceil - 1$ and  $f : \Omega \to (0,1]; \ z \mapsto \frac{d \xi }{d (\pi_* \mu)}(z) - k(z)$,
so $f$ is the fractional part of the Radon--Nikodym derivative except when the latter is an integer, in which case $f(z)=1$.

Applying the ``Moreover'' part of \cref{sets_with_prescribed_fiber-measures} to $X$, $\pi$, and $f$, we obtain a Borel set $Y \subseteq X$ such that
the disintegration $\{ \mu_z\}_{z \in Z}$ of the measure $\mu$ with respect to $\pi$ satisfies 
$\mu_z(Y) = f(z)$  for $(\pi_* \mu)$-a.e.\ $z \in \Omega$. Let  
\[
\tilde{X} \coloneq \left(Y \times \{0\}\right) \sqcup  \left\{(x,n) \in X \times \NN: n \le k(\pi(x)) \right\},
\]
and let $\tilde \pi : \tilde X \to \Omega$ be defined as the composition $\pi \circ \proj_X$ of the projection on the first coordinate followed by $\pi$
Let $\chi$ be the counting measure on $\N$ and let $\tilde\mu$ denote the restriction of $\mu \times \chi$ to $\tilde X$.
We show that pushforward $\tilde \pi_* \tilde \mu$ is exactly $\xi $.

For a Borel subset $A \subseteq \Omega$, we compute using Fubini's theorem on $\tilde X \subseteq X\times \N$: 
\begin{eqnarray*}
\tilde\pi_*\tilde\mu(A) &=& \tilde\mu (\proj_X^{-1} \pi^{-1} A) 
\\
&=&
\int_{\pi^{-1}(A)} \sum_{n \in \N} \left( 
\mathbf{1}_{Y \times \{0\}}(x,n)
+
\mathbf{1}_{
0 < n \leq k(\pi(x))
}
\right)d\mu(x)
\\
&=&
\int_{\pi^{-1}(A)} \left( \mathbf{1}_Y(x) + k(\pi(x))\right) d\mu(x)
\\
&=&
\int_A \int_{\pi^{-1}(z)} \left ( \mathbf{1}_Y(x) + k(\pi(x))\right) d\mu_z(x) d(\pi_* \mu)(z)
\\
&=&
\int_A \left(\mu_z(Y) + k(z) \right)d(\pi_* \mu)(z)
\\
&=&
\int_A \frac{d \xi }{d (\pi_* \mu)}(z) d(\pi_* \mu)(z)
=
\xi (A).
\end{eqnarray*}

We now define $\tilde E$ on $\tilde X$ by setting $(x,n) \tilde E (y,m)$ if $x E y$, so $\tilde E$ coincides with $E$ on $Y \times \{0\}\simeq Y$, which is a complete section for both $E$ and $\tilde E$.
Thus, $(E,X,\mu)$ and $(\tilde E,\tilde X,\tilde \mu)$ are stably isomorphic, and hence isomorphic by \cref{typeIII elasticity}.
Let $\varphi : \tilde{X} \to X$ be an isomorphism from $\tilde E$ to $E$.

Also note that $\tilde \pi = \pi \circ \proj_X$ is a scalar potential for $\tilde E$ because $\pi$ is a scalar potential for $E$ and $\w_{\tilde \mu}((x,n), (y,m)) = \w_\mu(x,y)$.
Therefore, setting $\mu' \coloneq \varphi_* \tilde\mu$ and $\pi' \coloneq \tilde\pi \circ \varphi^{-1}$, we see that $\pi'$ is a scalar potential for $(E,X,\mu')$ and $(\pi')_* \mu' = \xi $ as desired.

Finally, the property of being a Krieger scalar potential transfers from 
$\pi$ for $(E,X,\mu)$ to $\pi'$ for  $(E,X,\mu')$ (through 
$\tilde \pi$ for $(\tilde E, \tilde X, \tilde \mu)$)
by \labelcref{item:whole_to_section,item:section_to_whole} of \cref{scalar potential permanence properties} and \cref{krieger permanence properties}.
\end{proof}

\section{Constructing hyperfinite subgraphs with the same Krieger flow}

We say that a Borel graph $\G$ on a standard measure space $(X,\mu)$ is \textbf{mcp} if its connectedness relation $E_\G$ is mcp.
To simplify notation we also denote:
\begin{itemize}
\item $[x]_\G\coloneqq [x]_{E_\G}$;
\item $\ker_\G\w_\mu \coloneqq \ker_{E_\G}\w_\mu$;
\item $\pi_\G \coloneqq \pi_{E_\G}$ and $(\K_\G, \eta_\G )\coloneqq(\K_{E_\G},\eta_{E_\G})$.
\end{itemize} 

For an mcp countable Borel equivalence relation $E$ on a standard measure space $(X,\mu)$ and a Borel function $f : X \to \R$ that is either integrable or non-negative, we denote by $\E[f \mid E]$ the conditional expectation of $f$ with respect to the sigma-algebra of $E$-invariant Borel sets.

\begin{theorem}\label{thm:sameTypeHyperfiniteSubgraph}
Let $\G$ be an ergodic mcp type $\mathrm{III}_0$ locally countable Borel graph on a standard probability space $(X,\mu)$ with dis-closed Radon--Nikodym cocycle $\w_\mu$. There exists an ergodic hyperfinite subgraph $\F \subseteq \G$ such that
\begin{enumerate}[(i)]
\item\label{item:kerrelerg} $\ker_{\F} \w_\mu$ is ergodic relative to $\ker_{\G} \w_\mu$,

\item\label{item:fullimgcocy} $\w_\mu([x]_{\F}\times \{ x\} )=\w_\mu([x]_{\G}\times \{ x \} )$ for a.e.\ $x\in X$.
\end{enumerate}
Moreover, $(E_\F, X,\mu)$ and $(E_\G,X,\mu)$ have the same category of scalar potentials, in particular 
they have the same Krieger scalar potential.

\end{theorem}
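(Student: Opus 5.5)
The ``Moreover'' clause is immediate from \cref{prop:subrelation_scalar_potentials}\labelcref{item:same_scalar_potentials} once \labelcref{item:kerrelerg} and \labelcref{item:fullimgcocy} hold for $F \coloneqq E_\F \le E_\G$. That result also gives ergodicity of $\F$ for free: relative ergodicity of $F$ in the ergodic $E_\G$ implies that $F$ is ergodic. So the real work is to build a hyperfinite subgraph $\F\subseteq\G$ satisfying \labelcref{item:kerrelerg} and \labelcref{item:fullimgcocy}. I would build it as an increasing union $\F=\bigcup_n \F_n$ of hyperfinite subgraphs. Each stage would handle one countable requirement, and the limit would be kept hyperfinite, e.g.\ by ensuring each $\F_{n+1}\smallsetminus\F_n$ only joins finitely many $\F_n$-components in a bounded pattern, or by taking $\F$ to be a union of finite-component graphs with increasing components.

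I would first organize the requirements via the Krieger picture. By \cref{pi is ergodic decomposition}, $\pi_{\G}$ is an ergodic decomposition of $K\coloneqq\ker_\G\w_\mu$. By \cref{erg_component=ker-class} (type $\mathrm{III}_0$), a.e.\ fibre of $\pi_\G$ meets each $E_\G$-class in a single $K$-class. Condition \labelcref{item:fullimgcocy} says that for a.e.\ $x$, every $K$-class in $[x]_\G$ meets $[x]_\F$. Equivalently, the Poincaré return map $\PoincRetMap_{Z_{E_\G,\mu}}$ should be realized inside $F$: for a.e.\ $x$ there are $y,y'\in[x]_\F$ with $\w_\mu(y,x)$ equal to the next and previous cocycle values. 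Condition \labelcref{item:kerrelerg} says that $\ker_\F\w_\mu$ has the same invariant sets as $K$. Since $K$ is pmp (its restriction to $\mu$ is invariant), I would fix a countable family of sets $(A_k)$ generating the Borel $\sigma$-algebra mod null and aim to make $\E[\mathbf 1_{A_k}\mid \ker_\F\w_\mu]=\E[\mathbf 1_{A_k}\mid K]$ for all $k$.

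The main tool is the ergodic hyperfinite subgraph theorem of \cite{Tserunyan:hyperfin_erg_subgraph}, in its relative form: for a locally countable mcp Borel graph and a hyperfinite subrelation, one can add edges to obtain a hyperfinite subgraph that is ergodic relative to the full graph, or more precisely one whose invariant $\sigma$-algebra is as small as that of the ambient relation. I would alternate two kinds of steps, enumerated so that each requirement recurs infinitely often.

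The first kind addresses \labelcref{item:fullimgcocy}. Inside the $\G$-component, select finite $\G$-paths joining $x$ to a point in the next or previous $K$-class. Add these paths in a Borel way on a large-measure set of $x$, using a maximal-disjointness/Lusin--Novikov argument, so that the enlarged graph remains finite-component at this stage. Iterating over a sequence of shrinking error sets yields \labelcref{item:fullimgcocy} in the limit.

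The second kind addresses \labelcref{item:kerrelerg}, and this is where I expect the main obstacle. Adding edges to $\F$ controls $E_\F$, but I need ergodicity of the \emph{kernel} $\ker_\F\w_\mu$ relative to $K$, and the kernel of a hyperfinite graph is not generated by the graph's edges. My first attempt would use the dis-closed structure, as in \cref{pi is ergodic decomposition}, to pass to the pmp relation $K$ directly. I would apply Tserunyan's theorem to the graph $\G_K$ on $X$ whose edges are the pairs $(x,y)\in K$ joined by a $\G$-path whose cocycle values stay in a bounded window; since the cocycle is dis-closed, $\G_K$ generates $K$ locally countably. This produces a hyperfinite subgraph of $\G_K$ that is ergodic relative to $K$. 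Each $\G_K$-edge used would then be realized by its witnessing $\G$-path, added into $\F$. The delicate point is to keep $\F$ hyperfinite while adding these witnessing paths. For this I would use finitely many such edges per stage, of bounded length on large-measure sets, together with the nowhere-smoothness of $K$ (\cref{prop:kernelIsNowhereSmooth}), which allows finite partial matchings to be chosen inside given hyperfinite components. Ergodicity of the final $\ker_\F\w_\mu$ relative to $K$ would follow because the invariant functions $\E[\mathbf 1_{A_k}\mid\cdot]$ are approximated within $\varepsilon_n$ at stage $n$, with $\sum_n\varepsilon_n<\infty$.
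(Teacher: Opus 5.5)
Your reduction is correct and matches the paper. Once (i) and (ii) hold for $E_\F\le E_\G$, \cref{prop:subrelation_scalar_potentials}\labelcref{item:same_scalar_potentials} gives both the ergodicity of $\F$ and the ``moreover'' part.

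\textbf{The gap: your step for (i).} Relative ergodicity is monotone in the subrelation, so realizing the edges of a hyperfinite subgraph of your $\G_K$ by witnessing $\G$-paths does make $\ker_\F\w_\mu$ ergodic relative to $K$. The problem is that nothing controls the components of the resulting $\F$.
\begin{itemize}
\item The interior vertices of a witnessing path lie at other cocycle levels.
\item Each such vertex is itself an endpoint of $\G_K$-edges at its own level, whose witnessing paths pass through further levels, and so on.
\item Your windows are measured relative to each endpoint, so they drift along such chains.
\end{itemize}
Consequently one $\F$-component can absorb infinitely many earlier components in an arbitrary gluing pattern. If $\G$ is a treeing, nothing prevents the union of these finite subtrees from containing a nonamenable piece of $\G$. ``Finitely many edges per stage, bounded length, nowhere-smoothness of $K$, finite partial matchings'' does not stop this chain reaction.

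Your step for (ii) has a related weakness. The mass-transport argument that makes a retraction onto a maximal disjoint family finite-to-one needs the cocycle values on a component to be bounded below, and this fails in $\G$ (type $\mathrm{III}$). Moreover, without a bound on the number of levels per component, you do not control the mass sitting at the extreme levels, which are exactly the points where (ii) fails.

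\textbf{The missing idea: an absolute window from type $\mathrm{III}_0$.}
\begin{enumerate}
\item The Poincaré return map $\PoincRetMap_Z$ on the Krieger cross-section is aperiodic. So its orbit relation $R_Z$ is an increasing union of finite Borel relations $R_n$, where $R_0$ is the $\pi$-image of the previous stage and $\pi=\pi_\G$.
\item Set $\G_n=\G\cap(\pi\times\pi)^{-1}(R_n)$. Then $\ker_{\G_n}\w_\mu\nearrow\ker_\G\w_\mu$. Reverse martingale convergence gives $k$ such that $\Kc=\G_k$ satisfies (ii) and the conditional-expectation estimate up to $\epsilon$.
\item Every $\Kc$-component now carries only finitely many levels, with cocycle values bounded below. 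A maximal disjoint family plus a retraction along $\Kc$ (finite-to-one by mass transport) then gives a component-finite $\H\subseteq\Kc$, containing the previous stage, with $\pi([x]_\H)=\pi([x]_\Kc)$.
\item On each level set $X_j$, the relation $E_\Kc|_{X_j}=(\ker_\Kc\w_\mu)|_{X_j}$ is graphed by $(r_j\times r_j)(\Kc)$, where $r_j$ is a retraction onto $X_j$ along $\H$. Apply \cref{lem:Anush1.6} there and pull back by $r_j$. The result stays component-finite because $r_j$ is finite-to-one and each $\Kc$-component meets only finitely many $X_j$.
\end{enumerate}
This is \cref{fin_almost_erg_subgraph}. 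Iterating it with $\epsilon=2^{-n}$ along a dense sequence in $L^1$ produces $\F$ as an increasing union of component-finite graphs. Note also that \cref{lem:Anush1.6} starts from a component-finite subgraph, so your stages should be kept component-finite rather than merely hyperfinite.
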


\begin{proof}
The graph $\F$ will be constructed as an increasing union of component-finite graphs by an analogous iteration argument to the proof of \cite[Theorem 1.1]{Tserunyan:hyperfin_erg_subgraph}, using a more elaborate version (\cref{fin_almost_erg_subgraph} below) of \cite[Main Lemma 1.6]{Tserunyan:hyperfin_erg_subgraph} for the induction step.
We will use a Krieger scalar potential $\pi_\G : X \to \KKK_\G$ of $(E_\G, X, \mu)$ to guide our construction of $\F$, and we fix a canonical potential cross-section $Z \subseteq \KKK_G$ for $\pi_\G$ as in \cref{canonical_potential_section}.

More precisely, let $D$ be a countable set of bounded functions that is dense in $L^1(X,\mu)$, and let $(f_n)_{n \ge 0}$ be an enumeration of $D$ such that every element appears infinitely often.
We inductively build a sequence $\F_0 \subseteq \F_1 \subseteq \hdots \subseteq \F_n\subseteq \hdots \subseteq \G$ of component-finite Borel subgraphs, starting with $\F_0 \coloneq \emptyset$ and obtaining $\F_{n+1}$ from \cref{fin_almost_erg_subgraph} applied to $\pi_\G$, $Z$, $\H_0 \coloneq \F_n$, $f \coloneq f_n$, and $\varepsilon \coloneq 2^{-n}$.
Finally, let $\F \coloneq \bigcup_{n \in \N} \nearrow \F_n$.

Then the conditional expectations with respect to $\ker_\F \w_\mu$ and $\ker_\G \w_\mu$ agree on $D$, and hence on all of $L^1(X,\mu)$ by continuity.
Furthermore, it follows from \cref{fin_almost_erg_subgraph}\labelcref{cond:seeAllValues} and the equivariance of $\pi_\G$ (condition \cref{def: scalar potential}\labelcref{scalar-equivariance}) that condition \labelcref{item:fullimgcocy} holds for $\F$ as well.

The ergodicity of $\F$ and the ``moreover'' part now follow from \cref{prop:subrelation_scalar_potentials}\labelcref{item:same_scalar_potentials}.
\end{proof}

\begin{corollary}\label{cor:III0_hf_subgraph}
Let $\G$ be an ergodic mcp type $\mathrm{III}_0$ locally countable Borel graph on a standard probability space $(X,\mu)$. 
There exists an ergodic hyperfinite subgraph $\F \subseteq \G$ such that $(E_\F , X,\mu )$ and $(E_\G , X, \mu )$ have the same Krieger scalar potentials, and hence isomorphic Krieger flows.
\end{corollary}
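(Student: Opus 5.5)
The corollary removes the dis-closed hypothesis from \cref{thm:sameTypeHyperfiniteSubgraph}, so the plan is to reduce to that theorem by changing the measure within its class. The key observation is that the graph $\G$ and its connectedness relation $E_\G$ do not depend on $\mu$. Ergodicity, hyperfiniteness and type $\mathrm{III}_0$ depend only on the measure class $[\mu]$. So it suffices to find $\tilde\mu\sim\mu$ for which $\w_{\tilde\mu}$ is dis-closed, apply the theorem to $(\G,X,\tilde\mu)$, and then transport the conclusion about Krieger scalar potentials back to $\mu$.

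Step 1, discretizing the cocycle. Let $\pi_\G : X\to\K_\G$ be a Krieger scalar potential for $(E_\G,X,\mu)$. Since $E_\G$ is type $\mathrm{III}_0$, the Krieger flow is nowhere smooth by \cref{defn:classbytype}, and it admits a uniformly discrete Borel cross-section $Z$ by \cite{Kec92}. Apply \cref{change target cross-section} (Step 1 of its proof already suffices): it gives $\tilde\mu\sim\mu$ and a Krieger scalar potential $\tilde\pi$ for $(E_\G,X,\tilde\mu)$ with $\tilde\pi(X)\subseteq Z$ a.e. Scalar-equivariance then forces $\w_{\tilde\mu}([x]_\G\times\{x\})\subseteq\{s : s.\tilde\pi(x)\in Z\}$. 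That set is closed and discrete because $Z$ is a cross-section, so $\w_{\tilde\mu}$ is dis-closed. Normalizing $\tilde\mu$ to a probability measure only rescales $\tilde\pi$ by a constant, which is harmless.

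Step 2, applying the theorem. \Cref{thm:sameTypeHyperfiniteSubgraph} applied to $(\G,X,\tilde\mu)$ yields an ergodic hyperfinite subgraph $\F\subseteq\G$ such that $(E_\F,X,\tilde\mu)$ and $(E_\G,X,\tilde\mu)$ have the same scalar potentials, and in particular the same Krieger scalar potential.

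Step 3, returning to $\mu$. By \cref{scalar potential permanence properties}\labelcref{item:equiv_measure}, with $\mu$ equivalent to $\tilde\mu$, the map $\pi\mapsto \frac{d\mu}{d\tilde\mu}.\pi$ is a bijection between scalar potentials for $\tilde\mu$ and scalar potentials for $\mu$. This holds for either relation, and the map is the same for both. By \cref{krieger permanence properties} it also carries Krieger scalar potentials to Krieger scalar potentials. Hence $(E_\F,X,\mu)$ and $(E_\G,X,\mu)$ have the same scalar potentials, and so the same Krieger scalar potential and isomorphic Krieger flows. Ergodicity and hyperfiniteness of $\F$ are insensitive to the change of measure.

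The only step with real content is Step 1, namely producing an equivalent measure with dis-closed cocycle while keeping a Krieger scalar potential. This is exactly what the cross-section trick of \cref{change target cross-section} provides. The remaining steps are bookkeeping.
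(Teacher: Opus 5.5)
Your proposal is correct and follows the paper's proof: discretize the cocycle with \cref{change target cross-section} so the twisted Krieger scalar potential lands in a cross-section and $\w_{\tilde\mu}$ becomes dis-closed, apply \cref{thm:sameTypeHyperfiniteSubgraph} to $(\G,X,\tilde\mu)$, then untwist back to $\mu$. Your Step 3 transports the scalar potentials of $E_\F$ and of $E_\G$ through the same bijection $\pi\mapsto\frac{d\mu}{d\tilde\mu}.\pi$ from \cref{scalar potential permanence properties}\labelcref{item:equiv_measure}, which justifies the untwisting more precisely than the paper's brief appeal to \cref{prop:subrelation_scalar_potentials}.
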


\begin{proof}
Let $\pi : X\to \K_{\mathcal{G}}$ be a Krieger scalar potential for $(E_\mathcal{G},X,\mu )$.
Because $(E,X,\mu)$ is type $\mathrm{III}_0$, the $\RR$-flow $\K_E$ is free.
Letting $Z \subseteq \K_{\mathcal{G}}$ be a Borel cross-section for this flow, \cref{change target cross-section} gives a measure $\tilde \mu \sim \mu$ such that $\tilde{\pi}_* \tilde \mu$ in the Ramsay measure class of $Z$, where $\tilde{\pi}(x)=\tfrac{d\tilde{\mu}}{d\mu}(x).\pi (x)$;
the associated cocycle $\w_{\tilde{\mu}}$ is then dis-closed.
We may therefore apply \cref{thm:sameTypeHyperfiniteSubgraph} to obtain an ergodic hyperfinite subgraph $\mathcal{F}$ of $\mathcal{G}$ with the property that $\tilde{\pi}$ is also a Krieger scalar potential for $(E_\F , X, \tilde{\mu})$.
Using \cref{prop:subrelation_scalar_potentials}\labelcref{item:same_scalar_potentials} we untwist $\tilde\mu$ back to $\mu$ and $\tilde{\pi}$ back to $\pi$ to see that $\pi$ is a Krieger scalar potential for $(E_\F , X, \mu )$. 
\end{proof}

We now restate the main lemma underlying the proof of the existence of hyperfinite ergodic subgraphs \cite[Theorem 1.3]{Tserunyan:hyperfin_erg_subgraph} in an equivalent form in terms of $L^1$-norms.

\begin{lemma}[{\cite[Main Lemma 1.6]{Tserunyan:hyperfin_erg_subgraph}}]\label{lem:Anush1.6}
Let $\G$ be an mcp locally countable Borel graph on a standard probability space $(X,\mu)$. 
Let $\mathcal{L}_0 \leq \G$ be a component-finite Borel subgraph, $f \in L^\infty(X,\mu)$, and $\epsilon>0$.
Then there exists an intermediate component-finite Borel subgraph $\mathcal{L}_0 \leq \mathcal{L}_1 \leq \G$ such that 
\[
\|\E[f \mid E_{\mathcal{L}_1}]-\E[f \mid E_{\G}]\|_1 \leq \epsilon.
\]
\end{lemma}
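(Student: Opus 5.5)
We plan to derive the $L^1$ formulation directly from the original statement of \cite[Main Lemma 1.6]{Tserunyan:hyperfin_erg_subgraph}. That lemma is phrased pointwise: given $\mathcal{L}_0$, a bounded $f$ and $\delta>0$, it produces a component-finite $\mathcal{L}_0\le\mathcal{L}_1\le\G$ such that the $\w_\mu$-weighted averages of $f$ over $\mathcal{L}_1$-components are $\delta$-close to the $\G$-invariant quantity $\E[f\mid E_\G]$, outside a set of measure at most $\delta$. The plan is therefore to identify these weighted averages with conditional expectations, and then to turn the "$\delta$-close off a $\delta$-small set" estimate into an $L^1$ bound.

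The first step is the identification. For a component-finite Borel graph $\mathcal{L}$, every $E_{\mathcal{L}}$-class $C$ is finite. Using the mass transport principle \labelcref{mass_transport_formula}, we will check that
\[
\E[f\mid E_{\mathcal{L}}](x)=\frac{\sum_{y\in C}f(y)\,\w_\mu(y,x)}{\sum_{y\in C}\w_\mu(y,x)} \quad\text{for $\mu$-a.e.\ } x\in C .
\]
To do so, we test this expression against $\mathbf{1}_A$ for $E_{\mathcal{L}}$-invariant Borel sets $A$. We then transport the mass $f(y)\,\w_\mu(y,x)/\sum_{y'}\w_\mu(y',x)$ from $x$ to $y$; the cocycle identity makes the denominators telescope. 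This is exactly the averaging operator appearing in \cite{Tserunyan:hyperfin_erg_subgraph}, so its conclusion reads: $\mu(B)\le\delta$, where
\[
B\coloneqq\{x: |\E[f\mid E_{\mathcal{L}_1}](x)-\E[f\mid E_\G](x)|>\delta\}.
\]

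The second step is the estimate. Both conditional expectations are bounded in absolute value by $\|f\|_\infty$. Splitting the integral over $B$ and its complement gives
\[
\|\E[f\mid E_{\mathcal{L}_1}]-\E[f\mid E_\G]\|_1\le \delta\,\mu(X\smallsetminus B)+2\|f\|_\infty\,\mu(B)\le \delta\bigl(1+2\|f\|_\infty\bigr).
\]
Choosing $\delta\coloneqq \epsilon/(1+2\|f\|_\infty)$ yields the desired inequality. For the converse direction, that the $L^1$ version implies the original one, one would use Markov's inequality; this is not needed here, but it justifies calling the two formulations equivalent.

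The main obstacle is the first step, namely matching conventions. One must make sure that the averaging used in \cite{Tserunyan:hyperfin_erg_subgraph} is the $\w_\mu$-weighted one, which is correct in the mcp (non-invariant) setting, rather than the uniform average. One must also handle the case where $\G$ is not ergodic, in which case $\E[f\mid E_\G]$ is a genuine function rather than the constant $\int f\,d\mu$. If the cited lemma is only stated for ergodic $\G$ (or only for pmp graphs), I would first run the argument on each ergodic component of $E_\G$, using the ergodic decomposition together with measurable selection of the subgraphs. I would then reassemble the result, with the $L^1$ bound integrating over the components.
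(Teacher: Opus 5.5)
Your proposal is correct and takes essentially the same approach as the paper. The paper gives no argument of its own: it cites Tserunyan's Main Lemma~1.6 and asserts that the $L^1$ formulation is an equivalent restatement, and your two checks are exactly what justify that assertion, namely identifying the $\w_\mu$-weighted component averages with $\E[f\mid E_{\mathcal{L}}]$ via mass transport, then converting ``$\delta$-close off a set of measure $\le\delta$'' into an $L^1$ bound using $\|f\|_\infty$ (with Markov's inequality for the converse). Your ergodic-decomposition fallback is unnecessary, and it would not be a routine measurable selection anyway: the lemma as cited already covers arbitrary mcp graphs, with $\E[f\mid E_\G]$ a genuine conditional expectation, and the paper uses it that way on the non-ergodic graphs $(r_j\times r_j)(\Kc)$ in the proof of \cref{fin_almost_erg_subgraph}.
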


\begin{lemma}\label{increasing_union_with_finite_cocycle_values}
Let $E$ be an ergodic mcp type $\mathrm{III}_0$ equivalence relation on a standard probability space $(X,\mu)$
Let $\pi : (X, \mu) \to (\Omega,\eta)$ be a scalar potential for $(E,X,\mu)$ such that the $\RR$-flow $(\Omega,\eta)$ is aperiodic and $\pi_* \mu$ is in the Ramsay measure class of some Borel cross-section $Z \subseteq \Omega$ of the $\RR$-flow.
Then, after discarding an $E$-invariant null set, we may express $E$ as an increasing union $E = \bigcup_{n \in \N}\nearrow E_n$ of Borel subequivalence relations such that for each $n\in \N$ the cardinality of $\w_\mu([x]_{E_n} \times \{x\})$ is uniformly bounded in $x$.
\end{lemma}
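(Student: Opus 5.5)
Since $\pi_*\mu$ lives on the cross-section $Z$ (up to null sets) and the flow is aperiodic, I will first use the scalar-equivariance of $\pi$ to reduce everything to a $\mathbf{Z}$-valued cocycle. After discarding an $E$-invariant null set, assume $\pi(X)\subseteq Z$ and that $\pi(x)=\w_\mu(x,y).\pi(y)$ for all $(x,y)\in E$. Then for $y\in[x]_E$ the point $\w_\mu(y,x).\pi(x)=\pi(y)$ lies in $Z\cap \RR.\pi(x)$. This set is closed, discrete and bi-recurrent, and by aperiodicity the map $s\mapsto s.\pi(x)$ is injective. Hence there is a unique integer $c(y,x)$ with $\pi(y)=\PoincRetMap_Z^{c(y,x)}(\pi(x))$, and $\w_\mu(y,x)$ is determined by the pair $(x,c(y,x))$. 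The map $c$ is a Borel $\mathbf{Z}$-valued cocycle on $E$, by injectivity of the orbit maps. It therefore suffices to find subrelations $E_n\nearrow E$ on which $|c(y,x)|$ is bounded by a constant depending only on $n$. Indeed, the number of values $\w_\mu(y,x)$ for $y\in[x]_{E_n}$ is then at most $2n+1$, uniformly in $x$, because each value equals $s$ where $s.\pi(x)=\PoincRetMap_Z^k\pi(x)$ for some $|k|\le n$.

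The natural first attempt is $E_n\coloneqq\{(x,y)\in E: |c(y,x)|\le n\}$, but this is not transitive, so I will need a different construction. The plan is to use that $\ker_E\w_\mu=\ker c$ and to present $E$ as generated by countably many partial Borel maps. By Feldman--Moore, write $E=\bigcup_k \operatorname{graph}(\gamma_k)$ for a countable group $\{\gamma_k\}$ of Borel automorphisms. Let $E_n$ be the equivalence relation generated by the restrictions $\gamma_k|_{\{x:\,|c(\gamma_kx,x)|\le n\}}$ for $k\le n$. These relations increase and their union is $E$. However, compositions of generators within $E_n$ can accumulate unboundedly large values of $c$, so this still does not give uniform boundedness.

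To fix this I will use a coboundary-style height truncation. Let $h_N(x)\coloneqq\lfloor c(x,x_0)/N\rfloor$, which is not Borel because no basepoint $x_0$ is available. Instead, since the relation $E^\RR_Z$ on $Z$ is $\nu$-nowhere smooth and generated by the aperiodic automorphism $\PoincRetMap_Z$, it is hyperfinite. I will take a Borel Rokhlin/marker sequence for $\PoincRetMap_Z$: finite equivalence relations $R_n\nearrow E^\RR_Z$ whose classes are intervals of $\PoincRetMap_Z$-orbits, defined off an invariant null set, with the class lengths of $R_n$ bounded. The uniform bound is the subtle point, and I plan to handle it by intersecting the marker intervals with the blocks $\{k: |k|\le n\}$ relative to the markers. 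Then set
\[
E_n\coloneqq\{(x,y)\in E:\ (\pi(x),\pi(y))\in R_n\}\cap E_n',
\]
where $E_n'$ is any increasing exhaustion used to keep measurability. On $E_n$ the values $\pi(y)$ range over a single $R_n$-class, which has at most $L_n$ points, so $|\w_\mu([x]_{E_n}\times\{x\})|\le L_n$. Moreover $\bigcup_n E_n=E$, since $\bigcup_n R_n=E^\RR_Z$ and every pair in $E$ maps to a pair in $E^\RR_Z$.

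The main obstacle is obtaining $R_n$ with uniformly bounded class size and union equal to all of $E^\RR_Z$. An increasing union of relations with uniformly bounded finite classes cannot exhaust an aperiodic orbit everywhere, but it can do so off a $\PoincRetMap_Z$-invariant null set. I will argue this by taking vanishing markers $M_n$: the sets $M_n$ decrease, $\bigcap_n M_n=\emptyset$, and gaps in $M_n$ are at least $n$. Then I truncate long gaps by cutting at additional points every $\ell_n$ steps, where $\ell_n\to\infty$ is chosen so that the new cuts are refinements nested across $n$; one way is to cut at multiples of $\ell_n=2^n$ counted from the previous marker. The null set is discarded through $\pi^{-1}$, which is $E$-invariant by condition (ii) of a scalar potential.
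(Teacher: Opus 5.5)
Your reduction is the paper's. You set $E_n\coloneqq E\cap(\pi\times\pi)^{-1}(R_n)$ for an exhaustion $E^\RR_Z=\bigcup_n\nearrow R_n$ by Borel equivalence relations with classes of size at most $r_n$. Scalar-equivariance and freeness of the flow then show that $\w_\mu(y,x)$ is determined by $\pi(y)$, so $|\w_\mu([x]_{E_n}\times\{x\})|\le r_n$. The $\mathbf{Z}$-cocycle $c$ and the intersection with $E_n'$ are harmless but unnecessary. The paper simply asserts that aperiodicity of $\PoincRetMap_Z$ yields such $R_n$.

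You instead try to build the $R_n$, which you correctly single out as the crux, and that construction fails as written. Cutting at multiples of $2^n$ counted from the previous $M_n$-marker does not give nested relations, because a marker of $M_n\smallsetminus M_{n+1}$ resets the phase of the level-$n$ cuts. Concretely, write $m+k$ for $\PoincRetMap_Z^k(m)$. Suppose $m\in M_{n+1}$, the next point of $M_n$ after $m$ is $m''=m+2^n+1\notin M_{n+1}$, and $M_n$ has no further points up to $m+2^{n+1}+1$; this is compatible with gaps $\ge n$. Then $m+2^{n+1}$ is a level-$(n+1)$ cut. The level-$n$ cuts there are the points $m''+j2^n$, so $m+2^{n+1}-1$ and $m+2^{n+1}$ are $R_n$-related but not $R_{n+1}$-related. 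Nothing in your setup rules this out on a non-null set. You also never check that each point is a cut for only finitely many $n$, which is what $\bigcup_n R_n=E^\RR_Z$ requires, nor do you identify the null set being discarded.

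The fix is to make the cut sets themselves decrease.
\begin{itemize}
\item First discard the orbits on which $Z$ is not bi-recurrent. This set is smooth, hence null.
\item Put $C_0\coloneqq Z$. Let $C_{n+1}$ be a Borel maximal independent subset of $C_n$ for the degree-$\le 2$ Borel graph joining consecutive points of $C_n$ along $\PoincRetMap_Z$-orbits; such sets exist by the Kechris--Solecki--Todorcevic lemma for locally finite Borel graphs.
\item Consecutive points of $C_{n+1}$ are then $2$ or $3$ steps apart in $C_n$. By induction, consecutive points of $C_n$ are between $2^n$ and $3^n$ steps of $\PoincRetMap_Z$ apart.
\item Let $R_n$ identify points lying in the same interval $[c,c')$ between consecutive points of $C_n$. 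These are nested Borel relations with classes of size at most $3^n$.
\item Two points at distance $d$ cannot both lie in $C_n$ once $2^n>d$, so $\bigcap_n C_n$ is a Borel partial transversal of $E^\RR_Z$.
\item This transversal is null because $E^\RR_Z$ is nowhere smooth. Indeed, the flow is ergodic and free, and a conull orbit would force $\Omega\cong\RR$, making $\w_\mu$ a coboundary and $E$ of type $\mathrm{II}$.
\item Off the invariant saturation $N$ of $\bigcap_n C_n$, each of the finitely many points $\PoincRetMap_Z z,\dots,\PoincRetMap_Z^k z$ eventually leaves $C_n$. Hence $\bigcup_n R_n=E^\RR_Z$ there.
\item Finally, $\pi^{-1}(N)$ is $\mu$-null because $\pi_*\mu$ lies in the Ramsay class. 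It is $E$-invariant because $\pi$ maps $E$-classes into $E^\RR_Z$-classes by scalar-equivariance, not by condition (ii) as you wrote.
\end{itemize}
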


\begin{proof}
By the aperiodicity of the $\RR$-flow $\Omega$, the Poincaré return map $\PoincRetMap_Z$ on $Z$ is aperiodic, so the cross-section equivalence relation $\EE_Z$ on $Z$ is an increasing union $\bigcup_{n \in \N} \nearrow \EE_n$, where each $\EE_n$ is a Borel equivalence relation, whose classes have  cardinality bounded by some (finite) $r_n$. 
Let $E_n \coloneq E \cap (\pi \times \pi)^{-1}(\EE_n)$.
It now follows from the scalar-equivariance of $\pi$ (\cref{def: scalar potential}\labelcref{scalar-equivariance}) that $E_n$ is as desired.
\end{proof}

\begin{corollary}
\label{cor: III0 -> hyper II1}
Every ergodic mcp type $\mathrm{III}_0$ equivalence relation $E$ is \dfn{hyper type $\mathrm{II}_1$}, i.e., $E = \bigcup_{n \in \N} \nearrow E_n$, where each $E_n$ is a Borel subequivalence relation (potentially nonergodic), admitting an invariant Borel probability measure $\mu_n \sim \mu$.
\end{corollary}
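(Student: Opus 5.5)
The plan is to apply \cref{increasing_union_with_finite_cocycle_values} after a change of measure, and then to produce an invariant probability measure on each piece via \cref{bounded-unbounded_cocycle_types}\labelcref{item:inf>0}. Let $E$ be ergodic mcp type $\mathrm{III}_0$ on $(X,\mu)$, and let $\pi : X\to\K_E$ be a Krieger scalar potential. By \cref{defn:classbytype}, the Krieger flow is essentially free and nowhere smooth, hence aperiodic.

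First, we fix a Borel cross-section $Z\subseteq \K_E$; uniformly discrete ones exist by \cite{Kec92}. We then apply \cref{change target cross-section} to get $\tilde\mu\sim\mu$ and a Krieger scalar potential $\tilde\pi$ for $(E,X,\tilde\mu)$ with $\tilde\pi_*\tilde\mu$ in the Ramsay class on $Z$. Next, \cref{increasing_union_with_finite_cocycle_values} applied to $(E,X,\tilde\mu)$ and $\tilde\pi$ gives, after discarding an $E$-invariant null set, an increasing union $E=\bigcup_n\nearrow E_n$ in which $\w_{\tilde\mu}([x]_{E_n}\times\{x\})$ has cardinality at most some finite $r_n$ for every $x$. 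The discarded null set can be reattached by letting each $E_n$ be the identity (or all of $E$) there; this does not affect measures.

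Now fix $n$. Since the set of values of $\w_{\tilde\mu}(y,x)$ for $y\in[x]_{E_n}$ is finite, the quantity $i_{\tilde\mu}(x)=\inf_{y\in[x]_{E_n}}\w_{\tilde\mu}(y,x)$ is a minimum and therefore strictly positive for every $x$. By \cref{bounded-unbounded_cocycle_types}\labelcref{item:inf>0} applied to $(E_n,X,\tilde\mu)$, the measure $d\nu_n=i_{\tilde\mu}\,d\tilde\mu$ is an $E_n$-invariant finite measure equivalent to $\tilde\mu\sim\mu$. It is finite because $i\le 1$ and $\tilde\mu$ is finite. Normalizing gives $\mu_n\sim\mu$, as required. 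Note that ergodicity of $E_n$ is not needed, and the result is stated with the Borel sub-relations $E_n$ exactly as produced.

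The only real point is making sure the hypotheses of \cref{increasing_union_with_finite_cocycle_values} are met. These are aperiodicity of the flow and the requirement that the pushforward lies in the Ramsay class of a cross-section; both are supplied by type $\mathrm{III}_0$ together with \cref{change target cross-section}. The rest is bookkeeping about null sets.
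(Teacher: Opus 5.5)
Your proposal is correct and follows the paper's proof essentially step for step: you take the Krieger flow as the free, nowhere smooth potential flow, use \cref{change target cross-section} to put the pushforward in the Ramsay class of a cross-section, apply \cref{increasing_union_with_finite_cocycle_values} to get finitely many cocycle values on each $E_n$, and then get the invariant finite measure from \cref{bounded-unbounded_cocycle_types}\labelcref{item:inf>0}. One small correction on the discarded $E$-invariant null set: only the option of setting each $E_n$ to \emph{all of $E$} there keeps $E=\bigcup_n E_n$ literally true, while the identity option does not.
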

\begin{proof}
Because $(E,X,\mu)$ is type $\mathrm{III}_0$, it admits a free potential flow $(\Omega,\eta)$.
Letting $Z \subseteq \Omega$ be a Borel cross-section for this flow, \cref{change target cross-section} gives a measure $\tilde \mu \sim \mu$ and a scalar potential $\tilde\pi : (X,\tilde\mu) \to (\Omega,\eta)$ for $(E,X,\tilde\mu)$ satisfying that $\tilde\pi_* \tilde\mu$ in the Ramsay measure class of $Z$.
\cref{increasing_union_with_finite_cocycle_values} now yields $E = \bigcup_{n \in \N} \nearrow E_n$, where for each $n \in \N$, the cardinality of $\w_{\tilde\mu}([x]_{E_n} \times \{x\})$ is finite for a.e.\ $x \in X$.
Thus, \cref{bounded-unbounded_cocycle_types}\labelcref{item:inf>0} yields an $E_n$-invariant finite measure $\~\mu_n \sim \~\mu$, so its normalization is a desired probability measure.
\end{proof}

\begin{lemma}\label{fin_almost_erg_subgraph}
Let $E$ be an ergodic mcp type $\mathrm{III}_0$ equivalence relation on a standard probability space $(X,\mu)$
Let $\pi : (X, \mu) \to (\Omega,\eta)$ be a scalar potential for $(E,X,\mu)$ such that the $\RR$-flow $(\Omega,\eta)$ is aperiodic and $\pi_* \mu$ is in the Ramsay measure class of some Borel cross-section $Z \subseteq \Omega$ of the $\RR$-flow.

Let $\H_0 \subseteq \G$ be a component-finite Borel subgraph such that the image graph $(\pi\times \pi)(\H_0)$ is component-finite. 
Let $\epsilon > 0$ and $f \in L^\infty(X,\mu)$. 
Then there exists an intermediate component-finite Borel subgraph $\H_0 \subseteq \H_1 \subseteq \G$ such that
\begin{enumerate}[(i)]
\item\label{cond:finRel} the image graph $(\pi\times \pi)(\H_1)$ is component-finite;

\item\label{cond:seeAllValues} $\mu(\{x \in X : \PoincRetMap_Z(\pi x) \text{ and } \PoincRetMap_Z^{-1}(\pi x) \text{ are both in } \pi ([x]_{\H_1})\}) > 1-\epsilon$;

\item\label{cond:relErgodic} $\|\E[f \mid \ker_{\H_1} \w_\mu] - \E[f \mid \ker_{\G} \w_\mu]\|_1 \leq \epsilon$.
\end{enumerate}
\end{lemma}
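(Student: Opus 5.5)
We build $\H_1$ in three stages, each a component-finite extension of the previous one that keeps $(\pi\times\pi)$-images component-finite. Throughout, $\G$ is a graphing of $E$. We use \cref{increasing_union_with_finite_cocycle_values} to write $E=\bigcup_n\nearrow E_n$ with $\w_\mu([x]_{E_n}\times\{x\})$ uniformly finite, say $E_n=E\cap(\pi\times\pi)^{-1}(\EE_n)$ where $\EE_n$ are finite Borel equivalence relations increasing to $\EE_Z$. Note that any subgraph of $\G\cap E_n$ automatically has $(\pi\times\pi)$-image inside the finite relation $\EE_n$, hence component-finite. So condition \labelcref{cond:finRel} will follow once every edge we add lies in $E_N$ for one fixed large $N$; this also absorbs $\H_0$ for $N$ large (up to $\epsilon$-small error, which we handle by exhaustion or by including $\H_0$'s finitely many image-classes in $\EE_N$).

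\emph{Step 1 (condition \labelcref{cond:relErgodic}).} Since $\ker_\G\w_\mu\subseteq E$ is pmp (for $\mu$), we apply \cref{lem:Anush1.6} to a suitable graphing of the kernel. First we choose $N$ so large that $\|\E[f\mid\ker_{\G\cap E_N}\w_\mu]-\E[f\mid\ker_\G\w_\mu]\|_1<\epsilon/3$. This is possible by martingale convergence, because $\ker_\G\w_\mu$ is the increasing union of the kernels of the subgraphs $\G\cap E_N$. Here we use that connectivity within $E_N$ eventually captures each kernel pair via finite $\G$-paths, all of whose vertices lie in some $E_N$-class.

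Next, we observe that $\G\cap E_N$ has only finitely many cocycle values per component. By \cref{bounded-unbounded_cocycle_types}, $E_{\G\cap E_N}$ therefore carries an invariant finite measure $\nu\sim\mu$ of the form $i\,d\mu$. Moreover, $\ker\w_\mu$-conditional expectations for $\mu$ relate to $E_{\G\cap E_N}$-expectations for $\nu$ after tagging points by the finitely many level values $\pi(x)$ within a component.

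Concretely, we apply \cref{lem:Anush1.6} to the graph $\G\cap E_N$ with starting subgraph $\H_0\cap E_N$, using the bounded functions $f\cdot 1_{\{\text{level }k\}}$ for the finitely many relative levels. This yields a component-finite $\mathcal L\subseteq\G\cap E_N$ whose kernel-expectations of $f$ are within $\epsilon/3$ of those of $\G\cap E_N$. Passing to supergraphs within $E_N$ can only be handled by taking the final $\H_1$ and then reapplying the lemma last (the lemma allows arbitrary component-finite starting graphs). So we order the stages as: Step 2 first, then Step 1 starting from the Step 2 graph.

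\emph{Step 2 (condition \labelcref{cond:seeAllValues}).} Ergodicity of $E$ together with the Ramsay property gives, for a.e.\ $x$, points $y_\pm\in[x]_E$ with $\pi(y_\pm)=\PoincRetMap_Z^{\pm1}(\pi x)$. Choosing $N$ large, with probability greater than $1-\epsilon/2$ these points are joined to $x$ by $\G$-paths lying in $E_N$ of length at most $L$. We use Luzin--Novikov to select such paths Borel-ly. We then add the paths for a set of $x$ forming a "sparse" Borel family, so that the resulting union with $\H_0$ remains component-finite. The standard trick here is to take a finite Borel subequivalence relation $\subseteq E_{\G\cap E_N}$ covering most points (hyperfiniteness of $E_{\G\cap E_N}$, which is hyper type $\mathrm{II}_1$ by \cref{cor: III0 -> hyper II1}, or simply Tserunyan's component-finite exhaustion) and include the path edges only inside its classes.

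\emph{Main obstacle.} The hard part is Step 1: reducing the kernel-expectation approximation to \cref{lem:Anush1.6}, which concerns full connectedness rather than kernels. We would resolve this by working inside $E_N$, where cocycle values are finitely many per class, and passing to the invariant measure $\nu$. There, $\E[f\mid\ker]$ is computed from $E$-expectations of $f$ restricted to level sets $\{x:\pi(x)\in \EE_N\text{-class position }k\}$, since the level of each point within its $\EE_N$-class is a Borel invariant labeling. This turns the kernel statement into finitely many applications of \cref{lem:Anush1.6}, each with tolerance $\epsilon/(3K)$.
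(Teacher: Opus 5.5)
The genuine gap is in your Step 2. To keep the graph component-finite, you add the chosen paths from $x$ to $y_\pm(x)$ only inside the classes of a finite subrelation of $E_{\G\cap E_N}$ that contains most of them, and you justify this by hyperfiniteness of $E_{\G\cap E_N}$. Finitely many cocycle values per component only give an invariant finite measure, which is all the hyper type $\mathrm{II}_1$ corollary says; they do not give hyperfiniteness. In the application $E$ is nonamenable, hence so is $\ker_\G\w_\mu=\bigcup_N\nearrow\ker_{\G\cap E_N}\w_\mu$, hence so is $E_{\G\cap E_N}$ for large $N$. For prescribed pairs $(x,y_\pm(x))$ in a nonamenable relation there is no reason such a finite subrelation should exist, and \cref{lem:Anush1.6} does not produce one.

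The missing idea is that \emph{any} point of $[x]$ with the right $\pi$-value will do. With $\Kc=\G\cap E_N$ (the paper's $\G_k$), the paper exploits this as follows:
it takes a Borel maximal disjoint family of finite, $\Kc$-connected, $E_{\H_0}$-invariant sets $C$ with $\pi(C)=\pi([C]_\Kc)$;
it retracts along $\Kc$ onto their union;
it shows the retraction has finite fibers by mass transport, since $\w_\mu$ takes finitely many values on each $\Kc$-component and so is bounded below there.
This yields a component-finite $\H_0\subseteq\H\subseteq\Kc$ with $\pi([x]_\H)=\pi([x]_\Kc)$.

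Separately, your derivation of $y_\pm$ from ergodicity and the Ramsay property fails for an arbitrary scalar potential. For example, let the Krieger flow be the flow built over $(Z,T^2)$ and let $\pi$ be the Krieger potential followed by the natural factor map onto the flow built over $(Z,T)$. Then $\pi([x]_E)$ meets only every other point of $Z\cap\RR.\pi(x)$. What does hold is $\{s:s.\pi(x)\in Z\}=\w_\mu([x]_E\times\{x\})$ for the Krieger potential and the canonical cross-section of \cref{canonical_potential_section}. This is the setting in which the lemma is applied, and the paper's proof uses it tacitly.

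Your Step 1 takes a genuinely different route from the paper. Once $\H$ is available, the paper retracts each level $X_j$ along $\H$, so that $E_\Kc|_{X_j}=(\ker_\Kc\w_\mu)|_{X_j}$; it then applies \cref{lem:Anush1.6} on $X_j$ and pulls back. Your level-set reduction avoids retractions, but as stated it is incomplete. Since $L_k$ is $\ker_\Kc\w_\mu$-invariant and $E_{\mathcal L}|_{L_k}=(\ker_{\mathcal L}\w_\mu)|_{L_k}$ for every $\mathcal L\subseteq\Kc$, one has $\E[f\mid\ker_{\mathcal L}\w_\mu]=\E[f\mathbf 1_{L_k}\mid E_{\mathcal L}]/\E[\mathbf 1_{L_k}\mid E_{\mathcal L}]$ on $L_k$. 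So the denominators must be approximated too. The $L^1$-error on $L_k$ is then at most $\|\E[f\mathbf 1_{L_k}\mid E_{\mathcal L}]-\E[f\mathbf 1_{L_k}\mid E_\Kc]\|_1+\|f\|_\infty\|\E[\mathbf 1_{L_k}\mid E_{\mathcal L}]-\E[\mathbf 1_{L_k}\mid E_\Kc]\|_1$.

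Handling these $2K$ functions by iterating \cref{lem:Anush1.6} is legitimate. Enlarging $\mathcal L$ inside $\Kc$ only decreases each error (tower property and $L^1$-contractivity), which also makes the order of your stages irrelevant. With the denominators included, (ii) comes for free and your Step 2 can simply be dropped. On an $\mathcal L$-component that misses a level present in its $\Kc$-component, $\E[\mathbf 1_{L_k}\mid E_{\mathcal L}]=0<\E[\mathbf 1_{L_k}\mid E_\Kc]$. Choose the tolerances small relative to the distribution of the a.e.\ positive function $\E[\mathbf 1_{L_k}\mid E_\Kc]$ on $[L_k]_\Kc$; then this happens only on a small set, and elsewhere $\pi([x]_{\mathcal L})=\pi([x]_\Kc)$. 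Completed this way, your plan is a valid and somewhat shorter alternative to the paper's two-stage construction. The detour through $\nu$ is unnecessary.
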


\begin{proof}
Denote $\w \coloneq \w_\mu$.
By replacing $f$ with $f - \E[f \mid \ker_{\G} \w]$, we may assume without loss of generality that  $\E[f \mid \ker_{\G} \w] = 0$.

Because the connectedness relation of $(\pi \times \pi) (\H_0)$ is finite and the cross-section equivalence relation $R_Z = E_{\PoincRetMap_Z}$ is hyperfinite, there is an increasing exhaustion $R_Z = \bigcup_{n \in \N} \nearrow R_n$ with $R_0=E_{(\pi \times \pi) (\H_0)}$ where each $R_n$ is a finite Borel equivalence relation.
Define a sequence of subgraphings $\G_n\subseteq \G$ by letting $\G_n = \G \cap (\pi\times\pi)^{-1}(R_n)$.
These subgraphings satisfy that $\H_0\subseteq \G_n$ for every $n$ and $\G_n\nearrow \G$ and $\ker_{\G_n} \w \nearrow \ker_\G \w$.
Notice that the $\G_n$ are not necessarily component-finite.

By the reverse martingale convergence theorem, we have $\lim_n \E[f \mid \ker_{\G_n} \w] = \E[f \mid \ker_{\G} \w] = 0$ both a.e.\ and in $L^1$-norm.
Then there exists a large enough $k$ such that $\Kc \coloneq \G_k$ satisfies:
\begin{itemize}
\item $\mu(\{x \in X : \PoincRetMap_Z(\pi x) \text{ and } \PoincRetMap_Z^{-1}(\pi x) \text{ are both in } \pi ([x]_{\Kc})\}) > 1-\epsilon$, and

\item $\left \|\E[f\mid \ker_{\Kc} \w]\right\|_1 \leq \frac{\epsilon}{2}$.
\end{itemize}

Thus, the only desired property that $\Kc$ fails to have is component-finiteness.
Note that by construction, the set $\w([x]_{\Kc} \times \{x\})$ is finite for every $x \in X$.

We will define $\H_1$ in two steps. 
First, we will define an intermediate finite graphing $\H_0 \subseteq \H \subseteq \Kc$ such that $\pi([x]_{\H})=\pi([x]_{\Kc})$ for almost every $x\in X$. 
Second, we will apply Tserunyan's ergodic subgraph lemma countably many times to obtain the desired $\H_1$.

We begin with the first step of defining $\H$.
Let $[X]^{< \N}$ denote the standard Borel space of all finite subsets of $X$ and consider the (Borel) collection $\Theta \subseteq [X]^{< \N}$ of all nonempty $E_{\H_0}$-invariant finite subsets $C$ of $X$ which are $\Kc$-connected (i.e., the induced subgraph $\Kc |_C$ is connected) and satisfy $\pi(C)=\pi([C]_{\Kc})$.
By \cite[Lemma 7.3]{Kechris-Miller}, there is a maximal collection $\Theta_0 \subseteq \Theta$ of pairwise disjoint sets with $\Theta_0$ Borel.
Then the union $V_0 = \bigcup_{C \in \Theta_0} C$ is Borel and by maximality it meets every $\Kc$-component.
Now Luzin--Novikov uniformization together with a standard ``lexicographically least path'' argument yields a Borel retraction $r : X \to V_0$ to $V_0$ \textbf{along $\Kc$}, i.e., $r^{-1}(x)$ is $\Kc$-connected for each $x \in V_0$.
Applying mass transport \cref{mass_transport_formula} to the indicator function of the graph of $r$ yields that for a.e.\ $x \in V_0$, the sum $\sum_{y \in r^{-1}(x)} \w(y,x)$ is finite, and hence $r^{-1}(x)$ is finite because the set $\w([x]_{\Kc} \times \{x\})$ of possible values is bounded away from $0$.
We define $\H$ to be the subgraph of $\Kc$ where two $\Kc$-adjacent vertices $x$ and $y$ are $\H$-adjacent iff $r(x)$ and $r(y)$  belong to the same $C \in \Theta_0$.
Thus, the $\H$-components are exactly the sets $r^{-1}(C)$ for $C \in \Theta_0$, and in particular they are finite.

We now move on to step 2. 
For $j \geq 1$, we define $X_j \subseteq X$ to be the set of points $x\in X$ having $j^\text{th}$ least $\w$-weight within their $\Kc$-component, i.e., we inductively define $X_j$ to consist of all $x\in X\smallsetminus \bigcup_{1 \le i < j} X_i$ with $\w(y,x) \geq 1$ for all $y\in [x]_{\Kc}\smallsetminus\bigcup_{1 \le i < j} X_i$.
The $X_j$ form a partition of $X$ into $\ker_{\Kc}\w$-invariant Borel sets, and each $\Kc$-component meets $X_j$ for only finitely many $j \geq 1$.

We now aim to construct an increasing sequence $\F_0 \subseteq \F_1 \subseteq \dots$ of component-finite Borel subgraphs of $\Kc$ satisfying 
\begin{equation}\label{eq:ind_cond_expectation}
\|\E[\mathbf{1}_{X_j}f\mid\ker_{ \F_j}\w]\|_1\leq \frac{\epsilon}{2^{j + 1}}+\|\E[\mathbf{1}_{X_j}f\mid\ker_{\Kc}\w]\|_1    
\end{equation}
for every $j \geq 1$.  
Set $\F_0 \coloneqq \H$.
Let $j\geq 1$ and assume that $\F_{j-1}$ has already been constructed. 
Informally, to obtain $\F_j$ we will now edge-slide both $\F_{j-1}$ and $\Kc$ to $X_j$ along $\F_0 = \H$, apply \cref{lem:Anush1.6}, and then slide the resulting graph back, see \cref{fig:retractions}.

\begin{figure*}[t!]
\subfloat[$\F_0$ is slid to $X_1$ via a retraction along $\F_0$.]{%
\input{Tikz-folders/tikzF-1}%
\label{subfig:a}%
}\hfill
\subfloat[$\Kc$ is slid to $X_1$ via a retraction along $\F_0$.]{%
\input{Tikz-folders/tikzK-1}%
\label{subfig:b}%
}\vspace{2em}\\
\subfloat[The retract of $\F_0$ to $X_1$ along $\F_0$.]{%
\input{Tikz-folders/tikzF-2}%
\label{subfig:c}%
}\hfill
\subfloat[The retract of $\Kc$ to $X_1$ along $\F_0$.]{%
\input{Tikz-folders/tikzK-2}%
\label{subfig:d}%
}
\caption{
The retractions of $\F_0$ and $\Kc$ to $X_1$ along $\F_0$.
The loops in the retracts are not drawn.
}
\label{fig:retractions}
\end{figure*}

Let $[X_j]_{\Kc}$ denote the $E_{\Kc}$-saturation of $X_j$.
Since $\pi ([x]_\H) = \pi ([x]_\Kc)$ for each $x \in X$, it follows that $X_j$ meets each $\H$-component in $[X_j]_\Kc$.
Therefore, we may find a Borel retract $r_j : [X_j]_\Kc \to X_j$ along the graph $\H$.
In particular, $r_j(x) \in [x]_\H \subseteq [x]_{\F_j}$ for every $x \in X$ and $r_j$ is finite-to-one.

Note that $E_\Kc |_{X_j} = (\ker_{\Kc} \w) |_{X_j}$ and this relation is graphed by $(r_j \times r_j)(\Kc)$. 
Furthermore, the subgraph $(r_j \times r_j)(\F_{j-1})$ of $(r_j \times r_j)(\Kc)$ is component-finite since it is contained in $E_{\F_{j-1}} |_{X_j}$.
We can now apply \cref{lem:Anush1.6} to get a component-finite Borel subgraph $\mathcal{L}_j\subseteq (r_j\times r_j)(\Kc)$ containing $(r_j\times r_j)(\F_{j-1})$ with 
\[
\|\E \big[ f|_{X_j} \bigmid E_{\mathcal{L}_j} |_{X_j} \big] - \E\big[f|_{X_j} \bigmid (\ker_{\Kc} \w) |_{X_j}\big] \|_1\leq  \frac{\epsilon}{2^{j + 1}},
\]
which, by $\ker_{\Kc}\w$-invariance of $X_j$, is equivalent to
\[
\|\E [  \mathbf{1}_{X_j}f \mid E_{\mathcal{L}_j} ] - \E [ \mathbf{1}_{X_j}f \mid \ker_{\Kc} \w  ] \|_1\leq  \frac{\epsilon}{2^{j + 1}}.
\]
Define $\F_j\coloneqq (r_j \times r_j)^{-1}(E_{\mathcal{L}_j})\cap \Kc$, and note that $E_{\F_j}=(r_j\times r_j)^{-1}(E_{\mathcal{L}_j})$. 
Observe that $\F_j$ contains $\F_{j-1}$, and $\F_j$ is component-finite since $\F_{j-1}$ is component-finite and $r_j$ is finite-to-one.
Furthermore, $E_{\mathcal{L}_j} |_{X_j} = E_{\F_j} |_{X_j} = (\ker_{ \F_j}\w) |_{X_j}$, so $\F_j$ satisfies \cref{eq:ind_cond_expectation}.

Define $\H_1\coloneqq \bigcup_j  \F_j$. 
Then $\H_1$ is component-finite since each $\Kc$-component meets only finitely many $X_j$'s, hence for each $x\in X$ the sequence $[x]_{\F _0}\subseteq [x]_{\F_1}\subseteq \cdots\subseteq [x]_{\F_j}\subseteq \cdots$ eventually stabilizes.

Since $\ker_{\F_j}\w \subseteq \ker_{\H_1}\w$, we have
\[ 
\|\E[\mathbf{1}_{X_j} f \mid\ker_{\H_1} \w]\|_1 \leq \|\E[\mathbf{1}_{X_j} f \mid\ker_{ \F_{j}} \w]\|_1
\]
for every $j \ge 1$.
Since the $X_j$ are $(\ker_\Kc \w)$-invariant and partition $X$, this gives 
\[
\|\E[f\mid \ker_{\H_1} \w]\|_1 \leq \sum_{j \geq 1}\|\E[\mathbf{1}_{X_j} f \mid \ker_{ \F_{j}} \w]\|_1 \leq  \sum_{j \geq 1}  \frac{\epsilon}{2^{j + 1}}  +\underbrace{   \sum_{j \geq 1}\|\E[\mathbf{1}_{X_j} f \mid \ker_{\Kc} \w]\|_1}_{\leq \epsilon/2}  \leq \epsilon ,
\]
showing that $\H_1$ satisfies item \ref{cond:relErgodic}. Items \ref{cond:finRel} and \ref{cond:seeAllValues} follow since $\H\subseteq \H_1\subseteq \Kc$.
\end{proof}

\section{Balanced hyperfinite graphings with minimal cocycle values}

Let $E$ be a countable Borel equivalence relation on $X$ and let $A \subseteq B \subseteq X$ be a pair of nested sets. 
An \textbf{$E$-compression of $B$ starting from $A$} is a map $c \in [[E]]$ with domain $B$ and image contained in $B\smallsetminus A$ such that the sets $A,c(A),c^2(A),\dots$ form a partition of $B$.

\begin{lemma}\label{compressions}
Let $(E, X, \mu)$ be an ergodic mcp type $\mathrm{III}_0$ countable Borel equivalence relation with dis-closed balanced Radon--Nikodym cocycle $\w_\mu$. 
Suppose further that there is an $\alpha \in (0,1)$ such that $\w ([x]_E\times \{x\})=\alpha^\mathbf{Z}$ for almost every $x\in X$. 
Let $\pi: X\to Z$ be an ergodic decomposition map for $(\ker_E \w_\mu, X,\mu)$. 
Let $A \subset B \subset X$ be a pair of non-null measurable sets, and assume that $\mu (A)=\mu _z(A)=(1-\alpha )\mu_z(B)$ for a.e.\ $z \in Z$.

Then, after discarding a null set, there exists an $E$-compression $c$ of $B$ starting from $A$ satisfying $\w_\mu(c(x),x) = \alpha$ for a.e.\ $x \in B$, and $\mu_z(c^i(A)) = \mu (c^i(A))=\alpha^i \mu(A)$ for each $i \in \N$ and a.e.\ $z \in Z$.
\end{lemma}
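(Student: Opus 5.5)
We would build $c$ one level at a time. Set $A_0\coloneqq A$ and $B_0\coloneqq B$. At stage $i$ we choose $A_{i+1}\subseteq B_i\smallsetminus A_i$ together with $c_i\in[[E]]$ mapping $A_i$ onto $A_{i+1}$, with $\w_\mu(c_i(x),x)=\alpha$, and we then put $B_{i+1}\coloneqq B_i\smallsetminus A_i$. The inductive hypothesis is
\[
\mu_z(A_i)=\alpha^i\mu(A)
\qquad\text{and}\qquad
\mu_z(B_i)=\mu_z(B)-\sum_{k<i}\alpha^k\mu(A)=\alpha^i\mu_z(B)
\quad\text{for a.e.\ } z.
\]
The second equality uses $\mu(A)=(1-\alpha)\mu_z(B)$. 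Since $\mu_z(B_i)\to 0$ for a.e.\ $z$, the sets $A_i$ exhaust $B$ up to a null set. Gluing the maps, $c\coloneqq\bigsqcup_i c_i$ is then a compression with the required ratio and fiber masses.

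The inductive step is a transport along the cocycle. First use dis-closedness and the identity $\w([x]_E\times\{x\})=\alpha^{\Z}$: by \cref{pi is ergodic decomposition}, $\pi$ is (up to identification) the Krieger scalar potential, and $Z$ sits as the $\alpha$-equispaced cross-section with Poincaré return map $\PoincRetMap$. Multiplying a point's weight by $\alpha$ moves it to the adjacent $\ker_E\w_\mu$-class. So for $x\in\pi^{-1}(z)$, every $y\in[x]_E$ with $\w(y,x)=\alpha$ lies in $\pi^{-1}(z')$, where $z'$ is the appropriate neighbour $\PoincRetMap^{\pm1}(z)$ in $Z$ (which neighbour depends on the orientation convention).

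Next apply the mass transport principle across the homomorphism $\pi$ (\cref{lem:mass_transport_homomorphism}). For any partial map $\phi$ with $\w_\mu(\phi(x),x)=\alpha$, this gives $\mu_{z'}(\phi(S))=\alpha\,\mu_z(S)\cdot(\text{density correction})$. Balancedness says $\PoincRetMap$ preserves $\pi_*\mu$, so the correction factor is $1$. Thus an injective map that is bijective onto a set with the correct fiber mass exists exactly when the masses match, namely $\mu_{z'}(\text{target})=\alpha\mu_z(A_i)$.

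It remains to produce the map. Within the fiber $\pi^{-1}(z')$ choose, via \cref{sets_with_prescribed_fiber-measures}, a set $A_{i+1}\subseteq B_i\smallsetminus A_i$ with $\mu_{z'}(A_{i+1})=\alpha^{i+1}\mu(A)$. This is possible because $\mu_{z'}(B_i\smallsetminus A_i)=\alpha^{i+1}\mu_{z'}(B)\ge\alpha^{i+1}\mu(A)$, using $\mu_z(B)$ constant. Then find $c_i$ by measure-theoretic exhaustion over pseudogroup elements of $[[E]]$ with ratio $\alpha$. Each fiber $\ker_E\w_\mu$-relation is ergodic and pmp, and the image of $A_i$ under all such maps covers the next class. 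A Hopf-type exhaustion argument, in the spirit of \cref{typeIISlide} and \cite[Lemma 7.10]{Kechris-Miller}, applied simultaneously in each ergodic component, then yields a bijection from $A_i$ onto $A_{i+1}$ mod null.

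The main obstacle is this exhaustion step. We must make it Borel across all fibers $z$ at once, and we must make sure that the leftover sets are null rather than merely of small measure. Our plan is to compose a fixed Borel $\phi$ with ratio $\alpha$ (defined on all of $X$ via Luzin--Novikov) with elements of the full group of the pmp kernel relation $\ker_E\w_\mu$. Doing so reduces the problem to the pmp matching statement that equal-fiber-measure sets are $\ker_E\w_\mu$-equivalent, which follows from ergodicity of a.e.\ component.
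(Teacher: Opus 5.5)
Your overall route matches the paper's proof:
\begin{itemize}
\item levels $A_i\subseteq B$ with fiber masses $\alpha^i\mu(A)$ from \cref{sets_with_prescribed_fiber-measures};
\item a Luzin--Novikov choice of a point of ratio $\alpha$ in the next $\ker_E\w_\mu$-class;
\item mass transport across $\pi$, where balancedness gives $\w_{\pi_*\mu}\equiv 1$;
\item a redistribution inside the pmp relation $\ker_E\w_\mu$.
\end{itemize}
Choosing the levels one at a time rather than all at once makes no difference. You should also say why a.e.\ $\mu_z$ is atomless, since \cref{sets_with_prescribed_fiber-measures} needs it; this follows from \cref{prop:kernelIsNowhereSmooth}.

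The step that fails as you formulate it is the final reduction. A Luzin--Novikov uniformization $\phi$ is in general not injective on $A_i$. Post-composing with elements of $[\ker_E\w_\mu]$ cannot turn a non-injective map into an injective one. So you cannot reduce to matching the set $\phi(A_i)$ with $A_{i+1}$, and your identity $\mu_{z'}(\phi(S))=\alpha\,\mu_z(S)$ holds only for injective $\phi$.

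For an arbitrary such $\phi:A_i\to X$, mass transport across $\pi$ gives the multiplicity identity
\[
\int_X |\phi^{-1}(x)|\,\deriv\mu_{\alpha.z}(x)=\alpha\,\mu_z(A_i)=\alpha^{i+1}\mu(A)=\mu_{\alpha.z}(A_{i+1})\quad\text{for a.e.\ } z.
\]
This is exactly the hypothesis of \cref{typeIISlide} with $R=\ker_E\w_\mu$, $Y=A_i$ and $g=\mathbf{1}_{A_{i+1}}$. After discarding an $E$-invariant null set, $A_{i+1}$ is a complete section for $R$, because it has positive mass in a.e.\ ergodic component. That lemma moves each $\phi(y)$ within its $\ker_E\w_\mu$-class, which keeps the ratio equal to $\alpha$. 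It outputs $c_i:A_i\to A_{i+1}$ with $|c_i^{-1}(x)|=1$ a.e.\ on $A_{i+1}$, and this is how the paper concludes.

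If you prefer to keep your equal-fiber-measure matching, proceed in three steps:
\begin{itemize}
\item cut $A_i$ into countably many Borel pieces $P_n$ on which $\phi$ is injective, e.g.\ $\phi=\gamma_n$ on $P_n$ for a Feldman--Moore enumeration $(\gamma_n)$ of $E$;
\item partition $A_{i+1}$ into Borel sets $Q_n$ with $\mu_{\alpha.z}(Q_n)=\alpha\,\mu_z(P_n)$, which is possible since these masses sum to $\mu_{\alpha.z}(A_{i+1})$;
\item match $\phi(P_n)$ with $Q_n$ inside $\ker_E\w_\mu$, piece by piece.
\end{itemize}
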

\begin{proof}
Set $A_0 \coloneqq A$.
By \cref{prop:kernelIsNowhereSmooth}, the $\mu_z$ is atomless for a.e.\ $z \in Z$.
\cref{sets_with_prescribed_fiber-measures} yields a sequence $(A_i)_{i \ge 1}$ of pairwise disjoint measurable subsets of $B$ satisfying $\mu_z(A_i) = \alpha^i \mu_z(A)$ for a.e.\ $z \in Z$.
Note that $\mu_z(B) = \frac{\mu_z(A)}{1 - \alpha} = \sum_{i \geq 0} \mu_z(A_i)$, so after discarding an $E$-invariant Borel null set, we may assume that the $A_i$ partition $B$. 
We may also assume that each $A_i$ is a complete section for $\ker_E \w_\mu$, and that $\w_\mu^x([x]_E)=\alpha^\mathbf{Z}$ for every $x \in X$.

It then follows that for each $x \in A_i$ there is a $y \in [x]_E$ with $\w_\mu(y,x) = \alpha$ and moreover, we may take $y$ to be in $A_{i+1}$ because the latter is a complete section for $\ker_E \w_\mu$.
Hence, by Luzin--Novikov uniformization, there exist Borel maps $\tilde{c}_i : A_i \to A_{i+1}$ such that $\tilde{c}_i(x) \mathrel{E} x$ and $\w_\mu(\tilde{c}_i(x), x) = \alpha$.

By \cref{pi is ergodic decomposition} and essential uniqueness of ergodic decomposition, we may assume that $\pi$ is a Krieger scalar potential for $(E,X,\mu )$ and that $Z$ is the cross-section $Z_{E,\mu}$ associated to $\pi$ provided by \cref{canonical_potential_section}.

Since $\w_\mu$ is balanced, the measure $\pi_*\mu$ is invariant for the cross-section equivalence relation on $Z$. 
Hence, applying mass transport across Borel homomorphisms (\cref{lem:mass_transport_homomorphism}) with $f_i(x,y) \coloneqq 1$ if $y = \tilde{c}_i(x)$ and $f_i(x,y)\coloneqq 0$ otherwise, we get that for a.e.\ $z \in Z$:
\begin{align*}
\mu_z(A_i) = \int_{X} \sum_{y\in [x]_E}f_i(x,y)\,\deriv \mu_z(x) 
&=\sum_{z'\in \langle \PoincRetMap_Z\rangle (z)} \underbrace{\w_{\pi_*\mu}(z',z)}_{=1} \int_{X}\sum_{\substack{y\in [x]_E\\
\pi y = z}}f_i(y,x)\w_\mu(y,x) \, \deriv \mu_{z'}(x)\\
&= \int_{A_{i+1}}\sum_{\substack{y\in \pi^{-1}(z) \\
\tilde c_i(y)=x}}f_i(y,x)\underbrace{\w_\mu(y,\tilde c_i(y))}_{=\alpha^{-1}} \, \deriv \mu_{\alpha.z}(x)
\\
&= \alpha  ^{-1}\int_{A_{i+1}} |\tilde{c}_i^{-1}(x)| \, \deriv \mu_{\alpha.z},
\end{align*}
where the sum in the integrand in the righthand side of the first line is nonzero only when $x\in A_{i+1}$ and $x=\tilde{c}_i(y)$, and hence the only $z'$ term in the sum that contributes is $z'=\pi(\tilde c_i(y))=\alpha.\pi(y)=\alpha .z$.

Thus by assumption, we have $\mu_{\alpha.z}(A_{i+1})=\alpha\mu_{\alpha.z}(A_i)=\alpha\mu_z(A_i)=\mu_{\alpha.z}(x\mapsto |\tilde{c}_i^{-1}(x)|) $ for a.e.\ $z \in Z$.

For each $i \in \N$, applying \cref{typeIISlide} with 
$R \coloneqq \ker_E w_\mu$, 
$g \coloneqq \mathbf{1}_{A_{i+1}}$, 
$Y \coloneqq A_i$,
and $\phi \coloneqq \tilde{c}_i$
gives a Borel map $c_i : A_i \to A_{i+1}$ with $|c_i^{-1}(x)| = 1$ for all $x \in A^{i+1}$ and $c_i(y) \mathrel{\ker_E \w_\mu} \tilde{c}_i(y)$ for $y \in A_i$. 
The union of these maps $c \coloneqq \bigcup c_i$ satisfies the conclusion of the lemma.
\end{proof}

\begin{remark}\label{rem:typeIIILambdaCompressions}
\cref{compressions} also holds when $(E,X,\mu)$ is instead assumed to be type $\mathrm{III}_\lambda$ and $\w_\mu ([x]_E\times \{ x \} ) = \lambda^\mathbf{Z}$ for some $\lambda \in (0,1)$. 
In this case, $\lambda$ is an essential value of the Radon--Nikodym cocycle, which implies via measure exhaustion that for every pair of positive measure sets $C, D\subseteq X$ satisfying $\mu(D) = \lambda \,\mu(C)$, there exists a bijection $\phi \in [[E]]$ with domain $C$ and image $D$ with $\w_\mu(\phi x,x)=\lambda$ for almost every $x\in C$. 
Applying this with $C \coloneqq A_i$ and $D \coloneqq A_{i+1}$ as in the proof of \cref{compressions}, yields the conclusion of \cref{compressions}.
\end{remark}
\begin{proposition}\label{prop:graphWithMinCocycleValues}
Let $(R, X,\mu)$ be an ergodic mcp type $\mathrm{III}_0$ hyperfinite Borel equivalence relation with dis-closed balanced Radon--Nikodym cocycle $\w_\mu$. 
Let $\alpha \in (0,1)$ and suppose that $\w_\mu ([x]_R\times \{x\}) = \alpha^\mathbf{Z}$ for a.e.\ $x \in X$. 
Then there exists $S \in [R]$ such that $E_S  = R$ and $\w_\mu(S x, x) = \alpha^{\pm 1}$ for a.e.\ $x \in X$.
\end{proposition}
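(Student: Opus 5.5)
We will build $S$ so that its orbits are bi-infinite lines whose steps all have weight $\alpha^{\pm1}$. Concretely, $S$ will be assembled from a sequence of $R$-compressions produced by \cref{compressions}. The key structure is the ergodic decomposition $\pi : X\to Z$ of $\ker_R\w_\mu$. By \cref{pi is ergodic decomposition} we may take $\pi$ to be the Krieger scalar potential with $Z=Z_{R,\mu}$. Since $\w_\mu$ is dis-closed and balanced with values $\alpha^{\mathbf Z}$, the Poincaré return map on $Z$ is multiplication by $\alpha^{-1}$ and preserves $\pi_*\mu$. Hence $\mu_z$ and $\mu_{\alpha.z}$ have comparable structure, and mass transport across $\pi$ (\cref{lem:mass_transport_homomorphism}) turns fibrewise measure conditions into existence statements for partial maps. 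Using hyperfiniteness, write $R=\bigcup_n\nearrow R_n$ with each $R_n$ finite, or better with cocycle values bounded as in \cref{increasing_union_with_finite_cocycle_values}. Then $S$ is built inductively so that $E_S\supseteq R_n$ for each $n$.

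First we produce a single ``spine''. Choose $A\subset B=X$ with $\mu_z(A)=(1-\alpha)$ for a.e.\ $z$, which is possible by \cref{sets_with_prescribed_fiber-measures}, since $\mu_z$ is atomless by \cref{prop:kernelIsNowhereSmooth}. \cref{compressions} then yields a compression $c$ of $X$ from $A$ with $\w_\mu(c x,x)=\alpha$. Its orbits are one-sided rays starting in $A$. Running the same construction with $\alpha^{-1}$ gives decreasing rays, and a $\ker_R\w_\mu$-measure-preserving matching via \cref{typeIISlide} glues rays pairwise into bi-infinite lines. This gives an aperiodic $S_0\in[R]$ with $\w_\mu(S_0x,x)=\alpha^{\pm1}$. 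Its orbit relation, however, is only some subrelation of $R$.

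Next, to reach all of $R$, we use the standard Dye/Krieger interleaving trick. Given $S_n$ with $E_{S_n}\supseteq R_n$, we concatenate finitely many $S_n$-lines into one line, in a way that also absorbs the next $R_{n+1}$-classes. Inside a finite union $L$ of $S_n$-segments, all values lie in a finite window $\alpha^{\{a,\dots,b\}}$, and the number of points at each level is governed by the fibre measures $\mu_z$. We reorder $L$ as a path that zigzags up and down one level at a time. This is a combinatorial Euler-type path in the ``level graph'' and exists whenever consecutive level counts are compatible. Compatibility is ensured by first cutting segments at matching levels, using again \cref{compressions} and \cref{typeIISlide} to adjust fibre counts. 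The $S_{n+1}$ agree with $S_n$ on larger and larger sets, so they converge a.e.\ to $S$, and $E_S=\bigcup R_n=R$.

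The main obstacle is this rearrangement step. When finitely many lines are merged, the segment endpoints have to be reconnected across different levels while keeping every step of weight exactly $\alpha^{\pm1}$ and keeping the construction Borel. We also have to ensure that the modifications occur on sets of summable measure, so that the limit exists. Here we will lean on balancedness: the fibre measures at levels $z$ and $\alpha.z$ differ by exactly the factor dictated by mass transport. This makes the up-step and down-step counts match in expectation, and \cref{typeIISlide} upgrades expectation matching to exact matching within ergodic components of $\ker_R\w_\mu$.
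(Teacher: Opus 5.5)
\paragraph{The step that should give $E_S=R$ is missing.}
Your first step is essentially sound, with two corrections. First, \cref{compressions} only yields \emph{descending} compressions: a compression with $\w_\mu(c(x),x)=\alpha^{-1}$ would force $\mu(c^i(A))=\alpha^{-i}\mu(A)$. Second, with $B=X$ the rays of $c$ already exhaust $X$. So you must split $X=A\sqcup B_-\sqcup B_+$ and use two descending compressions issuing from the same $A$.

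The genuine gap is the step meant to give $E_S=R$. That is the entire content of the proposition, and you only gesture at it; as described it does not work.
\begin{itemize}
\item Your $S_n$-orbits are bi-infinite lines, while $R_{n+1}$-classes are finite. So if the $S_n$-lines are kept intact, the lines that must be fused into one $S_{n+1}$-orbit form a whole class of $E_{S_n}\vee R_{n+1}$. In general that is infinitely many lines, not ``finitely many''.
\item Reordering a finite set $L$ can never fuse even two bi-infinite lines into one. Outside $L$ four infinite tails remain, and a $2$-regular graph with four ends has exactly two bi-infinite components. So you need infinitely many cut-and-reconnect operations per orbit.
\item Nothing in your sketch guarantees that previously absorbed $R_m$-classes stay in one orbit, or that the paths joining their points stabilize. ``$S_{n+1}$ agrees with $S_n$ on larger and larger sets'' gives at best pointwise convergence of $S$, not $E_S\supseteq R_n$.
\item A path whose steps change the level by exactly one alternates the parity of the level. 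Hence on every finite set it traverses, the levels must be contiguous, and the numbers of points at even and at odd levels must differ by at most one.
\item These are exact constraints on each individual finite set. Balancedness, mass transport and \cref{typeIISlide} only control the fibre measures $\mu_z$, i.e.\ averages over ergodic components of $\ker_R\w_\mu$. ``Cutting segments at matching levels'' does not explain how these constraints are met.
\end{itemize}

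\paragraph{What the paper does instead.}
No limiting procedure is needed; hyperfiniteness is used exactly once. It gives a single Borel automorphism $S_0$ of a complete section $A$ generating $R|_A$ \cite[Theorem~6.6]{Kechris-Miller}.

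The paper takes $X=A\sqcup B_-\sqcup B_+$ with $\mu_z(A)=\frac{1-\alpha}{1+\alpha}$. It then takes descending compressions $c_\pm$ of $A\sqcup B_\pm$ from $A$; this is \cref{compressions}, and it is where balancedness enters.

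Each edge $(x,S_0x)$ is replaced by a three-part path:
\begin{itemize}
\item down along $c_+$ from $x$ for $k_+(x)\ge k$ steps;
\item one jump of weight $\alpha^{\pm1}$ to $c_-^{k_-(S_0x)}(S_0x)$, which is possible because all weights lie in $\alpha^{\mathbf Z}$;
\item back up along $c_-^{-1}$ to $S_0x$.
\end{itemize}

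The unused deep parts $D'\sqcup D''$ of the chains have small fibre measure by the choice of $k$. So there is a $\ker_R\w_\mu$-injection $t$ of $D'$ into the already-used set $C$. Each edge $(x,S_1x)$ with $x\in t(D')$ is then replaced by the zigzag $x,\ c(r(x)),\ r(x),\ S_1(x)$, where $r=t^{-1}$.

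Every step has weight $\alpha^{\pm1}$, every point lies on an $S$-path between points of $A$, and each $S_0$-orbit lies in an $S$-orbit. So $E_S=R$ follows at once. Detouring a single generator of $R|_A$ in this way is the idea your proposal is missing.
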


\begin{proof}
Let $\pi : X \to \K_R$ be a Krieger scalar potential for $(R,X,\mu)$, and let $Z=Z_{R,\mu}$ be the cross-section associated to $\pi$ provided by \cref{canonical_potential_section}.
Then $\pi$ is an ergodic decomposition map for $\ker_R\w_\mu$ by \cref{pi is ergodic decomposition}.
By \cref{prop:kernelIsNowhereSmooth}, $\ker_E w_\mu$ is $\mu$-nowhere smooth.
Since $\pi$ is $\ker_E w_{\mu}$-invariant, the measure disintegration $z \mapsto \mu_z$ of $\mu$ through $\pi$ satisfies that $\mu_z$ is atomless for $\pi_* \mu$-a.e.\ $z \in Z$.
By \cref{sets_with_prescribed_fiber-measures}, we may therefore find a partition $A\sqcup B_-\sqcup B_+ = X$ of $X$ into Borel sets such that $\mu_z(A) = \frac{1 - \alpha}{ 1 + \alpha}$ and $\mu_z(B_-) = \mu_z(B_+) = \frac{\alpha}{1+\alpha}$ for a.e.\ $z \in Z$.
For each $\epsilon \in \{-,+\}$, we apply \cref{compressions} and discard an $R$-invariant null set to get an $R$-compression $c_\epsilon$ of $A \sqcup B_\epsilon$ starting from $A$ with $\w_\mu(c_\epsilon(x), x) = \alpha$ for all $x \in A \sqcup B_\epsilon$, and such that for each $\epsilon \in \{-,+\}$, the iterates $c_\epsilon^i(A)$ with $i \geq 1$ partition $B_\epsilon$. For a large enough $k \in \N$, we have
\[
\mu_z(\sqcup_{i\geq k} c_{\epsilon}^i(A)) = \mu_z(A) \frac{\alpha^k}{1-\alpha} \le \frac{1}{2} \mu_z(A) 
\]
for all $z \in Z$.
We fix such a $k \ge 1$.

Since $R|_A$ is hyperfinite we may find a Borel bijection $S_0 : A \bij A$ in $[R|_A]$ generating $R|_A$ \cite[Theorem~6.6]{Kechris-Miller}. 
Let $k_+ : A \to \N \cap [k, \infty)$ be a measurable function such that $\w_\mu(c_+^{k_+(x)}(x), S_0(x)) < \alpha^k$ for every $x \in A$. 
Then there is a unique (measurable) function $k_- : A \to \N$ such that $\w_\mu(c_+^{k_+(x)} (x), c_-^{k_-(S_0(x))}(S_0(x))) = \alpha$ for every $x \in A$, see the left side of \cref{fig:detour}.
Observe that $k_-(y) \geq k$ for all $y \in A$.

We now modify $S_0$ in order to extend its domain to the following set:
\[
C \coloneqq\{c_\epsilon^{i}(x) \mid x \in A \text{ and } 0 \leq i \leq k_\epsilon(x) \text{ and } \epsilon \in \set{-,+}\}.
\]
We define a Borel bijection $S_1 : C \bij C$ by replacing each edge $(x,y) \coloneqq (x,S_0 x)$ of $S_0$ with the path $x, c_+(x), c_+^2(x), \dots, c_+^{k_+(x)}(x)$, then a jump from $c_+^{k_+(x)}(x)$ to $c_-^{k_-(y)}(y)$, then the path $c_-^{k_-(y)}(y), \dots, c_-^2(y), c_-(y), y$, as in \cref{fig:detour}.
More precisely:
\[   
S_1(u) \coloneqq 
\begin{cases}
c_+(u) &\quad\text{if } u = c_+^i(x)\text{ for some } x \in A \text{ and } 0 \leq i < k_+(x),
\\
c_-^{-1}(u) &\quad\text{if } u = c_-^i(y) \text{ for some } y \in A \text{ and } 0 < i \leq k_-(y),
\\
c_-^{k_-(y)}(y) &\quad\text{if } u = c_+^{k_+(x)}(x) \text{ for some } x \in A \text{ and } y = S_0(x).
\end{cases}
\]
The map $S_1$ now satisfies $\w_\mu( S_1(x), x) = \alpha^{\pm 1}$ for every $x \in C$.

\begin{figure}[ht]
\begin{subfigure}[b]{.45\textwidth}
\centering
\begin{tikzpicture}[scale = 0.8]
\foreach \i in {3, 2, 1, 0}{
\draw[->>] ( -2 + 0.5 * \i, 4 - \i ) -- (-1.5 + 0.5 * \i, 3 - \i);

\draw[->] ( -2 - 0.5 * \i, 4 - \i ) -- (-2.5 - 0.5 * \i, 3 - \i);
}

\foreach \i in {2, 1, 0}{
\draw[->] (3 - 0.5 * \i, 3 - \i ) -- (2.5 - 0.5 * \i, 2 - \i);
\draw[->>] (3 + 0.5 * \i, 3 - \i ) -- (3.5 + 0.5 * \i, 2 - \i);
}

\draw[dashed, blue] (-0.5,1) -- (2.5,2);
\draw[red, ->-] (-2, 4) to[out=0, in=160] (3,3);

\draw[red, ->-, dashed] (-4, 5) to [out=0, in=135] (-2, 4);
\draw[red, ->-, dashed] (3, 3) to [out=45, in=180] (5, 4);

\node at (-2, 4.3) {$x$};
\node at (3, 3.3) {$y$};
\node[red] at (1,4) {$S_0$};
\node at (-.5, 0) {$B_+$};
\node at (2, 0) {$B_-$};
\node at (-1.5, 1) {$c_+^{k_+(x)}(x)$};
\node at (1.6, 2.2) {$c_-^{k_-(y)}(y)$};

\fill (-2,4) circle (1.5pt);
\fill (3,3) circle (1.5pt);
\fill (-0.5,1) circle (1.5pt);
\fill (2.5,2) circle (1.5pt);
\end{tikzpicture}
\end{subfigure}
\begin{subfigure}[b]{.5\textwidth}
\centering    
\begin{tikzpicture}[scale = 0.7]
\foreach \i in {4, 3, 2, 1, 0}{
\ifthenelse{\i < 3}{
\draw[->-, blue] ( -3 + 0.5 * \i, 4 - \i) -- (-2.5 + 0.5 * \i, 3 - \i);
\fill (-2.5 + 0.5 * \i, 3 - \i) circle (1.5pt);
}{}
\ifthenelse{\i = 3}{
\draw[dashed] ( -3 + 0.5 * \i, 4 - \i ) -- (-2.5 + 0.5 * \i, 3 - \i);  
}{}
\ifthenelse{\i > 3}{
\draw[->>] ( -3 + 0.5 * \i, 4 - \i ) -- (-2.5 + 0.5 * \i, 3 - \i);}{}

\ifthenelse{\i < 2}{
\draw[->-, blue]  (-3.5 - 0.5 * \i, 3 - \i) -- ( -3 - 0.5 * \i, 4 - \i) ;
\fill ( -3 - 0.5 * \i, 4 - \i) circle (1.5pt);
}{}
\ifthenelse{\i = 2}{
\draw[dashed] ( -3 - 0.5 * \i, 4 - \i ) -- (-3.5 - 0.5 * \i, 3 - \i);
\fill ( -3 - 0.5 * \i, 4 - \i ) circle (1.5pt);
\draw[->-, blue, dashed] (-4 - 0.5 * \i, 4.3 - \i) to[out=0, in=160]  ( -3 - 0.5 * \i, 4 - \i );
}{}
\ifthenelse{\i > 2}{
\draw[->] ( -3 - 0.5 * \i, 4 - \i ) -- (-3.5 - 0.5 * \i, 3 - \i);}{}
}


\foreach \i in {3, 2, 1, 0}{
\ifthenelse{\i < 1}{\draw[->-, blue] (2.5 - 0.5 * \i, 2 - \i) -- (3 - 0.5 * \i, 3 - \i );
\fill (3 - 0.5 * \i, 3 - \i ) circle (1.5pt);
}{}

\ifthenelse{\i = 1}{\draw[dashed] (3 - 0.5 * \i, 3 - \i ) -- (2.5 - 0.5 * \i, 2 - \i);
}{
}
\ifthenelse{\i > 1}{\draw[->] (3 - 0.5 * \i, 3 - \i ) -- (2.5 - 0.5 * \i, 2 - \i);}{}

\ifthenelse{\i < 2}{\draw[->-, blue] (3 + 0.5 * \i, 3 - \i ) -- (3.5 + 0.5 * \i, 2 - \i);
\fill (3.5 + 0.5 * \i, 2 - \i) circle (1.5pt);
}{}

\ifthenelse{\i = 2}{\draw[dashed] (3 + 0.5 * \i, 3 - \i ) -- (3.5 + 0.5 * \i, 2 - \i);
\draw[->-, blue, dashed] (3 + 0.5 * \i, 3 - \i ) to[out=10, in=160]  (4 + 0.5 * \i, 3 - \i);}{
}
\ifthenelse{\i > 2}{\draw[->>] (3 + 0.5 * \i, 3 - \i ) -- (3.5 + 0.5 * \i, 2 - \i);}{}
}

\draw[blue, ->-] (-1.5,1) to[out=25, in=185]  (2.5,2);
\fill (2.5,2) circle (1.5pt);
\node[blue] at (0,2.1) {$S_1$};
\node[blue] at (-1.95,2.85) {$S_1$}; 
\node[blue] at (2.35,2.55) {$S_1$};

\end{tikzpicture}
\end{subfigure}
\caption{
Detouring an edge of $S_0$ along compressions $c_+$ and $c_-$.}
\label{fig:detour}
\end{figure}

Let $D_0$ be the set of the points in $X\smallsetminus C$ that belong to $c_+ (C) \cup c_- (C)$, the "first ones" along the downwards compressions that do not belong to $C = \dom(S_1)$, i.e.,
\[
D_0 =  \left\{v \in X: \exists u \in A, v = c_+^{k_+(u) + 1}(u) \text{ or } v = c_-^{k_-(u) + 1}(u)\right\}.
\]
Then $c \coloneqq c_+|_{X \smallsetminus  C} \sqcup c_-|_{X \smallsetminus  C}$ is a compression of $X \smallsetminus C$ starting from $D_0$.
Let 
\[
D' \coloneqq \bigsqcup_{n \geq 0} c^{2n} D_0 
\text{ and } 
D'' \coloneqq c D'= \bigsqcup_{n \geq 0} c^{2n+1} D_0.
\]

Notice that $D'\subset (\sqcup_{i\geq k} c_{1}^i(A)) \sqcup (\sqcup_{j\geq k} c_{0}^j(A))$, so $\mu_z(D') \le \mu_z(A) \le \frac{1}{2} \mu_z(C)$ for all $z\in Z$ due to our choice of $k$.
Thus, standard measure exhaustion yields a bijection $t \in [[\mathrel{\ker_R \w_\mu}]]$ with domain $D'$ and image $C' \subseteq C$ between $D'$ and a subset $C'$ of $C$. 
Let $r : C' \bij D'$ be the inverse of $t$, so it also belongs to $[[\mathrel{\ker_R \w_\mu}]]$.
We now extend the domain of $S_1$ to the entire $X$ by replacing every edge $(x, S_1(x))$ where $x \in C'$, with the path $x, c(r(x)), r(x), S_1(x)$, see \cref{fig:zigzag}.

\begin{figure}[ht]
\centering
\begin{tikzpicture}
\node at (-0.75,0) {$c(r(x))$};
\node at (2.5, 0) {$S_1(x)$};
\node at (-0.5, 2) {$r(x)$};
\node at (2.25, 2) {$x$};

\fill (0,0) circle (1.5pt);
\fill (0,2) circle (1.5pt);
\fill (2,0) circle (1.5pt);
\fill (2,2) circle (1.5pt);

\draw[->-, blue] (2,2) -- (2,0);
\draw[dashed, blue] (2,2.25) -- (2,2);
\draw[dashed, blue] (2,0) -- (2,-0.25);

\draw[->-] (2,2) -- (0,0);
\draw[->-] (0,0) -- (0,2 );
\draw[->-] (0,2) -- (2,0);

\end{tikzpicture}
\caption{Replacing an edge of $S_1$ with a path.}
\label{fig:zigzag}
\end{figure}

More precisely, we define $S  : X \to X$ by
\[   
S (x) = 
\begin{cases}
c(r(x)) &\quad\text{if } x \in C', \text{ this sends  bijectively } C' \text{ to } D''
\\
c^{-1}(x) &\quad\text{if } x \in D'' , \text{ this sends  bijectively } D'' \text{ to } D'
\\
S_1(t(x)) &\quad\text{if } x\in D', \text{ this sends  bijectively } D' \text{ to } S_1(C')
\\
S_1(x) &\quad\text{if } x \in C\smallsetminus C', \text{ this sends bijectively } C \smallsetminus C' \text{ to } S_1(C\smallsetminus C').
\end{cases}
\]
$S $ is surjective because $X = D'' \sqcup D' \sqcup S_1(C') \cup S_1(C \smallsetminus C')$, and it is injective because in each case of its definition a bijection is applied whose image is disjoint from the other cases.
Also, in all cases we have $\w_\mu(S (x), x) = \alpha^{\pm 1}$ as required, since $S (x)$ is defined as a map that satisfies this cocycle property, possibly composed with another map along which the Radon--Nikodym cocycle is trivial.
For instance, if $x \in D'$, then $\w_\mu(S (x), x) = \w_\mu(S_1(t(x)), x) = \w_\mu(S_1(t(x)), t(x)) \w_\mu(t(x), x) = \alpha^{\pm 1}$ since $\w_\mu(t(x), x)=1$.

Lastly, because $S_0$ generates $R|_A$, it follows from the construction that $S $ generates $R$ since the graph of $S $ is contained in $R$ and each $S_0$-orbit is contained in a $S $-orbit.
\end{proof}

\begin{remark}
In the context of \cref{rem:typeIIILambdaCompressions}, \cref{prop:graphWithMinCocycleValues} holds with the same proof if the countable Borel equivalence relation $(R,X,\mu)$ is instead assumed to be type $\mathrm{III}_\alpha$ and $\w_\mu ([x]_R\times\{x\})=\alpha^\mathbf{Z}$ for some $\alpha\in(0,1)$.
\end{remark}

\section{\texorpdfstring{Edge sliding into a $\mu$-homogeneous $T_{p,q}$-treeing}{Edge sliding into a mu-homogeneous {Tp,q}-treeing}}

In the following, given a Borel automorphism $S$ of the standard Borel space $X$, we denote by $\mathcal{G}_S$ the Borel graph consisting of the edges $(x,Sx)$ and $(Sx,x)$ for every $x\in X$.

\begin{theorem}\label{prop:edgeSlideTpqTreeing}
Let $2\leq p< q$ and let $\G$ be an ergodic mcp type $\mathrm{III}_0$ nonamenable Borel graph on $(X,\mu)$ that is acyclic, and let $E\coloneqq E_{\G}$. 
Assume that the Radon--Nikodym cocycle of $E$ is dis-closed and balanced, with $\w_\mu ([x]_E\times \{ x \} ) = ( \frac{p}{q} )^{\mathbf{Z}}$ for almost every $x \in X$. 
Let $S \in [E]$ be such that
\begin{enumerate}[(i)]
\item $\mathcal{G}_S$ is a subgraph of $\G$;
\item \label{it: rel. ergodicity for ker S w} $\ker_{E_S}\w_\mu\leq \ker_E \w_\mu$ is relatively ergodic;
\item \label{it:one-step-p/q} $\w_\mu(Sx, x) = (\frac{p}{q})^{\pm 1}$ for almost every $x\in X$;
\item \label{it: same values for w} $\w_\mu ([x]_S\times \{ x \} )=( \frac{p}{q} )^{\mathbf{Z}}$ for almost every $x\in X$.
\end{enumerate}
Then there exists a $\mu$-homogeneous $T_{p,q}$-treeing of $E$.
\end{theorem}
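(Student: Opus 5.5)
The plan is to obtain the treeing by sliding the edges of $\G\smallsetminus\mathcal{G}_S$ along the $S$-orbits, in three phases, while keeping $\mathcal{G}_S$ as the backbone that remembers how to undo every slide. Throughout, orient each edge of $\G\smallsetminus\mathcal{G}_S$ so that its endpoints are separated by a single $S$-line segment. Write $F\coloneqq E_S$. Since $\G$ is acyclic and each slide moves an endpoint along a path of $\mathcal{G}_S$, the following invariant is preserved: $\mathcal{G}_S\cup(\text{slid edges})$ is again a treeing of $E$, because sliding along a spanning subforest preserves acyclicity and connectivity. At the very end we delete the edges of $\mathcal{G}_S$ themselves. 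To do this we choose, for each $S$-line, the slid edges so that every vertex has exactly one $\mathcal{G}_S$-edge replaced by a slid edge of the same $\w$-weight. Alternatively, and more simply, we use the $\mathcal{G}_S$-edges with $\w(Sx,x)=q/p$ as positive edges from $Sx$ to $x$ and count them in the degrees.

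\emph{Phase 1 (in-degrees).} Since $E$ is nonamenable and $F$ is hyperfinite, the edges of $\G\smallsetminus\mathcal{G}_S$ meet a.e.\ $F$-class infinitely often. The subrelation $F$ is type $\mathrm{III}$: its cocycle values are $(p/q)^{\Z}$ and $\ker_F\w$ is relatively ergodic, so by \cref{prop:subrelation_scalar_potentials} it has the same Krieger flow as $E$. We may therefore invoke \cite[Theorem 3.1]{Poulin} to slide the heads of these edges along $F$ so that every vertex receives exactly $p$ heads, counting the contribution of $\mathcal{G}_S$ appropriately. Surplus edges are parked by sliding them onto a null-density compression, which exists by type $\mathrm{III}$ elasticity (\cref{typeIII elasticity}).

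\emph{Phase 2 (weights).} Slide the tail of each edge along its $S$-line until $\w(\text{tail},\text{head})=q/p$. By \labelcref{it: same values for w}, every value of $(p/q)^{\Z}$ is attained on the $S$-line. Relative ergodicity \labelcref{it: rel. ergodicity for ker S w} lets us choose the target inside a prescribed $\ker_E\w$-class, i.e.\ within the right ergodic component of the kernel; this keeps the choice measurable and within reach.

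\emph{Phase 3 (out-degrees), the main obstacle.} Out-degrees must now be made exactly $q$. I would first use \cref{lem:mass_transport_homomorphism} through the Krieger scalar potential $\pi$, whose pushforward $\pi_*\mu$ is $\PoincRetMap_Z$-invariant because the cocycle is balanced. Applied to the function counting edges, it should show that in a.e.\ ergodic component $\mu_z$ of $\ker_E\w$ the expected out-degree equals $\frac{p}{q}\cdot q/p\cdot(\text{in-degree } p)\cdot(q/p)=q$. Then I apply \cref{typeIISlide} with $R=\ker_E\w$ (which is pmp, with ergodic decomposition $\pi$ by \cref{pi is ergodic decomposition}), $g\equiv q$, $Y$ the set of edges, and $\phi$ the tail map. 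This redistributes tails inside kernel classes; it preserves the weights, the in-degrees and, since the motion is within $\ker_F\w$-related points joined through $S$, also the tree property. The delicate points are exactly this component-wise expectation computation and ensuring that the kernel-sliding stays along $F$, for which relative ergodicity \labelcref{it: rel. ergodicity for ker S w} is used.
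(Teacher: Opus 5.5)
Your three-phase plan is the paper's own strategy. You keep $\vec{\G}_S$, oriented so that $\w_\mu(\orig(e),\targ(e))=\frac{q}{p}$, as rails that count toward the degrees. You use \cite[Theorem 3.1]{Poulin} to get in-degree $p$, and slide tails along $S$-lines via \labelcref{it: same values for w} to get the right weights. Finally, mass transport across $\pi$ together with balancedness gives $\mu_z$-expected out-degree $q$, and \cref{typeIISlide} fixes the out-degrees.

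The gap is in the last step: as you set it up, it fails. With $R=\ker_E\w_\mu$, \cref{typeIISlide} only makes the new tail $\psi(e)$ $\ker_E\w_\mu$-equivalent to $\orig(e)$. It need not be $E_S$-equivalent, so the move is not a slide along the rail forest. Concretely, removing $e$ splits its tree into the side of $\orig(e)$ and the side of $\targ(e)$. The latter contains the $S$-line of $\targ(e)$, which by \labelcref{it: same values for w} meets the $\ker_E\w_\mu$-class of $\orig(e)$. If $\psi(e)$ lands there, you create a cycle and disconnect the tree. Your claim that the motion stays among $\ker_F\w$-related points does not follow from that choice of $R$. Likewise, $g\equiv q$ fails in both readings of $Y$. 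If $Y$ is all edges, the rails themselves move and the invariant you rely on is lost. If $Y$ is the non-rail edges, the out-degree becomes $q+\deg^{\mathrm{out}}_{\vec{\G}_S}$.

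The fix, which is what the paper does, is to apply \cref{typeIISlide} with:
\begin{itemize}
\item $R\coloneqq\ker_{E_S}\w_\mu$;
\item $Y$ the positive edges of $\vec{\G}_1\smallsetminus\vec{\G}_S$;
\item $\phi=\orig$;
\item $g\coloneqq q-\deg^{\mathrm{out}}_{\vec{\G}_S}\ge q-2\ge 1$.
\end{itemize}
Hypothesis \labelcref{it: rel. ergodicity for ker S w} is exactly what makes this legitimate. The relations $\ker_{E_S}\w_\mu$ and $\ker_E\w_\mu$ have the same invariant sets mod null, so $\E[\deg^{\mathrm{out}}_{\vec{\G}_1}\mid\ker_{E_S}\w_\mu]=\E[\deg^{\mathrm{out}}_{\vec{\G}_1}\mid\ker_E\w_\mu]=q$. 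This is the fiberwise identity that \cref{typeIISlide} needs. (Your mass-transport factors are garbled; the correct bookkeeping is $p=\frac{p}{q}\int\deg^{\mathrm{out}}_{\vec{\G}_1}\,\deriv\mu_{\frac{q}{p}.z}$, using $\w_{\pi_*\mu}\equiv 1$.) With this choice every move is along $\G_S$, so weights and in-degrees are preserved. Contracting the rails also leaves the quotient graph unchanged, so the result is still a treeing.

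A few smaller corrections:
\begin{itemize}
\item Discard the option of deleting $\G_S$ at the end. The rails are needed for connectivity and are part of the final treeing.
\item Discard the \emph{parking of surplus edges} in Phase 1. No edge of a treeing can be removed. Poulin's theorem applied to $\vec{\G}_S\subseteq\vec{\G}$ with $d_i\equiv p$ and $d_o=\deg^{\mathrm{out}}_{\vec{\G}}$ already gives in-degree exactly $p$. You should check its hypotheses: $\deg^{\mathrm{in}}_{\vec{\G}_S}\le 2\le p$ and $\deg^{\mathrm{out}}_{\vec{\G}_S}\le\deg^{\mathrm{out}}_{\vec{\G}}$, both strict somewhere.
\item Phase 2 needs only \labelcref{it: same values for w} and Lusin--Novikov, not relative ergodicity.
\end{itemize}
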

\begin{proof}
Throughout the proof, in order to find a $\mu$-homogeneous $T_{p,q}$-treeing we will slide edges of $\G\smallsetminus \G_S$, using edges of the graph $\G_S$ as rails. This will ensure that the resulting graph is acyclic, and we will make sure that its connected components coincide with those of $\G$. 

Let $\pi : X \to \K_{E}$ be a Krieger scalar potential for $(E,X,\mu)$, and let $Z=Z_{E,\mu}$ be the cross-section associated to $\pi$ provided by \cref{canonical_potential_section}. 
By \cref{prop:subrelation_scalar_potentials}\labelcref{item:same_scalar_potentials} and assumptions \labelcref{it: same values for w,it: rel. ergodicity for ker S w}, $E_S$ is ergodic and $\pi$ is also a Krieger scalar potential for $(E_S, X, \mu)$. 
In particular, $E_S$ is type $\mathrm{III}_0$.
By \labelcref{it:one-step-p/q} we may direct the edges of $\G_S$ to get a directed Borel graph $\vec{\G}_S$ satisfying $\w_\mu(\targ (e),\orig (e)) = \tfrac{p}{q}$ for every positive edge $e$ of $\vec{\G}_S$.
Direct the edges of $\G \smallsetminus \G_S$ in an arbitrary Borel fashion to extend $\vec{\G}_S$ to a directing of the entire graph we call $\vec{\G}$. 

Notice that $\deg^{\mathrm{in}}_{\vec{\G_S}}(x) \leq 2 \leq p$ and that  $\deg^{\mathrm{out}}_{\vec{\G_S}}(x) \leq \deg^{\mathrm{out}}_{\vec{\G}}(x)$, with both inequalities being strict somewhere; since the non-oriented degree of $\G_S$ is $2$ and since
$E_S$ is amenable contrarily to $\G$, respectively.
We may therefore apply \cite[Theorem 3.1]{Poulin} with $d_i \equiv p$ and $d_o = \deg^{\mathrm{out}}_{\vec{\G}}$ to the inclusion $\vec{\G}_S \subseteq \vec{\G}$. This yields a directed forest $\vec \G_0$ containing $\vec{\G}_S$ with $E_{\vec\G_0}=E_{\vec\G}=E$ and in-degree $\deg^{\mathrm{in}}_{\vec{\G}_0}(x)=p$ a.e.

Our next task will be to slide the tail of each edge in $\vec{\G}_0\smallsetminus\vec{\G}_S$ so that each positive edge $e$ in the resulting graphing $\vec{\G}_1$ satisfies $\w_\mu(\targ(e),\orig(e))=\frac{p}{q}$.
Assumption \labelcref{it: same values for w} allows us to choose a Borel map $\varphi$ assigning to each positive edge $e$ in $\vec{\G}_0\smallsetminus\vec{\G}_S$ a point 
$\varphi (e)$ in the $E_S$-class of $\orig(e)$
satisfying $\w_\mu(\targ(e),\varphi(e))=\frac{p}{q}$.
This map $\varphi$ gives destinations for the edge slide along $\G_S$: for each positive edge $e$ of $\vec{\G}_0\smallsetminus\vec{\G}_S$, we detach its tail from $\orig(e)$, and slide it along $\G_S$, reattaching the tail at the vertex $\varphi (e)$.
Formally, we obtain the directed Borel graph $\vec{\G}_1$ by adding to $\vec{\G}_S$ a positive edge from $\varphi (e)$ to $\targ(e)$ for each positive $e\in \vec{\G}_0\smallsetminus\vec{\G}_S$, see \cref{fig:edgeSlideWeights}.
The graph $\vec{\G}_1$ still has constant in-degree $p$, and each of its positive edges $e$ satisfies 
\begin{equation}
\label{eq: toto}
\w_\mu(\targ(e),\orig(e))=\frac{p}{q}.
\end{equation}

\begin{figure}[ht]
\centering
\begin{tikzpicture}[scale = 1]
\foreach \x/\y [remember=\y as \lasty] in {1/4, 2/3, 3/2, 4/1, 5/0}{
\ifthenelse{\x > 1}{
\ifthenelse{\x = 2 \OR \x = 5}{
\draw[red,dashed] (\x,\y) -- (\x - 1,\lasty);
}{
\ifthenelse{\y > \lasty}{
\draw[red, ->-] (\x - 1,\lasty) -- (\x,\y) ;
}{
\draw[red, ->-] (\x,\y) -- (\x - 1,\lasty);
}
}
}{}
}
\foreach \x/\y in {2/3, 3/2, 4/1}{
\fill (\x, \y) circle (1.5pt);
}
\foreach \x/\y [remember=\y as \lasty] in {5/1, 6/0, 7/1, 8/2, 9/1, 10/2}{
\ifthenelse{\x > 5}{
\ifthenelse{\x =6 \OR\x = 10}{
\draw[red,dashed] (\x,\y) -- (\x - 1,\lasty);
}{
\ifthenelse{\y > \lasty}{
\draw[red, ->-] (\x - 1,\lasty) -- (\x,\y) ;
}{
\draw[red, ->-] (\x,\y) -- (\x - 1,\lasty);
}
}
}{}
}
\foreach \x/\y in {6/0, 7/1, 8/2, 9/1}{
\fill (\x, \y) circle (1.5pt);
}

\draw[->-] (6,0) to[out=110,in=-10] (2,3);
\draw[->-, dashed] (8,2) -- (2,3);
\draw[->, double] (5.75,1) -- (6.75,2);
\end{tikzpicture}
\caption{Tails of directed edges of $\vec{\G}_0 \smallsetminus \vec{\G}_S$ (in black) are slid along the subgraph $\vec{\G}_S$ (in red) to witness the correct cocycle value on edges of the resulting graph $\vec{\G}_1$.}
\label{fig:edgeSlideWeights}
\end{figure}

Using \cref{lem:mass_transport_homomorphism} (mass transport across $\pi$) with $f$ the indicator function of $(\targ \times \orig)( \vec{\G}_1)$ and $F$ the cross-section equivalence relation on $Z$, we have for almost every $z \in Z$,
\begin{align}
\nonumber p = \int_X \sum_{y \in [x]_E} f(x,y) \; \deriv \mu_z(x) 
&= \sum_{z' \in [z]_F} \w_{\pi_*\mu}(z',z) \int_X \sum_{\substack{y \in [x]_E  \\ \pi(y) = z}} f(y,x) \w_\mu(y,x) \; \deriv\mu_{z'}(x)
\\
\label{eq:1}
&= \sum_{z' \in [z]_F} \int_X \sum_{\substack{y \in [x]_E  \\ \pi(y) = z}} f(y,x) \cdot \frac{p}{q} \; \deriv\mu_{z'}(x)
\\
\label{eq:2}
&= \frac{p}{q} \int_X \sum_{\substack{y \in [x]_E  \\ \pi(y) = z}} f(y,x) \; \deriv\mu_{\frac{q}{p} . z}(x)
\\
\label{eq:3}
&= \frac{p}{q} \int_X \deg^{\mathrm{out}}_{\vec{\G}_1}(x) \; \deriv\mu_{\frac{q}{p} . z}(x).
\end{align}
Here, \labelcref{eq:1} uses the fact that $f(y,x) \ne 0$ implies $\w_\mu(y,x) = \frac{p}{q}$ by \labelcref{eq: toto}, and
that $\w_\mu$ is balanced, i.e., $\w_{\pi_* \mu} \equiv 1$;
the next equality \labelcref{eq:2} uses the same fact and the equivariance of $\pi$ via 
\[
z' = \pi(x) = \w_\mu(x,y). \pi(y) = \w_\mu(x,y). z = \frac{q}{p}. z;
\]
finally, \labelcref{eq:3} follows from the definition of $f$.

Since $z \mapsto \frac{q}{p} . z$ is mcp, it follows that the $\mu_z$-expected out-degree of $\vec{\G}_1$ is $q$  for almost every $z \in Z$, i.e., 
\[
\int_X \deg^{\mathrm{out}}_{\vec{\G}_1}(x) \; \deriv\mu_z(x) = q.
\]
By \cref{pi is ergodic decomposition}, $\pi$ is an ergodic decomposition for $\ker_E\w_\mu$, and by assumption $\ker_{E_S}\w_\mu\leq \ker_E\w_\mu$ is relatively ergodic, hence the conditional expectations satisfy, for a.e.\ $x \in X$,
\[
\E[\deg^{\mathrm{out}}_{\vec{\G}_1}\mid\ker_{E_S}\w_\mu ](x)
=
\E[\deg^{\mathrm{out}}_{\vec{\G}_1} \mid \ker_E\w_\mu](x) 
=
\int_X \deg^{\mathrm{out}}_{\vec{\G}_1}(y) \; \deriv\mu_{\pi(x)}(y) 
=
q.
\]

In order to produce a final directed treeing $\vec{\G}_2$ of $E_\G$ with constant out-degree $q$ (and maintaining constant in-degree $p$ as well as the desired cocycle values), we will now slide the tails of edges of $\vec{\G}_1$ along $\G_S$, starting and ending in the same $\ker_{E_S}\w_\mu$-class; 
since $\ker_{E_S}\w_\mu$ is pmp, this does not change the values of $\w_\mu$ on edges.

\begin{figure}[ht]
\centering
\begin{tikzpicture}[scale = 0.15, rotate = 90]
\draw[->-] (0,10) -- (10,20);
\draw[->-, red] (0,10) -- (10,10);
\draw[->-] (0,10) -- (10,0);
\draw[->-, red] (-10,15) -- (0,10);
\draw[->-] (-10, 5) -- (0,10);

\draw[->-, red] (0,-10) -- (10,-20);
\draw[->-] (0,-10) -- (10,-10);
\draw[->-, red] (0,-10) -- (10,0);
\draw[->-] (-10,-15) -- (0,-10);
\draw[->-] (-10, -5) -- (0,-10);

\draw[->-] (5, 20) -- (10, 20);
\draw[->-, red] (10, 20) -- (15,23);
\draw[->-, red] (10, 20) -- (15,20);
\draw[->-] (10, 20) -- (15,17);

\draw[->-] (7, 7) -- (10, 10);
\draw[->-] (10, 10) -- (15,13);
\draw[->-] (10, 10) -- (15,10);
\draw[->-, red] (10, 10) -- (15,7);

\draw[->-] (10, 0) -- (15,3);
\draw[->-] (10, 0) -- (15,0);
\draw[->-, red] (10, 0) -- (15,-3);

\draw[->-] (7, -7) -- (10, -10);
\draw[->-] (10, -10) -- (15,-7);
\draw[->-, red] (10, -10) -- (15,-10);
\draw[->-, red] (10, -10) -- (15,-13);

\draw[->-] (5, -20) -- (10, -20);
\draw[->-] (10, -20) -- (15,-23);
\draw[->-, red] (10, -20) -- (15,-20);
\draw[->-] (10, -20) -- (15,-17);

\draw[->-] (-10, 15) -- (-5,15);
\draw[->-, red] (-10, 15) -- (-5,17.5);
\draw[->-] (-15, 18) -- (-10, 15);
\draw[->-] (-15,12) -- (-10, 15);

\draw[->-] (-10, 5) -- (-5,5);
\draw[->-,red] (-10, 5) -- (-5,2.5);
\draw[->-,red] (-15, 8) -- (-10, 5);
\draw[->-] (-15,2) -- (-10, 5);

\draw[->-] (-10, -5) -- (-5,-5);
\draw[->-] (-10, -5) -- (-5,-2.5);
\draw[->-,red] (-15, -8) -- (-10, -5);
\draw[->-,red] (-15,-2) -- (-10, -5);

\draw[->-] (-10, -15) -- (-5,-15);
\draw[->-,red] (-10, -15) -- (-5,-17.5);
\draw[->-,red] (-15, -18) -- (-10, -15);
\draw[->-] (-15,-12) -- (-10, -15);

\end{tikzpicture}
\caption{A uniformly balanced $T_{2,3}$-treeing with the edges of $\vec\G_S$ colored red.}
\label{fig:final_pmp_edge-slide}
\end{figure}

The graph $\vec{\G}_S$ contributes at most $2$ to the out-degree of any given point, and $q>2$. 
Applying \cref{typeIISlide} with $g(x) \coloneq q-\deg^{\mathrm{out}}_{\vec{\G}_S}(x)$, with $R \coloneq \ker_{E_S}\w_\mu$, and the map $\phi: \vec\G_1\smallsetminus\vec \G_S\to X$ that sends a directed edge $e$ to $\orig (e)$, 
we get a map $\psi: \vec\G_1 \smallsetminus \vec \G_S \to X$ such that $\psi(e)$ is $\ker_{E_S}\w_\mu$-equivalent to $\orig(e)$ and $|\psi^{-1}(x)|=q-\deg^{\mathrm{out}}_{\vec{\G}_S}(x)$ for a.e.\ $x \in X$. 
We now obtain the directed graph $\vec\G_2$ by adding to $\vec \G_S$ a positive edge $\Psi(e)$ from $\psi(e)$ to $\targ(e)$ for each positive edge $e$ of $\vec \G_1 \smallsetminus \vec \G_S$, see \cref{fig:final_pmp_edge-slide}.

By construction, the $\vec\G_2$-in-degree of each vertex is the same as its $\vec\G_1$-in-degree, while the $\vec\G_2$-out-degree is constant $q$ because $|\psi^{-1}(x)| + \deg^{\mathrm{out}}_{\vec{\G}_S}(x) = q$ for a.e.\ $x \in X$.
Furthermore, since for each positive edge $e$ of $\vec\G_1 \smallsetminus \vec\G_S$, the vertices $\orig(e)$ and $\psi(e)$ are in the same $\ker_{E_S} \w_\mu$-class and $\ker_{E_S} \w_\mu$ is pmp, the values of $\w_\mu$ on $e$ and $\Psi(e)$ coincide, hence $\w_\mu(\targ(\Psi (e)),\orig(\Psi (e)))=\frac{p}{q}$.
Lastly, $\vec \G_2$ is an acyclic graphing of $E$ because such is $\vec \G_1$ and $\vec\G_2$ was obtained by sliding edges of $\vec\G_1 \smallsetminus \vec\G_S$ along the subforest $\vec\G_S \subseteq \vec \G_1$.
Thus, $\vec\G_2$ is a $\mu$-homogeneous $T_{p,q}$-treeing of $E$.
\end{proof}

\section{Proof of Theorem \ref{thm:nonunim_nonamen_treeable_ME}}\label{sec:proof-treeable-ME}

\begin{theorem}\label{circle_factor_pmp_Krieger_gives_Tpq-treeing}
Let $E$ be a treeable nonamenable  ergodic mcp type $\mathrm{III}_0$ countable Borel equivalence relation on a standard probability space $(X,\mu)$, and let $2\leq p <q$ be integers.
Suppose that the measure class $[\eta_E]$ of its Krieger flow $\K_E$ contains an invariant probability measure.
Suppose further that $\RR \acts \RR/\langle \tfrac{p}{q}\rangle$ is a potential flow of $E$. 
Then there exists an equivalent probability measure $\mu_0 \sim \mu$ and a $\mu_0$-homogeneous $T_{p,q}$-treeing of $E$.
\end{theorem}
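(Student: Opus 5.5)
The plan is to assemble the results of the previous sections in the order of the outline in the introduction. Let $\alpha \coloneqq \tfrac{p}{q}$.

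First, fix a Krieger scalar potential $\pi : X \to \K_E$. Since $\RR \acts \RR/\langle\alpha\rangle$ is a potential flow, the universal property gives an equivariant factor map $h : \K_E \to \RR/\langle\alpha\rangle$. We set $Z \coloneqq h^{-1}(\{\langle\alpha\rangle\})$, the preimage of the identity coset. Because $E$ is type $\mathrm{III}_0$, the Krieger flow is essentially free, so $Z$ meets each orbit $\RR.\omega$ exactly in $\langle\alpha\rangle.\omega$. Thus $Z$ is an $\alpha$-equispaced, bi-recurrent and uniformly discrete Borel cross-section; uniform discreteness holds because consecutive points of $Z$ on an orbit are at multiplicative distance $q/p$.

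Next, apply \cref{changing measure to reach a pmp in Krieger flow} with this $Z$, using the assumed invariant probability measure in $[\eta_E]$. This gives $\mu_0 \sim \mu$ and a Krieger scalar potential $\pi_0$ for $(E,X,\mu_0)$ such that $(\pi_0)_*\mu_0$ lies in the Ramsay class on $Z$ and is $E^\RR_Z$-invariant. Since $\pi_0(X) \subseteq Z$ and $Z$ is $\alpha$-equispaced, scalar-equivariance forces $\w_{\mu_0}(y,x) \in \alpha^{\Z}$. By \cref{canonical_potential_section}, we may take $Z_{E,\mu_0} = Z$. Then \labelcref{eq:potential_section} gives $\w_{\mu_0}([x]_E\times\{x\}) = \alpha^{\Z}$ a.e., so the cocycle is dis-closed. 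Moreover, $(\pi_0)_*\mu_0$ is invariant under the Poincaré return map $\PoincRetMap_Z$, which generates $E^\RR_Z$, so the cocycle is balanced.

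Now fix a treeing $\G$ of $E$. It is acyclic, ergodic, nonamenable and of type $\mathrm{III}_0$. Apply \cref{thm:sameTypeHyperfiniteSubgraph} to $(\G,\mu_0)$ to get an ergodic hyperfinite subgraph $\F \subseteq \G$ such that $\ker_\F \w_{\mu_0}$ is ergodic relative to $\ker_\G\w_{\mu_0}$ and $\w_{\mu_0}([x]_\F\times\{x\}) = \alpha^{\Z}$. Then $\pi_0$ is also a Krieger potential for $E_\F$, so $E_\F$ is type $\mathrm{III}_0$, its cocycle is dis-closed, and its cocycle is balanced (the canonical cross-section is the same $Z$ with the same invariant pushforward).

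The main obstacle is the next step. \cref{prop:graphWithMinCocycleValues} applied to $R \coloneqq E_\F$ yields $S \in [E_\F]$ with $E_S = E_\F$ and $\w_{\mu_0}(Sx,x) = \alpha^{\pm1}$. However, $\G_S$ need not be a subgraph of $\G$, which \cref{prop:edgeSlideTpqTreeing} requires. I would repair this by edge-sliding. Since $\F$ is a subforest of the acyclic $\G$ and $E_S = E_\F$, we can replace $\G$ by the graph $\G' \coloneqq \G_S \cup (\G \smallsetminus \F)$. Each $\F$-component is a tree spanning the same vertex set as the corresponding $\G_S$-component (a bi-infinite line or a finite cycle, but finite cycles are excluded since $E_\F$ is aperiodic). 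Swapping one spanning tree of a block for another keeps the graph acyclic, generates the same relation $E$, and, because the resulting graph is still a treeing of the nonamenable relation $E$, keeps it nonamenable. The point to verify carefully is that $\G'$ remains a Borel treeing.

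With $\G'$ in hand, the hypotheses of \cref{prop:edgeSlideTpqTreeing} for $\G'$, $S$ and $\mu_0$ are checked as follows. Condition (i) holds by construction. Condition (ii) follows from the relative ergodicity of $\ker_\F\w_{\mu_0}$ together with $E_S = E_\F$. Condition (iii) is the defining property of $S$. Condition (iv) follows from $\w_{\mu_0}([x]_\F\times\{x\}) = \alpha^{\Z}$. The proposition then produces the desired $\mu_0$-homogeneous $T_{p,q}$-treeing of $E$.
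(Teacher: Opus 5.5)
Your proposal is correct and follows the paper's proof step for step. The steps are: the $\tfrac{p}{q}$-equispaced cross-section pulled back from the circle factor, then \cref{changing measure to reach a pmp in Krieger flow}, \cref{thm:sameTypeHyperfiniteSubgraph}, \cref{prop:graphWithMinCocycleValues}, and finally \cref{prop:edgeSlideTpqTreeing}.

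The step you flag as the main obstacle is resolved in the paper by exactly your replacement of $\F$ by $\G_S$ inside $\G$, stated there in a single line. Your explicit checks fill in details the paper leaves implicit:
\begin{itemize}
\item uniform discreteness of $Z$;
\item balancedness of the cocycle for both $E$ and $E_\F$;
\item acyclicity of $\G_S\cup(\G\smallsetminus\F)$.
\end{itemize}
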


\begin{proof}
By the universal property, after discarding an $\RR$-invariant Borel null set, the Krieger flow $\K_E$ factors onto $\RR \acts \RR/\langle \tfrac{p}{q}\rangle$, and pulling back the set $\{1\} \subseteq \RR/\langle \tfrac{p}{q}\rangle$ through the factor map yields a $\frac{p}{q}$-equispaced (see Section~\ref{sect: real flows} for the definition) Borel cross-section $Z \subseteq \K_E$.
Applying \cref{changing measure to reach a pmp in Krieger flow} to this cross-section $Z$ gives an equivalent probability measure $\mu_0 \sim \mu$ and a Krieger scalar potential $\pi : X \to \K_E$ for $(E,X,\mu_0)$ with essential image $Z$
such that the pushforward measure $\pi'_* \mu_0$ is preserved by the cross-section equivalence relation $E^\RR_Z$ on $Z$.
It then follows that $\w_{\mu_0}([x]_E \times \{x\}) = (\frac{p}{q})^\Z$ for a.e.\ $x \in X$.

Applying \cref{thm:sameTypeHyperfiniteSubgraph} to a treeing $\G$ of $E$, we obtain a type $\mathrm{III}_0$ hyperfinite subgraph $\F \subseteq \G$ with $\w_{\mu_0} ([x]_{\F} \times \{x\}) = (\frac{p}{q})^\mathbf{Z}$ for a.e.\ $x\in X$ and $\ker_\F\w_{\mu_0} \leq \ker_\G\w_{\mu_0}$ relatively ergodic.
Applying \cref{prop:graphWithMinCocycleValues} to $R \coloneq E_\F$ yields $S \in [E_\F]$ which generates $E_\F$ and satisfies $\w_{\mu_0}(S x, x) = \alpha^{\pm 1}$ for a.e.\ $x \in X$.
Up to replacing $\F$ with $\G_S$, we can assume $\G_S$ is a subgraph of $\G$.
Thus, together with nonamenability of $E$, $\G$ and $S$ satisfy all the hypotheses of \cref{prop:edgeSlideTpqTreeing}, which in turn delivers the desired $\mu_0$-homogeneous $T_{p,q}$-treeing of $E$.
\end{proof}

\begin{theorem}[\cref{thm:nonunim_nonamen_treeable_ME}]\label{ME_with_Autpq}
Let $G$ be a treeable nonamenable nonunimodular lcsc group, and let $2\leq p <q$ be integers.
Then $G$ is measure equivalent with $\Aut(T_{p,q})$.
\end{theorem}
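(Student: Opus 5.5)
We will produce a free ergodic pmp action of $G$ with a uniformly discrete cross-section $X$ such that $(E^G_X,\mu_X)$ satisfies the four hypotheses of \cref{circle_factor_pmp_Krieger_gives_Tpq-treeing}. We then transport the resulting $T_{p,q}$-treeing back to a measure equivalence via \cref{lem:cocycleTpqTreeingArisesFromAutTpqAction} and \cite[Theorem A]{Koivisto-Kyed-Raum-ME-non-unim}.

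\emph{Choice of action.} Since $G$ is treeable, fix a free pmp treeable action $G\acts(\Sigma_0,\nu_0)$. Let $\overline{\Delta_G(G)}$ act on the circle $\RR/\langle\tfrac pq\rangle$ through the modular homomorphism. To arrange that the cross-section relation is type $\mathrm{III}_0$ and factors onto this circle, we build an auxiliary pmp $\RR$-flow $\RR\acts(\Omega,\eta)$. It should be ergodic, essentially free, and nowhere smooth, with an $\RR$-invariant probability measure. It should also factor equivariantly onto $\RR/\langle\tfrac pq\rangle$, for example a suspension of an ergodic pmp aperiodic transformation with constant roof $\tfrac qp$. We let $G$ act on $\Omega$ through $\Delta_G$ and take $\Sigma\coloneqq\Sigma_0\times\Omega$ with the diagonal action and $\nu\coloneqq\nu_0\times\eta$. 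This action is free, pmp, and treeable, since treeability of $G$ passes to any free pmp action. Ergodicity can be arranged by taking $\Sigma_0$ mixing, or by passing to an ergodic component. Let $X\subseteq\Sigma$ be a uniformly discrete cross-section with canonical measure $\mu_X$.

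\emph{Verifying the hypotheses.} Treeability of $E^G_X$ follows from treeability of the action. Nonamenability follows because $G$ is nonamenable and the cross-section relation of a free pmp action of a nonamenable group is nonamenable. Ergodicity of $E^G_X$ follows from ergodicity of $G\acts\Sigma$. The projection $\Sigma\to\Omega$ is $G$-equivariant, so by \cref{G-equiv_maps<=>scalar-potentials} its restriction to $X$ is a scalar potential. In particular $\RR/\langle\tfrac pq\rangle$, a factor of $\Omega$, is a potential flow. By \cref{th: pmp G-action implies potential flow} the Krieger flow carries an invariant probability measure in its class.

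\emph{Main obstacle: type $\mathrm{III}_0$.} The remaining point, which we expect to be hardest, is that $E^G_X$ is type $\mathrm{III}_0$. This requires the Krieger flow to be essentially free and not a single orbit, rather than trivial or a circle. The Krieger flow factors onto the free nowhere-smooth flow $\Omega$, which already excludes types $\mathrm{III}_\lambda$, $\mathrm{III}_1$ and $\mathrm{II}$. The danger is that the Krieger flow, being an extension, is itself not of the right form. By \cref{Krieger_flow_is_ergodic} any non-free potential flow is a point or a circle, and neither factors onto the free flow $\Omega$. So the Krieger flow is free and ergodic. It is not transitive, because a transitive flow $\RR\acts\RR$ cannot factor onto a nowhere-smooth flow. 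Hence by \cref{defn:classbytype} $E^G_X$ is type $\mathrm{III}_0$.

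We should also double-check that the joint ergodicity of $\Sigma_0\times\Omega$ holds. First we would try to choose $\Sigma_0$ to be the Bernoulli-type or mixing action and use that $\Delta_G(G)$ acts ergodically on $\Omega$, taking $\Omega$ ergodic for the subgroup $\overline{\Delta_G(G)}$. Failing that, we pass to an ergodic component and recheck that the factor map to $\Omega$ survives.

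\emph{Conclusion.} \cref{circle_factor_pmp_Krieger_gives_Tpq-treeing} then yields $\mu_0\sim\mu_X$ and a $\mu_0$-homogeneous $T_{p,q}$-treeing of $E^G_X$. By \cref{lem:cocycleTpqTreeingArisesFromAutTpqAction}, $(E^G_X,\mu_0)$ is the cross-section relation $E^{\Aut(T_{p,q})}_X$ of a free pmp $\Aut(T_{p,q})$-action. The two groups therefore have free pmp actions with stably orbit equivalent cross-section relations, and \cite[Theorem A]{Koivisto-Kyed-Raum-ME-non-unim} gives the measure equivalence.
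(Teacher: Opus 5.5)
Your route is the paper's. The paper also takes the diagonal product of a free pmp treeable action with an auxiliary free ergodic pmp flow factoring onto $\RR/\langle\tfrac pq\rangle$; its choice is the torus $\RR/\langle\tfrac pq\rangle\times\RR/\langle\alpha\rangle$ with $\alpha$ incommensurable to $\tfrac pq$, which is a special case of your suspension. It then passes to an ergodic component, verifies the hypotheses of \cref{circle_factor_pmp_Krieger_gives_Tpq-treeing} as you do, and concludes via \cref{lem:cocycleTpqTreeingArisesFromAutTpqAction} and \cite[Theorem A]{Koivisto-Kyed-Raum-ME-non-unim}. Two of your justifications, however, need to be replaced.

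\emph{Treeability.} It does not follow from ``treeability of $G$ passes to any free pmp action''. That is not a known fact: for countable groups it is the open question of whether treeable groups are strongly treeable. The correct and easy reason, used in the paper, is that $\Sigma_0\times\Omega$ is a class-bijective extension of the treeable action on $\Sigma_0$, because $G$ acts freely on $\Sigma_0$. Hence a treeing of a cross-section relation of $\Sigma_0$ lifts.

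\emph{Ergodicity.} Your mixing route cannot work in general. Suppose $\Delta_G(G)\subseteq\langle\tfrac pq\rangle$, as happens for $G=\Aut(T_{p,q})$ itself. Then $G$ acts trivially on the circle factor, so $\Sigma_0\times\Omega\to\RR/\langle\tfrac pq\rangle$ is a nonconstant $G$-invariant map. For the same reason, no $\Omega$ factoring onto this circle is $\overline{\Delta_G(G)}$-ergodic. Passing to an ergodic component is therefore forced, as in the paper.

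On a component, the measure class on $\Omega$ determined by the scalar potential may differ from $[\eta]$. So do not derive non-transitivity of the Krieger flow from nowhere smoothness of $(\Omega,\eta)$. Instead use the $\RR$-invariant probability measure in its class provided by \cref{th: pmp G-action implies potential flow}; the free transitive flow $\RR\acts\RR$ admits none. Freeness of the Krieger flow still holds: the non-free locus of $\Omega$ is an $\RR$-invariant $\eta$-null set, hence null for almost every component.
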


\begin{proof}
Fix $\alpha \in (0,1)$ such that the action $\RR \acts \Omega \coloneq \RR/\langle \frac{p}{q}\rangle\times \RR/\langle \alpha \rangle$ is free.
Let $G \acts (\~Y_0,\~\nu_0)$ be a free pmp treeable action of $G$, and let $G\acts (\~Y_0\times \Omega , \~\nu_0\times\mu_\Omega )$ be the product action, where $\mu_\Omega$ is Haar measure and $G$ acts on $\Omega$ through its modular homomorphism $\Delta$.
This $G$-action is still free and treeable, being a class-bijective extension of the $G$-action on $\~Y_0$, hence the action of $G$ on almost every ergodic component of $\~Y_0\times\Omega$ is also free and treeable.

We fix one such ergodic component $G\acts (\~X,\~\mu)$ along with a uniformly discrete Borel cross-section $X$ for this action. 
Let $E$ be the cross-section equivalence relation on $X$ and $\mu$ the canonical cross-section measure on $X$, provided by \cref{pmp_cross-section}, which, after discarding a $G$-invariant null set from $\~X$, satisfies $\w_\mu (g.x , x)=\Delta_G(g)$ whenever $x, g.x \in X$.
Letting $\pi: X \rightarrow \Omega$ denote the restriction to $X$ of the coordinate projection $\~Y_0\times \Omega \to \Omega$, it follows that for $(g.x,x)\in E$ we have $\pi (g.x) = \Delta (g).\pi (x)=\w_\mu (g.x , x).\pi (x)$.
Thus, by \cref{unique measure-class on scalar potential}, the map $\pi$ is a scalar potential for $(E,X,\mu )$, as is the composition of $\pi$ with the projection $\Omega \to \RR/\langle\frac{p}{q}\rangle$. 
Moreover, by \cref{th: pmp G-action implies potential flow} the measure class of the Krieger flow of $(E,X,\mu )$ contains an invariant probability measure, and this Krieger flow is free since it factors onto $\Omega$.
It follows that $(E,X,\mu )$ is type $\mathrm{III}_0$ and has $\RR/\langle\frac{p}{q}\rangle$ as a potential flow.

The nonamenability of $G$, along with treeability and ergodicity of the free pmp action $G\acts (\~X,\~\mu )$, ensures that $(E,X,\mu )$ is treeable, nonamenable, and ergodic. 
\cref{circle_factor_pmp_Krieger_gives_Tpq-treeing} applies, yielding an equivalent probability measure $\mu_0\sim\mu$ and a $\mu_0$-homogeneous $T_{p,q}$-treeing of $E$. 
By \cref{lem:cocycleTpqTreeingArisesFromAutTpqAction}, $(E,X,\mu _0)$ is the cross-section equivalence relation of some free pmp $\Aut (T_{p,q})$-action.
By \cite[Theorem A]{Koivisto-Kyed-Raum-ME-non-unim}, the groups $G$ and $\Aut (T_{p,q})$ are measure equivalent.
\end{proof}

\appendix
\section{Algebraic measure equivalence couplings}
\label{app:algebraic_ME}

In this appendix we formulate measure equivalence directly at the level of measure algebras, and explain how it is related to the strict (pointwise) notion from Definition~\ref{def:strictMEcoupling}.

A \textbf{measure-class algebra} is a complete Boolean algebra $\mathcal{M}$ admitting a
faithful probability measure, i.e., a countably additive probability measure $\rho$ satisfying $\rho (a)=0$ iff $a=0$. 
If $\rho$ is such a measure on $\mathcal{M}$, then
$d_\rho(a,b)\coloneqq \rho(a\triangle b)$ defines a complete metric on
$\mathcal{M}$, and the associated topology is independent of the choice of
$\rho$. 
We always consider $\mathcal{M}$ as equipped with this topology.

Given a standard Borel space $X$ equipped with a measure class $\mathscr{C}$ of $\sigma$-finite Borel measures, we write $\MAlg(X,\mathscr{C})$ for its associated measure-class algebra, i.e.\ the Boolean algebra of Borel subsets of $X$ modulo $\mathscr{C}$-null sets.
We say that a measure-class algebra is \textbf{standard} if it is
isomorphic to $\MAlg(X,\mathscr C)$ for some standard Borel space $X$ equipped
with a measure class $\mathscr C$ of $\sigma$-finite Borel measures.

By Maharam's Theorem and its consequences \cite[331O, 331P, 332B]{Fremlin3}, a measure-class algebra is standard if and
only if its canonical topology is separable. 
In particular, every closed
subalgebra of a standard measure-class algebra is standard.

Let $\mathcal{M}$ and $\mathcal{N}$ be standard measure-class algebras equipped with
faithful probability measures $\rho$ and $\eta$ respectively. 
Then the metric completion of the Boolean ring
tensor product $\mathcal{M}\otimes\mathcal{N}$ with respect to the metric
$d_{\rho\otimes\eta}(x,y)\coloneqq (\rho\otimes\eta )(x\triangle y)$ is a standard measure-class algebra, denoted $\mathcal{M}\mathbin{\widehat\otimes}\mathcal{N}$, that is independent
(up to canonical isomorphism) of the choice of $\rho$ and $\eta$: if $\mathcal{M}\cong \MAlg(X,[\rho ])$ and $\mathcal{N}\cong \MAlg(Y,[\eta ])$, then $\mathcal{M}\mathbin{\widehat{\otimes}}\mathcal{N} \cong \MAlg(X\times Y,[\rho\times\eta ])$.

Let $G$ be an lcsc group and let $\mathcal{M}$ be a standard measure-class algebra. 
By a \textbf{continuous (left) Boolean action} of $G$ on $\mathcal{M}$, we mean an action of $G$ on $\mathcal{M}$ by Boolean algebra automorphisms such that the action map $G\times\mathcal{M}\to\mathcal{M}$ is jointly continuous. 
Given such an action $G\curvearrowright\mathcal{M}$, we write $\mathcal{M}^G$ for the closed subalgebra of all $G$-invariant elements of $\mathcal{M}$. 

Let $\mathcal{A}$ be a standard
measure-class algebra equipped with the trivial action of $G$, and equip the algebra $\MAlg(G,[\mu_G])\mathbin{\widehat\otimes}\mathcal{A}$ with the product action, where $G$ acts on $\MAlg(G,[\mu_G])$ by left translation. 
Using Fubini's Theorem one sees that $\left(\MAlg(G,[\mu_G])\mathbin{\widehat\otimes}\mathcal{A} \right)^G = 1\mathbin{\widehat\otimes}\mathcal{A}$.

Given another lcsc group $H$, a \textbf{continuous Boolean $(G,H)$-biaction} on $\mathcal{M}$ consists of commuting continuous left and right actions of $G$ and $H$ respectively on $\mathcal{M}$.

\begin{definition}\label{def:algebraic_ME_coupling}
Let $G$ and $H$ be lcsc groups. An \textbf{algebraic $(G,H)$-ME-coupling} is a standard measure-class algebra $\mathcal{M}$ equipped with a continuous Boolean $(G,H)$-biaction such that:
\begin{enumerate}[(i)]
\item There exists a $G$-equivariant Boolean isomorphism $\Phi_G:\MAlg(G,[\mu_G])\mathbin{\widehat{\otimes}}\mathcal{M}^G\to\mathcal{M}$ satisfying $\Phi_G(1\otimes a)=a$ for every $a\in\mathcal{M}^G$, where $G$ acts on $\MAlg(G,[\mu_G])$ by left translation and trivially on $\mathcal{M}^G$; likewise, there exists an $H$-equivariant Boolean isomorphism $\Phi_H:\mathcal{M}^H\mathbin{\widehat{\otimes}}\MAlg(H,[\mu_H])\to\mathcal{M}$ satisfying $\Phi_H(a\otimes 1)=a$ for every $a\in\mathcal{M}^H$, where $H$ acts trivially on $\mathcal{M}^H$ and by right translation on $\MAlg(H,[\mu_H])$;
\item The algebra $\mathcal{M}^H$ admits a $G$-invariant faithful probability measure, and likewise the algebra $\mathcal{M}^G$ admits an $H$-invariant faithful probability measure.
\end{enumerate}
\end{definition}

An mcp \textbf{(left) near action} of an lcsc group $G$ on a standard $\sigma$-finite measure space $(\Omega,\nu )$ is a Borel map $G\times\Omega\to\Omega$, $(g,\omega )\mapsto g.\omega$, such that (a) for each $g\in G$ the map $\omega \mapsto g.\omega$ is mcp, (b) $e_G.\omega =\omega$ for $\nu$-a.e.\ $\omega\in \Omega$, and (c) for each $g,h\in G$, for $\nu$-a.e.\ $\omega\in \Omega$ we have $g.(h.\omega )=(gh).\omega$. 

Given such a near action of $G$, we call a Borel map $\alpha :G\times \Omega \to \mathcal{L}$ into a Polish group $\mathcal{L}$ a \textbf{near cocycle} of the near action if for all $g,h\in G$, for $\nu$-a.e.\ $\omega\in \Omega$ we have $\alpha (gh,\omega )=\alpha (g,h.\omega )\alpha (h,\omega )$.
We likewise define mcp right near actions and their near cocycles.

The proof of the converse implication of the following proposition is essentially the argument from \cite[Theorem 2.12]{KoivistoKyedRaum:measure_equivalence_unimodular} adapted to our framework.

\begin{proposition}\label{prop:algebraic_to_strict}
Every strict $(G,H)$-ME-coupling induces an algebraic
$(G,H)$-ME-coupling on its measure-class algebra. 
Conversely, every algebraic $(G,H)$-ME-coupling is isomorphic to one induced from a strict
$(G,H)$-ME-coupling.
\end{proposition}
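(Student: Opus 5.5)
For the forward direction, I would start from a strict coupling $(\Omega,\mathscr{C})$ and set $\mathcal{M}\coloneqq\MAlg(\Omega,\mathscr{C})$. Since the Borel $G$- and $H$-actions on $\Omega$ are measure class preserving and jointly Borel, they induce Boolean actions on $\mathcal{M}$. These actions are continuous: for a fixed $a\in\mathcal{M}$, the map $g\mapsto g.a$ is measurable into a separable metric space, and a measurable homomorphism of a Polish group into the automorphism group is continuous. The left and right actions commute because the pointwise actions commute.

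For condition (i), freeness of the $G$-action together with standardness of $G\backslash\Omega$ gives a Borel section. Using it, I would identify $\Omega$ with $G\times (G\backslash\Omega)$ equivariantly, as in the proof of \cref{lem:lift_ME_semidirect}. Under this identification $\mathscr{C}$ corresponds to $[\mu_G\times\nu_Y]$, by Fubini and the quasi-invariance of $\mathscr{C}$. Hence $\mathcal{M}\cong\MAlg(G,[\mu_G])\mathbin{\widehat\otimes}\MAlg(G\backslash\Omega)$, and the Fubini remark gives $\mathcal{M}^G=1\mathbin{\widehat\otimes}\MAlg(G\backslash\Omega)$. Doing the same for $H$ completes (i). Condition (ii) is then immediate from the invariant probability measures on the quotients.

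For the converse, I would start from an algebraic coupling $\mathcal{M}$. I would point-realize $\mathcal{M}^G\cong\MAlg(Y,[\nu_Y])$, where by Mackey's point realization theorem the continuous Boolean $H$-action is realized by a Borel mcp action $Y\curvearrowleft H$. Then $\Omega\coloneqq G\times Y$ with $\mathscr{C}=[\mu_G\times\nu_Y]$ realizes $\mathcal{M}$ via $\Phi_G$, with $G$ acting freely by left translation. The right $H$-action on $\mathcal{M}$ commutes with $G$, so it transports under $\Phi_G$ to a continuous Boolean action commuting with left translation. The main step is to realize it as $(g_0,y).h=(g_0\beta(y,h),y.h)$ for a Borel cocycle $\beta:Y\times H\to G$.

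To get $\beta$, I would first use Mackey's theorem on point realizations of Boolean actions to obtain a near action on $\Omega$. Its restriction to $Y$-fibers then commutes with left $G$-translations, so for a.e. $y$ it acts on the $G$-coordinate by right multiplication by some $\beta(y,h)$, giving a near cocycle. Next I would invoke the standard strictification of near cocycles, in which a near cocycle agrees a.e. with a strict Borel cocycle after discarding an invariant null set (as in the argument of \cite[Theorem 2.12]{KoivistoKyedRaum:measure_equivalence_unimodular}), to obtain a genuine Borel biaction. I would then check freeness of $H$ using $\Phi_H$, by the same fiber argument. The quotients $\Omega/H$ and $G\backslash\Omega$ are standard and their measure classes correspond to $\mathcal{M}^H$ and $\mathcal{M}^G$, so the invariant measures from (ii) supply the remaining conditions.

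I expect the main obstacle to be this strictification step: passing from a near biaction to an honest commuting Borel biaction with free $H$-action and standard quotient $\Omega/H$. There I would also use Becker–Kechris style arguments to pass to an invariant conull Borel subset.
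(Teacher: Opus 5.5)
Your forward direction is correct and is essentially the paper's: Borel sections of the quotient maps, freeness to identify $\Omega$ with $G\times(G\backslash\Omega)$ and with $(\Omega/H)\times H$, and the Fubini/quasi-invariance fact that the paper quotes as \cite[Lemma 2.14(i)]{KoivistoKyedRaum:measure_equivalence_unimodular}.

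\textbf{The gap is in the converse, on the $H$-side.} You build only the model $\Omega=G\times Y$ with $(g_0,y).h=(g_0\beta(y,h),y.h)$. This makes everything about $G$ automatic: it acts freely, and the quotient is $Y$. But \cref{def:strictMEcoupling} also requires, pointwise, that $H$ act freely and that $\Omega/H$ be standard. Equivalently, the orbit equivalence relation of this cocycle-twisted $H$-action must be smooth on some $(G\times H)$-invariant conull Borel set. You only assert these two properties.

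They do not follow from $\Phi_H$ by ``the same fiber argument''. $\Phi_H$ is an identification modulo null sets, and the fiber argument produces a cocycle on the $X\times H$ side, not a pointwise property of $G\times Y$. Suppose you point-realize $\Phi_H$ using only the $H$-actions. Mackey's theorem then gives an $H$-equivariant Borel identification of some $H$-invariant conull $\Omega'\subseteq G\times Y$ with a subset of $X\times H$. But $\Omega'$ need not be $G$-invariant, and smoothness does not transfer to its $G$-saturation for free. Discarding invariant null sets (your ``Becker--Kechris style'' step) does not by itself produce smoothness.

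\textbf{The missing idea is to do your construction symmetrically and then glue; this is the heart of the paper's proof.} First build the second model $X\times H$, with the following ingredients:
\begin{itemize}
\item a genuine pmp $G$-action on $X$, where $\mathcal{M}^H\cong\MAlg(X,\nu_X)$; the paper obtains this via \cite[Lemma 2.11]{KoivistoKyedRaum:measure_equivalence_unimodular};
\item a strict cocycle $\alpha_H:G\times X\to H$, obtained via \cite[Theorem B.9]{Zimmer-Erg-th-ss-gp-Book-84};
\item the biaction $g.(x,h_0).h=(g.x,\alpha_H(g,x)h_0h)$.
\end{itemize}
Now both $X\times H$ and $G\times Y$ are genuine Borel $L$-spaces, where $L=G\times H$.

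Next apply \cite[Theorem 2]{Mackey:PointRealizations} to the $L$-equivariant Boolean isomorphism $\Phi_G^{-1}\circ\Phi_H$. This gives $L$-invariant conull Borel sets and an $L$-equivariant Borel isomorphism $\theta$ between them. An $H$-invariant conull subset of $X\times H$ must have the form $X_1\times H$, and $L$-invariance forces $X_1$ to be $G$-invariant. Likewise the other set is $G\times Y_1$.

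Taking $\Omega=X_1\times H$, identified with $G\times Y_1$ via $\theta$, makes freeness of both actions and standardness of both quotients immediate. The quotients carry the invariant measures $\nu_X|_{X_1}$ and $\nu_Y|_{Y_1}$.

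Alternatively, you could prove smoothness directly. Restrict to the $L$-invariant conull Borel set of $\omega$ with $g.\omega\in\Omega'$ for $\mu_G$-a.e.\ $g$. Then use the $H$-invariant, orbit-separating map $\omega\mapsto[g\mapsto p(g.\omega)]$ into $L^0(G,X)$, where $p:\Omega'\to X$ separates $H$-orbits. Some argument of this kind has to be supplied; as written, your proposal leaves exactly this step open.
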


\begin{proof}
Let $(\Omega ,\mathscr{C})$ be a strict $(G,H)$-ME-coupling, and set $\mathcal{M}\coloneqq\MAlg(\Omega,\mathscr{C})$, equipped with the induced continuous Boolean $(G,H)$-biaction. 
Since the quotient spaces $\Omega/H$ and $G\backslash\Omega$ are standard Borel, the quotient maps admit Borel sections \cite[Corollary 5.4.12]{Gao08}.
Since the actions on $\Omega$ are free, these sections give Borel identifications of $\Omega$ with $(\Omega/H)\times H$ and $G\times(G\backslash\Omega)$. 
By \cite[Lemma 2.14(i)]{KoivistoKyedRaum:measure_equivalence_unimodular}, these two identifications take $\mathscr{C}$ to $\mathscr{C}/H \times [\mu_H]$ and $[\mu_G]\times G\backslash\mathscr{C}$ respectively.
The quotient maps induce identifications of $\MAlg(\Omega/H,\mathscr{C}/H)$ with $\mathcal{M}^H$ and of $\MAlg(G\backslash\Omega,G\backslash\mathscr{C})$ with $\mathcal{M}^G$. 
We thus obtain the Boolean isomorphisms $\Phi_H$ and $\Phi_G$ required by (i) of Definition \ref{def:algebraic_ME_coupling}, while the invariant probability measures on $\Omega /H$ and $G\backslash\Omega$ give (ii).

For the converse, let $\mathcal{M}$ be an algebraic $(G,H)$-ME-coupling, and let
$L\coloneqq G\times H$ act on $\mathcal{M}$ by
$(g,h).a\coloneqq g.a.h^{-1}$.
Choose a faithful probability measure on $\mathcal{M}$ and, by Mackey's point
realization theorem \cite[Theorem 1]{Mackey:PointRealizations}, let
$(\Sigma,\mathscr C_\Sigma)$ be a point realization of this Boolean $L$-action.

Since $\mathcal{M}^H$ and $\mathcal{M}^G$ are both standard, we may write
$\mathcal{M}^H=\MAlg(X,\nu_X)$ and $\mathcal{M}^G=\MAlg(Y,\nu_Y)$, where
$(X,\nu_X)$ and $(Y,\nu_Y)$ are standard probability spaces, and by (ii) of 
Definition \ref{def:algebraic_ME_coupling} we may assume that $\nu_X$ and
$\nu_Y$ are $G$-invariant and $H$-invariant for the Boolean actions on
$\mathcal{M}^H$ and $\mathcal{M}^G$, respectively.
Let
\[
\Phi_H:\MAlg(X\times H,[\nu_X\times\mu_H])\to \mathcal{M}
\quad\text{and}\quad
\Phi_G:\MAlg(G\times Y,[\mu_G\times\nu_Y])\to \mathcal{M}
\]
be the $H$-equivariant and $G$-equivariant Boolean isomorphisms given by (i) of Definition \ref{def:algebraic_ME_coupling}.

By \cite[Theorem 2]{Mackey:PointRealizations}, after replacing $X$ and $Y$ by conull Borel sets, 
the map $\Phi_H$ is induced by an $H$-equivariant mcp Borel isomorphism $i:X\times H\to \Sigma_H$ onto an $H$-invariant conull Borel subset $\Sigma_H\subseteq \Sigma$, 
and the map $\Phi_G$ is induced by a $G$-equivariant mcp Borel isomorphism $j:G\times Y\to \Sigma_G$ onto a $G$-invariant conull Borel subset $\Sigma_G\subseteq \Sigma$.
Here, $H$ is acting trivially on the $X$-coordinate and $G$ is acting trivially on the $Y$-coordinate.

As in the proof of \cite[Theorem 2.12]{KoivistoKyedRaum:measure_equivalence_unimodular}, the map $i$ and the left $G$-action on $\Sigma$ induce a near action of $G$ on $X$ together with a near cocycle $\bar{\alpha}_H:G\times X\to H$, 
while the map $j$ and the right $H$-action on $\Sigma$ induce a near right action of $H$ on $Y$ together with a near cocycle $\bar{\alpha}_G:Y\times H\to G$.
Since $\Phi_H(a\otimes 1)=a$ and $\Phi_G(1\otimes b)=b$, the Boolean actions induced by these near actions on $\MAlg(X,\nu_X)$ and $\MAlg(Y,\nu_Y)$ are the given actions on $\mathcal{M}^H$ and $\mathcal{M}^G$, respectively, so the near actions preserve $\nu_X$ and $\nu_Y$.

By \cite[Lemma 2.11]{KoivistoKyedRaum:measure_equivalence_unimodular}, there are standard probability spaces $(X_0,\nu_{X_0})$ and $(Y_0,\nu_{Y_0})$ with genuine pmp Borel actions of $G$ and $H$ respectively, 
and measure-space isomorphisms $f_X:X\to X_0$ and $f_Y:Y\to Y_0$, such that $f_X(g.x)=g.f_X(x)$ for almost every $x\in X$, for each $g\in G$, and $f_Y(y.h)=f_Y(y).h$ for almost every $y\in Y$, for each $h\in H$. 
Define maps $i_0:X_0\times H\to\Sigma_H$ and $j_0:G\times Y_0\to\Sigma_G$, and near cocycles $\bar{\alpha}_{H,0}$ and $\bar{\alpha}_{G,0}$ by
\[
\begin{aligned}
i_0(x_0,h)&\coloneqq i(f_X^{-1}(x_0),h),
&\bar{\alpha}_{H,0}(g,x_0)&\coloneqq\bar{\alpha}_H(g,f_X^{-1}(x_0)),\\
j_0(g,y_0)&\coloneqq j(g,f_Y^{-1}(y_0)),
&\bar{\alpha}_{G,0}(y_0,h)&\coloneqq\bar{\alpha}_G(f_Y^{-1}(y_0),h).
\end{aligned}
\]
After replacing $(X,\nu_X),(Y,\nu_Y),i,j,\bar{\alpha}_H$, and $\bar{\alpha}_G$ respectively by $(X_0,\nu_{X_0}),(Y_0,\nu_{Y_0}),i_0,j_0,\bar{\alpha}_{H,0}$, and $\bar{\alpha}_{G,0}$, we may assume that the near actions on $(X,\nu_X)$ and $(Y,\nu_Y)$ are already genuine pmp Borel actions.

By \cite[Theorem B.9]{Zimmer-Erg-th-ss-gp-Book-84}, there are strict Borel cocycles $\alpha_H:G\times X\to H$ and $\alpha_G:Y\times H\to G$ such that $\alpha_H(g,\cdot)=\bar{\alpha}_H(g,\cdot)$ almost everywhere for each $g\in G$, 
and $\alpha_G(\cdot,h)=\bar{\alpha}_G(\cdot,h)$ almost everywhere for each $h\in H$.
Thus, we obtain mcp Borel $(G,H)$-biactions (and associated $L$-actions) on $X\times H$ and $G\times Y$, defined by
\[
g.(x,h_0).h \coloneqq (g.x,\alpha_H(g,x)h_0h), \qquad \text{and} \qquad
g.(g_0,y).h\coloneqq (gg_0\alpha_G(y,h),y.h).
\]
The maps $\Phi_H$ and $\Phi_G$ are $L$-equivariant for the induced Boolean $L$-actions, so the composition
\[
\Phi_G^{-1}\circ\Phi_H: \MAlg(X\times H,[\nu_X\times\mu_H]) \longrightarrow \MAlg(G\times Y,[\mu_G\times\nu_Y])
\]
is an $L$-equivariant Boolean isomorphism. Applying \cite[Theorem 2]{Mackey:PointRealizations}, we obtain $L$-invariant conull Borel subsets
$\Omega_H\subseteq X\times H$ and $\Omega_G\subseteq G\times Y$, together with an $L$-equivariant mcp Borel isomorphism $\theta:\Omega_H\to\Omega_G$ which induces $\Phi_G^{-1}\circ\Phi_H$. 

Since $\Omega_H$ is $H$-invariant, and the right $H$-action on $X\times H$ is just right translation on the $H$-coordinate (and trivial on the $X$-coordinate),
we have $\Omega_H=X_1\times H$ for some conull Borel subset $X_1\subseteq X$.
Since $\Omega_H$ is moreover $L$-invariant, the set $X_1$ is $G$-invariant. 
Likewise, $\Omega_G=G\times Y_1$ for some $H$-invariant conull Borel subset $Y_1$ of $Y$.

We now set $\Omega\coloneq X_1\times H$ and $\mathscr{C}_\Omega\coloneqq \left[(\nu_X\times\mu_H)|_{X_1\times H}\right]$.
We identify $\Omega$ with $G\times Y_1$ via $\theta$, which takes $\mathscr{C}_\Omega$ to $\left[(\mu_G\times\nu_Y)|_{G\times Y_1}\right]$.
Then $\Omega$ is a standard Borel space equipped with an mcp Borel $(G,H)$-biaction. 
The right $H$-action is free, and the quotient measurable left $G$-space $\Omega/H$ is standard, being isomorphic to $X_1$. 
Under this isomorphism, $\Omega/H$ carries the $G$-invariant Borel probability measure $\nu_X |_{X_1}$, which belongs to the measure class $\mathscr{C}_\Omega/H$. 
Similarly, the left $G$-action is free, the quotient measurable right $H$-space $G\backslash\Omega$ is standard and, under the Borel isomorphism $G\backslash\Omega \cong Y_1$, it carries the $H$-invariant Borel probability measure $\nu_Y|_{Y_1}$, which belongs to the measure class $G\backslash\mathscr C_\Omega$.

Thus $(\Omega,\mathscr{C}_\Omega)$ is a strict $(G,H)$-ME-coupling. 
By construction, the induced Boolean $(G,H)$-biaction on $\MAlg(\Omega,\mathscr{C}_\Omega)$ is isomorphic to the original $(G,H)$-biaction on $\mathcal{M}$.
\end{proof}

\def\MR#1{}
\bibliographystyle{amsalpha} 
\bibliography{refs}

\parindent=0pt\relax
\end{document}

%% file: Tikz-folders/tikzF-1.tex
 \begin{tikzpicture}[scale=0.7]
    \draw (0,0) ellipse (5cm and 1cm);
    \draw (5.5,0) node {$X_1$};
    \draw (0,2.5) ellipse (5cm and 1cm);
    \draw (5.5,2.5) node {$X_2$};
    \draw (0,5) ellipse (5cm and 1cm);
    \draw (5.5,5) node {$X_3$};

        \draw (0,0) -- (0,2.1);
        \draw (0,0) -- (0.7,4.9);

        \draw (-1,-.5) -- (-1,2);
        \draw (-1,2) -- (0.1,5.7);
        \draw (-1,2) -- (-1.8,5.3);
        
        \draw (-3,0) -- (-3.1, 2.9);
        \draw (-3,0) -- (-2.3,2.2);
        \draw (-2.3,2.2) -- (-3.1,4.5);
        \draw (-2.3,2.2) -- (-2.7,5);
        
        \draw (1.9, -.3) -- (1.1,2.6);
        \draw (1.9, -.3) -- (2,3);
        \draw (2,3) -- (1.2,5);

        \draw (2.8, -.3) -- (3.1,2.4);
        \draw (3.1,2.4) -- (2.3,5.2);

        \draw[teal, dashed, thick] (-2.7,5) -- (-1.8,5.3);
        \draw (-3,0) -- (-2,.5);
        \draw (-2.7,5) -- (-3.1,4.5);
        \draw (-2.3,2.2) -- (-3.1, 2.9);
        \draw[teal, dashed, thick] (-2.3,2.2) -- (-1,2); 

        \draw (1,0.2)[orange, dashed, thick] -- (0,2.1);
        \draw (1,0.2)[orange, dashed, thick] -- (0.7,4.9);
            
        \draw[purple, dashed, thick] (1.2,5) -- (2.3,5.2);
        \draw[purple, dashed, thick] (1.1,2.6) -- (2.3,5.2);
        \draw[purple, dashed, thick] (2,3) -- (3.1,2.4);

    \draw[teal, thick] (-3,0) -- (-1,-0.5) ;
    \draw[orange, thick] (0,0) -- (1,0.2) ;
    \draw[purple, thick] (1.9,-.3) -- (2.8,-0.3) ;
    
    \node[draw, shape=circle, fill=black, minimum size = 0.1cm, inner sep = 0cm] at (0,0) {} ;
    \node[draw, shape=circle, fill=black, minimum size = 0.1cm, inner sep = 0cm] at (1,0.2) {} ;
    \node[draw, shape=circle, fill=black, minimum size = 0.1cm, inner sep = 0cm] at (-3,0) {} ;
    \node[draw, shape=circle, fill=black, minimum size = 0.1cm, inner sep = 0cm] at (-1,-0.5) {} ;
    \node[draw, shape=circle, fill=black, minimum size = 0.1cm, inner sep = 0cm] at (1.9,-.3) {};
    \node[draw, shape=circle, fill=black, minimum size = 0.1cm, inner sep = 0cm] at (2.8,-0.3) {};
    \node[draw, shape=circle, fill=black, minimum size = 0.1cm, inner sep = 0cm] at (-2,0.5) {};

    \node[draw, shape=circle, fill=black, minimum size = 0.1cm, inner sep = 0cm] at (0,2.1) {} ;
    \node[draw, shape=circle, fill=black, minimum size = 0.1cm, inner sep = 0cm] at (1.1,2.6) {} ;
    \node[draw, shape=circle, fill=black, minimum size = 0.1cm, inner sep = 0cm] at (-3.1,2.9) {} ;
    \node[draw, shape=circle, fill=black, minimum size = 0.1cm, inner sep = 0cm] at (-1,2) {} ;
    \node[draw, shape=circle, fill=black, minimum size = 0.1cm, inner sep = 0cm] at (3.1,2.4) {};
    \node[draw, shape=circle, fill=black, minimum size = 0.1cm, inner sep = 0cm] at (-2.3,2.2) {};
    \node[draw, shape=circle, fill=black, minimum size = 0.1cm, inner sep = 0cm] at (2,3) {};

    \node[draw, shape=circle, fill=black, minimum size = 0.1cm, inner sep = 0cm] at (0.1,5.7) {} ;
    \node[draw, shape=circle, fill=black, minimum size = 0.1cm, inner sep = 0cm] at (0.7,4.9) {} ;
    \node[draw, shape=circle, fill=black, minimum size = 0.1cm, inner sep = 0cm] at (-3.1,4.5) {} ;
    \node[draw, shape=circle, fill=black, minimum size = 0.1cm, inner sep = 0cm] at (-1.8,5.3) {} ;
    \node[draw, shape=circle, fill=black, minimum size = 0.1cm, inner sep = 0cm] at (1.2,5) {};
    \node[draw, shape=circle, fill=black, minimum size = 0.1cm, inner sep = 0cm] at (2.3,5.2) {};
    \node[draw, shape=circle, fill=black, minimum size = 0.1cm, inner sep = 0cm] at (-2.7,5) {};

    \end{tikzpicture}

%% file: Tikz-folders/tikzK-1.tex
\begin{tikzpicture}[scale=0.7]
    \draw (0,0) ellipse (5cm and 1cm);
    \draw (5.5,0) node {$X_1$};
    \draw (0,2.5) ellipse (5cm and 1cm);
    \draw (5.5,2.5) node {$X_2$};
    \draw (0,5) ellipse (5cm and 1cm);
    \draw (5.5,5) node {$X_3$};

        \draw (0,0) -- (0,2.1);
        \draw (0,0) -- (0.7,4.9);

        \draw (-1,-.5) -- (-1,2);
        \draw (-1,2) -- (0.1,5.7);
        \draw (-1,2) -- (-1.8,5.3);
        
        \draw (-3,0) -- (-3.1, 2.9);
        \draw (-3,0) -- (-2.3,2.2);
        \draw (-2.3,2.2) -- (-3.1,4.5);
        \draw (-2.3,2.2) -- (-2.7,5);
        
        \draw (1.9, -.3) -- (1.1,2.6);
        \draw (1.9, -.3) -- (2,3);
        \draw (2,3) -- (1.2,5);

        \draw (2.8, -.3) -- (3.1,2.4);
        \draw (3.1,2.4) -- (2.3,5.2);

        \draw[teal, dashed, thick] (-2.7,5) -- (-1.8,5.3);
        \draw (-3,0) -- (-2,.5);
        \draw (-2.7,5) -- (-3.1,4.5);
        \draw (-2.3,2.2) -- (-3.1, 2.9);
        \draw[teal, dashed, thick] (-2.3,2.2) -- (-1,2); 

        \draw (1,0.2)[orange, dashed, thick] -- (0,2.1);
        \draw (1,0.2)[orange, dashed, thick] -- (0.7,4.9);
            
        \draw[purple, dashed, thick] (1.2,5) -- (2.3,5.2);
        \draw[purple, dashed, thick] (1.1,2.6) -- (2.3,5.2);
        \draw[purple, dashed, thick] (2,3) -- (3.1,2.4);
        
        \draw[green, dashed, thick] (0.1,5.7) -- (1.2,5);
        \draw[blue, dashed, thick] (0,2.1) -- (1.1,2.6);
        \draw[red, dashed, thick] (0,2.1) -- (-1,2);

    \draw[teal, thick] (-3,0) -- (-1,-0.5) ;
    \draw[orange, thick] (0,0) -- (1,0.2) ;
    \draw[purple, thick] (1.9,-.3) -- (2.8,-0.3) ;
    \draw[red, thick] (0,0) -- (-1,-0.5) ;
    \draw[blue, thick] (0,0) -- (1.9,-.3) ;
    \draw[green, thick] (-1,-0.5) -- (1.9,-.3) ;
    
    \node[draw, shape=circle, fill=black, minimum size = 0.1cm, inner sep = 0cm] at (0,0) {} ;
    \node[draw, shape=circle, fill=black, minimum size = 0.1cm, inner sep = 0cm] at (1,0.2) {} ;
    \node[draw, shape=circle, fill=black, minimum size = 0.1cm, inner sep = 0cm] at (-3,0) {} ;
    \node[draw, shape=circle, fill=black, minimum size = 0.1cm, inner sep = 0cm] at (-1,-0.5) {} ;
    \node[draw, shape=circle, fill=black, minimum size = 0.1cm, inner sep = 0cm] at (1.9,-.3) {};
    \node[draw, shape=circle, fill=black, minimum size = 0.1cm, inner sep = 0cm] at (2.8,-0.3) {};
    \node[draw, shape=circle, fill=black, minimum size = 0.1cm, inner sep = 0cm] at (-2,0.5) {};

    \node[draw, shape=circle, fill=black, minimum size = 0.1cm, inner sep = 0cm] at (0,2.1) {} ;
    \node[draw, shape=circle, fill=black, minimum size = 0.1cm, inner sep = 0cm] at (1.1,2.6) {} ;
    \node[draw, shape=circle, fill=black, minimum size = 0.1cm, inner sep = 0cm] at (-3.1,2.9) {} ;
    \node[draw, shape=circle, fill=black, minimum size = 0.1cm, inner sep = 0cm] at (-1,2) {} ;
    \node[draw, shape=circle, fill=black, minimum size = 0.1cm, inner sep = 0cm] at (3.1,2.4) {};
    \node[draw, shape=circle, fill=black, minimum size = 0.1cm, inner sep = 0cm] at (-2.3,2.2) {};
    \node[draw, shape=circle, fill=black, minimum size = 0.1cm, inner sep = 0cm] at (2,3) {};

    \node[draw, shape=circle, fill=black, minimum size = 0.1cm, inner sep = 0cm] at (0.1,5.7) {} ;
    \node[draw, shape=circle, fill=black, minimum size = 0.1cm, inner sep = 0cm] at (0.7,4.9) {} ;
    \node[draw, shape=circle, fill=black, minimum size = 0.1cm, inner sep = 0cm] at (-3.1,4.5) {} ;
    \node[draw, shape=circle, fill=black, minimum size = 0.1cm, inner sep = 0cm] at (-1.8,5.3) {} ;
    \node[draw, shape=circle, fill=black, minimum size = 0.1cm, inner sep = 0cm] at (1.2,5) {};
    \node[draw, shape=circle, fill=black, minimum size = 0.1cm, inner sep = 0cm] at (2.3,5.2) {};
    \node[draw, shape=circle, fill=black, minimum size = 0.1cm, inner sep = 0cm] at (-2.7,5) {};

        \draw[dashed] (-3.1, 2.9) -- (-4, 2.5);
        \draw[dashed] (-3.1,4.5) -- (-4, 5);
        \draw[dashed] (-3,0) -- (-4, 0);
        \draw[dashed] (2.8, -.3) -- (4,0);
        \draw[dashed] (3.1,2.4) -- (4,2.5);
        \draw[dashed] (2.3,5.2) -- (4,5);
    \end{tikzpicture}

%% file: Tikz-folders/tikzF-2.tex
    \begin{tikzpicture}[scale=0.7]
    \draw (0,0) ellipse (5cm and 1cm);
    \draw (5.5,0) node {$X_1$};
        \draw (-3,0) -- (-2,.5);

    \draw (-3,0) -- (-1,-0.5) ;
    \draw (0,0) -- (1,0.2) ;
    \draw (1.9,-.3) -- (2.8,-0.3) ;

    \node[draw, shape=circle, fill=black, minimum size = 0.1cm, inner sep = 0cm] at (0,0) {} ;
    \node[draw, shape=circle, fill=black, minimum size = 0.1cm, inner sep = 0cm] at (1,0.2) {} ;
    \node[draw, shape=circle, fill=black, minimum size = 0.1cm, inner sep = 0cm] at (-3,0) {} ;
    \node[draw, shape=circle, fill=black, minimum size = 0.1cm, inner sep = 0cm] at (-1,-0.5) {} ;
    \node[draw, shape=circle, fill=black, minimum size = 0.1cm, inner sep = 0cm] at (1.9,-.3) {};
    \node[draw, shape=circle, fill=black, minimum size = 0.1cm, inner sep = 0cm] at (2.8,-0.3) {};
    \node[draw, shape=circle, fill=black, minimum size = 0.1cm, inner sep = 0cm] at (-2,0.5) {};
    
    \end{tikzpicture}

%% file: Tikz-folders/tikzK-2.tex
    \begin{tikzpicture}[scale=0.7]
    \draw (0,0) ellipse (5cm and 1cm);
    \draw (5.5,0) node {$X_1$};
        \draw (-3,0) -- (-2,.5);

    \draw (-3,0) -- (-1,-0.5) ;
    \draw (0,0) -- (1,0.2) ;
    \draw (1.9,-.3) -- (2.8,-0.3) ;
    \draw (0,0) -- (-1,-0.5) ;
    \draw (0,0) -- (1.9,-.3) ;
    \draw (-1,-0.5) -- (1.9,-.3) ;
    
    \node[draw, shape=circle, fill=black, minimum size = 0.1cm, inner sep = 0cm] at (0,0) {} ;
    \node[draw, shape=circle, fill=black, minimum size = 0.1cm, inner sep = 0cm] at (1,0.2) {} ;
    \node[draw, shape=circle, fill=black, minimum size = 0.1cm, inner sep = 0cm] at (-3,0) {} ;
    \node[draw, shape=circle, fill=black, minimum size = 0.1cm, inner sep = 0cm] at (-1,-0.5) {} ;
    \node[draw, shape=circle, fill=black, minimum size = 0.1cm, inner sep = 0cm] at (1.9,-.3) {};
    \node[draw, shape=circle, fill=black, minimum size = 0.1cm, inner sep = 0cm] at (2.8,-0.3) {};
    \node[draw, shape=circle, fill=black, minimum size = 0.1cm, inner sep = 0cm] at (-2,0.5) {};
    
    \draw[dashed] (-3,0) -- (-4, 0);
    \draw[dashed] (2.8, -.3) -- (4,0);
    \end{tikzpicture}

%% file: refs.bib
@article{Dang73:typeIII_elasticity,
author = {Dang-Ngoc-Nghiem},
journal = {Annales de l'I.H.P. Probabilités et statistiques},
language = {eng},
number = {4},
pages = {397-425},
publisher = {Gauthier-Villars},
title = {On the classification of dynamical systems},
url = {http://eudml.org/doc/76990},
volume = {9},
year = {1973},
}

@article {Krieger:nonsingular_trans_I_II,
    AUTHOR = {Krieger, Wolfgang},
     TITLE = {On non-singular transformations of a measure space. {I}, {II}},
   JOURNAL = {Z. Wahrscheinlichkeitstheorie und Verw. Gebiete},
  FJOURNAL = {Zeitschrift f\"ur Wahrscheinlichkeitstheorie und Verwandte
              Gebiete},
    VOLUME = {11},
      YEAR = {1969},
     PAGES = {83--97; ibid. 11 (1969), 98--119},
   MRCLASS = {28.70},
  MRNUMBER = {240279},
MRREVIEWER = {R.\ V.\ Chacon},
       DOI = {10.1007/BF00531811},
       URL = {https://doi.org/10.1007/BF00531811},
}

@article{CGMTD,
  title={One-ended spanning subforests and treeability of groups},
  author={Conley, Clinton T. and Gaboriau, Damien and Marks, Andrew S. and Tucker-Drob, Robin D.},
  journal={preprint arXiv:2104.07431},
  year={2021},
  note={to appear in Trans. Amer. Math. Soc.}
}

@book {BdlHV,
    AUTHOR = {Bekka, Bachir and de la Harpe, Pierre and Valette, Alain},
     TITLE = {Kazhdan's property ({T})},
    SERIES = {New Mathematical Monographs},
    VOLUME = {11},
 PUBLISHER = {Cambridge University Press, Cambridge},
      YEAR = {2008},
     PAGES = {xiv+472},
      ISBN = {978-0-521-88720-5},
   MRCLASS = {22-02 (22E40 28D15 37A15 43A07 43A35)},
  MRNUMBER = {2415834},
MRREVIEWER = {Markus\ Neuhauser},
       DOI = {10.1017/CBO9780511542749},
       URL = {https://doi.org/10.1017/CBO9780511542749},
}

@article{DL14,
  title={Permanence properties of property {A} and coarse embeddability for locally compact groups},
  author={Deprez, Steven and Li, Kang},
  journal={arXiv:1403.7111 [math.GR]},
  year={2014}
}

@article {KPV15,
    AUTHOR = {Kyed, David and Petersen, Henrik Densing and Vaes, Stefaan},
     TITLE = {{$L^2$}-{B}etti numbers of locally compact groups and their
              cross section equivalence relations},
   JOURNAL = {Trans. Amer. Math. Soc.},
  FJOURNAL = {Transactions of the American Mathematical Society},
    VOLUME = {367},
      YEAR = {2015},
    NUMBER = {7},
     PAGES = {4917--4956},
      ISSN = {0002-9947,1088-6850},
   MRCLASS = {22D25 (22E40 22F10 37A20)},
  MRNUMBER = {3335405},
MRREVIEWER = {Bachir\ Bekka},
       DOI = {10.1090/S0002-9947-2015-06449-6},
}

@article {Ra71,
    AUTHOR = {Ramsay, Arlan},
     TITLE = {Virtual groups and group actions},
   JOURNAL = {Advances in Math.},
  FJOURNAL = {Advances in Mathematics},
    VOLUME = {6},
      YEAR = {1971},
     PAGES = {253--322},
      ISSN = {0001-8708},
   MRCLASS = {28.70 (20.00)},
  MRNUMBER = {281876},
MRREVIEWER = {B.\ Schreiber},
       DOI = {10.1016/0001-8708(71)90018-1},
}

@article{OW87,
	author = {Ornstein, Donald and Weiss, Benjamin},
	coden = {JOAMAV},
	fjournal = {Journal d'Analyse Math\'ematique},
	issn = {0021-7670},
	journal = {J. Analyse Math.},
	mrclass = {28D20 (22D40)},
	mrnumber = {88j:28014},
	mrreviewer = {John C. Kieffer},
	pages = {1--141},
	title = {Entropy and isomorphism theorems for actions of amenable groups},
	volume = {48},
	year = {1987}}

@article{Baumslag-Solitar-1962,
	author = {Baumslag, Gilbert and Solitar, Donald},
	fjournal = {Bulletin of the American Mathematical Society},
	issn = {0002-9904},
	journal = {Bull. Amer. Math. Soc.},
	mrclass = {20.99},
	mrnumber = {MR0142635 (26 \#204)},
	mrreviewer = {B. H. Neumann},
	pages = {199--201},
	title = {Some two-generator one-relator non-{H}opfian groups},
	volume = {68},
	year = {1962}}

@article {Farb-Mosher,
    AUTHOR = {Farb, Benson and Mosher, Lee},
     TITLE = {A rigidity theorem for the solvable {B}aumslag-{S}olitar
              groups},
      NOTE = {With an appendix by Daryl Cooper},
   JOURNAL = {Invent. Math.},
  FJOURNAL = {Inventiones Mathematicae},
    VOLUME = {131},
      YEAR = {1998},
    NUMBER = {2},
     PAGES = {419--451},
      ISSN = {0020-9910},
   MRCLASS = {57M07 (20F32 57M20)},
  MRNUMBER = {1608595},
MRREVIEWER = {Raul Quiroga-Barranco},
       DOI = {10.1007/s002220050210},
}

@article {Whyte,
    AUTHOR = {Whyte, Kevin},
     TITLE = {The large scale geometry of the higher {B}aumslag-{S}olitar
              groups},
   JOURNAL = {Geom. Funct. Anal.},
  FJOURNAL = {Geometric and Functional Analysis},
    VOLUME = {11},
      YEAR = {2001},
    NUMBER = {6},
     PAGES = {1327--1343},
      ISSN = {1016-443X},
   MRCLASS = {20F65 (20F69)},
  MRNUMBER = {1878322},
MRREVIEWER = {Lee Mosher},
       DOI = {10.1007/s00039-001-8232-6},
}

@article {Tserunyan:hyperfin_erg_subgraph,
    AUTHOR = {Tserunyan, Anush},
     TITLE = {Pointwise ergodic theorem for locally countable quasi-pmp
              graphs},
   JOURNAL = {J. Mod. Dyn.},
  FJOURNAL = {Journal of Modern Dynamics},
    VOLUME = {18},
      YEAR = {2022},
     PAGES = {609--655},
      ISSN = {1930-5311,1930-532X},
   MRCLASS = {37A20 (03E15 05C63 37A25 37A30)},
  MRNUMBER = {4521687},
MRREVIEWER = {Andrzej\ Wi\'{s}nicki},
       DOI = {10.3934/jmd.2022019},
}

@article {KatznelsonWeiss:NonsingularTransformations,
    AUTHOR = {Katznelson, Yitzhak and Weiss, Benjamin},
     TITLE = {The classification of nonsingular actions, revisited},
   JOURNAL = {Ergodic Theory Dynam. Systems},
  FJOURNAL = {Ergodic Theory and Dynamical Systems},
    VOLUME = {11},
      YEAR = {1991},
    NUMBER = {2},
     PAGES = {333--348},
      ISSN = {0143-3857,1469-4417},
   MRCLASS = {28D05},
  MRNUMBER = {1116644},
MRREVIEWER = {B.\ Kami\'nski},
       DOI = {10.1017/S0143385700006179},
}

@article {KoivistoKyedRaum:measure_equivalence_unimodular,
    AUTHOR = {Koivisto, Juhani and Kyed, David and Raum, Sven},
     TITLE = {Measure equivalence and coarse equivalence for unimodular
              locally compact groups},
   JOURNAL = {Groups Geom. Dyn.},
  FJOURNAL = {Groups, Geometry, and Dynamics},
    VOLUME = {15},
      YEAR = {2021},
    NUMBER = {1},
     PAGES = {223--267},
      ISSN = {1661-7207,1661-7215},
   MRCLASS = {20F65 (22D05 57M07)},
  MRNUMBER = {4235753},
MRREVIEWER = {Pierre-Emmanuel\ Caprace},
       DOI = {10.4171/ggd/597},
}

@article {Hahn:HaarMeasureGroupoids,
    AUTHOR = {Hahn, Peter},
     TITLE = {Haar measure for measure groupoids},
   JOURNAL = {Trans. Amer. Math. Soc.},
  FJOURNAL = {Transactions of the American Mathematical Society},
    VOLUME = {242},
      YEAR = {1978},
     PAGES = {1--33},
      ISSN = {0002-9947,1088-6850},
   MRCLASS = {28C10 (22D40)},
  MRNUMBER = {496796},
MRREVIEWER = {L.\ Corwin},
       DOI = {10.2307/1997726},
}

@book {HamachiOsikawa:ErgGroupsAndKrieger,
    AUTHOR = {Hamachi, Toshihiro and Osikawa, Motosige},
     TITLE = {Ergodic groups of automorphisms and {K}rieger's theorems},
    SERIES = {Seminar on Mathematical Sciences},
    VOLUME = {3},
 PUBLISHER = {Keio University, Department of Mathematics, Yokohama},
      YEAR = {1981},
     PAGES = {113},
   MRCLASS = {46L40 (28Dxx 46L55)},
  MRNUMBER = {617740},
}

@article {Mackey:PointRealizations,
    AUTHOR = {Mackey, George W.},
     TITLE = {Point realizations of transformation groups},
   JOURNAL = {Illinois J. Math.},
  FJOURNAL = {Illinois Journal of Mathematics},
    VOLUME = {6},
      YEAR = {1962},
     PAGES = {327--335},
      ISSN = {0019-2082},
   MRCLASS = {28.65 (22.40)},
  MRNUMBER = {143874},
MRREVIEWER = {H.\ L.\ Royden},
}

@article {HoudayerRaum:BSGroups,
    AUTHOR = {Houdayer, Cyril and Raum, Sven},
     TITLE = {Baumslag-{S}olitar groups, relative profinite completions and
              measure equivalence rigidity},
   JOURNAL = {J. Topol.},
  FJOURNAL = {Journal of Topology},
    VOLUME = {8},
      YEAR = {2015},
    NUMBER = {1},
     PAGES = {295--313},
      ISSN = {1753-8416,1753-8424},
   MRCLASS = {20F65 (20E08 37A20)},
  MRNUMBER = {3335256},
MRREVIEWER = {Dmytro\ M.\ Savchuk},
       DOI = {10.1112/jtopol/jtu026},
}

@article {Kida:BSGroups,
    AUTHOR = {Kida, Yoshikata},
     TITLE = {Invariants of orbit equivalence relations and
              {B}aumslag-{S}olitar groups},
   JOURNAL = {Tohoku Math. J. (2)},
  FJOURNAL = {The Tohoku Mathematical Journal. Second Series},
    VOLUME = {66},
      YEAR = {2014},
    NUMBER = {2},
     PAGES = {205--258},
      ISSN = {0040-8735,2186-585X},
   MRCLASS = {37A20 (20Exx)},
  MRNUMBER = {3229595},
       DOI = {10.2748/tmj/1404911861},
}

@article {Moldavanskii:BSGroupsIsomorphism,
    AUTHOR = {Moldavanskii, David I.},
     TITLE = {On the isomorphisms of {B}aumslag-{S}olitar groups},
   JOURNAL = {Ukrain. Mat. Zh.},
  FJOURNAL = {Akademiya Nauk Ukrainsko\u i\ SSR. Institut Matematiki.
              Ukrainski\u i\ Matematicheski\u i\ Zhurnal},
    VOLUME = {43},
      YEAR = {1991},
    NUMBER = {12},
     PAGES = {1684--1686},
      ISSN = {0041-6053},
   MRCLASS = {20F05},
  MRNUMBER = {1172309},
MRREVIEWER = {Oleg\ I.\ Tavgen\cprime},
       DOI = {10.1007/BF01066696},
}

@article {Casals-RuizKazachkovZakharov:CommensurabilityOfBS,
    AUTHOR = {Casals-Ruiz, Montserrat and Kazachkov, Ilya and Zakharov,
              Alexander},
     TITLE = {Commensurability of {B}aumslag-{S}olitar groups},
   JOURNAL = {Indiana Univ. Math. J.},
  FJOURNAL = {Indiana University Mathematics Journal},
    VOLUME = {70},
      YEAR = {2021},
    NUMBER = {6},
     PAGES = {2527--2555},
      ISSN = {0022-2518,1943-5258},
   MRCLASS = {20E07 (20E34 20F05)},
  MRNUMBER = {4359918},
MRREVIEWER = {Matteo\ Vannacci},
       DOI = {10.1512/iumj.2021.70.9496},
}

@article {Forester:BSComplexes,
    AUTHOR = {Forester, Max},
     TITLE = {Incommensurable lattices in {B}aumslag-{S}olitar complexes},
   JOURNAL = {J. Lond. Math. Soc. (2)},
  FJOURNAL = {Journal of the London Mathematical Society. Second Series},
    VOLUME = {109},
      YEAR = {2024},
    NUMBER = {3},
     PAGES = {Paper No. e12879, 31},
      ISSN = {0024-6107,1469-7750},
   MRCLASS = {20F65 (20E08 22D05 57M07)},
  MRNUMBER = {4754444},
       DOI = {10.1112/jlms.12879},
}

@article{Kida:TreeingsHNN,
 author = {Kida, Yoshikata},
 title = {On treeings arising from {HNN} extensions},
 fjournal = {Journal of Topology and Analysis},
 journal = {J. Topol. Anal.},
 issn = {1793-5253},
 volume = {18},
 number = {1},
 pages = {243--278},
 year = {2026},
 language = {English},
 doi = {10.1142/S1793525324500304},
 URL = {https://doi.org/10.1142/S1793525324500304}
}

@article {Kida:stability,
    AUTHOR = {Kida, Yoshikata},
     TITLE = {Stability in orbit equivalence for {B}aumslag-{S}olitar groups
              and {V}aes groups},
   JOURNAL = {Groups Geom. Dyn.},
  FJOURNAL = {Groups, Geometry, and Dynamics},
    VOLUME = {9},
      YEAR = {2015},
    NUMBER = {1},
     PAGES = {203--235},
      ISSN = {1661-7207,1661-7215},
   MRCLASS = {37A20 (20E06 20E08 37C85)},
  MRNUMBER = {3343352},
MRREVIEWER = {Rafael\ Oswaldo\ Ruggiero},
       DOI = {10.4171/GGD/311},
}

@article {JS87,
    AUTHOR = {Jones, Vaughan F. R. and Schmidt, Klaus},
     TITLE = {Asymptotically invariant sequences and approximate finiteness},
   JOURNAL = {Amer. J. Math.},
  FJOURNAL = {American Journal of Mathematics},
    VOLUME = {109},
      YEAR = {1987},
    NUMBER = {1},
     PAGES = {91--114},
      ISSN = {0002-9327,1080-6377},
   MRCLASS = {28D99 (46L55)},
  MRNUMBER = {878200},
MRREVIEWER = {S.\ G.\ Dani},
       DOI = {10.2307/2374553},
}

@article{Levitt:GBS,
 author = {Levitt, Gilbert},
 title = {On the automorphism group of generalized {Baumslag--Solitar} groups},
 fjournal = {Geometry \& Topology},
 journal = {Geom. Topol.},
 volume = {11},
 pages = {473--515},
 year = {2007},
 doi = {10.2140/gt.2007.11.473},
 URL = {https://doi.org/10.2140/gt.2007.11.473}
}

@article {MeesschaertVaes:BSgroups,
    AUTHOR = {Meesschaert, Niels and Vaes, Stefaan},
     TITLE = {Partial classification of the {B}aumslag-{S}olitar group von
              {N}eumann algebras},
   JOURNAL = {Doc. Math.},
  FJOURNAL = {Documenta Mathematica},
    VOLUME = {19},
      YEAR = {2014},
     PAGES = {629--645},
      ISSN = {1431-0635,1431-0643},
   MRCLASS = {46L36 (20E06 22D25)},
  MRNUMBER = {3247797},
MRREVIEWER = {Sven\ Raum},
}

@article {Meesschaert:BSactions,
    AUTHOR = {Meesschaert, Niels},
     TITLE = {A rigidity result for crossed products of actions of
              {B}aumslag-{S}olitar groups},
   JOURNAL = {Internat. J. Math.},
  FJOURNAL = {International Journal of Mathematics},
    VOLUME = {26},
      YEAR = {2015},
    NUMBER = {14},
     PAGES = {1550117, 32},
      ISSN = {0129-167X,1793-6519},
   MRCLASS = {46L10 (20F05 22F10 28D15 37A55 37C85 46L54 46L55)},
  MRNUMBER = {3438170},
MRREVIEWER = {R\'emi\ Boutonnet},
       DOI = {10.1142/S0129167X15501177},
}

@article {ChifanIoana:MEvsW*,
    AUTHOR = {Chifan, Ionut and Ioana, Adrian},
     TITLE = {On a question of {D}. {S}hlyakhtenko},
   JOURNAL = {Proc. Amer. Math. Soc.},
  FJOURNAL = {Proceedings of the American Mathematical Society},
    VOLUME = {139},
      YEAR = {2011},
    NUMBER = {3},
     PAGES = {1091--1093},
      ISSN = {0002-9939,1088-6826},
   MRCLASS = {46L10 (37A20)},
  MRNUMBER = {2745659},
MRREVIEWER = {Stefaan\ Vaes},
       DOI = {10.1090/S0002-9939-2010-10553-1},
}

@article{Hjorth:cost-attained,
 author = {Hjorth, Greg},
 title = {A lemma for cost attained},
 fjournal = {Annals of Pure and Applied Logic},
 journal = {Ann. Pure Appl. Logic},
 issn = {0168-0072},
 volume = {143},
 number = {1-3},
 pages = {87--102},
 year = {2006},
 language = {English},
 doi = {10.1016/j.apal.2005.05.034},
 URL = {https://doi.org/10.1016/j.apal.2005.05.034}
}

@article{Maharam:Extension1964,
author = {Maharam, Dorothy},
journal = {Fundamenta Mathematicae},
language = {eng},
number = {1},
pages = {35-50},
title = {Incompressible transformations},
url = {http://eudml.org/doc/213824},
volume = {56},
year = {1964},
}

@article {Krieger:Flow,
    AUTHOR = {Krieger, Wolfgang},
     TITLE = {On ergodic flows and the isomorphism of factors},
   JOURNAL = {Math. Ann.},
  FJOURNAL = {Mathematische Annalen},
    VOLUME = {223},
      YEAR = {1976},
    NUMBER = {1},
     PAGES = {19--70},
      ISSN = {0025-5831,1432-1807},
   MRCLASS = {46L10 (28A65)},
  MRNUMBER = {415341},
MRREVIEWER = {M.\ Takesaki},
       DOI = {10.1007/BF01360278},
       URL = {https://doi.org/10.1007/BF01360278},
}

@article {Poulin,
    AUTHOR = {Poulin, Antoine},
     TITLE = {Elasticity of free type {III} actions of free groups},
   JOURNAL = {J. Topol. Anal.},
  FJOURNAL = {Journal of Topology and Analysis},
    VOLUME = {18},
      YEAR = {2026},
    NUMBER = {5},
     PAGES = {1477--1487},
      ISSN = {1793-5253,1793-7167},
   MRCLASS = {37A20 (03E15)},
  MRNUMBER = {5051785},
       DOI = {10.1142/S1793525325500219},
       URL = {https://doi.org/10.1142/S1793525325500219},
}

@article {Koivisto-Kyed-Raum-ME-non-unim,
    AUTHOR = {Koivisto, Juhani and Kyed, David and Raum, Sven},
     TITLE = {Measure equivalence for non-unimodular groups},
   JOURNAL = {Transform. Groups},
  FJOURNAL = {Transformation Groups},
    VOLUME = {26},
      YEAR = {2021},
    NUMBER = {1},
     PAGES = {327--346},
      ISSN = {1083-4362},
   MRCLASS = {22D05 (28C10 28D05 37A15 43A05 43A07)},
  MRNUMBER = {4229668},
MRREVIEWER = {Radhakrishnan Nair},
       DOI = {10.1007/s00031-021-09640-5},
       URL = {https://doi-org.acces.bibliotheque-diderot.fr/10.1007/s00031-021-09640-5},
}

@article {LeMaitre:full-groups,
    AUTHOR = {Le Ma\^itre, Fran\c{c}ois},
     TITLE = {On full groups of non-ergodic probability-measure-preserving
              equivalence relations},
   JOURNAL = {Ergodic Theory Dynam. Systems},
  FJOURNAL = {Ergodic Theory and Dynamical Systems},
    VOLUME = {36},
      YEAR = {2016},
    NUMBER = {7},
     PAGES = {2218--2245},
      ISSN = {0143-3857,1469-4417},
   MRCLASS = {28D15 (22F10 37A05)},
  MRNUMBER = {3568978},
       DOI = {10.1017/etds.2015.20},
       URL = {https://doi.org/10.1017/etds.2015.20},
}

@article{Dye:groups_mp_transformations_I,
    AUTHOR = {Dye, Henry A.},
     TITLE = {On groups of measure preserving transformations. {I}},
   JOURNAL = {Amer. J. Math.},
  FJOURNAL = {American Journal of Mathematics},
    VOLUME = {81},
      YEAR = {1959},
     PAGES = {119--159},
      ISSN = {0002-9327,1080-6377},
   MRCLASS = {28.70},
  MRNUMBER = {131516},
MRREVIEWER = {M.\ Mahowald},
       DOI = {10.2307/2372852},
       URL = {https://doi.org/10.2307/2372852},
}

@article{FM77a,
	author = {Feldman, Jacob and Moore, Calvin C.},
	journal = {Trans. Amer. Math. Soc.},
	number = {2},
	pages = {289--324},
	title = {Ergodic equivalence relations, cohomology, and von {N}eumann algebras. {I}},
	volume = {234},
	year = {1977},
}

@book{Schmidt-1990-Algebraic-ideas,
	author = {Schmidt, Klaus},
	doi = {10.1090/cbms/076},
	isbn = {0-8218-0727-7},
	mrclass = {28D05 (22D40 28D15 28D20 46L55 54C70)},
	mrnumber = {1074576},
	mrreviewer = {A. I. Danilenko},
	pages = {vi+94},
	publisher = {Published for the Conference Board of the Mathematical Sciences, Washington, DC; by the American Mathematical Society, Providence, RI},
	series = {CBMS Regional Conference Series in Mathematics},
	title = {Algebraic ideas in ergodic theory},
	url = {https://doi-org.acces.bibliotheque-diderot.fr/10.1090/cbms/076},
	volume = {76},
	year = {1990}}

@article{Mackey-1966-Erg-th-virt-gps,
	author = {Mackey, George W.},
	doi = {10.1007/BF01361167},
	fjournal = {Mathematische Annalen},
	issn = {0025-5831},
	journal = {Math. Ann.},
	mrclass = {22.20 (28.70)},
	mrnumber = {201562},
	mrreviewer = {F. Hahn},
	pages = {187--207},
	title = {Ergodic theory and virtual groups},
	url = {https://doi-org.acces.bibliotheque-diderot.fr/10.1007/BF01361167},
	volume = {166},
	year = {1966}}

@book{Zimmer-Erg-th-ss-gp-Book-84,
	address = {Basel},
	author = {Zimmer, Robert J.},
	isbn = {3-7643-3184-4},
	mrclass = {22E40 (22D40 28D15)},
	mrnumber = {MR776417 (86j:22014)},
	mrreviewer = {S. G. Dani},
	pages = {x+209},
	publisher = {Birkh\"auser Verlag},
	series = {Monographs in Mathematics},
	title = {Ergodic theory and semisimple groups},
	volume = {81},
	year = {1984}}

@inproceedings{Krieger-1970-Araki-Woods,
	author = {Krieger, Wolfgang},
	booktitle = {Contributions to {E}rgodic {T}heory and {P}robability ({P}roc. {C}onf., {O}hio {S}tate {U}niv., {C}olumbus, {O}hio, 1970)},
	mrclass = {28A65 (46L10)},
	mrnumber = {414823},
	pages = {158--177},
	publisher = {Springer, Berlin-New York},
	series = {Lecture Notes in Math., Vol. 160},
	title = {On the {A}raki-{W}oods asymptotic ratio set and non-singular transformations of a measure space},
	year = {1970}}

@incollection {Fur,
    AUTHOR = {Furman, Alex},
     TITLE = {A survey of measured group theory},
 BOOKTITLE = {Geometry, rigidity, and group actions},
    SERIES = {Chicago Lectures in Math.},
     PAGES = {296--374},
 PUBLISHER = {Univ. Chicago Press, Chicago, IL},
      YEAR = {2011},
      ISBN = {978-0-226-23788-6; 0-226-23788-5},
   MRCLASS = {37A20 (20F65 22D40)},
  MRNUMBER = {2807836},
MRREVIEWER = {Claire\ Anantharaman-Delaroche},
}

@incollection{Gromov-asympt-inv,
	address = {Cambridge},
	author = {Gromov, Mikhael},
	booktitle = {Geometric group theory, {V}ol.\ 2 ({S}ussex, 1991)},
	mrclass = {20F32 (57M07)},
	mrnumber = {1253544 (95m:20041)},
	pages = {1--295},
	publisher = {Cambridge Univ. Press},
	series = {London Math. Soc. Lecture Note Ser.},
	title = {Asymptotic invariants of infinite groups},
	volume = {182},
	year = {1993}}

@article{BFS-13,
	author = {Bader, Uri and Furman, Alex and Sauer, Roman},
	doi = {10.1007/s00222-012-0445-9},
	fjournal = {Inventiones Mathematicae},
	issn = {0020-9910},
	journal = {Invent. Math.},
	mrclass = {22E40 (58D19)},
	mrnumber = {3117525},
	mrreviewer = {A. I. Danilenko},
	number = {2},
	pages = {313--379},
	title = {Integrable measure equivalence and rigidity of hyperbolic lattices},
	url = {https://doi-org.acces.bibliotheque-diderot.fr/10.1007/s00222-012-0445-9},
	volume = {194},
	year = {2013}}

@book{Kechris:Classical_DST,
    AUTHOR = {Kechris, Alexander S.},
     TITLE = {Classical descriptive set theory},
    SERIES = {Graduate Texts in Mathematics},
    VOLUME = {156},
 PUBLISHER = {Springer-Verlag, New York},
      YEAR = {1995},
     PAGES = {xviii+402},
      ISBN = {0-387-94374-9},
   MRCLASS = {03E15 (03-01 03-02 04A15 28A05 54H05 90D44)},
  MRNUMBER = {1321597},
MRREVIEWER = {Jakub\ Jasi\'nski},
       DOI = {10.1007/978-1-4612-4190-4},
       URL = {https://doi.org/10.1007/978-1-4612-4190-4},
}

@article {JKLcber,
    AUTHOR = {Jackson, Stephen C. and Kechris, Alexander~S. and Louveau, Alain},
     TITLE = {Countable {B}orel equivalence relations},
   JOURNAL = {J. Math. Log.},
  FJOURNAL = {Journal of Mathematical Logic},
    VOLUME = {2},
      YEAR = {2002},
    NUMBER = {1},
     PAGES = {1--80},
      ISSN = {0219-0613},
   MRCLASS = {03E15 (22A05)},
  MRNUMBER = {1900547},
MRREVIEWER = {Otmar Spinas},
       DOI = {10.1142/S0219061302000138},
       URL = {https://doi.org/10.1142/S0219061302000138},
}

@article {Gaboriau05,
    AUTHOR = {Gaboriau, Damien},
     TITLE = {Invariant percolation and harmonic {D}irichlet functions},
   JOURNAL = {Geom. Funct. Anal.},
  FJOURNAL = {Geometric and Functional Analysis},
    VOLUME = {15},
      YEAR = {2005},
    NUMBER = {5},
     PAGES = {1004--1051},
      ISSN = {1016-443X},
   MRCLASS = {60K35 (37A20)},
  MRNUMBER = {2221157},
MRREVIEWER = {Filippo Cesi},
       DOI = {10.1007/s00039-005-0539-2}
}

@misc{Chen-Terlov-Tserunyan,
      title={Nonamenable subforests of multi-ended quasi-pmp graphs}, 
      author={Ruiyuan Chen and Grigory Terlov and Anush Tserunyan},
      year={2025},
      eprint={2211.07908},
      archivePrefix={arXiv},
      primaryClass={math.DS},
      url={https://arxiv.org/abs/2211.07908}, 
}

@article{Hopf-1932,
 author = {Hopf, Eberhard},
 title = {Theory of measure and invariant integrals},
 fjournal = {Transactions of the American Mathematical Society},
 journal = {Trans. Am. Math. Soc.},
 issn = {0002-9947},
 volume = {34},
 pages = {373--393},
 year = {1932},
 language = {English},
 doi = {10.2307/1989547},
 zbMATH = {3005018},
 Zbl = {0004.26001}
}

@article{IPV:superrigidity,
 author = {Ioana, Adrian and Popa, Sorin and Vaes, Stefaan},
 title = {A class of superrigid group von {Neumann} algebras},
 fjournal = {Annals of Mathematics. Second Series},
 journal = {Ann. of Math. (2)},
 issn = {0003-486X},
 volume = {178},
 number = {1},
 pages = {231--286},
 year = {2013},
 language = {English},
 doi = {10.4007/annals.2013.178.1.4},
 zbMATH = {6190560},
 Zbl = {1295.46041}
}

@article{CdSS:products,
 author = {Chifan, Ionut and de Santiago, Rolando and Sinclair, Thomas},
 title = {{$W^*$}-rigidity for the von {Neumann} algebras of products of hyperbolic groups},
 fjournal = {Geometric and Functional Analysis},
 journal = {Geom. Funct. Anal.},
 issn = {1016-443X},
 volume = {26},
 number = {1},
 pages = {136--159},
 year = {2016},
 language = {English},
 doi = {10.1007/s00039-016-0361-z}
}

@article{Cornulier_Valette_2015,
  title={On equivariant embeddings of generalized Baumslag--Solitar groups},
  author={Cornulier, Yves and Valette, Alain},
  journal={Geometriae Dedicata},
  volume={175},
  number={1},
  pages={385--401},
  year={2015},
  publisher={Springer}
}

@book {Fremlin3,
    AUTHOR = {Fremlin, David~H.},
     TITLE = {Measure Theory. {V}ol. 3},
 SUBTITLE = {Measure Algebras},
 PUBLISHER = {Torres Fremlin},
   ADDRESS = {Colchester},
      YEAR = {2002},
      ISBN = {978-0-9538129-3-6},
       URL = {https://www1.essex.ac.uk/maths/people/fremlin/mt.htm},
}

@book{Kechris-Miller,
	address = {Berlin},
	author = {Kechris, Alexander S. and Miller, Benjamin D.},
	isbn = {3-540-22603-6},
	mrclass = {37A15 (03E15 28D05 28D15 37A20)},
	mrnumber = {MR2095154},
	pages = {x+134},
	publisher = {Springer-Verlag},
	series = {Lecture Notes in Mathematics},
	title = {Topics in orbit equivalence},
	volume = {1852},
	year = {2004}}

@book{Gao08,
  title={Invariant descriptive set theory},
  author={Gao, Su},
  year={2008},
  publisher={Chapman and Hall/CRC}
}

@article{Gab05,
	author = {Gaboriau, Damien},
	fjournal = {Ergodic Theory and Dynamical Systems},
	issn = {0143-3857},
	journal = {Ergodic Theory Dynam. Systems},
	mrclass = {22F10 (28D15 37A15 37A20)},
	mrnumber = {MR2183295 (2006i:22024)},
	mrreviewer = {Michel Coornaert},
	notes = {\url{http://hal.ccsd.cnrs.fr/ccsd- 00004408}},
	number = {6},
	pages = {1809--1827},
	title = {Examples of groups that are measure equivalent to the free group},
	volume = {25},
	year = {2005}}

@article{Gab00a,
	author = {Gaboriau, Damien},
	coden = {INVMBH},
	fjournal = {Inventiones Mathematicae},
	issn = {0020-9910},
	journal = {Invent. Math.},
	mrclass = {28D15 (20E06 20E15 28D20 37Axx 46Lxx)},
	mrnumber = {1 728 876},
	number = {1},
	pages = {41--98},
	title = {Co{\^u}t des relations d'{\'e}quivalence et des groupes},
	volume = {139},
	year = {2000}}

@article{Kechris00,
	author = {Kechris, Alexander S.},
	coden = {JLMSAK},
	fjournal = {The Journal of the London Mathematical Society. Second Series},
	issn = {0024-6107},
	journal = {J. London Math. Soc. (2)},
	mrclass = {20K15},
	mrnumber = {1 783 636},
	number = {2},
	pages = {437--450},
	title = {On the classification problem for rank 2 torsion-free abelian groups},
	volume = {62},
	year = {2000}}

@misc{T-TD:amenable_in_tree_antitreeability,
      title={Amenable subrelations of treeable equivalence relations and applications to antitreeability}, 
      author={Anush Tserunyan and Robin D. Tucker-Drob},
      year={2026},
      note={in preparation},
}

@article {Kec92,
    AUTHOR = {Kechris, Alexander S.},
     TITLE = {Countable sections for locally compact group actions},
   JOURNAL = {Ergodic Theory Dynam. Systems},
  FJOURNAL = {Ergodic Theory and Dynamical Systems},
    VOLUME = {12},
      YEAR = {1992},
    NUMBER = {2},
     PAGES = {283--295},
      ISSN = {0143-3857,1469-4417},
   MRCLASS = {22D40 (03E15 54H11)},
  MRNUMBER = {1176624},
MRREVIEWER = {Ivan\ L.\ Reilly},
       DOI = {10.1017/S0143385700006751},
       URL = {https://doi-org.proxy3.library.mcgill.ca/10.1017/S0143385700006751},
}

@article{CCBT,
  title={Commability and envelopes of {B}aumslag-{S}olitar groups and generalizations},
  author={Carette, Mathieu and Cornulier, Yves and Le Boudec, Adrien and Tessera, Romain},
  journal={},
  year={},
  note={in preparation (personal communication)}
}
